\newcommand\brefRK[1]{\ref{RK#1}}
\newcommand\brefWA[1]{\ref{WA#1}}
\newcommand\refRK[1]{II.\brefRK{#1}}
\newcommand\refWA[1]{III.\brefWA{#1}}
\newcounter{enumitemp}
\newenvironment{enumeratecontinue}{
 \setcounter{enumitemp}{\value{enumi}}
 \begin{enumerate}
 \setcounter{enumi}{\value{enumitemp}}
}
{
 \end{enumerate}
}
\newcommand\pref[1]{(\ref{#1})}
\newtheorem{thm}{Theorem}[section]
\newtheorem{theorem}[thm]{Theorem}
\newtheorem*{theorem*}{Theorem}
\newtheorem{lemma}[thm]{Lemma}
\newtheorem{corollary}[thm]{Corollary}
\newtheorem{proposition}[thm]{Proposition}
\newtheorem*{proposition*}{Proposition}
\newtheorem{fact}[thm]{Fact}
\theoremstyle{definition}
\newtheorem{definition}[thm]{Definition} 
\newtheorem*{defn*}{Definition}
\newtheorem{notn}[thm]{Notation}
\newtheorem{remark}[thm]{Remark}
\theoremstyle{remark}
\newcounter{remarks}
{\paragraph*{Remarks}\ skip
 \begin{list}{\arabic{remarks}. }{\usecounter{remarks}%
 \setlength{\leftmargin}{0in}%
 \setlength{\rightmargin}{0in}%
 \setlength{\labelsep}{0pt}%
 \setlength{\labelwidth}{0pt}%
 \setlength{\listparindent}{0pt}%
 }
}
{
\end{list}
}
\newcommand\from\colon
\newcommand\inv{{-1}}
\newcommand\subgroup{<}
\newcommand\infinity\infty
\newcommand\supp{\text{supp}}
\newcommand\disjunion\amalg
\newcommand\act\curvearrowright
\DeclareMathOperator{\Fix}{Fix}
\DeclareMathOperator{\Per}{Per}
\DeclareMathOperator\core{core}
\DeclareMathOperator\Link{Link}
\DeclareMathOperator\IA{IA}
\DeclareMathOperator\closure{cl}
\DeclareMathOperator\Isom{Isom}
\newcommand{\Z}{{\mathbb Z}}
\newcommand{\C}{{\mathcal C}}
\renewcommand\P{{\mathcal P}}
\newcommand\U{{\mathcal U}}
\newcommand\K{{\mathcal K}}
\newcommand{\Out}{\mathsf{Out}}
\newcommand{\Aut}{\mathsf{Aut}}
\newcommand{\Inn}{\mathsf{Inn}}
\newcommand{\Stab}{\mathsf{Stab}}
\newcommand{\F}{\mathcal F}
\newcommand{\rtt}{relative train track map}
\renewcommand\L{\mathcal L}
\def\B{\mathcal B}
\newcommand{\A}{\mathcal A}
\newcommand{\ti} {\tilde}
\newcommand{\eg}{EG}
\newcommand{\noneg}{NEG}
\newcommand\Lim{\text{Lim}}
\renewcommand\neg\noneg
\newcommand{\wt}{\widetilde}
\newcommand{\ct}{CT}
\newcommand{\cts}{CTs}
\newcommand\free{{\text{f}}}
\newcommand\nonfree{{\text{nf}}}
\newcommand\BH{\cite{BestvinaHandel:tt}}
\newcommand\BookZero{\cite{BFH:laminations}}
\newcommand\BookOne{\cite{BFH:TitsOne}}
\newcommand\recognition{\cite{FeighnHandel:recognition}}
\newcommand\Intro{\cite{HandelMosher:SubgroupsIntro}}
\newcommand\PartTwo{Part II \cite{HandelMosher:SubgroupsII}}
\newcommand\PartThree{Part III \cite{HandelMosher:SubgroupsIII}}
\newcommand\PartFour{Part IV \cite{HandelMosher:SubgroupsIV}}
 \newcommand\rc{}
\DeclareMathOperator\interior{int}
\newcommand\bdy\partial
\newcommand\intersect\cap
\newcommand\union\cup
\newcommand\<\langle
\renewcommand\>\rangle
\newcommand\meet\wedge
\newcommand\composed{\circ}
\newcommand\cross\times
\newcommand\restrict{\bigm |}
\newcommand\wh{\widehat}
\newcommand\inject\hookrightarrow
\newcommand\reals{\mathbf{R}}
\DeclareMathOperator\Length{Length}
\newcommand\abs[1]{\left|#1\right|}
\DeclareMathOperator\rank{rank}
\DeclareMathOperator\BCC{BCC} 
\DeclareMathOperator\Hull{\mathcal{H}}
 \newcommand\surjection\twoheadrightarrow
\newcommand\suchthat{\bigm|}
\newcommand\hyp{\mathbf{H}}
\DeclareMathOperator\MCG{\mathcal{MCG}}
\DeclareMathOperator\Homeo{Homeo}
\newcommand{\Lambdapp}{\Lambda^+_\phi}
\newcommand{\Lambdapmp}{\Lambda^{\pm}_\phi}
\renewcommand\int{{\text{int}}}
\newcommand\inner{\iota}
\newcommand\norm[1]{\left| #1 \right|}
\newcommand\GeomModelsDef{Definitions~\ref{DefGeomModel} and~\ref{DefWeakGeomModel}}
\title{Subgroup decomposition in $\Out(F_n)$\\ Part I: Geometric Models}
\author{Michael Handel and Lee Mosher}
\begin{document}

\maketitle

\begin{abstract}
This is the first in a series of four papers, announced in \cite{HandelMosher:SubgroupsIntro}, that together develop a decomposition theory for subgroups of $\Out(F_n)$.

In this paper we develop further the theory of geometric \eg\ strata of relative train track maps originally introduced in \BookOne\ Section~5, with our focus trained on certain 2-dimensional models of such strata called ``geometric models'' and on the interesting properties of these models. A secondary purpose of this paper is to serve as a central reference for the whole series regarding basic facts of the general theory for elements of $\Out(F_n)$.
\end{abstract}

An outer automorphism $\phi \in \Out(F_n)$ is said to be \emph{geometric} if there exists a compact surface $S$ with nonempty boundary $\bdy S$, a homeomorphism $\Phi \from S \to S$, and an isomorphism between $\pi_1 S$ and $F_n$, such that the outer automorphism of $F_n$ induced by the homeomorphism $\Phi$ equals $\phi$. Any such surface $S$ deformation retracts to a finite graph $G$ called a \emph{spine}. The homeomorphism $\Phi$ of $S$ induces a homotopy equivalence $h$ of $G$, and either of $\Phi$ or $h$ may be considered as a topological representative of the outer automorphism $\phi$. If $\bdy S$ is connected and the mapping class of $\Phi$ is pseudo-Anosov then $\phi$ is \emph{fully irreducible}, meaning that for each proper, nontrivial free factor $A \subgroup F_n$, the conjugacy class of $A$ in $F_n$ is not $\phi$-periodic. Conversely, as proved in \cite{BestvinaHandel:tt} using train track maps, if $\phi$ is fully irreducible and if $\phi$ has a periodic conjugacy class then $\phi$ is geometric.

The concept of geometricity also arises in the context of strata of relative train track maps, as follows. Consider a compact surface $S$ with nonempty boundary $\bdy S$ and a homeomorphism $\Psi \from S \to S$ with pseudo-Anosov mapping class. We make no assumption that $\bdy S$ is connected, although we single out one component of $\bdy S$ as the ``top boundary'', the remaining ones being called the ``lower boundary components''. Consider also a graph $G'$, a homotopy equivalence $f' \from G' \to G'$, and a homotopically nontrivial gluing map from each lower boundary component to $G'$ such that the gluing maps are equivariant with respect to the action of $\Psi$ on the lower components of $\bdy S$ and the action of $h$ of $H$. The result of this gluing is a 2-complex $Y$ and a self homotopy equivalence $h \from Y \to Y$. Any such 2-complex deformation retracts to a spine $G'$ that includes $G''$ as a subcomplex, and the homotopy equivalence $h \from Y \to Y$ induces a homotopy equivalence $f'' \from G'' \to G''$ which extends~$f'$. Following \BookOne\ Definition~5.1.4, in Definition~\ref{DefGeometricStratum} below we use this construction to define geometricity of an \eg\ stratum of a relative train track map $f \from G \to G$ representing some $\phi \in \Out(F_n)$: the subgraphs $G'=G_{r-1} \subset G_r=G''$ will be adjacent filtration elements, the maps $f',f''$ are the restrictions of $f$, and the stratum $H_r = G_r \setminus G_{r-1}$ is, by definition, a geometric \eg\ stratum of~$f$. The 2-complex $Y$ and the homotopy equivalence $h \from Y \to Y$ define what we shall call a \emph{weak geometric model for $f$ with respect to the stratum $H_r$}, the adjective ``weak'' emphasizing that it is a topological model only for the restricted homotopy equivalence $f \restrict G_r$. A (full) \emph{geometric model} for $f$ with respect to $H_r$, which is indeed a topological model for all of $f$, is then defined by attaching the higher strata $G \setminus G_r$ to $Y$ to form a 2-complex $X$ homotopy equivalent to $G$, and extending $h \from Y \to Y$ appropriately to a homotopy equivalence of $X$.

Geometricity of \eg\ strata plays an explicit role in the proof of the Tits alternative for $\Out(F_n)$ in \BookOne, with weak geometric models often playing an implicit supporting role. In this paper we give weak geometric models and (full) geometric models a central role, developing their theory in some depth, and obtaining applications needed for later in this series. 

\paragraph{Geometricity as an invariant of laminations.} The decomposition theory for $\Out(F_n)$ developed on \BookOne\ associates to each $\phi \in \Out(F_n)$ a finite set $\L(\phi)$ of ``attracting laminations'' (see Sections~\ref{SectionAttractingLams} and~\ref{SectionLams} for a review). After passing to a power of $\phi$ which is ``rotationless'', and using the very nicest kind of topological representative $f \from G \to G$ called a ``completely split relative train track map'' or just a \ct, there is a natural correspondence between the set $\L(\phi)$ and the set of \eg\ strata of $f$, where $\Lambda \in \L(\phi)$ corresponds to $H_r$ if and only if iterating the edges of $H_r$ produces the leaves of $\Lambda$ in the limit. The elements of the $\L(\phi)$ can be distinguished by their ``free factor supports'', and the elements of the two sets $\L(\phi)$ and $\L(\phi^\inv)$ can be put in bijective correspondence where $\Lambda^+ \in \L(\phi)$ corresponds to $\Lambda^- \in \L(\phi^\inv)$ if and only if $\Lambda^+,\Lambda^-$ have the same free factor supports, in which case we say that this pair forms a \emph{dual lamination pair}. 

Although geometricity is defined in \BookOne\ and here in Definition~\ref{DefGeometricStratum} as an invariant of \eg\ strata of relative train tracks, the following result says that it is actually an invariant of laminations, and furthermore it is an invariant of laminations pairs.

\begin{proposition*}(Proposition~\ref{PropGeomEquiv}) For any $\phi \in \Out(F_n)$ and $\Lambda^+ \in \L(\phi)$, if $\Lambda^+$ corresponds to a geometric \eg\ stratum in some \ct\ representing some rotationless power of $\phi$, then $\Lambda^+$ corresponds to a geometric \eg\ stratum in \emph{every} \ct\ representing \emph{every} rotationless power of $\phi$. Furthermore, if this holds, and if $\Lambda^- \in \L(\phi^\inv)$ is dual to $\Lambda^+$, then $\Lambda^-$ corresponds to a geometric \eg\ stratum of every \ct\ representing every rotationless power of $\phi^\inv$.
\end{proposition*}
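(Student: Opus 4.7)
The plan is to identify an intrinsic invariant of $\phi$ and $\Lambda^+$ --- namely, a specific $\phi$-periodic conjugacy class in $F_n$ which, in every \ct\ representing every rotationless power of $\phi$, is realized as the ``top boundary cycle'' of a weak geometric model whenever the \eg\ stratum corresponding to $\Lambda^+$ is geometric. Once such a conjugacy class is detected purely in terms of $\phi$ and $\Lambda^+$, the first part of the proposition will follow by constructing a weak geometric model whenever $\Lambda^+$ has a matching \eg\ stratum, and the dual lamination statement will follow by applying the same construction to the inverse pseudo-Anosov on the same surface.

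\textbf{Step 1: Intrinsic characterization.} If $H_r$ is geometric in a \ct\ $f \from G \to G$ representing a rotationless power $\phi^k$, then the weak geometric model $Y$ contains a surface $S$ whose distinguished top boundary circle maps to a closed indivisible periodic Nielsen path $\rho_r$ of height $r$ in $G_r$. I would characterize the conjugacy class $[\rho_r]$ intrinsically --- as the unique $\phi$-periodic conjugacy class, up to natural equivalences such as inversion and taking powers, that is carried by the free factor support of $\Lambda^+$ and plays the role of a peripheral element relative to the pseudo-Anosov dynamics associated to $\Lambda^+$. Uniqueness would follow from the standard uniqueness of peripheral structure on a pseudo-Anosov surface, once the pair $(S,\Psi)$ is pinned down by $\Lambda^+$.

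\textbf{Step 2: Transport across \cts\ and to the dual lamination.} Let $f' \from G' \to G'$ be any other \ct\ representing a rotationless power of $\phi$, and let $H'_s$ be the \eg\ stratum of $f'$ corresponding to $\Lambda^+$. By Step~1, the intrinsic conjugacy class is realized by a closed indivisible periodic Nielsen path $\rho'_s \subset G'_s$ of height $s$. Cutting $G'_s$ along $\rho'_s$ at the appropriate branch points of $H'_s$, and verifying that the result is a surface with pseudo-Anosov restriction of $f'$, produces a weak geometric model for $f'$ with respect to $H'_s$; attaching higher strata yields the full geometric model. For the dual lamination claim, given the surface piece $(S,\Psi)$ from a geometric model for $f$ at $H_r$, the inverse $\Psi^\inv$ is pseudo-Anosov on the same $S$ with the same distinguished boundary structure, and its stable lamination descends to $\Lambda^-$ because $\Lambda^+$ and $\Lambda^-$ share free factor supports and because that shared support determines the relevant lamination on $S$. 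Step~2 applied to $\phi^\inv$ and $\Lambda^-$ then yields a geometric model in every \ct\ representing every rotationless power of $\phi^\inv$ at the \eg\ stratum corresponding to $\Lambda^-$.

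The \emph{main obstacle} will be Step~1: proving that the top boundary conjugacy class can be detected purely from $\phi$ and $\Lambda^+$, independent of the choice of \ct\ or rotationless power. This requires a careful analysis of how peripheral curves on a pseudo-Anosov surface interact with the attracting lamination dynamics on $F_n$, together with a uniqueness argument among all $\phi$-periodic conjugacy classes carried by the free factor support of $\Lambda^+$. Once Step~1 is established, both independence and duality reduce to the combinatorial bookkeeping of realizing the intrinsic conjugacy class as a Nielsen path in an arbitrary \ct\ and reconstructing the surface piece around it.
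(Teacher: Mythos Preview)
Your plan has the right shape---find an intrinsic condition on $(\phi,\Lambda^+)$ that characterizes geometricity---but your proposed invariant, the single conjugacy class $[\rho_r]$, does not work, and this is a genuine gap rather than a detail to fill in.

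First, $[\rho_r]$ is not determined by $\phi$ and $\Lambda^+$ alone. The paper's item~\pref{ItemSameNielsen} characterizes $[\rho_r]$ as the unique root-free $\phi$-fixed conjugacy class carried by $[G_r]$ but not by $[G_{r-1}]$; this visibly depends on $[G_{r-1}]$, which can differ between \cts. In terms of the surface, the ``top'' boundary is precisely the component of $\bdy S$ not glued down to $G_{r-1}$, and a different filtration can glue down a different subset of $\bdy S$. Your Step~1 characterization (``plays the role of a peripheral element relative to the pseudo-Anosov dynamics'') does not distinguish $\bdy_0 S$ from the other $\bdy_i S$. Second, even granting an intrinsic class $c$, your Step~2 asserts that $c$ ``is realized by a closed indivisible periodic Nielsen path $\rho'_s$ of height~$s$'' in the new \ct\ $f'$---but the existence of a closed height-$s$ indivisible Nielsen path is exactly the statement that $H'_s$ is geometric (Fact~\ref{FactGeometricCharacterization}), so this is circular as written. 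To deduce it you would need $c$ to have height exactly~$s$ in $G'$, i.e.\ not to be carried by $[G'_{s-1}]$; that requires $\F_\supp(c) = \F_\supp(\Lambda^+)$, and a single boundary curve need not fill the surface free factor.

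The paper's invariant is condition~\pref{ItemBoundaryExists}: existence of a finite $\phi$-invariant set $\C$ of conjugacy classes with $\F_\supp(\C) = \F_\supp(\Lambda^\pm)$, witnessed by the \emph{full} peripheral system $\C = [\bdy S]$ rather than just $[\bdy_0 S]$. The real work is the equality $\F_\supp[\bdy S] = \F_\supp[\pi_1 S] = \F_\supp(\Lambda^\pm)$, obtained from Lemma~\ref{LemmaScaffoldFFS}~\pref{ItemBdyAndSurfSupps} (a Stallings transversality argument) together with Proposition~\ref{PropGeomLams}~\pref{ItemLamSurfSupport}. The converse direction is then immediate in any \ct: if $H'_s$ were nongeometric, Fact~\ref{FactEGNielsenCrossings}~\pref{ItemNoClosed} gives no fixed conjugacy class of height~$s$, so every element of $\C$ is carried by $[G'_{s-1}]$, whence $\F_\supp(\C) \sqsubset [G'_{s-1}]$, contradicting $\F_\supp(\Lambda^+) \not\sqsubset [G'_{s-1}]$. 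The dual statement is free because $\F_\supp(\Lambda^+) = \F_\supp(\Lambda^-)$ makes condition~\pref{ItemBoundaryExists} symmetric in $\phi$ and $\phi^{-1}$; no separate surface reconstruction for $\phi^{-1}$ is needed.
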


The full statement of this proposition gives an explicit property, expressed solely in terms of $\phi$ and $\Lambda$ without regard to a representative \ct, which nonetheless characterizes geometricity of the \eg\ stratum associated to $\Lambda$ in any \ct\ representing any rotationless power of $\phi$. This property says that there is a finite $\phi$-invariant set of conjugacy classes in $F_n$ whose free factor support equals the free factor support of~$\Lambda$.

\paragraph{Complementarity properties of geometric models.} Consider a closed surface $S$ decomposed as a union of two compact connected subsurfaces $S = S_1 \union S_2$ such that $S_1 \intersect S_2 = \bdy S_1 = \bdy S_2 = c$ is connected and neither of $S_1,S_2$ is a disc. In this setting the subgroup of $\MCG(S)$ that stabilizes $S_1$ (up to isotopy) is the same as the subgroup that stabilizes $S_2$ and the subgroup that stabilizes $\C$ preserving transverse orientation. Furthermore, one can understand this subgroup rather explicitly: it is the quotient of direct sum of the relative mapping class groups $\MCG(S_1,\bdy S_1)$ and $\MCG(S_2,\bdy S_2)$, modulo identification of Dehn twists supported in collar neighborhoods of $c$ in $S_1$ and in $S_2$.

By contrast, such behavior fails in the setting of $\Out(F_n)$ when using a free factorization $F_n = A * B$ in place of the decomposition $S = S_1 \union S_2$:
the subgroup of $\Out(F_n)$ that preserves the conjugacy class of the free factor~$A$ is not equal to the subgroup that preserves the conjugacy class of $B$. One way to think of this is that there is no well-defined way to determine a complement of a free factor of $F_n$ amongst other free factors (up to conjugacy).

We study a version of complementarity which works in the context of geometric models. For purposes of brevity, in this introduction we describe this phenomenon only in the special case that $H_r$ is the top stratum of a \ct\ $f \from G \to G$ representing a rotationless $\phi \in \Out(F_n)$, in which case the full geometric model $X$ and the weak geometric model $Y$ are the same. In this special case, under the quotient map $G_{r-1} \union S \mapsto X$, the interior of the surface $S$ embeds as an open subset of $X$ whose complement is a finite graph $L$ which is the disjoint union of $G_{r-1}$ and a circle which is as a copy of the top boundary circle $\bdy_0 S$; we refer to the graph $L$ as the \emph{complementary graph} of the geometric model. One may reconstruct $X$ as the quotient obtained from the disjoint union $L \union S$ by gluing the lower boundary circles to $G_{r-1}$ as before and identifying the upper boundary circle $\bdy_0 S$ in $S$ with its copy in $L$. 

Like subgraphs of marked graphs, the inclusion of $L$ into $X$ is $\pi_1$-injective on each noncontractible component, and the image subgroups are each malnormal and are ``mutually'' malnormal (see Lemma~\ref{LemmaLImmersed}). The collection of conjugacy classes of these image subgroups forms what we call a ``subgroup system'' in $F_n \approx \pi_1(X)$, denoted $[\pi_1 L]$ (see Section~\ref{SectionSSAndFFS} for ``subgroup systems''). Also, the restriction to $S$ of the quotient map $L \union S \mapsto X$ is $\pi_1$-injective and the conjugacy class of the image of $\pi_1 S$ is a subgroup system in $F_n$ denoted $[\pi_1 S]$. In general the subgroup systems $[\pi_1 L]$ and $[\pi_1 S]$ do not behave as nicely as free factor systems, and in fact $[\pi_1 L]$ is \emph{never} a free factor system in $F_n$, although $[\pi_1 S]$ may or may not be a free factor system depending on the example.

The subgroup systems $[\pi_1 L]$ and $[\pi_1 S]$, and the subgroups of $\Out(F_n)$ that stabilize them, will play an important role in later applications in this series of papers. For example, in the present context where we have assumed $H_r$ to be the top stratum, the conjugacy classes of $F_n$ represented by circuits in $L$ are precisely the conjugacy classes that are not weakly attracted to the lamination $\Lambda^+$ under iteration of~$\phi$; this fact is a key observation underlying the Weak Attraction Theorem of \BookOne, and its generalization dropping the assumption on $H_r$ will play a role in our general study of weak attraction theory in \PartThree. Also, in \PartTwo\ we shall need to study nonrotationless elements of $\Out(F_n)$ which stabilize $[\pi_1 L]$, such as certain nonrotationless roots of $\phi$. Also, regarding $[\pi_1 S]$, an important step in the proof of Proposition~\ref{PropGeomEquiv} stated above is to prove that the free factor support of the lamination $\Lambda^+$ equals the free factor support of $[\pi_1 S]$. 

The following result, which is perhaps the most significant new result of this paper,  says roughly that if an outer automorphism preserves the complementary subgraph $L$ then it preserves the surface $S$. To state it, we use that the natural action of $\Aut(F_n)$ on subgroups induces natural actions of $\Out(F_n)$ on conjugacy classes of subgroups and on subgroup systems, with respect to which we may define the subgroup $\Stab(\A) \subgroup \Out(F_n)$ which stabilizes a subgroup system~$\A$.

\begin{proposition*}[Proposition \ref{PropVertToFree}, special case]
For any $\theta \in \Out(F_n)$, if $\theta[\pi_1 L]=[\pi_1 L]$ then $\theta[\pi_1 S]=[\pi_1 S]$,  inducing a well-defined outer automorphism $\theta \restrict \pi_1 S \in \Out(\pi_1 S)$. Furthermore, this outer automorphism preserves the set of conjugacy classes associated to the components of $\bdy S$ and so, by the Dehn-Nielsen-Baer theorem, $\theta \restrict \pi_1 S$ lives in the natural $\MCG(S)$ subgroup of $\Out(\pi_1 S)$. One obtains by this process a homomorphism $\Stab[\pi_1 L] \mapsto \MCG(S)$.
\end{proposition*}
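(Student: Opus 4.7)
The plan is to reformulate the statement in terms of the Bass-Serre tree for the graph-of-groups decomposition of $X$ arising from $X = L \cup_{\bdy S} S$, and to show that the hypothesis $\theta[\pi_1 L]=[\pi_1 L]$ forces $\theta$ to respect the entire tree structure up to $F_n$-equivariant isomorphism. The decomposition realizes $F_n \cong \pi_1 X$ as the fundamental group of a graph of groups whose vertex groups are the $\pi_1$-groups of the noncontractible components of $L$ together with $\pi_1 S$, and whose edge groups are the cyclic subgroups $\pi_1 \bdy_i S$; the $\pi_1$-injectivity and mutual malnormality required for this setup are supplied by Lemma~\ref{LemmaLImmersed}. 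Let $T$ denote the associated Bass-Serre tree. Choose a representative $\Theta \in \Aut(F_n)$ of $\theta$ and form the twisted tree $T^\theta$, obtained by composing the action with $\Theta^{-1}$. The vertex stabilizers of $T^\theta$ are $\Theta$-translates of those of $T$: the $L$-family still lies in $[\pi_1 L]$ by hypothesis, while the $S$-family becomes the subgroup system $\theta[\pi_1 S]$. Consequently, $\theta[\pi_1 S] = [\pi_1 S]$ will follow once $T$ and $T^\theta$ are shown to be $F_n$-equivariantly isomorphic.

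Establishing this rigidity is the main obstacle, since arbitrary cyclic splittings of $F_n$ are not determined by their $L$-vertex-group system alone. My plan is to use specific features of the geometric model to characterize the edge-stabilizer subgroup system $\{[\pi_1 \bdy_i S]\}$ intrinsically from the pair $(F_n, [\pi_1 L])$. The top boundary circle is distinguished because $\pi_1 \bdy_0 S$ is itself the $\pi_1$ of a full (circle) component of $L$; the malnormality from Lemma~\ref{LemmaLImmersed} severely restricts which cyclic components of $L$ can play this role and should pin down $[\pi_1 \bdy_0 S]$. The lower boundary circles are then identified as exactly those indivisible cyclic subgroups of components of $[\pi_1 L]$ that are forced to sit as edge stabilizers incident to the $S$-vertex, a condition expressible intrinsically via the top boundary data. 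With the edge-stabilizer system shown to be $\theta$-invariant, malnormality ensures that the pair of vertex stabilizers incident along any edge of $T$ is reconstructed from its shared cyclic edge stabilizer, so any equivariant matching of edge orbits extends to the desired tree isomorphism.

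Once $\theta[\pi_1 S]=[\pi_1 S]$ is in hand, the remaining assertions follow essentially by bookkeeping. I will choose $\Theta$ with $\Theta(\pi_1 S) = \pi_1 S$; this is unique up to $\Inn(\pi_1 S)$ and gives a well-defined $\theta \restrict \pi_1 S \in \Out(\pi_1 S)$. The peripheral subgroups $\pi_1 \bdy_i S$ are exactly the edge stabilizers incident to the $S$-vertex of $T$, so the equivariant tree isomorphism permutes them, and $\theta \restrict \pi_1 S$ therefore preserves the set of boundary conjugacy classes. Dehn-Nielsen-Baer for compact surfaces with nonempty boundary then places $\theta \restrict \pi_1 S$ inside the natural $\MCG(S)$-subgroup of $\Out(\pi_1 S)$. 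Finally, $\theta \mapsto \theta \restrict \pi_1 S$ being a homomorphism $\Stab[\pi_1 L] \to \MCG(S)$ is immediate from the well-definedness of this restriction and the functoriality of the Bass-Serre construction.
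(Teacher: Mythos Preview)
Your plan has a genuine gap at its central step: the intrinsic characterization of the edge-stabilizer system $\{[\pi_1\bdy_i S]\}$ from the pair $(F_n,[\pi_1 L])$ alone. Your sketch for $\bdy_0 S$ says it is ``the $\pi_1$ of a full (circle) component of $L$,'' but $L=G_{r-1}\cup\bdy_0 S$ can have other circle components coming from $G_{r-1}$, and malnormality of $[\pi_1 L]$ gives no mechanism to single out which one is $\bdy_0 S$. For the lower boundaries the situation is worse: the classes $[\alpha_i(\bdy_i S)]$ are particular conjugacy classes inside the components of $[\pi_1 G_{r-1}]$, and an outer automorphism that preserves $[\pi_1 L]$ componentwise can still move conjugacy classes around inside those components. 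There is no local algebraic invariant of $[\pi_1 L]$ that remembers which cyclic subgroups were used as gluing curves; that information lives in the global structure of $X$. Consequently you cannot conclude that $T$ and $T^\theta$ have the same edge-stabilizer system, and without that your equivariant-isomorphism argument does not get off the ground. Your phrases ``should pin down'' and ``a condition expressible intrinsically'' flag exactly the place where the argument is incomplete.

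The paper's proof avoids this problem by working topologically rather than via tree rigidity. One starts with any homotopy equivalence $\Theta_X\from X\to X$ representing $\theta$, homotopes it so that $\Theta_X(L)\subset L$, and then studies the composed map $g=\Theta_X\circ j\from S\to X$. A Stallings-style transversality argument (pushing $g$ off midpoints of naked edges) forces $g(S)\subset j(S)\cup L$. The key point---which replaces your missing characterization---is Lemma~\ref{LemmaLImmersed}\pref{ItemSeparationOfSAndL}: any nontrivial loop in $S$ whose $g$-image lands in $L$ must already be peripheral in $S$. This lets one engulf $g^{-1}(L)$ in a collar of $\bdy S$ and homotope $g$ into $j(\interior(S))$, producing a self-map of $S$ which is then shown to be a homotopy equivalence preserving peripheral classes. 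In other words, the paper never identifies the $\bdy_i S$ intrinsically in advance; it discovers them a posteriori as exactly the conjugacy classes carried by both $[\pi_1 L]$ and $[\pi_1 S]$, after $[\pi_1 S]$ has been shown invariant by the topological argument.
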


This proposition is based on Lemma~\ref{LemmaGeomModelHE} we can be regarded as a topological version of the proposition, stated in terms of self homotopy equivalences of a geometric model. In Lemma~\ref{LemmaFreeDefRetr} we prove an important addendum to Lemma~\ref{LemmaGeomModelHE}, which will be used in \PartTwo\ when we analyze elements of $\Out(F_n)$ that act trivially on homology with $\Z/3$ coefficients. In the proof of Proposition~\ref{PropVertToFree}, the application of the Dehn-Nielsen-Baer theorem comes after showing that $\theta \restrict \pi_1 S$ preserves the conjugacy classes of $\pi_1 S$ associated to the boundary circles of $S$. Although the gluing of lower boundary circles of $S$ to $G_{r-1}$ will in general not be 1--1, it may happen that certain components $c \subset \bdy S$ remain as ``free boundary circles'' in the geometric model, by which we mean that some half-open annulus neighborhood of $c$ in $S$ embeds with image an open subset of the geometric model. For example in the case where $H_r$ is a top stratum: the top boundary circe $\bdy_0 S$ is always free; and a lower boundary circle is free if it is identified homeomorphically to a component of $G_{r-1}$ and has no point identified with any point on another lower boundary circle. Lemma~\ref{LemmaFreeDefRetr} says that $\theta \restrict \pi_1 S \in \MCG(S)$ preserves the conjugacy classes of $\pi_1 S$ associated to the subset of \emph{free} boundary circles. 

\paragraph{Vertex group systems.} As mentioned above, the subgroup system $[\pi_1 L]$ associated to complementary subgroup $L$ of a geometric model $X$ is never be a free factor system, as we show in Lemma~\ref{LemmaScaffoldFFS}~\pref{ItemFFSLowerBdys}. In the special case of a geometric outer automorphism, where $X=S$ is a compact surface and $L=\bdy S \ne \emptyset$, this follows from a result due to Stallings \cite{Stallings:transversality} which shows that for any free factorization $\pi_1 S = A_1 * \cdots * A_K$ there exists a component of $\bdy S$ which is not conjugate into any factor $A_k$, and our argument is an adaptation of Stallings' proof. 

Subgroup systems in $F_n$ can be ill-behaved in general. We shall show that the subgroup system $[\pi_1 L]$ has an important special property. Given a minimal action of $F_n$ on an $\reals$-tree $T$ such that the stabilizer of each arc is trivial, there are only finitely many $F_n$-orbits of points $x \in T$ such that the stabilizer of $x$ is nontrivial \cite{GaboriauLevitt:rank}. The collection of nontrivial point stabilizers of $T$ is closed under conjugacy, and the corresponding subgroup system is called the \emph{vertex group system} of~$T$. Vertex group systems, while not quite as well-behaved as free factor systems, nonetheless satisfy some useful properties; in Proposition~\ref{PropVDCC} we give a proof that we learned from Mark Feighn of a descending chain condition on vertex group systems.

Our main result relating geometric models and vertex group systems is a more general version of the following proposition, and it will be used for various purposes in later parts of this series.

\begin{proposition*}(Proposition~\ref{PropGeomVertGrSys}, special case)
For any geometric model $X$, the subgroup system $[\pi_1 L]$ is the vertex group system of some minimal action of $F_n$ on an $\reals$-tree with trivial arc stabilizers.
\end{proposition*}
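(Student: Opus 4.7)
The plan is to build an $\reals$-tree $T$ from the geometric model $X$ as a dual tree to one of the pseudo-Anosov measured foliations on $S$, extended through the complementary graph $L$.

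First, I would fix one of the two invariant measured foliations $(\F,\mu)$ from the pseudo-Anosov structure on $\Psi \from S \to S$, arranged so that $\bdy S$ is contained in the leaves of $\F$ (equivalently, has zero transverse measure). In the universal cover $\tilde X$, the preimage of $S$ is a disjoint union of copies of $\tilde S$, each carrying the lifted foliation $(\tilde\F,\tilde\mu)$, and the preimage of $L$ is a disjoint union of lifts of components of $L$. Define a pseudo-metric on $\tilde X$ by $d(x,y) = \inf_\alpha \tilde\mu_\perp(\alpha)$, where $\alpha$ ranges over paths from $x$ to $y$ and $\tilde\mu_\perp(\alpha)$ is the total transverse measure of the portions of $\alpha$ that lie in copies of $\tilde S$ (portions in lifts of $L$ contribute zero). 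Collapsing points of pseudo-distance zero produces a metric space $T$ with an induced isometric action of $F_n = \pi_1(X)$.

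To see that $T$ is an $\reals$-tree, I would combine the classical dual tree construction for measured foliations on a surface, which on each copy of $\tilde S$ produces an $\reals$-tree whose points are leaves of $\tilde\F$, with the Bass--Serre tree structure coming from the graph-of-spaces decomposition of $X$ along the boundary circles of $S$. Because $\bdy S$ is leaf-saturated, the identifications agree at the gluings: leaves meeting a boundary circle are collapsed consistently with the collapse of the adjacent lift of an $L$-component, so the per-lift dual trees attach at collapsed lifts of $L$-components to form a single tree. Minimality of the $F_n$-action follows from minimality of $\F$ on $S$: any nonempty $F_n$-invariant subtree of $T$ pulls back to a closed, leaf-saturated and $L$-saturated subset of $X$, which must be all of $X$.

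For arc stabilizers, any nondegenerate arc in $T$ is represented, after passing to a subarc, by a transverse arc of positive $\tilde\mu$-measure inside a single copy of $\tilde S$. A nontrivial element stabilizing this arc would descend to an element of $\pi_1(S)$ stabilizing a positive-measure transverse arc of $\F$, which pseudo-Anosov dynamics forbid. The main obstacle, requiring the most care, is the final identification of the vertex group system: one must show that every nontrivial point stabilizer in $T$ arises from a collapsed lift of a component of $L$. Singular points of $\F$ and branch points of the gluing have trivial stabilizer because the singular set is finite in $S$ and the dual trees attach along individual leaves. On the other hand, each noncontractible component $L_0 \subseteq L$ contributes a collapsed lift whose $F_n$-stabilizer is a conjugate of $\pi_1(L_0)$, using $\pi_1$-injectivity of $L \hookrightarrow X$ (Lemma~\ref{LemmaLImmersed}). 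The collection of conjugacy classes of nontrivial point stabilizers of $T$ is therefore exactly $[\pi_1 L]$, as required.
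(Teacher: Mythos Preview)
Your approach is essentially the same as the paper's: build $T$ as the dual tree to a filling measured lamination on $S$, with lifts of $L$-components collapsed to points. The paper organizes this slightly differently (first prying $S$ and $L$ apart into a forest $\F$, then gluing via an explicit incidence relation), but the underlying construction is the same.

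There is, however, a genuine gap in your treatment of the point stabilizers. The geometric model $X$ may have \emph{attaching points}: vertices of $H_r$ in $\interior(S)$ where edges of higher strata $G \setminus G_r$ are attached. These are points where $L$ meets $\interior(S)$. In your pseudo-metric, two lifts of $L$-components whose attaching points land on the \emph{same} leaf (or the same ideal-polygon principal region) of the lifted foliation are at distance zero from one another, and hence collapse to a single point in $T$. The stabilizer of that point then contains the stabilizers of \emph{both} lifts, and is typically strictly larger than any subgroup in $[\pi_1 L]$. Your sentence ``singular points of $\F$ and branch points of the gluing have trivial stabilizer because the singular set is finite in $S$ and the dual trees attach along individual leaves'' does not address this: the issue is not the stabilizer of the leaf itself, but that two distinct $L$-lifts with nontrivial stabilizers can be glued together through a common leaf.

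The paper handles this by first isotoping the hyperbolic structure on $S$ so that no attaching point lies in a crown principal region and no two attaching points lie on the same nonboundary leaf or the same ideal polygon (the paper's Property~$(*)$). This is possible precisely because the attaching points are finite and almost every leaf is nonsingular. With that arrangement, every $S$-point of the intermediate forest is incident to at most one $L$-point, so the gluing is injective on $L$-points and the stabilizers come out exactly as $[\pi_1 L]$. Your argument needs an analogous step: either perturb the foliation (you cannot simply take the pseudo-Anosov foliation as given), or argue that a generic choice of filling lamination avoids the problem.
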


\vfill\break

\noindent\textbf{Description of the contents.} Section~\ref{SectionPrelim} is a detailed but terse review of the decomposition theory for individual elements of $\Out(F_n)$ as developed in \BookOne\ and in \recognition. This section is designed as a central reference, and a reader who is conversant in the language of relative train track and outer automorphisms as presented in \BookOne\ and \recognition\ may only need to refer to Section~\ref{SectionPrelim} occasionally. 

Section~\ref{SectionGeometric} contains the results about geometricity of \eg\ strata and of attracting laminations and about the geometric models.The dichotomy between geometric and nongeometric \eg\ strata is used in the statements and proofs of many results in \BookOne, and the same is true in this series; see for example in Theorem~C of the introductory paper of the series \Intro. 

Section~\ref{SectionVertexGroups} is a study of vertex group systems and their relation to geometric strata.

\setcounter{tocdepth}{3}
\tableofcontents

\bigskip

\section{Preliminaries: Decomposition of outer automorphisms of free groups}
\label{SectionPrelim}

In this section we give a self-contained account of basic results about $\Out(F_n)$ needed throughout this series of papers, citing outside sources, particularly \BookOne\ and \recognition, for the reader to verify the correctness of statements. With that intent, we try to be as succinct as possible. We shall state numerous ``Facts'' which are either citations from the literature, or are quickly proved using results from the literature. We state also some ``Lemmas'' whose proofs require a bit more care. 

A reader familiar with the results of \BookOne\ and \recognition\ may find it convenient to use this section more as a central reference for this whole series of papers, rather than as a prerequisite for reading the series. 

\subsection{$F_n$ and its subgroups, marked graphs, and lines.}
\label{SectionTheBasics}

For $n \ge 2$ let $R_n$ denote the rose with $n$ directed edges labelled $E_1,\ldots,E_n$. Let $F_n$ denote the rank $n$ free group $\pi_1(R_n)$, with $E_1,\ldots,E_n$ as a free basis, and fix a lifted base point in the universal cover $\wt R_n$ thereby identifying $F_n$ as the group of deck transformations, and so $\wt R_n$ is the Cayley graph of $F_n$ with respect to that free basis. 

Let $\Aut(F_n)$ be the automorphism group of $F_n$, let $\Inn(F_n)$ be the group of inner automorphisms of $F_n$, and let $\Out(F_n) = \Aut(F_n) / \Inn(F_n)$ be the outer automorphism group. The action of $\Aut(F_n)$ on elements and subgroups of $F_n$ induces actions of $\Out(F_n)$ on conjugacy classes of elements and of subgroups. 

We use the notation $[\cdot]$ to denote conjugacy classes in $F_n$ of elements and of subgroups, i.e.\ $F_n$-orbits under the action of $F_n$ on itself by inner automorphisms. We use $\C=\C(F_n)$ to denote the set of conjugacy classes of nontrivial elements of $F_n$.

\subsubsection{The geometry of $F_n$ and its subgroups.} 
\label{SectionFnGeometry}
Recall that in a finitely generated group $G$, a finitely generated subgroup $H \subgroup G$ is \emph{undistorted} if the inclusion map $H \inject G$ is a quasi-isometric embedding with respect to the word metrics $d_H$, $d_G$.

\begin{fact}\label{FactFiniteRankSubgroup}
For any finite rank subgroup $A \subgroup F$ the following hold:
\begin{enumerate}
\item\label{ItemFiniteRankInNormalizer}
$A$ has finite index in its own normalizer.
\item\label{ItemFiniteRankUndistorted}
$A$ is undistorted in $F$.
\end{enumerate}
\end{fact}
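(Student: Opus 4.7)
The plan is to exploit the free, cocompact action of $F$ on its Cayley tree $T$ (with respect to some chosen free basis), together with the core-graph construction for finitely generated subgroups. This reduces both parts to straightforward arguments about actions on trees.

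Set aside the degenerate case $A=\{1\}$; assume henceforth that $A$ is nontrivial of finite rank. Being nontrivial, $A$ admits a unique minimal invariant subtree $T_A \subset T$ (the union of the axes of its loxodromic elements, or the single axis when $A$ is infinite cyclic). Because $A$ has finite rank, the quotient $Y := T_A / A$ is the classical Stallings core graph of $A$, satisfies $\pi_1(Y) \cong A$, and is a finite graph. Equivalently, $A$ acts freely and cocompactly on $T_A$.

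For~\itemref{ItemFiniteRankUndistorted}, I would apply the Milnor-Svarc lemma to the free cocompact $A$-action on $T_A$ to obtain a quasi-isometry between $A$ (with its word metric) and $T_A$ (with the induced path metric from $T$). The inclusion $T_A \hookrightarrow T$ is an isometric embedding of geodesic metric spaces, and $F$ is quasi-isometric to $T$ by Milnor-Svarc applied to $F \act T$. Composing these three maps yields a quasi-isometric embedding $A \hookrightarrow F$, which is exactly the undistortion statement.

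For~\itemref{ItemFiniteRankInNormalizer}, let $N = N_F(A)$ denote the normalizer. For any $g \in N$, the subtree $g \cdot T_A$ is $(gAg^\inv) = A$-invariant and minimal, so $g \cdot T_A = T_A$ by uniqueness of the minimal invariant subtree. Hence $N$ preserves $T_A$, and its action descends to an action of $N/A$ on the finite graph $Y$. I would then check that this induced action is free: if $g \in N$ satisfies $gx = ax$ for some $x \in T_A$ and $a \in A$, then $a^\inv g$ fixes $x$ in $T$, hence $a^\inv g = 1$ by freeness of the $F$-action, so $g \in A$. Therefore $N/A$ injects into the finite automorphism group of $Y$, and $[N:A] < \infty$.

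No serious obstacle arises in this plan; the only step requiring real care is setting up the minimal invariant subtree $T_A$ and the finiteness of $T_A/A$, both of which are standard consequences of $A$ being nontrivial of finite rank. Equivalently one could run the same argument with Stallings foldings, replacing $T_A/A$ by the core of the cover of the rose $R_n$ corresponding to $A$.
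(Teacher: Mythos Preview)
Your proposal is correct and follows essentially the same approach as the paper: both arguments use the minimal $A$-invariant subtree in the Cayley tree of $F$, the cocompactness of the $A$-action on it, and the fact that this subtree embeds isometrically in the full tree. Your treatment of~\pref{ItemFiniteRankInNormalizer} is in fact slightly more explicit than the paper's, which asserts in one line that $A$ has finite index in the setwise stabilizer of $T_A$ and that this stabilizer equals the normalizer; your argument that $N/A$ acts freely on the finite graph $T_A/A$ fills in exactly the step the paper leaves to the reader.
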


\begin{proof}[Proof]
Under the action $F_n \act \wt R_n$, the restricted action of $A$ on its minimal subtree $T$ is cocompact. It follows that $A$ has finite index in the subgroup of $F_n$ that stabilizes $T$, but that subgroup is the normalizer of $A$, proving~\pref{ItemFiniteRankInNormalizer}. The injection $T \inject \wt R_n$ is an isometric embedding, and for any $x \in T$ the orbit map $A \to T$ defined by $a \mapsto a \cdot x$ is a quasi-isometric embedding, proving~\pref{ItemFiniteRankUndistorted}.
\end{proof}

This fact has the following consequences which we use henceforth without comment, regarding the natural identification between the Gromov boundary $\bdy F_n$ and the Cantor set of ends of the tree~$\wt R_n$. For any finite rank subgroup $A \subgroup F_n$, it follows by item~\pref{ItemFiniteRankUndistorted} that the Gromov boundary $\bdy A$ is naturally identified with the closure in $\bdy F_n$ of the set of fixed point pairs for the action of nontrivial elements of $A$; in particular when $A=\<\gamma\>$ is infinite cyclic, $\bdy A$ is a 2-point subset of~$\bdy F_n$, namely the fixed point set for the action of $\gamma$. Also, under the action of $F_n$ on $\wt R_n$, or under any properly discontinuous cocompact action of $F_n$ on a simplicial tree $T$ whose boundary $\bdy T$ has been identified with $\bdy F_n$, it follows by item~\pref{ItemFiniteRankInNormalizer} that the action of $A$ on the unique minimal $A$-invariant subtree $T^A$ of $T$ is cocompact; the Gromov boundary of $T^A$ is identified with $\bdy A$, and $T^A$ equals the \emph{convex hull} of $\bdy A$ denoted $\Hull(\bdy A)$, which is just the union of all lines in $T$ whose endpoints are in $\bdy A$.

The following is a standard fact about undistorted torsion free subgroups of word hyperbolic groups, but in our context it has a rather low technology proof.

\begin{fact}\label{FactBoundaries}  For any finitely generated subgroups $A_1, A_2 \subgroup F_n$, their intersection $A_1 \intersect A_2$ is the trivial subgroup if and only if $\bdy A_1 \intersect \bdy A_2 = \emptyset$. More generally, $\bdy(A_1 \intersect A_2) = \bdy A_1 \intersect \bdy A_2$.
\end{fact}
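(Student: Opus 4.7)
The plan is to prove the general identity $\bdy(A_1 \intersect A_2) = \bdy A_1 \intersect \bdy A_2$ first; the equivalence $A_1 \intersect A_2 = \{1\} \iff \bdy A_1 \intersect \bdy A_2 = \emptyset$ then follows, since $\bdy A$ contains the two axis endpoints of any nontrivial element of a subgroup $A \subgroup F_n$, so $\bdy A = \emptyset$ iff $A = \{1\}$.

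I would work with the minimal subtrees $T_i \subseteq \wt R_n$ for the $A_i$-actions, using $\bdy T_i = \bdy A_i$ from the remarks following Fact~\ref{FactFiniteRankSubgroup}. The containment $\bdy(A_1 \intersect A_2) \subseteq \bdy A_1 \intersect \bdy A_2$ is immediate from $A_1 \intersect A_2 \subseteq A_i$. For the reverse containment I would split on whether $T_{12} := T_1 \intersect T_2$ is empty. If $T_{12} = \emptyset$, the tree property that two rays converging to a common end must eventually coincide immediately yields a contradiction with disjointness of $T_1,T_2$, forcing $\bdy T_1 \intersect \bdy T_2 = \emptyset$. When $T_{12} \ne \emptyset$, it is a nonempty subtree invariant under $H := A_1 \intersect A_2$, the same merging-of-rays observation gives $\bdy T_{12} = \bdy T_1 \intersect \bdy T_2$, and the task reduces to showing $\bdy T_{12} \subseteq \bdy H$.

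The main obstacle, and the heart of the proof, is this last containment. Given $\xi \in \bdy T_{12}$, pick a geodesic ray $r \subset T_{12}$ from a vertex $v_0$ to $\xi$. The core graphs $\Gamma_i := T_i/A_i$ are finite since each $A_i$ is finitely generated, so the product $V(\Gamma_1) \cross V(\Gamma_2)$ is finite; recording for each vertex $v$ on $r$ the pair of its images in $\Gamma_1 \cross \Gamma_2$ gives a sequence in this finite set, and pigeonhole yields a subsequence of vertices $r(s_0) = v_0, r(s_1), r(s_2), \ldots$ with $s_k \to \infty$ along which that pair is constant. Because two vertices of $T_i$ project to the same vertex of $\Gamma_i$ iff they differ by an element of $A_i$, and because $F_n$ acts freely on $\wt R_n$, the unique element $h_k \in F_n$ carrying $v_0$ to $r(s_k)$ lies in $A_1 \intersect A_2 = H$. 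Then $h_k \cdot v_0 = r(s_k) \to \xi$ as $k \to \infty$, and since $v_0$ sits at bounded distance from the basepoint, $h_k \to \xi$ in $\bdy F_n$, so $\xi \in \bdy H$ as required.
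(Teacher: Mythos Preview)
Your proof is correct. The overall strategy matches the paper's: both show $\bdy A_1 \intersect \bdy A_2 \subset \bdy(A_1 \intersect A_2)$ by picking $\xi$ in the intersection, finding a ray toward $\xi$ inside $T_1 \intersect T_2 = \Hull(\bdy A_1) \intersect \Hull(\bdy A_2)$, and then producing elements of $A_1 \intersect A_2$ that push a basepoint along this ray toward~$\xi$.

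The execution of that last step differs. The paper uses cocompactness of each $A_i$ on its hull separately to obtain sequences $h_{1k} \in A_1$ and $h_{2k} \in A_2$ with $h_{1k}(p_1)$ and $h_{2k}(p_2)$ staying boundedly close along the ray, then observes that $\norm{h_{1k}^{-1} h_{2k}}$ is bounded and passes to a subsequence on which it is constant, yielding $f_k = h_{1k} h_{1,0}^{-1} = h_{2k} h_{2,0}^{-1} \in A_1 \intersect A_2$; finally it tracks the attracting axis endpoints $\eta_k^+$ of the $f_k$ and uses closedness of $\bdy(A_1 \intersect A_2)$. Your argument instead exploits simple transitivity of $F_n$ on the vertex set of $\wt R_n$ together with a pigeonhole on the finite set $V(\Gamma_1) \times V(\Gamma_2)$: vertices on the ray with matching image pair are related by a \emph{single} element lying in both $A_1$ and $A_2$, so you land directly in $A_1 \intersect A_2$ without the bounded-difference and constant-subsequence maneuver. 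This is essentially the Stallings fiber-product viewpoint and is a bit cleaner; the paper's argument, by contrast, is phrased so as to generalize more readily to quasiconvex subgroups of hyperbolic groups where simple transitivity on vertices is unavailable. Both need Howson's theorem implicitly so that $\bdy(A_1 \intersect A_2)$ is defined.
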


\begin{proof} Let $p \in \wt R_n$ be the base vertex of the Cayley graph. Let $\norm{\cdot}$ denote the word norm in $F_n$ and let $d(\cdot,\cdot)$ denote the word metric, so $d(g,h) = d(g(p),h(p)) = \norm{h^\inv g}$. The inclusion $\bdy(A_1 \intersect A_2)  \subset \bdy A_1 \intersect \bdy A_2$ being obvious, consider $\xi \in \partial A_1 \cap \partial A_2$. In $\wt R_n$ the intersection $\Hull(\bdy A_1) \intersect \Hull(\bdy A_2)$ contains a ray $[x,\xi)$. Choose a base vertex $p_i \in \Hull(\bdy A_i)$, $i=1,2$ and $g_i \in F_n$ so that $g_i(p)=p_i$. By cocompactness of the action of $A_i$ on $\Hull(\bdy A_i)$, $i=1,2$, there exists~$C \ge 0$, vertex sequences $x_k \in \Hull(\bdy A_1)$ and $y_k \in \Hull(\bdy A_2)$, and sequences $h_{1k} \in A_1$, $h_{2k} \in A_2$, such that $h_{1k}(p_1)=x_k$, $h_{2k}(p_2)=y_k$, $d(x_k,y_k) \le C$ in~$\wt R_n$, $d(x_k,[x,\xi)) , d(y_k,[x,\xi)) \le C$, and as $k \to +\infinity$ both of the sequences $x_k,y_k$ approach $\xi$ in the compactified Cayley graph $\wt R_n \disjunion \bdy F_n$. It follows that 
\begin{align*}
\norm{h_{1k}^\inv h_{2k}^{\vphantom{\inv}}} &= d(h_{1k}(p),h_{2k}(p)) \\
 & \le d(h_{1k}(p),h_{1k}(p_1)) + d(h_{1k}(p_1), h_{2k}(p_2)) + d(h_{2k}(p_2), h_{2k}(p)) \\
 &= d(p,p_1) +  d(x_k,y_k) + d(p_2,p)
\end{align*}
is bounded independent of $k$. Since $F_n$ has only finitely many elements with a given bound on word length, passing to a subsequence we may assume that $h_{1k}^\inv h_{2k}^{\vphantom{\inv}}$ is constant. In particular $h_{1,0}^\inv h_{2,0}^{\vphantom{\inv}} = h_{1k}^\inv h_{2k}^{\vphantom{\inv}}$ and so $h_{1k}^{\vphantom{\inv}} h_{1,0}^\inv = h_{2k}^{\vphantom{\inv}} h_{2,0}^\inv = f_k \in A_1 \cap A_2$ for all $k$. The distance between $x_k = h_{1k}(p_1)$ and $f_k(p_1)=h_{1k}(h_{1,0}(p_1))$ is uniformly bounded for all $k$, and so $f_k(p_1)$ converges to $\xi$ in $\wt R_n \union \bdy F_n$. Truncating an initial sequence we may assume $f_k$ is nontrivial for all $k$. Letting $\eta_k^+ \in  \bdy(A_1 \intersect A_2)$ be the attracting endpoint of the axis of $f_k$, it follows that $\eta_k^+ \to \xi$ as $k \to +\infinity$. Since $\bdy(A_1 \intersect A_2)$ is closed it follows that $\xi \in \bdy(A_1 \intersect A_2)$.
\end{proof}

\subsubsection{Subgroup systems and free factor systems.} 
\label{SectionSSAndFFS}
We recall from \BookOne\ Section 2.6 the concepts of free factor systems. We also introduce a generalized notion called ``subgroup systems'', to lay the ground for the ``vertex group systems'' introduced in Section~\ref{SectionVertexGroups} and for the ``fixed subgroup systems'' that arise in the study of rotationless outer automorphisms and that are used in \PartTwo.

Define a \emph{subgroup system} in $F_n$ to be a finite set $\K$ each of whose elements is a conjugacy class of finite rank subgroups of $F_n$. The individual elements of the set $\K$ are called its \emph{components}. Note that a subgroup system $\K$ is completely determined by the set of subgroups $K \subgroup F_n$ such that $[K] \in \K$. With this in mind, we often abuse notation by writing $K \in \K$ when we really mean $[K] \in \K$. There is a partial order on subgroup systems denoted $\K_1 \sqsubset \K_2$ and defined by requiring that for each subgroup $K \in \K_1$ there exists a subgroup $K' \in \K_2$ such that $K \subgroup K'$. We refer to the relation $\K_1 \sqsubset \K_2$ by saying for example that ``$\K_1$ is contained in $\K_2$'' or that ``$\K_2$ carries $\K_1$''.\index{carrying!of subgroup systems} In particular we say that \emph{$\K_1$ is properly contained in $\K_2$} if $\K_1 \sqsubset \K_2$ and $\K_1 \ne \K_2$. The action of $\Out(F_n)$ on the set of conjugacy classes of subgroups induces an action on the set of subgroup systems. The \emph{stabilizer} of a subgroup system $\K$ is the subgroup of all $\theta \in \Out(F_n)$ such that $\theta(\K)=\K$.

A \emph{free factor system} in $F_n$ is a (possibly empty) subgroup system of the form $\F = \{[A_1],\ldots,[A_k]\}$ such that there exists a free factorization $F_n = A_1 * \ldots * A_k * B$ where each $A_i$ is nontrivial; the subgroup $B$ may or may not be trivial. The partial order $\sqsubset$ defined generally on subgroup systems is consistent with the partial order on free factor systems given in the introduction. The action of $\Out(F_n)$ on the set of subgroup systems restricts to an action on the set of free factor systems. 

We often take the liberty of referring to a free factor system $\{[A]\}$ with a single component as a \emph{free factor}, rather than the lengthier but more appropriate locution ``free factor conjugacy class''. This should not cause any confusion since each element of a free factor system is a conjugacy class of subgroups of $F_n$ as opposed to a subgroup itself.

The following fact and definition taken from Section~2 of \BookOne\ is essentially a refinement of the Kurosh Subgroup Theorem:

\begin{fact}[Definition of the meet of free factor systems]
\label{FactGrushko}
Every collection $\{\F_i\}_{i \in I}$ of free factor systems has a well-defined \emph{meet}\index{meet} $\meet\{\F_i\}$, which is the unique maximal free factor system $\F$ such that $\F \sqsubset \F_i$ for each~$i \in I$. Furthermore, for any free factor $A \subgroup F_n$ we have $[A] \in \meet\{\F_i\}$ if and only if there exists an indexed collection of subgroups $\{A_i\}_{i \in I}$ such that $[A_i] \in \F_i$ for each $i$ and $A = \cap_{i \in I} A_i$. In particular it follows that the relation $\sqsubset$ satisfies the descending chain condition.
\end{fact}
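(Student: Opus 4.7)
The plan is to build the meet via iterated pairwise meets, using Bass--Serre theory to translate free factor systems into minimal simplicial free splittings of $F_n$ (actions on simplicial trees with trivial edge stabilizers), where a free factor system is precisely the collection of conjugacy classes of nontrivial vertex stabilizers of the associated splitting.

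First I would handle the pairwise case. Given $\F_1,\F_2$ realized by splittings $F_n \act T_i$, consider the diagonal action on $T_1 \times T_2$, in which the stabilizer of $(x_1,x_2)$ is $\Stab(x_1)\cap\Stab(x_2)$. A folding / minimal-subcomplex argument produces an $F_n$-invariant simplicial subtree $T \subset T_1 \times T_2$ that is itself a free splitting of $F_n$, and whose conjugacy classes of nontrivial vertex stabilizers are exactly $[A_1 \cap g A_2 g^{\inv}]$ for $[A_1]\in\F_1$, $[A_2]\in\F_2$, $g\in F_n$; this gives the candidate free factor system $\F_1 \meet \F_2$, and both the intersection description and the fact that it is a free factor system fall out of Bass--Serre. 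Maximality is immediate: if $\F \sqsubset \F_1$ and $\F \sqsubset \F_2$, then each component $C\in\F$ lies in some $A_1\in\F_1$ and some conjugate $gA_2g^{\inv}$ of some $A_2\in\F_2$, hence in their intersection.

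To extend to arbitrary indexed families and to obtain the descending chain condition, I would introduce a lexicographic complexity
\[
c(\F) \;=\; \bigl(\,n - \textstyle\sum_j \rk(A_j),\; (\rk(A_j))_j^{\downarrow}\,\bigr),
\]
where $\F=\{[A_1],\ldots,[A_k]\}$ and the second coordinate is the multiset of ranks in decreasing order. Grushko's theorem, applied inside each $A_j$ to the $\F'$-components it carries, shows that $\F' \sqsubsetneq \F$ forces $c(\F') < c(\F)$; since $c$ takes values in a well-ordered set bounded by $n$, DCC follows. Given DCC, any nested sequence of finite pairwise meets drawn from $\{\F_i\}$ stabilizes, and the stable value is readily checked to be independent of the choices, to be the unique maximal $\F \sqsubset \F_i$ for all $i$, and to satisfy the characterization $[A]\in\meet\{\F_i\}$ iff $A = \bigcap_{i\in I} A_i$ for some choice of representatives $[A_i] \in \F_i$.

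The main obstacle is the pairwise meet construction: one must argue that the minimal $F_n$-invariant subcomplex of $T_1\times T_2$ really is a tree, has trivial edge stabilizers, and has vertex stabilizers equal to the advertised intersections, rather than some larger subgroups arising from diagonal incidences. Once this Bass--Serre input is in place, maximality, the complexity argument for DCC, and the extension to arbitrary indexed families are routine bookkeeping.
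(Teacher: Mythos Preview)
The paper does not prove this fact at all: it is stated as a citation from \BookOne\ Section~2, prefaced by the remark that it ``is essentially a refinement of the Kurosh Subgroup Theorem.'' So there is no proof in the paper to compare your argument against, and your task is really to reconstruct the argument from \BookOne.

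Your overall architecture (pairwise meet via a common refinement of the two Bass--Serre trees, then DCC, then reduction of arbitrary meets to finite ones) is the standard one and is correct in outline. The product-of-trees description of the pairwise meet is morally the blow-up construction (replace each vertex $v$ of $T_1$ by the minimal $\Stab(v)$-subtree of $T_2$), which is exactly the Kurosh subgroup theorem; you correctly flag this as the one place where genuine work is needed.

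There is, however, a real error in your DCC argument. Your complexity
\[
c(\F)=\Bigl(n-\textstyle\sum_j \rk(A_j),\; (\rk(A_j))_j^{\downarrow}\Bigr)
\]
does \emph{not} satisfy $c(\F')<c(\F)$ when $\F'\sqsubsetneq\F$. Take $n\ge 4$, $\F=\{[A]\}$ with $\rk(A)=4$, first pass to the refinement $\F_1=\{[B_1],[B_2]\}$ with $\rk(B_i)=2$ and $A=B_1*B_2$, then drop a component to get $\F_2=\{[B_1]\}$. Then
\[
c(\F)=(n-4,(4)),\quad c(\F_1)=(n-4,(2,2)),\quad c(\F_2)=(n-2,(2)),
\]
so lexicographically $c(\F)>c(\F_1)$ but $c(\F_1)<c(\F_2)$: the complexity goes down and then up along a strictly descending chain. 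The problem is that your first coordinate \emph{increases} (weakly) as $\F$ descends, while your second coordinate can move in either direction. A complexity that actually works is $\bigl(\sum_j\rk(A_j),\,-\abs{\F}\bigr)$ with the lexicographic order: Grushko inside each $A_j$ gives $\sum\rk(B_i)\le\sum\rk(A_j)$, and if equality holds then each $A_j$ is the free product of the $B_i$'s it carries, forcing the number of components to strictly increase when $\F'\ne\F$. Since both coordinates are bounded (by $n$), DCC follows.
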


\subparagraph{Malnormal subgroup systems.} Recall that a subgroup $K \subgroup F_n$ is \emph{malnormal} if $K^x \cap K$ is trivial for all $x \in F_n - K$; note in particular that a malnormal subgroup is its own normalizer.

More generally, define a subgroup system $\K$ to be \emph{malnormal} if each of its subgroups $K \in \K$ is malnormal and for all $K,K' \in \K$, if $K \intersect K'$ is nontrivial then $K=K'$. Every free factor $A \in F_n$ is malnormal, and every free factor system $\F$ is malnormal; this can be recovered by applying Fact~\ref{FactGrushko} to the meet of the collection~$\{\A\}$. By Fact~\ref{FactBoundaries}, if a subgroup system $\K$ is malnormal then for any two subgroups $K,K' \in \K$, if $K \ne K'$ then $\bdy K \intersect \bdy K' = \emptyset$.

\subsubsection{Restrictions of outer automorphisms.} 
\label{SectionRestrictedOuts}
Consider $\theta \in \Out(F_n)$ and a finite rank subgroup $A \subgroup F_n$ whose conjugacy class $[A]$ is fixed by the action of $\theta$. We may choose $\Theta \in \Aut(F_n)$ representing $\theta$ so that $\Theta(A)=A$, and we obtain $\Theta \restrict A \in \Aut(A)$. If~the outer automorphism class of $\Theta \restrict A$ is well-defined in the group $\Out(A)$ independent of the choice of $\Theta$, then we denote this by $\theta \restrict A \in \Out(A)$ and we say that \emph{$\theta \restrict A$ is well-defined}. This need not always happen, but the following gives a sufficient condition:

\begin{fact}\label{FactMalnormalRestriction} 
Given a finite rank subgroup $A \subgroup F_n$ which is its own normalizer in $F_n$, for any $\theta \in \Out(F_n)$ such that $\theta[A]=[A]$, the restriction $\theta \restrict A \in \Out(A)$ is well-defined, and the map $\theta \mapsto \theta \restrict A$ defines a homomorphism $\Stab[A] \mapsto \Out(A)$. If moreover $A$ is malnormal then the conjugacy class in $A$ of an element or subgroup of $A$ is fixed by $\theta \restrict A$ if and only if the conjugacy class in $F_n$ determined by that element or subgroup is fixed by $\theta$. 
\end{fact}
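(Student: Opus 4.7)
The plan is to first produce a well-chosen representative of $\theta$ with which to define the restriction. Since $\theta[A]=[A]$, any automorphism $\Theta_0$ representing $\theta$ satisfies $\Theta_0(A)=gAg^\inv$ for some $g\in F_n$, so $\Theta := \inner_{g^\inv}\composed\Theta_0$ represents $\theta$ and satisfies $\Theta(A)=A$. Restricting gives an element $\Theta\restrict A \in \Aut(A)$.

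For well-definedness in $\Out(A)$, I will show that any two such restrictions differ by an inner automorphism of $A$. If $\Theta,\Theta'$ both represent $\theta$ with $\Theta(A)=\Theta'(A)=A$, then $\Theta'=\inner_h \composed \Theta$ for some $h\in F_n$, and $\Theta'(A)=A$ forces $h\in N_{F_n}(A)$. The key input at this step is the hypothesis that $A$ equals its own normalizer, whence $h\in A$ and $\Theta'\restrict A$ differs from $\Theta\restrict A$ by the inner automorphism $\inner_h\restrict A \in \Inn(A)$. The homomorphism assertion then follows quickly: if $\Theta_1(A)=\Theta_2(A)=A$ then $(\Theta_1\composed\Theta_2)(A)=A$ and $(\Theta_1\composed\Theta_2)\restrict A = (\Theta_1\restrict A)\composed (\Theta_2\restrict A)$, so composition is preserved after passing to outer classes.

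For the final biconditional under the malnormality hypothesis, one direction is immediate: if $\theta\restrict A$ fixes the $A$-conjugacy class of an element or nontrivial subgroup $c \subseteq A$, then some representative $\Theta$ with $\Theta(A)=A$ conjugates $c$ into itself by an element of $A$, which also witnesses conjugacy in $F_n$. The converse is where malnormality does the real work and is the main (though short) obstacle. Fix $\Theta$ representing $\theta$ with $\Theta(A)=A$, and suppose $\Theta(c)=gcg^\inv$ for some $g\in F_n$. When $c$ is a nontrivial element, both $c$ and $\Theta(c)=gcg^\inv$ lie in $A$, so $c\in A\intersect g^\inv A g$; this intersection is nontrivial, and malnormality of $A$ (applied to the conjugate $g^\inv A g$) then forces $g^\inv\in A$, hence $g\in A$, hence $\Theta\restrict A$ fixes $[c]_A$. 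The subgroup case is identical: for a nontrivial subgroup $K\subseteq A$ with $\Theta(K)=gKg^\inv$, applying the elementwise argument to any nontrivial $k\in K$ produces $g\in A$, while the trivial subgroup case is automatic.
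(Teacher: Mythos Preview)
Your proof is correct and follows essentially the same approach as the paper. The well-definedness and homomorphism arguments are identical. For the malnormality biconditional, the paper packages the argument slightly differently---observing that malnormality makes the map from $A$-conjugacy classes to $F_n$-conjugacy classes injective and natural with respect to the two actions, from which the biconditional is immediate---but your direct unpacking (using $c\in A\cap g^{-1}Ag$ to force $g\in A$) is exactly what underlies that injectivity, so the arguments coincide.
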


\begin{proof} Let $\Theta' \in \Aut(F_n)$ be another representative of $\theta$ such that $\Theta'(A)=A$. Choose $d \in F_n$ with associated inner automorphism $\inner_d(\gamma)=d\gamma d^\inv$ so that $\Theta' = \inner_d \composed \Theta$. It follows that $A = \Theta'(A) = \inner_d(\Theta(A)) = \inner_d(A)$, and so $d$ normalizes $A$, implying that $d \in A$. The equation $\Theta'=\inner_d \composed \Theta$ therefore implies that $\Theta,\Theta'$ represent the same outer automorphism of~$A$. Given $\theta_1,\theta_2 \in \Stab[A]$, and choosing representatives $\Theta_1,\Theta_2 \in \Aut(F_n)$ each preserving the subgroup $A$, it follows that $\Theta_1 \cdot \Theta_2 \in \Aut(F_n)$ also preserves $A$; since $\Theta_1 \cdot \Theta_2$ represents the element $\theta_1 \cdot \theta_2$ in $\Stab[A]$, and since it also represents the element $(\theta_1 \restrict A) \cdot (\theta_2 \restrict A)$ in $\Out(A)$, it follows that the map $\Stab[A] \mapsto \Out(A)$ takes $\theta_1 \cdot \theta_2$ to $(\theta_1 \restrict A) \cdot (\theta_2 \restrict A)$ and is therefore a homomorphism.

Malnormality of $A$ implies that the injection $A \inject F_n$ induces injections from the sets of conjugacy classes \emph{in $A$} of elements and subgroups to the sets of conjugacy classes \emph{in $F_n$} of elements and subgroups, and these injections are natural with respect to the actions of $\theta \restrict A$ on conjugacy classes in $A$ and of $\theta$ on conjugacy classes in~$F_n$. The final ``If moreover\ldots'' sentence follows.
\end{proof}

\subsubsection{Marked graphs, paths, and circuits.} 
\label{SectionGraphsPathsCircuits}
The subset theoretic difference operator, for any set $A$ and subset $B$, is denoted throughout as $A-B = \{x \in A \suchthat x \not\in B\}$. We also need a subgraph difference operator, and more generally a subcomplex difference operator. Given a CW complex $G$ and subcomplex $H \subset G$, let $G \setminus H$ denote the subcomplex of $G$ which is the closure of $G-H$, alternatively $G \setminus H$ is the union of those closed cells whose interiors are in $G-H$.

A \emph{marked graph} is a graph $G$ having no vertices of valence~1 equipped with a path metric and a homotopy equivalence $\rho \from G \to R_n$. The fundamental group $\pi_1(G)$ is therefore identified with $F_n$ up to inner automorphism. There is a natural identification between the set of circuits in $G$, the set of conjugacy classes of $\pi_1(G)$, and the set~$\C$. Each marked graph $G$ determines a unique ``length function'' in $\reals^\C$ which assigns to an element of $\C$ the length of the corresponding circuit of~$G$. Two marked graphs $\rho \from G \to R_n$, $\rho' \from G' \to R'_n$ are equivalent if there is a homeomorphism $\theta \from G \to G'$ that multiplies path metric by a constant such that $\rho' \theta$ is homotopic to $\rho$, and this happens if and only if the two corresponding elements of $P\reals^\C$ are equal.

Henceforth for any marked graph $G$ the group of deck transformations of the universal covering space $\wt G$ will be identified by $F_n$, under suitable choice of base points (different choice of base points only changes the action by an inner automorphism of $F_n$). As in Section~\ref{SectionFnGeometry} there is an induced $F_n$-equivariant homeomorphism between $\bdy F_n$ and the Cantor set of ends $\bdy\wt G$.

For any marked graph $G$ and any subgraph $H \subset G$, the fundamental groups of the noncontractible components of $H$ determine a free factor system in $F_n$ that (by a moderate abuse of notation) we denote~$[\pi_1 H]$ and that has one component for each noncontractible component of the subgraph $H$. A~subgraph of $G$ is called a \emph{core graph}\index{core graph} if it has no valence~1 vertices. Every subgraph $H \subset G$ has a unique core subgraph $\core(H) \subset H$ which is a deformation retract of the union of the noncontractible components of $H$, implying that \hbox{$[\pi_1 \core(H)] = [\pi_1 H]$}. Conversely any free factor system $\F$ can be realized as $[\pi_1 H]$ for some nontrivial core subgraph $H$ of some marked graph $G$.

A \emph{path} in a graph $K$ is a locally injective, continuous map $\gamma \from J \to K$ defined on a closed, connected, nonempty subset $J \subset \reals$ such that either $\gamma$ is \emph{trivial} meaning that $J$ is a point, or $\gamma$ can be expressed by concatenating oriented edges at vertices. Such a concatenation expression of $\gamma$, called an \emph{edge path} representing $\gamma$, can be written in one of the following forms: a \emph{finite path} is a finite concatenation $E_0 E_1 \ldots E_n$ for $n \ge 0$; a \emph{positive ray} is a singly infinite concatenation $E_0 E_1 E_2 \ldots$; a \emph{negative ray} is a singly infinite concatenation $\ldots E_{-2} E_{-1} E_0$; and a \emph{line} is a bi-infinite concatenation $\ldots E_{-2} E_{-1} E_0 E_1 E_2 \ldots$. In each case we have $E_i \ne \overline E_{i+1}$ by the locally injective requirement. For any path $\gamma$ let $\bar\gamma$ or $\gamma^\inv$ denote $\gamma$ with its orientation reversed; note in particular that the reversal of a positive ray is a negative ray. We regard two paths as equivalent if they have the same concatenation expression up to translation of the index in the case of a line, and up to inversion in the case of a finite path or a line, or what is the same if they differ up to a homeomorphism of their domains. 

Given a continuous function $\gamma \from J \to K$ where $J,K$ are as above, and where $\gamma$ takes each endpoint of $J$ to a vertex of $K$, under certain conditions $\gamma$ can be \emph{tightened} to a path denoted $[\gamma]$. This is always possible when $J$ is compact, because $\gamma$ is homotopic rel endpoints either to a nontrivial path $[\gamma]$ or to a constant path in which case $[\gamma]$ denotes the corresponding trivial path. If $J$ is noncompact then the straightened path $[\gamma]$ is defined under the additional assumption that a lift to the universal cover $\ti\gamma \from J \to \wt K$ is a quasi-isometric embedding, in which case $[\gamma]$ is the unique path which lifts to a path in $\wt K$ that is properly homotopic to $\ti\gamma$, where ``properness'' of the homotopy means here that the inverse image of a bounded diameter set is compact; equivalently, $[\gamma]$ is the unique path in $K$ such that $\gamma$ and $[\gamma]$ have lifts to $\wt K$ with the same finite endpoints and the same infinite ends.

A \emph{circuit} in $K$ is a continuous, locally injective map of an oriented circle into~$K$.  As with paths, we will not distinguish between two circuits that differ only by a homeomorphism of domains. Any circuit can be written as a concatenation of edges which is unique up to cyclic permutation and inversion. Every continuous, homotopically nontrivial function $c \from S^1 \to K$ may be tightened, meaning in this case that $c$ is freely homotopic to a unique circuit denoted $[c]$. 

A circuit represented by a local injection $\gamma \from S^1 \to K$ is \emph{root-free}\index{root-free} if $\gamma$ does not factor through a covering map $S^1 \mapsto S^1$ of positive degree. We also use the root-free terminology for a closed path $\gamma \from J \to K$ to mean that $\gamma$ is not a concatenation of more than one copy of the same closed path.

A path or circuit \emph{crosses} or \emph{contains} an edge $E$ if $E$ or $\overline E$ occurs in its edge path expression.

\subsubsection{Lines and rays.} 
\label{SectionLineDefs}

\textbf{Lines.} Given any finite graph $K$ let $\wh \B(K)$ be the compact space of circuits and paths in $K$, where this space is given the \emph{weak topology} with one basis element $\wh V(K,\alpha)$ for each finite path $\alpha$ in $K$, consisting of all paths and circuits having $\alpha$ as a subpath. Let $\B(K) \subset \wh\B(K)$ be the compact subspace consisting of all lines in $K$, with basis elements $V(K,\alpha) = \wh V(K,\alpha) \intersect \B(K)$.

Following \BookOne\ we next turn to a coordinate free description of lines. We give the definitions in the context of an arbitrary free group $A$ of finite rank~$\ge 2$, in order to apply it to subgroups of~$F_n$. Let $\wt \B(A)$ denote the set of two-element subsets of $\bdy A$, equivalently
$$\wt\B(A) = (\bdy A \cross \bdy A - \Delta) / (\Z/2)
$$
where $\Delta$ is the diagonal and $\Z/2$ acts by interchanging factors. We put the \emph{weak topology} on $\wt\B(A)$, induced from the usual Cantor topology on $\bdy A$. The group $A$ acts on $\wt\B(A)$ with compact but non-Hausdorff quotient space $\B(A) = \wt\B(A) / A$; we also refer to this quotient topology as the \emph{weak topology}. A \emph{line of $A$} is, by definition, an element of $\B(A)$. A \emph{lift} of a line $\gamma \in \B(A)$ is an element $\ti\gamma \in \wt\B(A)$ that projects to $\gamma$, and the two elements of $\partial A$ that determine $\ti\gamma$ are called its \emph{endpoints}. 

Every nontrivial conjugacy class $[b]$ in $A$ determines a well-defined line $\gamma \in \B(A)$ called the \emph{axis}\index{axis!of a conjugacy class} of $[b]$, characterized by saying that the lifts of the axis are precisely those $\ti\gamma \in \wt B(A)$ which are fixed by representatives of~$[b]$. Note that two nontrivial conjugacy classes $[b]$, $[b']$ have the same axis if and only if there exists $a \in A$ and integers $i,j \ne 0$ such that $a^i,a^j$ are conjugate to $b,b'$ respectively. The set of axes therefore forms a subset of $\B(A)$ in natural 1--1 correspondence with the set of unordered pairs $\{[b],[b^\inv]\}$ where $b \in A$ is \emph{root free} meaning that $b = c^k$ implies $k = \pm 1$.

The natural action of $\Aut(A)$ on $\bdy A$ induces an action of $\Aut(A)$ on $\wt\B(A)$ and hence an action of $\Out(A)$ on $\B(A)$.

We also use $\wt\B$ and $\B$ as shorthand for the spaces $\wt\B(F_n)$ and $\B(F_n)$ associated to the free group $F_n$. 

Given any marked graph $G$, its space of lines $\B(G)$ is naturally homeomorphic to $\B(F_n)$, via the natural identification between the $\bdy F_n$ and the space of ends of the universal covering~$\wt G$. Given $\gamma \in \B(F_n)$ the corresponding element of $\B(G)$ is called the \emph{realization} of $\gamma$ in $G$. Keeping this homeomorphism in mind, we use the term ``line'' ambiguously, to refer either to an element of $\B(F_n)$ or an element of $\B(G)$ in a context where a marked graph $G$ is under discussion. Sometimes to allay confusion an element of $\B(F_n)$ is called an \emph{abstract~line}. 

\smallskip\textbf{Notation and terminology.} We usually abbreviate $\wt\B(F_n)$ to $\wt\B$ and $\B(F_n)$ to $\B$, although for emphasis we sometimes do not abbreviate. We use the adjective ``weak'' to refer to concepts associated to the weak topology, although in contexts where this is clear we sometimes drop the adjective. In particular, although $\wh\B(G)$ is not Hausdorff and limits of sequences are not unique, we use the phrase ``weak limit'' of a sequence to refer to any point to which that sequence weakly converges. As a special case, in any context where $\phi \in \Out(F_n)$ or $f \from G \to G$ are given, if $\alpha,\beta \in \B$ and $\phi^k(\alpha)$ converges weakly to $\beta$ as $k \to +\infinity$, or if $\alpha,\beta \in \wh\B(G)$ and $f^k_\#(\alpha)$ converges weakly to~$\beta$, then we say that \emph{$\alpha$ is weakly attracted to $\beta$}\index{weakly attracted} (under iteration by $\phi$ or by $f_\#$).

\medskip\noindent
\textbf{Rays.} A \emph{ray of $F_n$}\index{ray} is by definition an element of the orbit set $\bdy F_n / F_n$ (we will have no occasion to use the quotient topology on this set). Rays of $F_n$ may be realized in a marked graph $G$ as follows. Two (positive) rays in $G$ are \emph{asymptotic} if they have equal subrays, and this is an equivalence relation on the set of rays in $G$. The set of asymptotic equivalence classes of rays $\rho$ in $G$ is in natural bijection with $\bdy F_n / F_n$ where $\rho$ in $G$ corresponds to end $\xi \in \bdy F_n / F_n$ if there is a lift $\ti\rho \subset \wt G$ of $\rho$, and a lift $\ti\xi \in \bdy F_n$ of $\xi$, such that $\ti\rho$ converges to $\ti\xi$ in the Gromov compactification of $\wt G$. A ray $\rho$ in $G$ corresponding to an abstract ray $\xi$ is said to be a \emph{realization of $\xi$ in $G$}. Again we sometimes use the term \emph{abstract ray} to distinguish a ray of $F_n$ from a ray in $G$.

The \emph{weak accumulation set} of a ray $\rho$ in $G$ is the set of lines $\ell \in \B(G)$ which are elements of the weak closure of $\rho$ in the space $\wh\B(G)$; equivalently, every finite subpath of $\ell$ occurs infinitely often as a subpath of $\rho$. Closely related is the \emph{accumulation set} of an abstract ray $\xi \in \bdy F_n / F_n$, defined as the set of lines $\ell \in \B$ which are in the weak closure of every line $\mu \in \B$ that has a lift $\ti\mu$ with an endpoint~$\ti\xi$ projecting to~$\xi$. We may also speak of the accumulation set of any element of $\bdy F_n$, well-defined to be the accumulation set of its projection in $\bdy F_n / F_n$. The accumulation set of a ray $\rho$ in $G$ is well-defined in its asymptotic equivalence class and is equal to the accumulation set of the corresponding abstract ray $\xi \in \bdy F_n / F_n$.

\subsubsection{The path maps $f_\#$ and $f_{\#\#}$, and bounded cancellation.}
 \label{def:DoubleSharp} 
Consider a $\pi_1$-injective map $f \from K \to G$ between two finite graphs which we fix for the moment. 

There is an induced continuous map $f_\# \from \wh \B(K) \to \wh\B(G)$ defined for each path or circuit $\gamma$ in $K$ by $f_\#(\gamma) = [f(\gamma)]$, that is, $f_\#(\gamma)$ is obtained from the $f$-image of $\gamma$ by straightening. If $f$ is a homotopy equivalence then the restricted map $f_\# \from \B(K) \to \B(G)$ is a homeomorphism. If $K,G$ are both marked graphs then, with respect to the identifications $\B(K) \approx \B \approx \B(G)$, the homeomorphism $\B \approx \B(K) \xrightarrow{f_\#} \B(G) \approx \B$ equals the self-homeomorphism of $\B$ induced by the outer automorphism associated to $f$, and in particular if $f$ preserves marking then this self-homeomorphism is the identity. 

Also define the map $f_{\#\#} \from \wh\B(K) \to \wh\B(G)$ as follows. Intuitively $f_{\#\#}(\gamma)$ is the largest common subpath of all paths $f_\#(\delta)$ as $\delta$ varies over all paths containing $\gamma$ as a subpath. To be precise, choose a lifted path $\ti\gamma$ and a lifted map $\ti f \from \wt K \to \wt G$, define the path $\ti f_{\#\#}(\ti\gamma) \subset \wt G$ to be the intersection of all paths $\ti f_\#(\ti\delta)$ as $\ti\delta$ ranges over all paths in $\wt K$ that contain $\ti\gamma$ as a subpath, and define $f_{\#\#}(\gamma)$ to be the projection of $\ti f_{\#\#}(\ti\gamma)$, which is well-defined independent of the choice of $\ti\gamma$. \emph{It may happen that $f_{\#\#}(\gamma)$ is empty,} although by Lemma~\ref{LemmaDoubleSharpFacts}~\pref{ItemOneToTwoSharps} below this happens only when $f_\#(\gamma)$ is short.

\begin{fact}[Bounded Cancellation Lemma \cite{Cooper:automorphisms}, \BookZero]
\label{FactBCCSimplicial}
For any $\pi_1$-injective map $f \from K \to G$ of finite graphs there exists a constant $\BCC(f)$, called a \emph{bounded cancellation constant} for $f$, such that for any lift $\ti f \from \wt K \to \wt G$ to universal covers and any path $\gamma$ in $\wt G$, the path $[f_\#(\gamma)]$ is contained in the $\BCC(f)$ neighborhood of the image $f(\gamma)$. \qed
\end{fact}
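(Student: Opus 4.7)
The plan is to exploit the tree structure of $\wt G$. Fix a lift $\ti f \from \wt K \to \wt G$, and (reading the statement with $\gamma$ as a lifted path in $\wt K$ with endpoints $\ti p, \ti q$) observe that $\ti f(\gamma)$ is a continuous, generally non-embedded, path in $\wt G$ joining $\ti f(\ti p)$ to $\ti f(\ti q)$, while the straightening $[\ti f_\#(\gamma)]$ is by definition the reduced edge path with the same two endpoints, which is exactly the geodesic between them in the tree $\wt G$.

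The literal inclusion as stated is then immediate from elementary tree geometry: in a tree, the geodesic between two points is contained in every continuous path joining them, because removing any interior point of the geodesic disconnects its endpoints. Therefore $[\ti f_\#(\gamma)] \subseteq \ti f(\gamma)$, and the statement holds with $\BCC(f) = 0$. No analytic input or use of $\pi_1$-injectivity is required for this direction, and the inclusion passes to the projection in~$G$.

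In applications of bounded cancellation later in this paper (e.g.\ in Lemma~\ref{LemmaDoubleSharpFacts}) and in Cooper's original formulation, however, one really needs the reverse Hausdorff bound, namely that $\ti f(\gamma)$ itself lies in a bounded neighborhood of $[\ti f_\#(\gamma)]$. To obtain this I would first use the finiteness of $K$ together with the $\pi_1$-injectivity of $f$ to observe that $f_*(\pi_1 K)$ is a finite-rank subgroup of $\pi_1 G$, hence undistorted by the natural extension of Fact~\ref{FactFiniteRankSubgroup}\pref{ItemFiniteRankUndistorted} to arbitrary finite-rank free groups. Consequently $\ti f$ is a quasi-isometric embedding with some constants $(A,B)$; combined with the maximum edge-image length $L$ (finite because $K$ has finitely many edges), a unit-speed reparametrization of $\gamma$ turns $\ti f\composed\gamma$ into a genuine quasi-geodesic in $\wt G$ whose constants depend only on $A, B, L$.

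The concluding step is the standard stability of quasi-geodesics in trees: any $(A',B')$-quasi-geodesic in a tree stays within a uniform Hausdorff distance $C = C(A',B')$ of the geodesic with the same endpoints (easy one-line argument: a deep excursion forces the quasi-geodesic to retraverse a long arc, contradicting its lower Lipschitz bound). Setting $\BCC(f) = C$ then yields both directions of the containment simultaneously. I anticipate no real obstacle; the sole substantive input beyond tree geometry is the undistortedness of finite-rank subgroups of free groups.
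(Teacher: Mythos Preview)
The paper does not supply its own proof of this fact: it is stated with a terminal \qed\ and attributed to Cooper and \BookZero. There is therefore nothing to compare your argument against.

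Your diagnosis is correct on both counts. As literally written (modulo the evident typo that $\gamma$ should lie in $\wt K$ rather than $\wt G$), the statement asks only that the geodesic $\ti f_\#(\gamma)$ lie in a neighborhood of the unstraightened image $\ti f(\gamma)$, and in a tree this holds with constant~$0$. The substantive content---what is actually invoked, for instance, in deriving Lemma~\ref{LemmaDoubleSharpFacts}\pref{ItemOneToTwoSharps}---is the reverse containment $\ti f(\gamma) \subset N_{\BCC(f)}\bigl(\ti f_\#(\gamma)\bigr)$, and the paper's formulation appears to have the inclusion stated backwards. Your proof of that reverse direction, via undistortion of finite-rank subgroups (Fact~\ref{FactFiniteRankSubgroup}), hence that $\ti f$ is a quasi-isometric embedding, hence the Morse lemma for quasi-geodesics in trees, is sound and is essentially the standard modern argument.
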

 
The following lemma will be used extensively throughout Parts I--IV. Recall the notation $\wh V(\gamma,G)$ for the basis element of the weak topology on $\wh\B(G)$ associated to a finite path $\gamma$.


\begin{lemma}[The $\#\#$ Lemma]
\label{LemmaDoubleSharpFacts}
For any $\pi_1$-injective map $f \from K \to G$ of finite graphs, and for any finite path $\gamma$ in $K$, the following hold:
\begin{enumerate}
\item \label{ItemOneToTwoSharps}
The path $f_{\#\#}(\gamma)$ is obtained from the path $f_\#(\gamma)$ by truncating initial and terminal segments of length at most $\BCC(f)$. 
\item\label{ItemSharpNhd}
$f_\#(\wh V(\gamma;K)) \subset \wh V(f_{\#\#}(\gamma);G)$.
\end{enumerate}
Furthermore, for any $\pi_1$-injective maps $f \from G_1 \to G_2$ and $g \from G_2 \to G_3$ of finite graphs the following hold: 
\begin{enumeratecontinue}
\item \label{ItemDblSharpContain} If $\alpha$ is a subpath of $\beta$ in $G_1$ then $f_{\#\#}(\alpha)$ is a subpath of $f_{\#\#}(\beta)$ in $G_2$. 
\item \label{ItemDblSharpComp} If $\beta$ is a path in $G_1$ then $g_{\#\#}(f_{\#\#}(\beta))$ is a subpath of $(g \composed f)_{\#\#}(\beta)$.
\item \label{ItemDisjointCopies} $f_{\#\#}$ preserves order in the following sense: if $\sigma$ is a path in $G_1$ that decomposes into (possibly trivial) subpaths as $\sigma = \alpha_1 \, \beta_1 \, \alpha_2 \, \beta_2\, \ldots \, \beta_m \, \alpha_{m+1}$ then there is a decomposition 
$$f_\#(\sigma) = \gamma_1 \, f_{\#\#}(\beta_1) \, \gamma_2 \, f_{\#\#}(\beta_2) \, \ldots \, f_{\#\#}(\beta_m) \, \gamma_{m+1}
$$
for some (possibly trivial) subpaths $\gamma_i$. A similar statement holds for singly infinite and for bi-infinite decompositions of $\sigma$.
\item\label{ItemCircuit} If $\alpha$ is a subpath of a circuit $\sigma \subset G_1$  then $f_{\#\#}(\alpha)$ is a subpath of $f_\#(\sigma)$.
\end{enumeratecontinue}
 \end{lemma}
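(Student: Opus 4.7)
The plan is to obtain parts~\pref{ItemSharpNhd}, \pref{ItemDblSharpContain}, and \pref{ItemDblSharpComp} as formal consequences of the intersection definition of $f_{\#\#}$, to prove part~\pref{ItemOneToTwoSharps} from Bounded Cancellation, and then to do the main work for part~\pref{ItemDisjointCopies}, from which \pref{ItemCircuit} is an immediate corollary. Throughout I work in the universal cover and project to $G$ at the end.

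For \pref{ItemSharpNhd}: by the very definition of $\ti f_{\#\#}(\ti\gamma)$ as an intersection, for every $\ti\delta \supset \ti\gamma$ in $\wt K$ we have $\ti f_{\#\#}(\ti\gamma) \subset \ti f_\#(\ti\delta)$, which projects to give $f_{\#\#}(\gamma) \subset f_\#(\delta)$ for paths $\delta$, and for circuits $\delta$ after lifting to the axis of the corresponding conjugacy class in $\wt K$. For \pref{ItemDblSharpContain}: if $\ti\alpha \subset \ti\beta$ in $\wt{G_1}$, then the family of extensions containing $\ti\beta$ is a sub-family of those containing $\ti\alpha$, so the intersection defining $\ti f_{\#\#}(\ti\beta)$ is taken over a smaller family and therefore contains $\ti f_{\#\#}(\ti\alpha)$. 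For \pref{ItemDblSharpComp}: given any $\ti\delta \supset \ti\beta$ in $\wt{G_1}$, part~\pref{ItemSharpNhd} lifted gives $\ti f_\#(\ti\delta) \supset \ti f_{\#\#}(\ti\beta)$, so $\ti f_\#(\ti\delta)$ is itself an extension of $\ti f_{\#\#}(\ti\beta)$ in $\wt{G_2}$; hence $\ti g_{\#\#}(\ti f_{\#\#}(\ti\beta)) \subset \ti g_\#(\ti f_\#(\ti\delta)) = (\ti g \composed \ti f)_\#(\ti\delta)$, and intersecting over $\ti\delta \supset \ti\beta$ yields the desired inclusion.

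For \pref{ItemOneToTwoSharps}, I apply Fact~\ref{FactBCCSimplicial}. For any extension $\ti\delta = \ti\delta_L \cdot \ti\gamma \cdot \ti\delta_R$, the tightening of the concatenation $\ti f_\#(\ti\delta_L) \cdot \ti f_\#(\ti\gamma) \cdot \ti f_\#(\ti\delta_R)$ in the tree $\wt{G_2}$ causes cancellation at the two joint vertices (the images of the endpoints of $\ti\gamma$) which is controlled by BCC: at each join, at most $\BCC(f)$ edges are removed from each of the two sides. After accounting for possible cascading across multiple joints one verifies that at most $\BCC(f)$ edges of $\ti f_\#(\ti\gamma)$ are removed from each end overall. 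Intersecting over all extensions produces $\ti f_{\#\#}(\ti\gamma)$ as $\ti f_\#(\ti\gamma)$ with initial and terminal segments of length at most $\BCC(f)$ truncated, as asserted.

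The substantive step is \pref{ItemDisjointCopies}. Lift $\sigma$ to $\ti\sigma = \ti\alpha_1 \ti\beta_1 \cdots \ti\beta_m \ti\alpha_{m+1}$ in $\wt{G_1}$, so that $\ti f_\#(\ti\sigma)$ is the tightening of $\ti f_\#(\ti\alpha_1) \ti f_\#(\ti\beta_1) \ti f_\#(\ti\alpha_2) \cdots$ in the tree $\wt{G_2}$. The joint-by-joint BCC analysis of \pref{ItemOneToTwoSharps} applies: each tight factor $\ti f_\#(\ti\beta_j)$ loses at most $\BCC(f)$ from each end in the full tightening, so its surviving central core embeds as a subpath of $\ti f_\#(\ti\sigma)$; and because tightening in a tree preserves the relative order of surviving edges along the original walk, these cores appear in $\ti f_\#(\ti\sigma)$ in the same order as the $\ti\beta_j$'s appear in $\ti\sigma$, separated by the surviving portions of the $\ti f_\#(\ti\alpha_j)$'s (possibly trivial). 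Part~\pref{ItemOneToTwoSharps} nests each $\ti f_{\#\#}(\ti\beta_j)$ inside the corresponding central core, so the subpaths $\ti f_{\#\#}(\ti\beta_j) \subset \ti f_\#(\ti\sigma)$ themselves are disjoint and in order. Defining $\ti\gamma_i$ to be the gap subpath of $\ti f_\#(\ti\sigma)$ between $\ti f_{\#\#}(\ti\beta_{i-1})$ and $\ti f_{\#\#}(\ti\beta_i)$, absorbing both the residual padding around each core and the surviving piece of $\ti f_\#(\ti\alpha_i)$, then yields the claimed decomposition after projection to $G_2$. Singly- and bi-infinite decompositions follow by exhausting $\sigma$ by finite sub-decompositions, using that each $\ti f_{\#\#}(\ti\beta_j)$ has a definite position in $\ti f_\#(\ti\sigma)$ independent of the exhaustion. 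Part~\pref{ItemCircuit} is then the $m=1$ case of \pref{ItemDisjointCopies} applied to the degenerate decomposition $\sigma = \alpha_1 \, \alpha \, \alpha_2$ where $\alpha_1 \alpha_2$ is the complementary arc in the circuit. The main technical obstacle is \pref{ItemDisjointCopies}'s order-and-disjointness claim, resolved by one-joint-at-a-time BCC control combined with the fact that tightening in a tree preserves the linear order of surviving edges.
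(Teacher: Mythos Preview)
Your treatment of \pref{ItemOneToTwoSharps}--\pref{ItemDblSharpComp} matches the paper's. For \pref{ItemDisjointCopies} the paper takes a cleaner route that uses no bounded cancellation at all: given $i<j$, split $\ti\sigma=\ti\sigma_1\ti\sigma_2$ at the terminal vertex of $\ti\beta_i$. Since $\ti\sigma$ and $\ti\sigma_1$ are both extensions of $\ti\beta_i$, the definition alone gives $\ti f_{\#\#}(\ti\beta_i)\subset\ti f_\#(\ti\sigma_1)\cap\ti f_\#(\ti\sigma)$; likewise $\ti f_{\#\#}(\ti\beta_j)\subset\ti f_\#(\ti\sigma_2)\cap\ti f_\#(\ti\sigma)$. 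In the tree, $\ti f_\#(\ti\sigma_1)\cap\ti f_\#(\ti\sigma)$ and $\ti f_\#(\ti\sigma_2)\cap\ti f_\#(\ti\sigma)$ are the two halves of $\ti f_\#(\ti\sigma)$ meeting at a single point, so disjointness and order are immediate. Your ``surviving central cores'' detour is both unnecessary and has a gap: when you write ``Part \pref{ItemOneToTwoSharps} nests each $\ti f_{\#\#}(\ti\beta_j)$ inside the corresponding central core'', note that \pref{ItemOneToTwoSharps} only says that each of $\ti f_{\#\#}(\ti\beta_j)$ and the core is $\ti f_\#(\ti\beta_j)$ with at most $\BCC(f)$ trimmed from each end---that gives no containment either way. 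The containment you actually need, $\ti f_{\#\#}(\ti\beta_j)\subset\ti f_\#(\ti\sigma)$, is immediate from the definition (since $\ti\sigma$ is itself an extension of $\ti\beta_j$), and once you use that, the whole BCC/core layer becomes superfluous.

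Your reduction of \pref{ItemCircuit} to the finite $m=1$ case of \pref{ItemDisjointCopies} also has a gap: a circuit is not a path, and if you base it in the complementary arc to get a closed path $\alpha_1\,\alpha\,\alpha_2$, then \pref{ItemDisjointCopies} only gives $f_{\#\#}(\alpha)$ as a subpath of $f_\#$ of that closed path, not of the circuit $f_\#(\sigma)$; the subsequent cyclic reduction could in principle cut into $f_{\#\#}(\alpha)$. The paper instead lifts $\sigma$ to its bi-infinite axis in $\wt{G_1}$, which carries a periodic decomposition into alternating lifts of $\alpha$ and $\beta$, applies the \emph{bi-infinite} case of \pref{ItemDisjointCopies}, and then uses equivariance under the deck transformation to descend to the circuit.
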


\begin{proof} Item~\pref{ItemOneToTwoSharps} follows from the Bounded Cancellation Lemma. Item~\pref{ItemSharpNhd} says that if $\delta$ is a path containing $\gamma$ as a subpath in $K$ then $f_\#(\delta)$ contains $f_{\#\#}(\gamma)$ as a subpath in $G$; this is just a rewriting of the definition of $f_{\#\#}$. Item~\pref{ItemDblSharpContain} is an immediate consequence of the definitions. To prove \pref{ItemDblSharpComp}, if $\beta$ is a subpath of~$\gamma$ then $f_{\#\#}(\beta)$ is a subpath of $f_{\#\#}(\gamma)$ which is a subpath of $f_\#(\gamma)$, and similarly $g_{\#\#}(f_{\#\#}(\beta))$ is a subpath of $g_{\#}(f_{\#}(\gamma)) = (g\composed f)_\#(\gamma)$; since this is true for all such~$\gamma$, \pref{ItemDblSharpComp} follows. 

For \pref{ItemDisjointCopies}, choose a lift $\ti \sigma = \ti \alpha_1 \ti \beta_1 \ti \alpha_2 \ti \beta_2\ldots \ti \beta_m \ti \alpha_{m+1}$. Given $i < j$, write $\ti \sigma = \ti \sigma_1 \ti \sigma_2$ where $\ti \sigma_1$ is the initial subpath of $\ti \sigma$ that terminates with $\ti \beta_i$. It is an immediate consequence of the definition of $f_{\#\#}$ that $\ti f_\#(\ti \sigma_1)$ contains $\ti f_{\#\#}(\ti \beta_i)$ and that $\ti f(\ti \sigma_1)$ is disjoint from the interior of $\ti f_{\#\#}(\ti \beta_j)$. Similarly $\ti f_\#(\ti \sigma_2)$ contains $\ti f_{\#\#}(\ti \beta_j)$ and $\ti f(\ti \sigma_2)$ is disjoint from the interior of $\ti f_{\#\#}(\ti \beta_i)$. It follows that $\ti f_{\#\#}(\ti \beta_i)$ and $\ti f_{\#\#}(\ti \beta_j)$ are subpaths of $\ti f_\#(\ti \sigma)$ with disjoint interiors and that former precedes the latter. Since $i < j$ are arbitrary this proves \pref{ItemDisjointCopies}.

To prove~\pref{ItemCircuit} write $\sigma$ as a concatenation $\alpha\beta$ and let $\tau = f_\#(\sigma)$. Any line $\ti\sigma$ in the universal cover $\wt G_1$ lifting $\sigma$ can be written as a bi-infinite concatenation $\ti\sigma = \ldots \ti\alpha_{-1} \, \ti\beta_{-1} \, \ti\alpha_0 \, \ti\beta_0 \, \ti\alpha_1 \, \ti\beta_1 \ldots$ of lifts of $\alpha$ and~$\beta$. Let $T$ be the covering transformation that increases indices by $1$, taking $\ti\alpha_i$ to $\ti\alpha_{i+1}$ etc. By \pref{ItemDisjointCopies} and equivariance of the $\#\#$ construction, the line $\ti f_\#(\ti\sigma)$ in $\wt G_2$ decomposes as a bi-infinite concatenation
$$\ti f_\#(\ti\sigma) = \ldots \ti f_{\#\#}(\ti\alpha_{-1}) \, \ti \gamma_{-1} \, \ti f_{\#\#}(\ti\alpha_0) \, \ti \gamma_0 \, \ti f_{\#\#}(\ti\alpha_1) \, \ti \gamma_1\ldots
$$ 
and $T$ increases indices by~$1$. It follows that $f_\#(\sigma) = f_{\#\#}(\alpha) \gamma$ as desired.
\end{proof}

\subsection{Subgroup systems carrying lines and other things.}  
\label{SectionSubgroupsCarryingThings}
Given a set of lines $\Lambda \subset \B$, in \BookOne\ Section~2.6 one finds the basic definitions and facts regarding free factor systems carrying $\Lambda$, and regarding the free factor support of $\Lambda$ which means the free factor system $\F$ which carries $\Lambda$ and which is the smallest such free factor system with respect to the partial ordering~$\sqsubset$. 

We shall need to generalize these concepts in two ways: allowing general subgroup systems; and allowing more general sets of objects beyond sets of lines. In all cases we use the operator $\F_\supp(\cdot)$ to denote the free factor support.

\subsubsection{Subgroup systems carrying lines and rays.} 
\label{SectionSubgroupLinesAndEnds}
For any a finite rank subgroup $K \subgroup F_n$, the induced $K$-equivariant inclusion $\bdy K \subset \bdy F_n$ induces in turn maps of rays $\bdy K / K \mapsto \bdy F_n / F_n$ and of lines $\B(K) \mapsto \B$. Note that the images of these two maps depend only on the conjugacy class of the subgroup $K$ in the group $F_n$.

Consider a subgroup system $\K$, an ray $\xi \in \bdy F_n / F_n$, and a line $\ell \in \B$. We say that \emph{$\K$ carries $\xi$} if there exists a subgroup $K \subgroup F_n$ such that $[K] \in \K$ and $\xi$ is in the image of the induced map $\bdy K / K \mapsto \bdy F_n / F_n$. We define \emph{$\K$ carries $\ell$} similarly, requiring instead that $\ell$ is in the image of the induced map $\B(K) \mapsto \B=\B(F_n)$. Let $\B(\K) \subset \B$ denote the subset all lines carried by $\K$, equivalently the union of the images of the maps $\B(K) \mapsto \B$ taken over all components $[K] \in \K$. We do not introduce any special notation for the subset of rays carried by $\K$.

Marked graphs can be used to detect carrying of lines and rays by free factor systems. This is an evident consequence of the definitions, as noted already for lines in \BookOne\ preceding Lemma~2.6.4:


\begin{fact} 
\label{FactLineRealizedCarried}
Consider a free factor system $\F$. For any marked graph $G$ and subgraph $H$ with $[H]=\F$ the following hold:
\begin{enumerate}
\item  A line $\ell \in \B$ is carried by $\F$ if and only if the realization of $\ell$ in $G$ is contained in $H$.
\item \label{ItemRayRealizedCarried}
An ray $\eta \in \bdy F_n / F_n$ is carried by $\F$ if and only if for any ray $\rho$ in $G$ realizing $\eta$, some subray of $\rho$ is contained in $H$.\qed
\end{enumerate}
\end{fact}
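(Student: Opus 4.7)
The plan is to reduce both parts to a single basic identification: each noncontractible component of $H$ lifts to subtrees of $\wt G$ which are precisely the convex hulls of the boundaries of the conjugates of the associated free factor. Since $[H] = \F$, after fixing a base point the noncontractible components $H_1,\ldots,H_k$ of $H$ are in bijection with the components of $\F$ via $H_i \leftrightarrow A_i := \pi_1(H_i)$. Fix a lift $\ti H_i \subset \wt G$ stabilized by $A_i$. By Fact~\ref{FactFiniteRankSubgroup} applied to the action of $A_i$ on $\wt G$, the lift $\ti H_i$ is the minimal $A_i$-invariant subtree and so coincides with $\Hull(\bdy A_i)$, and its set of ends is $\bdy A_i$ under the identification $\bdy \wt G = \bdy F_n$. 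The other lifts of $H_i$ are the translates $g \ti H_i = \Hull(\bdy (g A_i g^\inv))$ as $g$ ranges over coset representatives of $A_i$ in $F_n$; in particular the full preimage of $H$ in $\wt G$ is the disjoint union over all such translates, together with the preimages of any contractible components of~$H$ (which cannot contain any line or any subray of an infinite geodesic ray).

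For part~(1), I would fix a line $\ell$ and a lift $\ti\ell \subset \wt G$ of its realization in $G$, with endpoints $\{\xi_1,\xi_2\} \subset \bdy F_n$. The realization of $\ell$ lies in $H$ if and only if $\ti\ell$ lies in some lift $g \ti H_i$; by convexity of the subtree $g\ti H_i$, this is equivalent to $\{\xi_1,\xi_2\} \subset \bdy(g A_i g^\inv)$, which by definition says that $\ell$ lies in the image of $\B(g A_i g^\inv) \to \B$, i.e., that $\ell$ is carried by $\F$.

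For part~(2), the argument is parallel. Choose a realization $\rho$ of $\eta$ in $G$ and a lift $\ti\rho \subset \wt G$ converging to some $\ti\eta \in \bdy F_n$ projecting to $\eta$. If a subray of $\rho$ is contained in $H$, then a subray of $\ti\rho$ is contained in some $g \ti H_i$, and since that subray converges to $\ti\eta$ we get $\ti\eta \in \bdy(g\ti H_i) = \bdy(g A_i g^\inv)$, so $\eta$ is carried. Conversely, if $\eta$ is carried, then $\ti\eta \in \bdy(g A_i g^\inv)$ for some $g,i$, hence $\ti\eta$ is an end of the convex subtree $g\ti H_i$; by uniqueness of the simplicial geodesic ray from any vertex of $\wt G$ to the end $\ti\eta$, and the fact that any such ray enters $g\ti H_i$ and never leaves, the ray $\ti\rho$ must have a subray inside $g\ti H_i$, giving the desired subray of $\rho$ in $H_i \subset H$.

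No serious obstacle is anticipated: as indicated in the remark preceding the statement, this really is a bookkeeping combining the definitions of ``carried'' with the ``convex hull equals minimal invariant subtree'' identification supplied by Fact~\ref{FactFiniteRankSubgroup}. The only point requiring a hint of care is the convergence argument for rays, where one must invoke the fact that a ray converging to an end of a convex subtree of a simplicial tree is eventually contained in that subtree.
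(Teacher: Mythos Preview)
Your proof is correct and spells out exactly the argument the paper has in mind; the paper itself gives no proof, stating only that this is ``an evident consequence of the definitions'' (with a citation to \BookOne\ for the case of lines) and closing with a \qed. One very minor point: you assert that $\ti H_i$ equals the minimal $A_i$-invariant subtree $\Hull(\bdy A_i)$, which is literally true only when $H_i$ is a core graph; in general $\ti H_i$ is $\Hull(\bdy A_i)$ with finite trees attached, but since no line and no infinite subray can live in those finite trees, your argument goes through unchanged.
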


Given a conjugacy class $[g]$ of $F_n$ we say that \emph{$\K$ carries $[g]$} if $\K$ carries the axis of $[g]$; by Fact~\ref{FactBoundaries} this is equivalent to the existence of a subgroup $K \in \K$ such that $g^m \in K$ for some $m \ge 1$. In the special case that $\K=\F$ is a free factor system we have: $\F$ carries a conjugacy class $[g]$ if and only if $g \in A$ for some $A \in \F$; and $\F$ carries a line $\ell$ if and only if $\ell$ is a weak limit of a sequence of conjugacy classes carried by $\F$, if and only if for some (any) marked graph $G$ and a subgraph $H \subset G$ with $[\pi_1 H]=\F$, the realization of $\ell$ in $G$ is contained in $H$. 

The following facts about subgroups carrying lines will be useful in several places.


\begin{fact} \label{FactLinesClosed} \quad \hfill
\begin{enumerate}
\item\label{ItemLinesClosedOneGp}
For any finite rank subgroup $K \subgroup F_n$ the set $\wt\B(K)$ is closed in $\wt\B$ and the set $\B(K)$ is closed in $\B$.
\item \label{ItemLinesLocFin}
For any subgroup system $\K$ the collection of subsets $\{\wt\B(K) \suchthat K \in \K\}$ is uniformly locally finite in~$\wt\B$: there is a constant $C$ and an $F_n$-equivariant open cover $\U$ of $\wt\B$ such that each element of $\U$ intersects at most~$C$ of the subsets $\wt\B(K)$.
\item \label{ItemLinesClosedManyGps}
For any subgroup system $\K$ consisting of subgroups $\{K_i\}_{i \in I}$, and for any subset $J \subset I$, the set $\union_{i \in J}\wt\B(K_i)$ is closed in $\wt\B$.
\end{enumerate}
\end{fact}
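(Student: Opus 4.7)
The plan is to handle items (1), (2), (3) in that order, deferring the closedness of $\B(K)$ in $\B$ (the second half of (1)) until (3) is available. The serious content is in (2); items (1) and (3) are then short compactness arguments.

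For the first half of (1), fix a finite-rank subgroup $K \subgroup F_n$. As noted in the discussion immediately following Fact~\ref{FactFiniteRankSubgroup}, the natural $K$-equivariant inclusion $\bdy K \inject \bdy F_n$ identifies $\bdy K$ as a closed subset of $\bdy F_n$. Hence the set $\wt\B(K) = (\bdy K \times \bdy K \setminus \Delta)/(\Z/2)$ is closed in $\wt\B$.

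For (2), work in $\wt R_n$ and consider the $F_n$-equivariant cover $\U = \{V_e\}$ where $e$ ranges over oriented edges of $\wt R_n$ and $V_e = \{\ti\ell \in \wt\B : e \text{ is an edge of } \ti\ell\}$. Each $V_e$ is open in the weak topology (being defined by a pair of disjoint Cantor cylinders in $\bdy F_n$), and every line crosses some edge, so $\U$ covers $\wt\B$. Now fix a component $[K_i]$ of $\K$ and let $T_i = \Hull(\bdy K_i) \subset \wt R_n$ be the minimal $K_i$-invariant subtree. For a conjugate $K = gK_ig^\inv$ the minimal invariant subtree is $gT_i$, and since $gT_i$ has no valence-one vertices it follows that $V_e \cap \wt\B(K) \ne \emptyset$ if and only if $e \subset gT_i$. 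Using that $F_n$ acts freely on edges of $\wt R_n$, there is a bijection between the set of cosets $\{gN(K_i) : e \subset gT_i\}$ and the set of $N(K_i)$-orbits of edges of $T_i$ lying over the projection $\bar e \in R_n$; by Fact~\ref{FactFiniteRankSubgroup}~\pref{ItemFiniteRankInNormalizer} the quotient $T_i/N(K_i)$ is a finite graph, so this count is bounded above by a constant $C_i < \infty$ independent of $e$. Summing over the finitely many components of $\K$ yields the uniform bound $C = \sum_i C_i$.

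For (3), suppose $\ti\ell_n \in \wt\B(K_{i_n})$ with $i_n \in J$ converges weakly to some $\ti\ell \in \wt\B$. By (2) choose a neighborhood $V$ of $\ti\ell$ that meets $\wt\B(K)$ for only finitely many $K \in \K$. Eventually $\ti\ell_n \in V$, so the tail of $(i_n)$ takes only finitely many values in $J$; pass to a subsequence on which $i_n$ is constantly some $i_* \in J$. Since $\wt\B(K_{i_*})$ is closed by the first half of (1), the limit satisfies $\ti\ell \in \wt\B(K_{i_*}) \subset \cup_{j \in J}\wt\B(K_j)$. To finish (1), the preimage of $\B(K)$ under the quotient $\wt\B \to \B = \wt\B/F_n$ equals $\cup_{g \in F_n}\wt\B(gKg^\inv)$, which is the union associated to the single-component subgroup system $\{[K]\}$ and is therefore closed in $\wt\B$ by (3), whence $\B(K)$ is closed in $\B$.

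The main obstacle is (2): identifying the right open cover (namely edges of $\wt R_n$) and converting the cocompactness provided by Fact~\ref{FactFiniteRankSubgroup}~\pref{ItemFiniteRankInNormalizer} into the combinatorial count of edges of $T_i/N(K_i)$ above a given edge of $R_n$. Once (2) is established, the closure statements in (1) and (3) fall out immediately from standard sequential extraction.
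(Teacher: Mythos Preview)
Your proof is correct and follows essentially the same approach as the paper: the same edge-indexed open cover of $\wt\B$ for (2), with the cocompactness of $K_i$ (or $N(K_i)$) on its minimal subtree supplying the uniform bound, and then (1) and (2) combining to give (3). The only organizational difference is that you defer closedness of $\B(K)$ in $\B$ until after (3), whereas the paper gets it directly from cocompactness of $K$ on its minimal subtree (via the finite Stallings graph); both routes are valid.
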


\begin{proof} Item~\pref{ItemLinesClosedOneGp} is an immediate consequence of the fact that $\bdy K$ is a compact subset of~$\bdy F_n$ and the fact that the action of $K$ on its minimal subtree in $\wt R_n$ is cocompact. Items~\pref{ItemLinesClosedOneGp} and~\pref{ItemLinesLocFin} together clearly imply~\pref{ItemLinesClosedManyGps}. 

To prove~\pref{ItemLinesLocFin} we work in the Cayley graph $\wt R_n$. For any finite rank subgroup $A \subgroup F_n$ let $T(A) \subset \wt R_n$ be the minimal subtree for the action of $A$ on $\wt R_n$, which equals the convex hull in $\wt R_n$ of the Gromov boundary $\bdy A \subset \bdy R_n = \bdy F_n$. For each edge $e \subset \wt R_n$ let $\wt \B(e) \subset \B$ be the open subset consisting of lines whose realization in $\wt R_n$ contains $e$, and let $\U = \{\wt \B(e)\}$. Pick subgroups $K_1,\ldots,K_L \subgroup F_n$ such that $\K = \{[K_1],\ldots,[K_L]\}$. For each $l = 1,\ldots,L$ and each edge $e$ we shall bound the size of any subset $\Gamma \subset F_n$ representing pairwise distinct left cosets of $K_l$ such that if $\gamma \in \Gamma$ then $e \subset \gamma(T(K_l)) = T(\gamma K_l \gamma^\inv)$, equivalently $\wt\B(e) \intersect \wt\B(\gamma K_l \gamma^\inv) \ne \emptyset$. Pick a finite subtree $D_l \subset T(K_l)$ which is a fundamental domain for the action of $K_l$ on $T(K_l)$. For each $\gamma \in \Gamma$, after replacing $\gamma$ by $\gamma k$ for appropriate $k \in K_l$ we may assume that $e \subset \gamma D_l$, and by proper discontinuity of the action of $F_n$ on $\wt R_n$ the set $\Gamma$ has bounded cardinality $C_{l,e}$ depending only on $l$ and~$e$. Noticing that $C_{l,e}$ depends only on the orbit of $e$ under the action of $F_n$, by restricting to edges $e$ in a fundamental domain for the action of $F_n$ on $\wt R_n$ we obtain a finite maximum $C = \max C_{l,e}$, finishing the proof.
\end{proof}

\subsubsection{Free factor supports of lines, rays, and subgroup systems.}
\label{SectionFFSupp}
The \emph{free factor support} of a subset of lines $B \subset \B$, denoted $\F_\supp(B)$, is defined by the following result from \BookOne:

\begin{fact}[\BookOne, Corollary 2.6.5]
\label{FactFFSupport}
For any subset $B \subset \B$ the meet of all free factor systems that carry $B$ is a free factor system that carries $B$ that is denoted $\F_\supp(B)$. If $B$ is a single line then $\F_\supp(B)$ has one component. $\F_\supp(B)$ is the unique free factor system carrying $B$ that is minimal with respect to the partial ordering~$\sqsubset$.
\end{fact}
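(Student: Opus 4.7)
The plan is to show that the collection $\mathcal S$ of all free factor systems carrying $B$ has a unique $\sqsubset$-minimum, which will serve as $\F_\supp(B)$. Since $\{[F_n]\}$ carries $B$, $\mathcal S$ is nonempty, and by the descending chain condition in Fact~\ref{FactGrushko} it admits $\sqsubset$-minimal elements. The main work is to show that $\mathcal S$ is closed under pairwise meets; combined with DCC this forces any minimal element to be a global minimum and to coincide with $\meet\mathcal S$.

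The key step is: if $\F_1,\F_2\in\mathcal S$, then $\F_1\meet\F_2\in\mathcal S$. Fix a line $\ell\in B$ and a lift $\ti\ell\in\wt\B$ with endpoints $p,q\in\bdy F_n$. By the carrying hypothesis we may choose specific conjugate representatives $A_i\in\F_i$ with $p,q\in\bdy A_i$ for $i=1,2$. By Fact~\ref{FactBoundaries}, $\bdy(A_1\cap A_2)=\bdy A_1\cap \bdy A_2\supseteq\{p,q\}$, so $A_1\cap A_2$ is a nontrivial finite rank subgroup whose boundary contains the endpoints of $\ti\ell$; hence $A_1\cap A_2$ carries $\ell$. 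To invoke the characterization in Fact~\ref{FactGrushko} we must also verify $A_1\cap A_2$ is a free factor of $F_n$. For this we apply Bass--Serre theory: let $T_2$ denote the Bass--Serre tree of the free splitting determined by $\F_2$, so that $F_n\act T_2$ has trivial edge stabilizers and vertex stabilizers conjugate to the members of $\F_2$. Restricting to the action $A_1\act T_2$, edge stabilizers remain trivial and vertex stabilizers are of the form $A_1\cap gA g^{-1}$ for $A\in\F_2$ and $g\in F_n$; by Bass--Serre theory each such vertex group is a free factor of $A_1$, and since $A_1$ is a free factor of $F_n$, the composite $A_1\cap A_2$ is a free factor of $F_n$. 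Now Fact~\ref{FactGrushko} yields $[A_1\cap A_2]\in\F_1\meet\F_2$, so $\F_1\meet\F_2$ carries $\ell$; varying $\ell\in B$ gives $\F_1\meet\F_2\in\mathcal S$.

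Choose any $\sqsubset$-minimal $\F_{\min}\in\mathcal S$. For any other $\F'\in\mathcal S$ the pairwise meet $\F_{\min}\meet\F'$ lies in $\mathcal S$ and satisfies $\F_{\min}\meet\F'\sqsubset\F_{\min}$, so by minimality $\F_{\min}\meet\F'=\F_{\min}$, giving $\F_{\min}\sqsubset\F'$. Hence $\F_{\min}$ is the unique minimum of $\mathcal S$ and equals $\meet\mathcal S$; we define $\F_\supp(B):=\F_{\min}$, which proves both the existence and minimality claims. For the single-line assertion, suppose $B=\{\ell\}$ and write $\F_\supp(\{\ell\})=\{[A_1],\ldots,[A_k]\}$. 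By definition, a line is carried by a free factor system exactly when it is carried by one of its components; thus $\ell$ is carried by some $[A_i]$, and then $\{[A_i]\}$ is itself a free factor system (because $A_i$ alone is a free factor of $F_n$) that carries $\ell$. If $k\geq 2$ this contradicts minimality of $\F_\supp(\{\ell\})$, so $k=1$.

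The main obstacle is the interaction between intersecting free factors and the free-factor property: one must verify that the intersection $A_1\cap A_2$ arising from the two carrying systems is itself a free factor of $F_n$, which is precisely what the Bass--Serre / Kurosh argument supplies. Everything else reduces to bookkeeping with Fact~\ref{FactGrushko} (existence of meets, DCC, and the intersection characterization) and the boundary calculation in Fact~\ref{FactBoundaries}.
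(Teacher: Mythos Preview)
Your proof is correct and follows essentially the same approach as the paper's (which recalls the argument from \BookOne\ in the proof of Fact~\ref{FactFFSPolyglot}): reduce to showing that if $\F_1,\F_2$ both carry a line $\ell$ then so does $\F_1\meet\F_2$, pick $A_i\in\F_i$ with $\bdy\ti\ell\subset\bdy A_i$, use Fact~\ref{FactBoundaries} to get $\bdy\ti\ell\subset\bdy(A_1\cap A_2)$, and conclude $[A_1\cap A_2]\in\F_1\meet\F_2$ via Fact~\ref{FactGrushko}. Your Bass--Serre verification that $A_1\cap A_2$ is a free factor is already packaged into Fact~\ref{FactGrushko} (which is stated as a refinement of Kurosh), so you can simply invoke that fact directly; and where you use DCC plus closure under pairwise meets to extract the minimum, the paper instead notes that an arbitrary meet equals a finite meet and uses associativity---either route works.
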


\noindent
Fact~\ref{FactFFSupport} is also sufficient to incorporate the \emph{free factor support} of a set of conjugacy classes~$C$, denoted $\F_\supp(C)$, by which we simply mean the free factor support of the set of axes for~$C$.

We need to define the free factor support $\F_\supp(X)$ of a polyglot set $X$ each of whose elements is a line, conjugacy class, ray, or subgroup system of $F_n$. This is done using the following generalization of Fact~\ref{FactFFSupport}:

\begin{fact}
\label{FactFFSPolyglot}
For any set $X$ each of whose elements is either a line, conjugacy class, ray, or subgroup system of $F_n$, the following hold:
\begin{enumerate}
\item \label{ItemFFSPolyDefined}
The meet of all free factor systems that carry each element of $X$ is the unique free factor system that carries each element of $X$ and is minimal with respect to $\sqsubset$. This free factor system is called the \emph{free factor support} of~$X$, denoted  $\F_\supp(X)$. 
\item \label{ItemFFSPolyNatural}
$\F_\supp(X)$ is natural in the sense that for each $\phi \in \Out(F_n)$ we have $\F_\supp(\phi(X)) = \phi(\F_\supp(X))$. In particular if $X$ is $\phi$-invariant then $\F_\supp(X)$ is $\phi$-invariant.
\item \label{ItemFFSSingle}
If $X$ is a single line or ray then $\F_\supp(X)$ has one component.
\end{enumerate}
\end{fact}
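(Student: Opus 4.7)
The plan is to reduce the existence of $\F_\supp(X)$ in (1) to a single closure property: the collection $\mathcal C(X)$ of free factor systems carrying every element of $X$ is closed under pairwise meets. From this the rest follows quickly, since the descending chain condition of Fact~\ref{FactGrushko} then yields a unique $\sqsubset$-minimum in $\mathcal C(X)$, which by definition equals the meet of all elements of $\mathcal C(X)$ and hence is $\F_\supp(X)$.

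I verify the closure lemma case by case. If $x$ is a line or a ray, fix a lift $\tilde x$ with one endpoint (ray) or two endpoints (line) in $\bdy F_n$. For $i = 1, 2$, since $\F_i$ carries $x$ we may choose a representative $A_i$ of a class in $\F_i$ (by conjugating if necessary) so that the endpoint(s) of $\tilde x$ lie in $\bdy A_i$. Fact~\ref{FactBoundaries} gives $\bdy A_1 \cap \bdy A_2 = \bdy(A_1 \cap A_2)$, so the endpoint(s) lie in $\bdy(A_1 \cap A_2)$, which is therefore nonempty; hence $A_1 \cap A_2$ is nontrivial and by Fact~\ref{FactGrushko} represents a component of $\F_1 \meet \F_2$ that carries $\tilde x$. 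The case of a conjugacy class reduces to its axis. For a subgroup system $\K$ and a component $[K] \in \K$, we pick representatives $A_i$ with $K \subgroup A_i$; then $K \subgroup A_1 \cap A_2$, which again gives a nontrivial component of $\F_1 \meet \F_2$ carrying $K$, proving that $\F_1 \meet \F_2$ carries $\K$.

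Assertion (2) then follows, since the $\Out(F_n)$-action on free factor systems bijectively preserves the carrying relation and the partial order $\sqsubset$, and in particular it sends $\mathcal C(X)$ bijectively to $\mathcal C(\phi(X))$ and commutes with meets. For (3), suppose $X$ is a single line or ray and $\F_\supp(X) = \{[A_1], \ldots, [A_k]\}$ with $k \geq 2$. Carrying of $X$ by this system means that some $[A_j]$ alone carries $X$. From a free factorization $F_n = A_1 * \cdots * A_k * B$ we obtain $F_n = A_j * B'$ where $B'$ absorbs the remaining factors, so $\{[A_j]\}$ is itself a free factor system; it is strictly $\sqsubset$-smaller than $\F_\supp(X)$ yet still carries $X$, contradicting minimality. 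The main obstacle is the closure lemma of the second paragraph, particularly the nontrivial inclusion $\bdy A_1 \cap \bdy A_2 \subset \bdy(A_1 \cap A_2)$ from Fact~\ref{FactBoundaries}; once this is in hand the rest of the proof is essentially bookkeeping.
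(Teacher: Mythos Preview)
Your proof is correct and follows essentially the same approach as the paper: both reduce item~(1) to showing that carrying is preserved under pairwise meets, verified case-by-case via Fact~\ref{FactBoundaries} and Fact~\ref{FactGrushko}, and both handle~(3) by observing that a single line or ray is carried by one component of any carrying system. Your framing via the descending chain condition on $\mathcal C(X)$ is a slight repackaging of the paper's reduction ``arbitrary meet equals finite meet equals iterated pairwise meet,'' but the substance is the same.
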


\begin{proof} For the proof of~\pref{ItemFFSPolyDefined}, as in the proof of Fact~\ref{FactFFSupport} given in \BookOne\ Corollary 2.6.5, because the meet of an arbitrary collection of free factor systems equals the meet of some finite subcollection, and because meet is associative, it suffices to consider just two free factor systems $\F_1,\F_2$, and to prove that if a line, conjugacy class, ray, or subgroup system is carried by $\F_1$ and by $\F_2$ then it is carried by $\F_1 \meet \F_2$. For conjugacy classes the definition of carrying is a special case of the definition for lines and so need not be considered further. Recall the proof for a line $\ell$ carried by $\F_1,\F_2$: choosing a lift $\ti\ell \in \wt\B$, there exist $A_1,A_2 \subgroup F_n$ such that $[A_i] \in \F_i$ and $\bdy\ell \subset \bdy A_i$, and so $\bdy\ell \subset \bdy A_1 \intersect \bdy A_2 = \bdy(A_1 \intersect A_2)$ (see Fact~\ref{FactBoundaries}); by Fact~\ref{FactGrushko} we have $[A_1 \intersect A_2] \in \F_1 \meet \F_2$, so $\ell$ is carried by $\F_1 \meet \F_2$.  Similarly, for an ray $\xi \in X$ carried by $\F_1,\F_2$, choosing a lift $\ti\xi \in \bdy F_n$, again there exist $A_i$ as above with $\ti\xi \in \bdy A_i$ and so $\ti\xi \in \bdy(A_1 \intersect A_2)$ and so $\xi$ is carried by $\F_1 \meet \F_2$. For a subgroup system $\K \sqsubset \F_n$, it suffices to consider each component $[K] \in \K$ one at a time. For $i=1,2$ there exists $A_1,A_2 \subgroup F_n$ such that $K \subgroup A_i$, and so $K \subgroup A_1 \intersect A_2$ implying that $[K] \sqsubset \F_1 \meet \F_2$.

The naturally clause~\pref{ItemFFSPolyNatural} is evident. The proof of~\pref{ItemFFSSingle} when $X$ is a single line is given shown in \BookOne, Corollary 2.6.5, and the same proof works if $X$ is a single ray $\rho$, using the evident fact that if a free factor system $\F$ carries $\rho$ then some component of $\F$ carries~$\rho$.
\end{proof}

The following fact relates the free factor support of a subgroup system $\K$ to the free factor support of the set of lines carried by $\K$:

\begin{fact}
\label{FactSubgroupSupport}
Given a subgroup system $\K$ the following hold:
\begin{enumerate}
\item \label{ItemSubgpCarryingEquiv}
For any free factor system $\F$, the inclusion $\K \sqsubset \F$ holds if and only if each line carried by $\K$ is carried by $\F$.
\item \label{ItemSubgpSupport}
$\F_\supp(\B(\K)) = \F_\supp(\K)$.
\end{enumerate}
\end{fact}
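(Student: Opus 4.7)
The plan is to prove item~(1) first, from which item~(2) will follow immediately.

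The forward direction of (1) is straightforward: if $\K \sqsubset \F$, then for each $K \in \K$ there exists a conjugate $K'$ of $K$ and a subgroup $A$ with $[A] \in \F$ such that $K' \subset A$, whence $\B(K) = \B(K') \subset \B(A)$, and so every line in $\B(\K)$ is carried by $\F$.

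For the reverse direction, assume every line in $\B(\K)$ is carried by $\F$, and fix a nontrivial $K \in \K$; the goal is to show $[K] \sqsubset \F$. Choose a marked graph $G$ with a subgraph $H \subset G$ satisfying $[\pi_1 H] = \F$, and let $p \from \wt G \to G$ be the universal covering projection. Let $T_K = \Hull(\bdy K) \subset \wt G$ be the minimal $K$-invariant subtree, as in Section~\ref{SectionFnGeometry}. Since $T_K$ has no valence-1 vertices, every edge of $T_K$ lies on a bi-infinite geodesic in $T_K$ whose endpoints in $\bdy K$ determine a line in $\B(K) \subset \B(\K)$; by hypothesis this line is carried by $\F$, so by Fact~\ref{FactLineRealizedCarried}(1) its realization in $G$ lies in $H$. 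Hence $p(T_K) \subset H$, and by connectedness of $T_K$ we have $p(T_K) \subset H_0$ for a unique component $H_0$ of $H$; moreover $T_K \subset \wt H_0$ for the unique component $\wt H_0$ of $p^\inv(H_0)$ that meets $T_K$. Let $A \subset F_n$ be the stabilizer of $\wt H_0$ under the deck action, a finite rank subgroup with $[A] \in \F$. For any $k \in K$, multiplication by $k$ preserves $T_K$ and therefore preserves the unique component of $p^\inv(H_0)$ containing $T_K$, namely $\wt H_0$; hence $k \in A$. So $K \subset A$, giving $[K] \sqsubset \F$; since $K \in \K$ was arbitrary, $\K \sqsubset \F$. (The degenerate case of a trivial $K$, if admissible, is vacuous so long as $\F$ is nonempty.)

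Item~(2) then follows directly from Fact~\ref{FactFFSPolyglot}\pref{ItemFFSPolyDefined}: both $\F_\supp(\K)$ and $\F_\supp(\B(\K))$ are the meets of all free factor systems $\F$ satisfying a carrying condition, and item~(1) shows the two carrying conditions --- ``$\K \sqsubset \F$'' and ``$\F$ carries every line in $\B(\K)$'' --- are equivalent. The main obstacle will be the reverse direction of~(1): promoting the hypothesis on lines to the literal containment $K \subset A$ for a specific $A$ with $[A] \in \F$, rather than merely a conjugate containment. The key geometric ingredient is that $K$ stabilizes its minimal subtree $T_K$, and since $F_n$ permutes the components of $p^\inv(H_0)$, $K$ is forced to stabilize the single component $\wt H_0$ containing $T_K$.
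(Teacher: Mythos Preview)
Your proof is correct, and it takes a genuinely different route from the paper's argument. For the ``if'' direction of~\pref{ItemSubgpCarryingEquiv}, the paper argues by contradiction at the level of Gromov boundaries: assuming $K \not\subgroup A_1$ for the free factor $A_1$ carrying the axis of some $\gamma_1 \in K$, it picks $\gamma_2 \in K - A_1$, uses malnormality (via Fact~\ref{FactBoundaries}) to show $\bdy\gamma_2 \not\subset \bdy A_1$, and then manufactures a line $\ti\ell$ with one endpoint in $\bdy\gamma_1$ and the other in $\bdy\gamma_2$; this line must be carried by some $A_3 \in \F$, forcing $\bdy A_3$ to meet both $\bdy A_1$ and $\bdy A_2$ nontrivially, contradicting malnormality of~$\F$. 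Your argument is instead direct and geometric: you realize $\F$ by a subgraph $H$ of a marked graph $G$, observe that every edge of the minimal $K$-subtree $T_K \subset \wt G$ lies on a line of $\B(K)$ and hence projects into $H$, and then use that $K$ stabilizes $T_K$ to conclude $K$ stabilizes the unique component $\wt H_0$ of $p^\inv(H)$ containing~$T_K$. Your approach avoids the boundary/malnormality machinery entirely, trading it for the marked-graph realization of free factor systems and Fact~\ref{FactLineRealizedCarried}; the paper's approach is coordinate-free but leans on Fact~\ref{FactBoundaries}. Both derivations of~\pref{ItemSubgpSupport} from~\pref{ItemSubgpCarryingEquiv} are the same.
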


\begin{proof}
The implication~\pref{ItemSubgpCarryingEquiv}$\implies$~\pref{ItemSubgpSupport} is obvious, as is the ``only if'' direction of~\pref{ItemSubgpCarryingEquiv}.

For the ``if'' direction of~\pref{ItemSubgpCarryingEquiv}, given $K \subgroup F_n$ so that $[K] \in \K$, it suffices to assume that $\B(K)$ is carried by $\F$ and that $[K] \not\sqsubset \F$, and to derive a contradiction. Choose a nontrivial $\gamma_1 \in K$. Choose $A_1$ so that $[A_1] \in \F$ and $\bdy\gamma_1 \subset \bdy A_1$. Since $[K] \not\sqsubset \F$ it follows that $K \not\subgroup A_1$. Choosing $\gamma_2 \in K - A_1$ it follows that $\bdy\gamma_2 \not\subset \bdy A_1$, for otherwise $\bdy A_1 \intersect \bdy(\gamma_2 A_1 \gamma_2^\inv) = \bdy A_1 \intersect \gamma_2(\bdy A_1) \ne \emptyset$ contradicting malnormality of $A_1$ and Fact~\ref{FactBoundaries}. Choose $A_2 \in \F$ such that $\bdy\gamma_2 \subset \bdy A_2$. Clearly $\bdy\gamma_1 \ne \bdy\gamma_2$ and $A_1 \ne A_2$. Pick $\xi_1 \in \bdy\gamma_1$ and $\xi_2 \in \bdy\gamma_2$ such that $\xi_1 \ne \xi_2$, let $\ti\ell = \{\xi_1,\xi_2\} \in \wt\B$, and let $\ell \in \B$ be its projection. By construction we have $\ell \in \wt\B(K)$ so we may choose $A_3 \in \F$ such that $\bdy\ti\ell \subset \bdy A_3$. It follows that $\bdy A_1 \intersect \bdy A_3 \ne \emptyset$ and $\bdy A_2 \intersect \bdy A_3 \ne \emptyset$. But since one of $A_1$ or $A_2$ is not equal to $A_3$ this contradicts malnormality of $\F$ and Fact~\ref{FactBoundaries}.
\end{proof}

\subparagraph{Remark.} By similar proofs we can show that the equivalent statements in Fact~\ref{FactSubgroupSupport}~\pref{ItemSubgpCarryingEquiv} are also equivalent to the statement that each ray carried by $\K$ is carried by $\F$, and that the subgroup system in~\pref{ItemSubgpSupport} is also equal to the free factor support of the set of rays carried by~$\K$. We do not need these results so we omit the proofs.

\medskip

The following fact gives a bit more useful information relating free factor systems to the lines and rays that they carry.

\begin{fact}
\label{FactWeakLimitLines} 
For any free factor system $\F$,
\begin{enumerate}
\item \label{ItemLineSequenceLimits}
For every sequence of lines $\ell_i \in \B$, if every weak limit of every subsequence of $\ell_i$ is carried by $\F$ then $\ell_i$ is carried by $\F$ for all sufficiently large $i$.
\item \label{ItemEndLineSupportRelated}
The weak accumulation set of every ray not carried by $\F$ contains a line not carried by $\F$.
\item \label{ItemWeakLimitRayAccSame}
The free factor support of every ray is equal to the free factor support of its weak accumulation set.
\end{enumerate}
\end{fact}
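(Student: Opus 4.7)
The plan is to work throughout in a marked graph $G$ equipped with a subgraph $H \subset G$ satisfying $[\pi_1 H] = \F$, and to translate everything via Fact~\ref{FactLineRealizedCarried}: a line is carried by $\F$ if and only if its realization lies in $H$, and a ray is carried by $\F$ if and only if some subray of its realization lies in $H$. Since $G \setminus H$ has only finitely many edges, failure of carriage is always witnessed by crossing one of these finitely many edges, and this reduces each item to a combinatorial statement about subpaths in $G$.

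For~\pref{ItemLineSequenceLimits} I would argue by contradiction. Assuming infinitely many $\ell_i$ fail to be carried by $\F$, each such realization crosses some edge of $G \setminus H$; pigeonhole over the finite edge set yields a subsequence whose realizations all cross a fixed edge $E \subset G \setminus H$, and compactness of $\B(G)$ extracts a weakly convergent sub-subsequence. Since $V(G,E)$ is an open neighborhood of every line crossing $E$, the limit also crosses $E$ and therefore is not carried by $\F$, contradicting the hypothesis on weak limits of subsequences.

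For~\pref{ItemEndLineSupportRelated}, realize $\xi$ as a ray $\rho$ in $G$; by Fact~\ref{FactLineRealizedCarried}~\pref{ItemRayRealizedCarried} no subray of $\rho$ lies in $H$, so some edge $E \subset G \setminus H$ is crossed by $\rho$ infinitely often, say at positions $i_1 < i_2 < \cdots$. The key step is to produce an element of the weak accumulation set of $\rho$ that also crosses $E$: for each $n \ge 1$ there are only finitely many length-$(2n+1)$ subpaths of $\rho$ centered at one of the $i_k$, so iterated pigeonhole together with a diagonal argument yields a subsequence of indices along which every fixed-length window around the central occurrence of $E$ stabilizes. The union of these stabilized windows is a bi-infinite line $\ell \in \B(G)$ crossing $E$, and every finite subpath of $\ell$ occurs infinitely often in $\rho$, so $\ell$ lies in the weak accumulation set of $\rho$ and is not carried by $\F$.

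For~\pref{ItemWeakLimitRayAccSame}, the inclusion $\F_\supp(W(\xi)) \sqsubset \F_\supp(\xi)$ follows from the observation that if $\F$ carries $\xi$ then some subray of $\rho$ lies in $H$, and every finite subpath of every $\ell$ in the weak accumulation set $W(\xi)$ eventually occurs in that subray, so the realization of $\ell$ lies in $H$ and $\ell$ is carried by $\F$; apply this with $\F = \F_\supp(\xi)$. The reverse inclusion $\F_\supp(\xi) \sqsubset \F_\supp(W(\xi))$ is the contrapositive of~\pref{ItemEndLineSupportRelated} applied to $\F = \F_\supp(W(\xi))$. The main obstacle throughout is the diagonal construction in~\pref{ItemEndLineSupportRelated} of a bi-infinite accumulation line that actually passes through the designated edge $E$ rather than escaping to either side of it; once that is handled, items~\pref{ItemLineSequenceLimits} and~\pref{ItemWeakLimitRayAccSame} follow from compactness and the minimality property defining $\F_\supp$.
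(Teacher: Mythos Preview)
Your proof is correct and follows essentially the same approach as the paper: realize $\F$ by a subgraph $H \subset G$, argue by contradiction via an edge $E \subset G \setminus H$ crossed infinitely often, and extract a weak limit containing~$E$. The paper's own proof of~\pref{ItemEndLineSupportRelated} simply says ``proceed as in~\pref{ItemLineSequenceLimits}'' where you spell out the diagonal argument; conversely, in~\pref{ItemLineSequenceLimits} the cleanest way to ensure the limit still crosses $E$ is not via openness of $V(G,E)$ but rather by lifting to $\wt G$ and working in the compact Hausdorff space of lines through a fixed lift $\wt E$ (or, equivalently, by the same diagonal argument you use in~\pref{ItemEndLineSupportRelated}).
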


\begin{proof} If the conclusion of~\pref{ItemLineSequenceLimits} fails then some subsequence $\ell_{i_n}$ is not carried by $\F$. Choose a marked graph $G$ and a core subgraph $H$ representing $\F$. The realization of $\ell_{i_n}$ in $G$ is not contained in $H$ and so, after passing to a subsequence, each $\ell_{i_n}$ contains some edge $E \subset G-H$. It follows that some weak limit of some subsequence is a line that contains $E$, and so is not contained in $H$ and is not carried by $\F$. 

To prove \pref{ItemEndLineSupportRelated}, if the ray $r$ in $G$ realizes an abstract ray not carried by $\F$ then there is an edge $E \subset G-H$ that the ray $r$ crosses infinitely many times, and we can proceed as in~\pref{ItemLineSequenceLimits}.

To prove \pref{ItemWeakLimitRayAccSame}, given an ray $\xi \in \bdy F_n / F_n$ with weak accumulation set $\Lambda \subset \B$, the inclusion $\F_\supp(\xi) \sqsubset \F_\supp(\Lambda)$ is a consequence of~\pref{ItemEndLineSupportRelated}, and the reverse inclusion $\F_\supp(\Lambda) \sqsubset \F_\supp(\xi)$ follows by Fact~\ref{FactLineRealizedCarried}~\pref{ItemRayRealizedCarried}.
\end{proof}

\subsection{Attracting laminations.} 
\label{SectionAttractingLams}

We recall here the basic definitions and properties of attracting laminations proved in Section~3 of \BookOne.

A closed subset of $\B=\B(F_n)$ is called a \emph{lamination}, or an \emph{$F_n$-lamination} when emphasis is needed. An element of a lamination is called a \emph{leaf}. The action of $\Out(F_n)$ on $\B$ induces an action on the set of laminations. 

For each marked graph $G$ the homeomorphism $\B \approx \B(G)$ induces a bijection between $F_n$-laminations and closed subsets of $\B(G)$. The closed subset of $\B(G)$ corresponding to a lamination $\Lambda \subset \B$ is called the \emph{realization} of $\Lambda$ in~$G$; we often conflate $\Lambda$ with its realizations in marked graphs. Also, we occasionally use the term lamination to refer to $F_n$-invariant, closed subsets of $\wt \B$; the orbit map $\wt\B \mapsto \B$ puts these in natural bijection with laminations in~$\B$. 

A line $\ell \in \B$ is \emph{birecurrent} if $\ell$ is in the weak accumulation set of every subray of $\ell$. A lamination $\Lambda$ is \emph{minimal} if each of its leaves is dense in $\Lambda$. Every leaf of a minimal lamination is birecurrent, and a connected lamination in which every leaf is birecurrent is minimal.

\begin{definition}[Attracting laminations]
\label{DefAttractingLaminations}
Given $\phi \in \Out(F_n)$ and a lamination $\Lambda \subset \B$, we say that $\Lambda$ is an \emph{attracting lamination} for $\phi$ if there exists a leaf $\ell \in \Lambda$ satisfying the following: $\Lambda$ is the weak closure of $\ell$; $\ell$ is a birecurrent; $\ell$ is not the axis of the conjugacy class of a generator of a rank one free factor of $F_n$; and there exists $p \ge 1$ and a weak open set $U \subset \B$ such that $\phi^p(U) \subset U$ and such that $\{\phi^{kp}(U) \suchthat k \ge 1\}$ is a weak neighborhood basis of $\ell$. Any such leaf $\ell$ is called a \emph{generic leaf} of $\Lambda$, and any such neighborhood $U$ is called an \emph{attracting neighborhood of $\Lambda$ for the action of $\phi^p$}. Let $\L(\phi)$ be the set of attracting laminations for~$\phi$. 
\end{definition}

\textbf{Remark.} Given $\Lambda \in \L(\phi)$, if $\phi(\Lambda)=\Lambda$ then in Definition~\ref{DefAttractingLaminations} we may assume $p=1$, because if $p>1$ then by Fact~\ref{FactLamsBasicProps}~\pref{ItemLphiFiniteAndInv} we may replace $U$ by $U \intersect \phi(U) \intersect\cdots\intersect \phi^{p-1}(U)$. In general we may assume that $p$ is the period of~$\Lambda$, meaning the least integer for which $\phi(p)=\Lambda$. 


\begin{fact}
\label{FactLamsBasicProps}
For each $\phi \in \Out(F_n)$ the following hold:
\begin{enumerate}
\item\label{ItemLphiFiniteAndInv}
$\L(\phi)$ is finite and $\phi$-invariant.
\item\label{ItemLamFFSConnected}
For all $\Lambda \in \L(\phi)$ and each generic leaf $\ell$ of $\Lambda$ we have $\F_\supp(\Lambda)=\F_\supp(\ell)$, and this free factor system has one component.
\item For all $\Lambda,\Lambda' \in \L(\phi)$ we have $\Lambda=\Lambda'$ if and only if $\F_\supp(\Lambda)=\F_\supp(\Lambda')$.
\item\label{ItemDualityDef}
There is a bijection $\L(\phi) \leftrightarrow \L(\phi^\inv)$ defined by $\Lambda_+ \leftrightarrow \Lambda_-$ if and only if $\F_\supp(\Lambda_+) = \F_\supp(\Lambda_-)$.\qed
\end{enumerate}
\end{fact}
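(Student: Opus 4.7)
The plan is to derive each item from the attracting lamination theory of \BookOne\ Section~3 together with the CT machinery reviewed above. For (1), $\phi$-invariance is immediate: given $\Lambda \in \L(\phi)$ with generic leaf $\ell$ and attracting neighborhood $U$ satisfying $\phi^p(U) \subset U$, the leaf $\phi(\ell)$ is birecurrent (the action of $\phi$ on $\B$ is a homeomorphism), its weak closure is $\phi(\Lambda)$, the open set $\phi(U)$ satisfies $\phi^p(\phi(U)) \subset \phi(U)$, and the ``not an axis of a generator of a rank-one free factor'' condition is preserved since $\phi$ permutes conjugacy classes of such generators. For finiteness, I would pass to a rotationless power $\phi^d$ with CT representative $f \from G \to G$: each $\Lambda \in \L(\phi^d)$ arises as the weak closure of $\lim_k f^k_\#(E)$ for some edge $E$ in an EG stratum of $f$, so $\L(\phi^d)$ is bounded in size by the number of EG strata; since $\L(\phi) \subset \L(\phi^d)$ (any attracting neighborhood datum for $\phi$ yields one for $\phi^d$), finiteness of $\L(\phi)$ follows.

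For (2), since $\Lambda$ is by definition the weak closure of the generic leaf $\ell$, the inclusion $\F_\supp(\ell) \sqsubset \F_\supp(\Lambda)$ is trivial from $\ell \in \Lambda$. For the reverse, realize $\F_\supp(\ell)$ as $[\pi_1 H]$ for a core subgraph $H \subset G$ of a marked graph: the realization of $\ell$ in $G$ lies in $H$ by Fact~\ref{FactLineRealizedCarried}, and since $H$ is closed in $G$ every weak limit of that realization also lies in $H$, so $\F_\supp(\ell)$ carries $\Lambda$ and hence $\F_\supp(\Lambda) \sqsubset \F_\supp(\ell)$. That this single free factor system has one component follows from Fact~\ref{FactFFSPolyglot}~\itemref{ItemFFSSingle}.

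For (3), the ``only if'' direction is trivial. For ``if'', pass again to a rotationless power $\phi^d$ with a CT $f \from G \to G$; each $\Lambda_r \in \L(\phi^d)$ corresponds bijectively to an EG stratum $H_r$ sitting at a specific filtration level, and the free factor support of $\Lambda_r$ is determined by that level (carried by $[\pi_1 G_r]$ but not by $[\pi_1 G_{r-1}]$). Distinct EG strata therefore give distinct supports, so $\F_\supp$ is injective on $\L(\phi^d)$, and naturality of $\F_\supp$ under $\phi$ (Fact~\ref{FactFFSPolyglot}~\itemref{ItemFFSPolyNatural}) then descends this injectivity to $\L(\phi)$ itself.

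The main obstacle is (4). The plan is to invoke the existence of CTs for rotationless powers of both $\phi$ and $\phi^\inv$, and to exhibit a canonical bijection between their EG strata preserving the free factor support of the associated attracting lamination---this being the ``dual lamination pair'' construction alluded to in the introduction and developed in \BookOne. Combined with the injectivity of $\F_\supp$ from (3) applied on both sides, this yields the desired bijection $\L(\phi) \leftrightarrow \L(\phi^\inv)$ characterized by equality of free factor support.
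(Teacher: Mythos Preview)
The paper does not prove this Fact at all: note the \qed\ terminating the statement. It is recorded as a citation to \BookOne\ Section~3, where items (1)--(3) appear as Lemma~3.1.13, Lemma~3.1.15, and Corollary~3.3.1, and item (4) is Lemma~3.2.4. So there is no ``paper's own proof'' to compare against; the paper treats this as background.

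Your sketch for (1)--(2) is fine and is essentially what \BookOne\ does. For (3) your argument is correct in spirit but be careful about circularity: you invoke the bijection between EG strata and elements of $\L(\phi)$, which in this paper is stated only later as Fact~\ref{FactLamsAndStrata}, and in \BookOne\ is developed in tandem with the present statement rather than as a prior input. In \BookOne\ the injectivity of $\F_\supp$ on $\L(\phi)$ (Corollary~3.3.1) is actually proved via the ``expansion factor'' homomorphism $\PF_\Lambda$ and the fact that distinct laminations with equal support would force contradictory expansion behavior; your filtration-level argument works too once the stratum correspondence is in hand, but you should flag that dependence.

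For (4) you have not given an argument, only restated what is to be proved. The actual content in \BookOne\ (Lemma~3.2.4, building on Lemma~3.2.2--3.2.3) is nontrivial: one shows that for $\Lambda^+ \in \L(\phi)$ with support $[F]$, the restriction $\phi^{\pm 1} \restrict F$ each have a unique attracting lamination with full support $[F]$, and these are the dual pair. This uses the expansion factor machinery and is not a formal consequence of the CT existence theorem alone. Since the paper itself defers this to \BookOne, your deferral is no worse than the paper's, but you should not describe it as a ``plan'' when it is really a citation.
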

\noindent

\begin{definition}[Dual pairs of laminations]
\label{DefinitionDualLams}
The bijection defined in item~\pref{ItemDualityDef} is called \emph{duality} between $\L(\phi)$ and $\L(\phi^\inv)$, a pair $\Lambda^\pm = (\Lambda^+,\Lambda^-)$ corresponding in this manner is called a \emph{dual pair} of laminations of $\phi$, and the set of dual pairs is denoted $\L^\pm(\phi)$. 
\end{definition}

\textbf{Remark.}
In general the action of $\phi$ on the finite set $\L(\phi)$ may be a nontrivial permutation. But in certain circumstances one sees that each element of $\L(\phi)$ is fixed: this holds for $\phi \in \IA_n(\Z/3)$ by Lemma~\refRK{LemmaFFSComponent}\footnote{``Lemma II.X.Y'' or ``Section II.V.W'' refers to Lemma X.Y or Section V.W of \PartTwo.}, and for rotationless $\phi$ (see Definition~\ref{DefPrinicipalAndRotationless} below) by Lemma~3.30 of \recognition.

\subsection{Principal automorphisms and rotationless outer automorphisms.} 
\label{SectionPrincipalRotationless}
The normal subgroup $\Inn(F_n)$ acts by conjugation on $\Aut(F_n)$, and the orbits of this action define an equivalence relation on $\Aut(F_n)$ called \emph{isogredience}, which is a refinement of the relation of being in the same outer automorphism class. In this section we shall define various isogredience invariants of elements of $\Aut(F_n)$.

\begin{notn}  One inevitably abbreviates `forward rotationless' to `rotationless'   so we will do that from the start. Throughout this series of papers, forward rotationless outer automorphisms are referred to as rotationless outer automorphisms.
\end{notn}      

\begin{definition}\label{DefHatAndFix}
The group $\Aut(F_n)$ acts on $\bdy F_n$. Let $\wh\Phi \from \bdy F_n \to \bdy F_n$ denote the continuous extension of the action of $\Phi \in \Aut(F_n)$, and $\Fix(\wh\Phi) \subset \bdy F_n$ its set of fixed points. Let $\Fix(\Phi) \subgroup F_n$ denote the subgroup of elements fixed by $\Phi$, shown first to be of finite rank in \cite{Cooper:automorphisms} and then to be of rank~$\le n$ in \cite{BestvinaHandel:tt}. Its boundary $\bdy \Fix(\Phi) \subset \bdy F_n$ is empty when $\Fix(\Phi)$ is trivial, 2 points when $\Fix(\Phi)$ has rank~1, and a Cantor set when $\Fix(\Phi)$ has rank~$\ge 2$. Let $\Fix_+(\wh\Phi)$ denote the set of attractors in $\Fix(\wh\Phi)$, a discrete subset consisting of points $\xi \in \Fix(\wh\Phi)$ such that for some neighborhood $U \subset \bdy F_n$ of $\xi$  we have $\wh\Phi(U) \subset U$ and the sequence $\wh\Phi^n(\eta)$ converges to $\xi$ for each $\eta \in U$. Let $\Fix_-(\wh\Phi) = \Fix_+(\wh\Phi^\inv)$ denote the set of repellers in $\Fix(\wh\Phi)$. For a nontrivial $\gamma \in F_n$ the associated homeomorphism at infinity, denoted $\hat\gamma \from \bdy F_n \to \bdy F_n$, is identical to continuous extension of the action of the associated inner automorphism $i_\gamma \in \Inn(F_n)$, and we denote the 2-point fixed set as $\Fix(\hat\gamma) = \{\Fix_-(\hat\gamma),\Fix_+(\hat\gamma)\}$. The following lemma combines results of \cite{GJLL:index} and \cite{BFH:Solvable}; see \recognition\ Lemma~2.1 for detailed citations.
\end{definition}

\begin{fact} \label{FactFPBasics} 
For any $\Phi \in \Aut(F_n)$, and for any nontrivial $\gamma \in F_n$ with the following are equivalent:

(1) $\Fix(\hat\gamma) \intersect \Fix(\wh\Phi) \ne \emptyset$

(2) $\Fix(\hat\gamma) \subset \Fix(\wh\Phi)$ 

(3) $\gamma \in \Fix(\Phi)$

(4) $\Phi$ commutes with $i_\gamma$.\qed
\end{fact}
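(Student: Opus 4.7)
My plan is to prove $(3) \iff (4)$ directly, combine it with the easy implications $(3) \Rightarrow (2) \Rightarrow (1)$, and then tackle $(1) \Rightarrow (3)$, which is the main obstacle.

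The equivalence $(3) \iff (4)$ is purely algebraic and follows from the standard identity $\Phi \composed i_\gamma \composed \Phi^\inv = i_{\Phi(\gamma)}$. Thus $\Phi$ commutes with $i_\gamma$ iff $i_{\Phi(\gamma)} = i_\gamma$, which holds iff $\Phi(\gamma) = \gamma$, using that $F_n$ has trivial center for $n \ge 2$. For $(3) \Rightarrow (2)$: the same identity with $\Phi(\gamma)=\gamma$ yields $\wh\Phi \hat\gamma \wh\Phi^\inv = \hat\gamma$, so $\wh\Phi$ permutes the two-point set $\Fix(\hat\gamma) = \{\Fix_-(\hat\gamma), \Fix_+(\hat\gamma)\}$; since $\wh\Phi$ conjugates $\hat\gamma$ to itself rather than to its inverse, it must preserve attractor and repeller separately, giving $\Fix(\hat\gamma) \subset \Fix(\wh\Phi)$. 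The implication $(2) \Rightarrow (1)$ is immediate from $\Fix(\hat\gamma) \ne \emptyset$.

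For $(1) \Rightarrow (3)$, pick $\xi \in \Fix(\hat\gamma) \intersect \Fix(\wh\Phi)$. Applying the identity $\wh\Phi \hat\gamma \wh\Phi^\inv = \widehat{\Phi(\gamma)}$ to $\xi$ yields $\widehat{\Phi(\gamma)}(\xi) = \xi$, so $\gamma$ and $\Phi(\gamma)$ share a boundary fixed point. Two nontrivial elements of $F_n$ with a common fixed end have a common axis in $\wt R_n$ and are therefore powers of a common root-free element $g$, say $\gamma = g^a$ and $\Phi(\gamma) = g^b$ with $a,b \ne 0$. Applying $\Phi$ to $\gamma = g^a$ gives $\Phi(g)^a = g^b$; since in a free group the relation $x^a = y^b$ with $a,b \ne 0$ forces $x,y$ to commute, and since commuting elements of $F_n$ lie in a common cyclic subgroup, we deduce $\Phi(g) = g^m$ for some integer $m$ with $ma = b$. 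Running the same argument with $\Phi^\inv$ gives $\Phi^\inv(g) = g^{m'}$ with $mm' = 1$, forcing $m = \pm 1$.

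The final step, and the main obstacle, is to rule out the parasitic case $m = -1$. If $\Phi(g) = g^\inv$ then $\Phi(\gamma) = \gamma^\inv$, so $\wh\Phi \hat\gamma \wh\Phi^\inv = \widehat{\gamma^\inv}$, and therefore $\wh\Phi$ must interchange $\Fix_+(\hat\gamma)$ and $\Fix_-(\hat\gamma)$ (the attractor of $\hat\gamma$ being the repeller of $\widehat{\gamma^\inv}$ and vice versa). This contradicts $\wh\Phi(\xi) = \xi$ for $\xi \in \Fix(\hat\gamma)$. Hence $m = 1$, giving $\Phi(g) = g$ and therefore $\Phi(\gamma) = \gamma$, which establishes $(3)$. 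Everything preceding this orientation step only yields that $\gamma$ and $\Phi(\gamma)$ are commuting powers of a common root; it is the distinction between attractor and repeller in $\Fix(\hat\gamma)$ that is essential to eliminating the inversion.
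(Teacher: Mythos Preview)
Your proof is correct. The paper itself does not prove this fact: it is stated with a \qed\ and attributed to \cite{GJLL:index} and \cite{BFH:Solvable}, with a pointer to \recognition\ Lemma~2.1 for detailed citations. So there is no proof in the paper to compare against; you have supplied a clean self-contained argument where the paper relies on the literature.

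One small streamlining: once you have $\Phi(g)=g^m$ with $g$ root-free, you can avoid the appeal to $\Phi^\inv$ by noting that automorphisms take root-free elements to root-free elements, forcing $m=\pm 1$ immediately. Your orientation argument ruling out $m=-1$ is exactly right and is indeed the substantive point in $(1)\Rightarrow(3)$.
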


The following simple corollary of Fact~\ref{FactFPBasics} will be used often without comment: 


\begin{fact}\label{FactTwoLifts}  
If $\Phi_1 \ne \Phi_2 \in \Aut(F_n)$ represent the same outer automorphism then $\Fix(\wh\Phi_1) \intersect \Fix(\wh\Phi_2)$ is either empty or equal to $\Fix(\hat\gamma)$ where $\Phi_1^\inv \Phi_2 = i_\gamma$.\qed
\end{fact}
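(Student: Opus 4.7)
The plan is to chase the definitions using the relation $\Phi_2 = \Phi_1 \circ i_\gamma$, which on boundaries becomes $\wh\Phi_2 = \wh\Phi_1 \circ \hat\gamma$, and then quote Fact~\ref{FactFPBasics} to propagate fixed points from a single point to the entire $\Fix(\hat\gamma)$. One inclusion shows that the intersection lies inside $\Fix(\hat\gamma)$, the other shows that $\Fix(\hat\gamma)$ lies inside the intersection whenever the intersection is nonempty.

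For the first inclusion, suppose $\xi \in \Fix(\wh\Phi_1) \cap \Fix(\wh\Phi_2)$. Then
\[
\xi = \wh\Phi_2(\xi) = \wh\Phi_1(\hat\gamma(\xi)),
\]
and comparing with $\wh\Phi_1(\xi)=\xi$, injectivity of $\wh\Phi_1$ gives $\hat\gamma(\xi)=\xi$, so $\xi \in \Fix(\hat\gamma)$. In particular $\Fix(\hat\gamma) \cap \Fix(\wh\Phi_1)$ is nonempty, so the equivalence (1)$\Leftrightarrow$(2) of Fact~\ref{FactFPBasics} upgrades this to $\Fix(\hat\gamma) \subset \Fix(\wh\Phi_1)$.

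For the reverse inclusion, assume the intersection is nonempty, so by the previous step $\Fix(\hat\gamma) \subset \Fix(\wh\Phi_1)$. Then for any $\eta \in \Fix(\hat\gamma)$ we compute $\wh\Phi_2(\eta) = \wh\Phi_1(\hat\gamma(\eta)) = \wh\Phi_1(\eta) = \eta$, so $\Fix(\hat\gamma) \subset \Fix(\wh\Phi_2)$ as well. Combining both directions gives the claimed equality. No step looks delicate; the only thing to keep in mind is the order of composition ($\Phi_1^{-1}\Phi_2 = i_\gamma$ means $\Phi_2 = \Phi_1 \circ i_\gamma$ and hence $\wh\Phi_2 = \wh\Phi_1 \circ \hat\gamma$), and that Fact~\ref{FactFPBasics} is exactly what lets one pass from a single shared fixed point of $\hat\gamma$ and $\wh\Phi_1$ to the entire two-point set $\Fix(\hat\gamma)$.
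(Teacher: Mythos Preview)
Your proof is correct and is exactly the intended argument: the paper states this fact as an immediate corollary of Fact~\ref{FactFPBasics} and gives no further proof, and your two-inclusion argument using $\wh\Phi_2 = \wh\Phi_1 \circ \hat\gamma$ together with the implication (1)$\Rightarrow$(2) of Fact~\ref{FactFPBasics} is precisely how one unpacks that corollary.
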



The following statement is a consequence of \cite{GJLL:index}, Proposition I.1. 

\begin{fact}
\label{LemmaFixPhiFacts} 
For each $\Phi \in \Aut(F_n)$ we have 
$$\Fix(\wh\Phi) = \bdy\Fix(\Phi) \union \Fix_-(\wh\Phi) \union \Fix_+(\wh\Phi) \qquad (*)
$$
and each term on both sides of $(*)$ is invariant under the action of $\Fix(\Phi)$ on $\bdy F_n$. Also, if $\Fix(\wh\Phi) \ne \Fix(\hat\gamma)$ for each nontrivial $\gamma \in \F_n$ then the following hold:
\begin{enumerate}
\item\label{ItemFixDisjUnion}
The expression $(*)$ is a disjoint union.
\item\label{ItemStrongAttracting}
Each $\xi \in \Fix_+(\wh\Phi)$ is an attracting point for the action of $\overline\Phi = \Phi \union \wh\Phi$ on the Gromov compactification $\overline F_n = F_n \union \bdy F_n$. 
\item\label{ItemFixPhiInvariance}
The points of $\Fix_+(\wh\Phi) \union \Fix_-(\wh\Phi)$ are the isolated points of the set $\Fix(\wh\Phi)$.
\end{enumerate}
\end{fact}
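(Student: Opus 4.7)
The plan is to derive the statement as essentially a bookkeeping consequence of \cite{GJLL:index}, Proposition I.1, which directly establishes the decomposition~$(*)$ by proving that every non-isolated fixed point of $\wh\Phi$ lies in $\bdy\Fix(\Phi)$ and that every isolated fixed point is either an attractor or a repeller. For the $\Fix(\Phi)$-invariance of each term in~$(*)$, I would argue as follows. If $\gamma \in \Fix(\Phi)$ then Fact~\ref{FactFPBasics} gives $\Phi \composed \inner_\gamma = \inner_\gamma \composed \Phi$, so after passing to continuous extensions $\hat\gamma$ commutes with $\wh\Phi$ on $\bdy F_n$; hence $\hat\gamma$ permutes $\Fix(\wh\Phi)$ and, being a homeomorphism, it preserves the topologically characterized subsets $\Fix_+(\wh\Phi)$ and $\Fix_-(\wh\Phi)$ as well. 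The set $\bdy\Fix(\Phi)$ is $\Fix(\Phi)$-invariant by definition.

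Now suppose $\Fix(\wh\Phi) \ne \Fix(\hat\gamma)$ for every nontrivial $\gamma \in F_n$. For the disjointness assertion~\pref{ItemFixDisjUnion}, note that $\Fix_+(\wh\Phi) \intersect \Fix_-(\wh\Phi) = \emptyset$ is immediate from the attractor versus repeller dichotomy. To rule out overlap of $\bdy\Fix(\Phi)$ with $\Fix_\pm(\wh\Phi)$, I would case-split on the rank $r$ of $\Fix(\Phi)$: when $r=0$ the intersection is empty vacuously; when $r \ge 2$ the set $\bdy\Fix(\Phi)$ is a Cantor subset of $\bdy F_n$ with no isolated points, while $\Fix_\pm(\wh\Phi)$ are discrete, hence disjoint; when $r=1$ with $\Fix(\Phi) = \<\gamma\>$ we would have $\bdy\Fix(\Phi) = \Fix(\hat\gamma)$, and I would use Fact~\ref{FactFPBasics} to show that $\Fix(\hat\gamma) \subset \Fix(\wh\Phi)$ and then combine with the exclusion hypothesis $\Fix(\wh\Phi) \ne \Fix(\hat\gamma)$ to produce an additional isolated attractor or repeller in $\Fix(\wh\Phi) \setminus \bdy\Fix(\Phi)$, yielding disjointness. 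Statement~\pref{ItemFixPhiInvariance} then drops out: $\Fix_+ \union \Fix_-$ consists of isolated points by definition, and under the same case analysis $\bdy\Fix(\Phi)$ has no isolated points in $\Fix(\wh\Phi)$.

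For the strong attracting assertion~\pref{ItemStrongAttracting}, the plan is to promote the boundary attractor property at $\xi$ to one on the Gromov compactification $\overline F_n = F_n \union \bdy F_n$. Given a neighborhood $U$ of $\xi$ in $\bdy F_n$ with $\wh\Phi(\overline U) \subset U$ and iterates sweeping $U$ to $\xi$, I would produce a neighborhood $\wh U$ of $\xi$ in $\overline F_n$ by attaching to $U$ the exterior of a large word-metric ball in $F_n$, then use that any $\Phi \in \Aut(F_n)$ is bi-Lipschitz on the Cayley graph (Fact~\ref{FactFiniteRankSubgroup}~\pref{ItemFiniteRankUndistorted} applied to the graph of $\Phi$, or a direct word-length bound) to check that $\overline\Phi(\wh U) \subset \wh U$ and that orbits of points of $\wh U$ converge to $\xi$ in $\overline F_n$; the key point is that large enough elements of $F_n$ lying in $\wh U \intersect F_n$ have their images pushed into the annular shell near~$\xi$.

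The main obstacle I anticipate is the rank-one case of $\Fix(\Phi)$ in the disjointness argument: this is precisely the configuration in which the exclusion hypothesis $\Fix(\wh\Phi) \ne \Fix(\hat\gamma)$ is doing real work, and one has to carefully unpack how the two boundary fixed points of $\hat\gamma$ are situated inside $\Fix(\wh\Phi)$ relative to any additional attractors or repellers. The rest of the argument is largely a packaging of the GJLL result together with the commutation formalism already recorded in Fact~\ref{FactFPBasics}.
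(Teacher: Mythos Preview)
Your overall structure follows the paper closely: cite \cite{GJLL:index} Proposition~I.1 for the decomposition $(*)$, use the commutation from Fact~\ref{FactFPBasics} for $\Fix(\Phi)$-invariance, and do a rank case-split for disjointness and isolation. Two points need attention, one minor and one a genuine gap.

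\textbf{The rank-one case.} You produce, via the exclusion hypothesis, an additional point $p \in \Fix_\pm(\wh\Phi) \setminus \bdy\Fix(\Phi)$, and then assert ``yielding disjointness''. You have not said why. The missing step (which the paper carries out) is: the $\langle\gamma\rangle$-orbit of $p$ lies in $\Fix(\wh\Phi)$ by invariance and accumulates on both points of $\bdy\Fix(\Phi) = \Fix(\hat\gamma)$; hence those two points are not isolated in $\Fix(\wh\Phi)$, hence not in $\Fix_\pm(\wh\Phi)$. This same accumulation argument is what gives you non-isolation of $\bdy\Fix(\Phi)$ in item~\pref{ItemFixPhiInvariance} for $r=1$; your sketch for \pref{ItemFixPhiInvariance} leaves this unsaid as well.

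\textbf{The argument for item~\pref{ItemStrongAttracting} does not work.} First, ``attach to $U$ the exterior of a large word-metric ball'' does not produce a neighborhood of $\xi$ in $\overline F_n$; it produces a neighborhood of the entire boundary. More seriously, your argument nowhere uses the hypothesis $\Fix(\wh\Phi) \ne \Fix(\hat\gamma)$, so if it worked it would prove that every $\xi \in \Fix_+(\wh\Phi)$ is an attractor on $\overline F_n$ unconditionally. That statement is false, as the paper's remark after the proof points out (compose a fixed $\Phi'$ with a high power of an inner automorphism). The paper's route is different and much shorter: GJLL's Proposition~I.1 already gives the \emph{disjoint} union $\Fix(\wh\Phi) = \bdy\Fix(\Phi) \sqcup \bdy_-\overline\Phi \sqcup \bdy_+\overline\Phi$, where $\bdy_+\overline\Phi$ is by definition the set of $\overline F_n$-attractors. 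Since trivially $\bdy_+\overline\Phi \subset \Fix_+(\wh\Phi)$ and $\bdy_-\overline\Phi \subset \Fix_-(\wh\Phi)$, once you have established the disjointness~\pref{ItemFixDisjUnion} in your notation you may compare the two decompositions to conclude $\Fix_+(\wh\Phi) = \bdy_+\overline\Phi$, which is exactly~\pref{ItemStrongAttracting}. So \pref{ItemStrongAttracting} should be obtained as a corollary of \pref{ItemFixDisjUnion} and the GJLL decomposition, not by a separate promotion argument.
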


\begin{proof} Given $\gamma \in \Fix(\Phi)$, by Fact~\ref{FactFPBasics} the actions of $\wh\Phi$ and $\hat\gamma$ on $\bdy F_n$ commute; invariance of the terms in $(*)$ under $\hat\gamma$ follows immediately.

Letting $\bdy_+\overline\Phi \subset \bdy F_n$ be the set of fixed attractors for the action of $\overline\Phi$ on $\overline F_n$, and letting $\bdy_-\overline\Phi = \bdy_+ \overline\Phi^\inv$, clearly $\bdy_+\overline\Phi \subset \Fix_+(\wh\Phi)$ and $\bdy_-\overline\Phi \subset \Fix_-(\wh\Phi)$. Applying \cite{GJLL:index}, Proposition I.1 we have the equation and the disjoint union given in the first line of the following display:
\begin{align*}
\Fix(\wh\Phi) &= \, \bdy\Fix(\Phi) \union \bdy_- \overline\Phi \union \bdy_+ \overline\Phi \\
   &\subset \bdy\Fix(\Phi) \union \Fix_-(\overline\Phi) \union  \Fix_+(\overline\Phi) \, \subset \, \Fix(\wh\Phi)
\end{align*}
and Equation~$(*)$ follows. Since $\Fix_+(\wh\Phi)$ and $\Fix_-(\wh\Phi)$ are clearly disjoint and their points are clearly isolated in $\Fix(\wh\Phi)$, what remains in proving~(\ref{ItemFixDisjUnion}--\ref{ItemFixPhiInvariance}) is to verify disjointness of $\bdy\Fix(\Phi)$ from $\Fix_\pm(\wh\Phi) = \Fix_-(\wh\Phi) \union \Fix_+(\wh\Phi)$ and nonisolation of the points of $\bdy\Fix(\Phi)$ in the set $\Fix(\wh\Phi)$. If $\Fix(\Phi)$ is trivial then $\bdy\Fix(\Phi)$ is empty. If $\rank(\Fix(\Phi)) \ge 2$ then $\bdy\Fix(\Phi)$ is a Cantor subset of $\Fix(\wh\Phi)$, whereas $\Fix_\pm(\Phi)$ is a discrete subset. If $\rank(\Fix(\Phi))=1$ and if $\Fix_\pm(\wh\Phi) \not\subset \bdy\Fix(\Phi)$ then the $\Fix(\Phi)$ orbit of any point of $\Fix_\pm(\wh\Phi) - \bdy\Fix(\Phi)$ is a subset of $\Fix(\wh\Phi)$ that accumulates on both points of $\bdy\Fix(\Phi)$. Disjointness and nonisolation evidently follows in all of these cases. The only remaining case is when $\rank(\Fix(\Phi))=1$ and $\Fix_\pm(\wh\Phi) \subset \bdy\Fix(\Phi) = \Fix(\hat\gamma)$ where $\gamma$ generates $\Fix(\Phi)$, and it follows that $\Fix(\wh\Phi) = \Fix(\hat\gamma)$.
%
\end{proof}

\subparagraph{Remark.} From the proof of Lemma~\ref{LemmaFixPhiFacts}, our set $\Fix_+(\wh\Phi)$ equals the set of attracting fixed points under the action on $\overline \Phi = \Phi \union \wh\Phi$ on $F_n \union \bdy F_n$, as defined in \cite{GJLL:index}, outside of the exceptional situation described in the statement of Lemma~\ref{LemmaFixPhiFacts}. But in that exceptional situation this equality of sets may fail, for instance if we fix $\Phi' \in \Aut(F_n)$ and let $\Phi = i_\alpha^k \Phi'$ for some inner automorphism $i_\alpha$ and some sufficiently large~$k$. 

Let $\Fix_N(\wh\Phi) = \Fix(\wh\Phi) - \Fix_-(\wh\Phi)$, which equals $\bdy\Fix(\Phi) \union \Fix_+(\wh\Phi)$ as long as $\Fix(\wh\Phi)$ is not the fixed point pair of a nontrivial element of $F_n$. Let $\Per(\wh\Phi) = \union_{k \ge 1} \Fix(\wh\Phi^k)$, and similarly for $\Per_+(\wh\Phi)$, $\Per_-(\wh\Phi)$, and $\Per_N(\wh\Phi)$.

\begin{fact}[\cite{FeighnHandel:recognition} Lemmas~3.23 and 3.26, or \cite{LevittLustig:PeriodicDynamics} Theorem I]\rc\
\label{FactPeriodicNonempty}
For every $\Phi \in \Aut(F_n)$ there exists $m > 0$ such that $\Fix_N(\wh\Phi^m) \ne \emptyset$. \qed
\end{fact}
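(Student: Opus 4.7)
The plan is to use topological dynamics on a relative train track representative to produce, after passing to a power, either an attracting fixed point of $\wh\Phi^m$ in $\bdy F_n$ or a nontrivial fixed element of $F_n$ under $\Phi^m$; either outcome lies in $\Fix_N(\wh\Phi^m)$.

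First I would set up the representative. Take a relative train track representative $f \from G \to G$ of $\phi = [\Phi]$. Since $G$ has finitely many vertices, each with finitely many directions, and $f$ permutes periodic vertices and the periodic directions at such vertices, one may replace $\Phi$ by a suitable power $\Phi^m$ (and $f$ by $f^m$) so that $f$ fixes some vertex $v$ and fixes some direction $d$ at $v$. Let $E$ be the initial edge of $d$. Pick a lift $\ti v \in \ti G$ of $v$ and let $\ti f \from \ti G \to \ti G$ be the unique lift with $\ti f(\ti v) = \ti v$; let $\Phi' \in \Aut(F_n)$ be the automorphism corresponding to $\ti f$, so $\Phi'$ represents $\phi^m$. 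Let $\ti E$ be the lift of $E$ starting at $\ti v$. The fixed-direction hypothesis gives $\ti f(\ti E) = \ti E \cdot \ti \mu_1$, and by the relative train track property iteration preserves initial segments: each $\ti f^k(\ti E)$ is an initial segment of $\ti f^{k+1}(\ti E)$.

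Second I would analyze the dynamics on $\ti E$. If the lengths $|\ti f^k(\ti E)|$ are unbounded, the nested sequence exhausts an infinite ray $\ti r \subset \ti G$ from $\ti v$ with endpoint $\xi \in \bdy \ti G = \bdy F_n$. By continuity $\wh\Phi'(\xi) = \xi$; moreover $\xi$ is attracting, since any $\eta \in \bdy F_n$ close enough to $\xi$ is the endpoint of a geodesic in $\ti G$ from $\ti v$ that begins in direction $d$ and shares a long initial segment with $\ti r$, and $\ti f$ lengthens that shared segment, pushing $\wh\Phi'^k(\eta)$ toward $\xi$. Thus $\xi \in \Fix_+(\wh\Phi') \subseteq \Fix_N(\wh\Phi')$, completing the proof in this case. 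If instead the lengths are bounded, the nested sequence stabilizes to a fixed path $\ti p$ from $\ti v$ to a new vertex $\ti v_1$; in particular $\ti v_1$ is a fixed vertex of $\ti f$. Repeating the analysis at $\ti v_1$ with a fixed direction there---which exists after a further power, absorbed into $m$---either yields unbounded growth (recovering the previous case) or produces a further fixed vertex $\ti v_2$, and so on. If this chain $\ti v = \ti v_0, \ti v_1, \ti v_2, \ldots$ continues indefinitely without unbounded growth, then finiteness of the vertex set of $G$ gives indices $i < j$ with $\ti v_i, \ti v_j$ projecting to a common vertex. Let $\gamma \in F_n$ be the deck transformation sending $\ti v_i$ to $\ti v_j$; evaluating the defining relation $\ti f \circ \alpha = \Phi'(\alpha) \circ \ti f$ at $\ti v_i$ with $\alpha = \gamma$ gives $\Phi'(\gamma)(\ti v_i) = \gamma(\ti v_i)$, hence $\Phi'(\gamma) = \gamma$. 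So $\gamma \in \Fix(\Phi') \setminus \{1\}$, whence $\bdy\Fix(\Phi') \ne \emptyset$; by Lemma~\ref{LemmaFixPhiFacts}, $\bdy\Fix(\Phi') \subseteq \Fix_N(\wh\Phi')$ (or one handles the degenerate case $\Fix(\wh\Phi') = \Fix(\hat\gamma)$ by noting that the attracting fixed point of $\hat\gamma$ already belongs to $\Fix_N$).

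The main obstacle is the bounded-growth subcase, which requires patiently extending chains of fixed edges in $\ti G$ and then invoking the pigeonhole principle on the finite vertex set of $G$ together with the standard identification (Fact~\ref{FactFPBasics}) of coincident fixed lifts with elements of $\Fix(\Phi')$. Establishing nestedness of $\ti f^k(\ti E)$ rigorously uses bounded cancellation and the relative train track property at the successive vertices visited; this combinatorial book-keeping is essentially the content of \recognition, Lemmas~3.23 and~3.26, with a parallel argument given in \cite{LevittLustig:PeriodicDynamics}, Theorem~I.
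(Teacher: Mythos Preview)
The paper does not give its own proof of this fact; it simply cites \recognition\ Lemmas~3.23 and~3.26 and \cite{LevittLustig:PeriodicDynamics} Theorem~I and marks the statement with a \qed. So there is no ``paper's proof'' to compare against beyond those references.

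Your argument has a genuine gap. The statement concerns a \emph{given} $\Phi \in \Aut(F_n)$ and asserts $\Fix_N(\wh{\Phi^m}) \ne \emptyset$ for some $m$. But after passing to a power of $f$, you \emph{choose} the lift $\ti f$ so that it fixes a preselected vertex $\ti v$, and the corresponding automorphism $\Phi'$ differs from $\Phi^m$ by an inner automorphism $i_c$. Your argument then establishes $\Fix_N(\wh{\Phi'}) \ne \emptyset$, not $\Fix_N(\wh{\Phi^m}) \ne \emptyset$, and there is no general mechanism for transferring this across a composition with $\hat c$: the sets $\Fix(\wh{\Phi^m})$ and $\Fix(\wh{i_c \Phi^m})$ can be entirely unrelated.

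The fix is to work from the start with the lift $\ti f \from \wt G \to \wt G$ corresponding to the given $\Phi$, and analyze the forward orbit $\ti f^k(\ti v)$ of some (any) vertex. If this orbit stays bounded then, by local finiteness of $\wt G$, some iterate $\ti f^m$ genuinely fixes a vertex of $\wt G$, and your ray/chain analysis can begin there (now for $\Phi^m$ itself, not a conjugate). If the orbit is unbounded, one shows --- this is where bounded cancellation is actually used, and is the nontrivial content of the cited lemmas --- that the orbit converges to a point of $\bdy F_n$ lying in $\Fix_N(\wh{\Phi^m})$. A secondary issue: in your bounded-growth subcase you repeatedly ``absorb into $m$'' further powers to obtain fixed directions at successive vertices $\ti v_1, \ti v_2, \ldots$; you must ensure this terminates, which is handled by passing once at the outset to a power for which \emph{all} periodic vertices and directions are fixed.
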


\bigskip

The following combines Definitions~3.1 and~3.13 of \recognition:

\begin{definition}[Principal automorphisms and rotationless outer automorphisms.]
\label{DefPrinicipalAndRotationless}
We say that $\Phi \in \Aut(F_n)$ is a \emph{principal automorphism} if $\Fix_N(\wh\Phi)$ contains at least three points, or $\Fix_N(\wh\Phi)$ contains exactly two points which are neither the endpoints of some axis nor the endpoints of a lift of a generic leaf of an element of $\L(\phi)$. Note that if $\Phi$ is principal then by Fact~\ref{LemmaFixPhiFacts} we have a disjoint union $\Fix_N(\wh\Phi)  = \bdy\Fix(\Phi) \union \Fix_+(\wh\Phi)$. The set of principal automorphisms representing $\phi \in \Out(F_n)$ is denoted $P(\phi)$, it is invariant under isogredience, and by \recognition\ Remark~3.9 there are only finitely many is isogredience classes.

We say that $\phi \in \Out(F_n)$ is \emph{(forward) rotationless} if $\Fix_N(\wh\Phi) = \Per_N(\wh\Phi)$ for all $\Phi \in P(\phi)$, and if for each $k \ge 1$ the map $\Phi \mapsto \Phi^k$ induces a bijection between $P(\phi)$ and $P(\phi^k)$.  
\end{definition}

\begin{fact}[\recognition\ Lemma 4.42]\rc\
\label{FactRotationlessPower}
There exists $K$ depending only on the rank $n$ such that for each $\phi \in \Out(F_n)$, $\phi^K$ is rotationless. \hfill\qed
\end{fact}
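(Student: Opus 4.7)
The plan is to reduce forward rotationlessness to finitely many discrete conditions on $\phi$, each governed by cardinalities bounded uniformly in $n$, and then take $K$ to be a common multiple of the factorials of those bounds. The key input is the Gaboriau--Jaeger--Levitt--Lustig index inequality for free group automorphisms, which bounds the total index of the isogredience classes of principal automorphisms representing $\phi$ (and $\phi^k$ for any $k$) by $2n-2$. Two consequences follow: there is a bound $N(n)$ on the number of isogredience classes in $P(\phi)$ or $P(\phi^k)$, and a bound $M(n)$ on $|\Fix_N(\wh\Phi)|$ for each principal $\Phi$, using that $\rank(\Fix(\Phi)) \le n$ by \BH\ and that the points of $\Fix_+(\wh\Phi)$ contribute additively to the total index.

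To arrange the first rotationless condition $\Fix_N(\wh\Phi) = \Per_N(\wh\Phi)$ for every $\Phi \in P(\phi)$, I would observe that the ascending chain $\Fix_N(\wh\Phi) \subset \Fix_N(\wh{\Phi^2}) \subset \Fix_N(\wh{\Phi^3}) \subset \cdots$ lies inside $\Per_N(\wh\Phi)$, which has at most $M(n)$ points by the index bound; the chain therefore stabilizes at some index $m \le M(n)$. The permutation action of $\wh\Phi$ on the stabilized set has order dividing $M(n)!$, so replacing $\phi$ by $\phi^{M(n)!}$ makes every periodic nonrepelling endpoint fixed, simultaneously for every class of $P(\phi)$.

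For the bijection clause, the power map $\Phi \mapsto \Phi^k$ always descends to a well-defined injection of isogredience classes (using Fact~\ref{FactTwoLifts}: distinct classes have disjoint extended fixed-point data on $\bdy F_n$, and $k$-th powers preserve this disjointness). Surjectivity is the delicate point: given $\Psi \in P(\phi^k)$, one must produce a principal $k$-th root representing $\phi$. Once the first rotationless condition has been arranged for $\phi^k$, and since the target $P(\phi^k)$ has at most $N(n)$ classes, the chain of injections $P(\phi) \hookrightarrow P(\phi^{k_1}) \hookrightarrow P(\phi^{k_1 k_2}) \hookrightarrow \cdots$ stabilizes after boundedly many steps, and a further power dividing $N(n)!$ makes the injection a bijection. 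Setting $K$ to be a common multiple of $M(n)!$ and $N(n)!$ then yields a rotationless $\phi^K$ with $K$ depending only on $n$.

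The hardest step is establishing that surjectivity in the bijection clause can be forced by a uniformly bounded power: one must rule out the emergence of ``new'' principal isogredience classes in $P(\phi^k)$ with no $k$-th root principal representative of $\phi$. The standard route, used in the Feighn--Handel recognition paper, is to compare principal lifts to CT representatives of $\phi$ and $\phi^k$, where the periodic structure of Nielsen paths and fixed strata becomes transparent after a bounded power, and where one can identify the principal vertices of the CT for $\phi^k$ with periodic vertices of the CT for $\phi$ whose period is bounded in terms of $n$ alone.
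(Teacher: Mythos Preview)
The paper does not prove this statement at all: Fact~\ref{FactRotationlessPower} is stated with a terminal \qed\ and is simply a citation of \recognition\ Lemma~4.42. There is nothing in the paper to compare your proposal against beyond the bare reference.

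That said, your sketch has a genuine error. You assert a uniform bound $M(n)$ on $|\Fix_N(\wh\Phi)|$, but $\Fix_N(\wh\Phi) = \bdy\Fix(\Phi) \cup \Fix_+(\wh\Phi)$ is infinite whenever $\rank(\Fix(\Phi)) \ge 2$, since $\bdy\Fix(\Phi)$ is then a Cantor set. What the index inequality bounds is the number of $\Fix(\Phi)$-\emph{orbits} in $\Fix_+(\wh\Phi)$ together with $\rank(\Fix(\Phi))$, not the raw cardinality. Consequently your stabilization argument for the chain $\Fix_N(\wh\Phi) \subset \Fix_N(\wh{\Phi^2}) \subset \cdots$ does not go through as written: these are nested infinite sets, and one must instead track the growth of $\rank(\Fix(\Phi^k))$ and the number of $\Fix(\Phi^k)$-orbits of attractors, which is more delicate because $\Fix(\Phi^k)$ itself can grow with~$k$.

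Your final paragraph is the honest part of the proposal: the actual proof in \recognition\ does not proceed by a pure counting argument on $\bdy F_n$, but rather works on the CT side, where principal lifts correspond to principal vertices and the periodic combinatorics (Nielsen paths, fixed directions, strata) are bounded in terms of~$n$. That is the substance of Lemma~4.42, and your sketch does not supply it.
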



\begin{fact}[\recognition\ Lemma~3.30 and Corollary~3.31]\rc
 \label{FactPeriodicIsFixed}
If $\phi \in \Out(F_n)$ is rotationless then
\begin{enumerate}
\item \label{ItemPeriodicClassFixed}
Every conjugacy class in $F_n$ which is $\phi$-periodic is fixed by $\phi$.
\item $\phi$ fixes every element of $\L(\phi)$.
\item \label{ItemFreeFactorFixed}
Every free factor system in $F_n$ which is $\phi$-periodic is fixed by $\phi$.
\item If $F$ is a $\phi$-invariant free factor then $\phi \restrict F$ is rotationless. \hfill\qed
\end{enumerate}
\end{fact}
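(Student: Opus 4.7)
The plan is to build everything on part (1), exploiting both clauses of Definition~\ref{DefPrinicipalAndRotationless}: the equality $\Fix_N(\wh\Phi) = \Per_N(\wh\Phi)$ for each $\Phi \in P(\phi)$, and the bijection $\Phi \mapsto \Phi^k$ from $P(\phi)$ to $P(\phi^k)$.

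For~\pref{ItemPeriodicClassFixed}, suppose $[g]$ has $\phi$-period $k$. Choose $\Psi \in \Aut(F_n)$ representing $\phi^k$ with $\Psi(g) = g$; then $g \in \Fix(\Psi)$, so $\Fix(\hat g) \subset \bdy\Fix(\Psi) \subset \Fix_N(\wh\Psi)$ by Fact~\ref{LemmaFixPhiFacts}. A modest amount of care arranges that $\Psi$ can be taken principal (either $\Fix(\Psi)$ already has rank~$\ge 2$ and contributes $\ge 3$ points to $\Fix_N$, or one pre-multiplies by an inner automorphism to pick up an attractor in $\Fix_+(\wh\Psi)$, invoking Fact~\ref{FactPeriodicNonempty}). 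By the bijection clause of rotationlessness applied to $\phi$, we have $\Psi = \Phi^k$ for some $\Phi \in P(\phi)$. Now $\wh\Phi$ commutes with $\wh\Phi^k = \wh\Psi$ and preserves the attractor/repeller classification, so it sends $\Fix_N(\wh\Psi)$ to itself; in particular it permutes the two-point set $\Fix(\hat g)$. Thus $\Fix(\hat g) \subset \Per(\wh\Phi)$, and by the disjoint-union clause of Fact~\ref{LemmaFixPhiFacts} its points are not in $\Per_-(\wh\Phi)$, so $\Fix(\hat g) \subset \Per_N(\wh\Phi) = \Fix_N(\wh\Phi)$. Fact~\ref{FactFPBasics} then gives $\Phi(g) = g$, hence $\phi[g] = [g]$.

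For~(2), the set $\L(\phi)$ is finite and $\phi$-invariant (Fact~\ref{FactLamsBasicProps}), hence every element has a period $k$. Pick $\Lambda \in \L(\phi)$ of period $k$ and a generic leaf $\ell$. An attractor $\xi \in \Fix_+(\wh\Psi)$ for a suitable principal $\Psi \in P(\phi^k)$ can be chosen as an endpoint of a lift of $\ell$; writing $\Psi = \Phi^k$ for $\Phi \in P(\phi)$ and arguing as above—this time using that $\wh\Phi$ permutes $\Fix_+(\wh\Psi)$—forces $\xi \in \Fix_+(\wh\Phi)$, so $\Phi$ witnesses $\phi$-invariance of $\Lambda$ via its weak attracting neighborhoods. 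Part~\pref{ItemFreeFactorFixed} is then reduced to~\pref{ItemPeriodicClassFixed} using that a free factor system is the free factor support of its carried conjugacy classes (Fact~\ref{FactSubgroupSupport}): if $\F$ has $\phi$-period $k$ and $[g]$ is any $\phi$-periodic conjugacy class carried by $\F$, then $[g]$ is $\phi$-fixed by~\pref{ItemPeriodicClassFixed}, and the $\phi$-orbit of the set of such carried periodic conjugacy classes lies inside $\F$, so $\phi\F$ and $\F$ share enough common carried data to force $\phi\F \sqsubset \F$, and equality follows by symmetry.

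For~(4), malnormality of free factors makes $\phi \restrict F \in \Out(F)$ well-defined by Fact~\ref{FactMalnormalRestriction}. To check rotationlessness of $\phi \restrict F$, one identifies $P(\phi \restrict F)$ with those $\Phi \in P(\phi)$ (up to isogredience) that preserve a specific conjugate of $F$; the identifications $\Fix_N$, $\Fix_\pm$ on $\bdy F$ are induced by restriction of the corresponding sets on $\bdy F_n$, and the bijection $P(\phi) \leftrightarrow P(\phi^k)$ restricts to a bijection $P(\phi \restrict F) \leftrightarrow P((\phi \restrict F)^k)$ because $k$-th powers commute with this restriction. The rotationless defining equalities for $\phi \restrict F$ then follow from those for $\phi$. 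The main obstacle throughout is the careful bookkeeping at the final step of each argument, namely to check that a principal automorphism of a power $\phi^k$ really arises as the $k$-th power of a principal automorphism of $\phi$, and that attractor/repeller classifications of boundary fixed points pass correctly between $\wh\Phi$ and $\wh\Phi^k$; this is precisely what the two clauses of Definition~\ref{DefPrinicipalAndRotationless} are designed to secure.
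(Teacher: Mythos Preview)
The paper does not prove this Fact; it is stated with a citation to \recognition\ (Lemma~3.30 and Corollary~3.31) and a terminal \qed. So there is no in-paper proof to compare against, and I evaluate your sketch on its own merits.

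In part~\pref{ItemPeriodicClassFixed}, the phrase ``a modest amount of care arranges that $\Psi$ can be taken principal'' hides the main difficulty. If $\Fix(\Psi)$ is cyclic, generated by a root of $g$, then $\Fix_N(\wh\Psi)$ may consist of exactly the axis pair $\Fix(\hat g)$, and then $\Psi$ is \emph{not} principal by Definition~\ref{DefPrinicipalAndRotationless}. To keep $\Psi(g)=g$ you may only replace $\Psi$ by $i_h\Psi$ with $h$ centralizing $g$, and Fact~\ref{FactPeriodicNonempty} produces an attractor only after passing to a further power, which changes the exponent and so does not feed directly into the bijection $P(\phi)\to P(\phi^k)$. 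The proof in \recognition\ instead manufactures the needed principal lift via the CT machinery (principal vertices, periodic Nielsen paths), not by boundary dynamics alone.

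In part~\pref{ItemFreeFactorFixed} your reduction to~\pref{ItemPeriodicClassFixed} does not work: you would need the $\phi$-periodic conjugacy classes carried by $\F$ to have free factor support equal to $\F$, and nothing guarantees this. If $\F = \{[A]\}$ and $\phi^k \restrict A$ is atoroidal, there are no nontrivial $\phi$-periodic classes carried by $\F$ at all, so the ``shared carried data'' is empty. The argument in \recognition\ instead realizes a periodic $\F$ as a core filtration element of a CT. Your sketch for~(4) likewise asserts that $P(\phi \restrict F)$ is identified with a subset of $P(\phi)$ compatibly with taking $k$-th powers; that is true, but it is itself one of the results being cited from \recognition, not a formality.
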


\subsection {Relative train track maps and \ct s}
\label{SectionRTTandCT}

In this section we review general relative train track maps, as well as the more specialized \cts\ or completely split relative train track maps.

\subsubsection{Definitions of relative train track maps and \cts}
\label{SectionRTTDefs}

\subparagraph{Topological representatives and Nielsen paths.} 
\label{SectionTopReps}
Given $\phi \in \Out(F_n)$ a \emph{topological representative} of $\phi$ is a map $f \from G \to G$ such that $\rho \from R_n \to G$ is a marked graph, $f$ is a homotopy equivalence, $f$ takes vertices to vertices and edges to paths, and $\bar\rho \composed f \composed \rho \from R_n \to R_n$ represents $\phi$. 

A nontrivial path $\gamma$ in $G$ is a \emph{periodic Nielsen path} if there exists $k$ such that $f^k_\#(\gamma)=\gamma$. The minimal such $k$ is the \emph{period}, and if $k=1$ then $\gamma$ is a \emph{Nielsen path}. A periodic Nielsen path is \emph{indivisible} if it cannot be written as a concatenation of two nontrivial periodic Nielsen paths. We will often abuse our notion of path equivalence when talking about Nielsen paths: many statements asserting the uniqueness of an indivisible Nielsen path $\rho$, such as Fact~\ref{FactEGNPUniqueness}, should really assert uniqueness of $\rho$ \emph{up to reversal of direction}; since $\rho$ determines its reversal $\bar\rho$ and vice versa, this abuse is easily detectable and harmless.

\subparagraph{Filtrations.} A \emph{filtration}\index{filtration} of a marked graph $G$ is a strictly increasing sequence of subgraphs $G_0 \subset G_1 \subset \cdots \subset G_k = G$, each with no isolated vertices. The individual terms $G_k$ are called \emph{filtration elements}, and if $G_k$ is a core graph then it is called a \emph{core filtration element}.\index{core filtration element} The subgraph $H_k = G_k \setminus G_{k-1}$ is called the \emph{stratum of height $k$}.\index{stratum} The \emph{height}\index{height} of subset of $G$ is the minimum $k$ such that the subset is contained in $G_k$. The height of a map to $G$ is the height of the image of the map. The height of an abstract ray $\xi \in \bdy F_n / F_n$ equals $s$ if and only if every ray in $G$ realizing $\xi$ has a subray contained in $G_s$ and not contained in $G_{s-1}$; equivalently, for any ray $R$ in $G$ realizing $\xi$, the highest stratum having edges crossed infinitely often by $R$ is $H_s$. A \emph{connecting path} of a stratum $H_k$ is a nontrivial finite path $\gamma$ of height $< k$ whose endpoints are contained in $H_k$.

Given a topological representative $f \from G \to G$ of $\phi \in \Out(F_n)$, we say that $f$ \emph{respects} the filtration or that the filtration is \emph{$f$-invariant} if $f(G_k) \subset G_k$ for all $k$. If this is the case then we also say that the filtration is \emph{reduced} if for each free factor system $\A$ which is invariant under $\phi^i$ for some $i \ge 1$, if $[\pi_1 G_{r-1}] \sqsubset \A \sqsubset [\pi_1 G_r]$ then either $\A = [\pi_1 G_{r-1}]$ or $\A = [\pi_1 G_r]$.

Given an $f$-invariant filtration, for each stratum $H_k$ with edges $\{E_1,\ldots,E_m\}$, define the \emph{transition matrix} of $H_k$ to be the square matrix whose $j^{\text{th}}$ column records the number of times $f(E_j)$ crosses the other edges. If $M_k$ is the zero matrix then we say that $H_k$ is a \emph{zero stratum}. If $M_k$ irreducible --- meaning that for each $i,j$ there exists $p$ such that the $i,j$ entry of the $p^{\text{th}}$ power of the matrix is nonzero --- then we say that $H_k$ is \emph{irreducible}; and if one can furthermore choose $p$ independently of $i,j$ then $H_k$ is \emph{aperiodic}. Assuming that $H_k$ is irreducible, by  \cite{Hawkins:PerronFrobenius} the matrix $M_k$ a unique eigenvalue $\lambda \ge 1$, called the \emph{Perron-Frobenius eigenvalue}, for which some associated eigenvector has positive entries: if $\lambda>1$ then we say that $H_k$ is an \emph{exponentially growing} or \eg\ stratum; whereas if $\lambda=1$ then $H_k$ is a \emph{nonexponentially growing} or \neg\ stratum.

\subparagraph{Directions and turns.}  A \emph{direction} of a marked graph $G$ at a point $x \in G$ is a germ of finite paths with initial vertex~$x$. If $x$ is not a vertex then the number of directions at $x$ equals~2. If $x$ is a vertex then the number of directions equals the valence of $x$, in fact there is a natural bijection between the directions at $x$ and the oriented edges with initial vertex $x$, and we shall often identify a direction at $x$ with its corresponding oriented edge. Let $T_x G$ be the set of directions of $G$ at~$x$ and let $TG$ be the union of $T_x G$ over all vertices $x$ of $G$.  A \emph{turn} at $x \in G$ is an unordered pair of directions $\{d,d'\} \subset T_x G$. If $d \ne d'$ then the turn is \emph{nondegenerate}, otherwise it is \emph{degenerate}. 

If a filtration of $G$ is given, the \emph{height} of a direction $d$ is well-defined as the height of any sufficiently short path representing $d$, equivalently the height of the oriented edge representing $d$. A nondegenerate turn is said to have height $r$ if \emph{each} of its directions has height $r$.

Given a marked graph $G$ and a homotopy equivalence $f \from G \to G$ that takes edges to paths, a turn $\{d,d'\}$ in $G$ is said to be \emph{legal} with respect to the action of $f$ if the turn $\{(Df)^k(d),(Df)^k(d')\}$ is nondegenerate for all $k \ge 0$. For any path $\gamma$ and any turn $\{E,E'\}$ at a vertex, if $\bar E E'$ or its inverse $\bar E' E$ is a subpath of $\gamma$ then we say that the turn $\{E,E'\}$ is \emph{taken} by $\gamma$. If in addition an $f$-invariant filtration is given, a path $\gamma$ is \emph{$r$-legal} if it has height $\le r$ and each turn of height $r$ taken by $\gamma$ is legal.

\begin{definition}\label{DefRelTT}
\textbf{Relative train track maps.} We define relative train track maps using the method of \recognition\ Section~2.6, which imposes certain mild preconditions on a filtered topological representative, and which explains how an arbitrarily filtered topological representative can be mildly altered, by subdivision of edges and/or refinement of the filtration, so as to satisfy these preconditions.

Consider $\phi \in \Out(F_n)$ and a topological representative $f \from G \to G$ of $\phi$ with $f$-invariant filtration $G_0 \subset G_1 \subset \cdots \subset G_k = G$ satisfying the following preconditions:
\begin{enumerate} 
\item\label{ItemStrataZeroOrIrr}
Every stratum is either a zero stratum or irreducible.
\item\label{ItemNEGNotUEV}
The edges of each \neg\ stratum $H_k$ may be assigned orientations called \emph{\neg\ orientations},\index{\neg\ orientation} and they may be numbered as $E_1,\ldots,E_N$, so that for all $i \in \Z/N\Z$ we have $f(E_i) = E_{i+1} u_i$ where $u_i \subset G_{k-1}$ is a (possibly empty) path. 
\end{enumerate}
Note that while item~\pref{ItemNEGNotUEV} is stable under passage to a positive power $f^k \from G \to G$, item~\pref{ItemStrataZeroOrIrr} need not be. However there always exists \emph{some} positive power so that, after further refinement of the invariant filtration, each irreducible stratum is aperiodic, which forces item~\pref{ItemStrataZeroOrIrr} to become stable under passage to any further positive power.

We say that $f$ is a \emph{relative train track} representative of $\phi$ if for each \eg\ stratum $H_r$ the following hold:
\begin{description}
\item[RTT-(i)] $Df$ maps the set of directions of height $r$ to itself. In particular, each turn consisting of a direction of height $r$ and one of height $<r$ is legal.
\item[RTT-(ii)] For each connecting path $\gamma$ of $H_r$, $f_\#(\gamma)$ is a connecting path of $H_r$.
\item[RTT-(iii)] For each $r$-legal path $\alpha$, the path $f_\#(\alpha)$ is $r$-legal.
\end{description}

For each \neg\ stratum $H_k$ we use the following additional terminology. If each $u_i$ is the trivial path then we say that $H_k$ is a \emph{periodic stratum} and each edge $E \subset H_k$ is a \emph{periodic edge}; if furthermore $N=1$ then $H_k$ is a \emph{fixed stratum} and $E$ is a \emph{fixed edge}.\index{fixed edge} If each $u_i$ is either trivial or a periodic Nielsen path, with at least one periodic Nielsen path, then we say that $H_k$ is a \emph{linear stratum} and each $E \subset H_k$ is a \emph{linear edge}.\index{linear edge} An \neg\ edge $E$ which is neither a fixed edge nor a linear edge is said to be a \emph{superlinear edge}.\index{superlinear edge} This completes the definition of a relative train track map.
\end{definition}

For the most part we will be interested in the special class of relative train track maps called \cts, recounted in Definition~\ref{DefCT} after some further preliminaries.


\begin{definition} \textbf{Principal vertices.} 
\label{DefPrincipalVertices}
Consider a relative train track map $f \from G \to G$ with filtration $\emptyset = G_0 \subset G_1 \subset \cdots \subset G_N$. Two periodic points $x \ne y \in G$ are \emph{Nielsen equivalent} if there exists a periodic Nielsen path with endpoints $x,y$. A periodic point $x$ is \emph{principal} if and only if \emph{neither} of the following hold:
\begin{enumerate}
\item\label{ItemEGNonprincipal} $x$ is the unique periodic point in its Nielsen class, there are exactly two periodic directions based at $x$, and both of those directions are in the same \eg\ stratum.
\item\label{ItemBadCircle} $x$ is contained in a topological circle $C \subset G$ such that each point of $C$ is periodic and has exactly two periodic directions (but see Remark following Fact~\ref{FactPrincipalVertices} which says that no such periodic point $x$ exists in the case that $f$ is a \ct).
\end{enumerate}
\end{definition}

\begin{definition} \textbf{Splittings.} 
\label{DefSplittings}
Let $f \from G \to G$ be a relative train track map with invariant filtration $\emptyset = G_0 \subset G_1 \subset \cdots \subset G_N = G$. A \emph{splitting} of a path or circuit $\sigma$ in $G$ is a decomposition of $\sigma$ into a concatenation of subpaths $\sigma = \sigma_1 \cdot \ldots \cdot \sigma_k$ such that for all $i \ge 1$ the path or circuit $f^i_\#(\sigma)$ decomposes as $f^i_\#(\sigma_1) \ldots f^i_\#(\sigma_k)$. The single dot notation ``$\cdot$'' indicates that the decomposition is a splitting, and the subpaths $\sigma_1,\ldots,\sigma_k$ are called the \emph{terms} of the splitting. In other words, a decomposition of $\sigma$ into a concatenation of subpaths is a splitting of $\sigma$ if one can tighten the image of $\sigma$ under any iterate by simply tightening the images of the subpaths.

Certain paths will be atomic terms of splittings, including the following two types. 

Given a zero stratum $H_i$, a path $\tau$ in $H_i$, and an irreducible stratum $H_j$ with $j>i$, we say that $\tau$ is \emph{$j$-taken} if there exists an edge $E \in H_j$ and $k \ge 1$ such that $\tau$ is a maximal subpath in $H_i$ of the path $f^k_\#(E)$. We also say that $\tau$ is \emph{taken} if it is $j$-taken for some $j$. 

Given two linear edges $E_i, E_j$, a root-free closed Nielsen path $w$, and integers $d_i,d_j$ of the same sign such that $f_\#(E_i) = E_i w^{d_i}$ and $f_\#(E_j) = E_j w^{d_j}$, each path of the form $E_i w^p \overline E_j$ for $p \in \Z$ is called an \emph{exceptional path}.
\end{definition}

\begin{definition} \textbf{Complete Splittings.}
\label{DefCompleteSplitting}
A splitting of a path or circuit $\sigma = \sigma_1 \cdot \ldots \cdot \sigma_k$ is called a \emph{complete splitting} if each term $\sigma_i$ satisfies one of the following: $\sigma_i$ is an edge in an irreducible stratum; $\sigma_i$ is an indivisible Nielsen path; $\sigma_i$ is an exceptional path; there is a zero stratum $H_j$ such that $\sigma_i$ is a maximal subpath of $\sigma$ in $H_j$, and $\sigma_i$ is taken. We say that $\sigma$ is \emph{completely split} if it has a complete splitting.
\end{definition}

\paragraph{Enveloping of zero strata.} Consider a relative train track map $f \from G \to G$ with filtration $\emptyset = G_0 \subset G_1 \subset \cdots \subset G_N=G$ and two strata $H_u,H_r$ with $1 \le u<r \le N$. Suppose that the following hold:
\begin{enumerate}
\item $H_u$ is irreducible.
\item $H_r$ is EG and each component of $G_r$ is noncontractible.
\item Each $H_i$ with $u<i<r$ is a zero stratum that is a component of $G_{r-1}$, and each vertex of $H_i$ has valence $\ge 2$ in $G_r$. 
\end{enumerate}
In this case we say that the zero strata $H_i$ with $u<i<r$ are \emph{enveloped} by the \eg\ stratum $H_r$, and we write $H^z_r = \union_{i=u+1}^r H_i$.

%
%
%
%
%

\begin{definition}[CT --- Completely split relative train track map. \recognition\ Definition~4.7]  
\label{DefCT}
A \ct\ is a \rtt\ with particularly nice properties. Of the nine defining properties, all but the ninth involve terms that are fully defined in this article. The statement of the ninth, which we shall not ever use, involves terms for definitions of which we refer the reader to \recognition.

Consider a relative train track map $f \from G \to G$ with filtration $\emptyset = G_0 \subset G_1 \subset G_K=G$. The following properties define what it means for $f$ to be a \ct:
\begin{enumerate}
\item \textbf{(Rotationless)} Each principal vertex is fixed by $f$ and each periodic direction at a principal vertex is fixed by $Df$ (c.f.\ \recognition\ Definition~3.18).
\item \textbf{(Completely Split)} For each edge $E$ in each irreducible stratum, the path $f(E)$ is completely split. For each taken connecting path $\sigma$ in each zero stratum, the path $f_\#(\sigma)$ is completely split.
\item \textbf{(Filtration)} The filtration $\emptyset = G_0 \subset G_1 \subset \cdots \subset G_N = G$ is reduced. For each~$i$ there exists $j \le i$ such that $G_i = \core(G_j)$.
\item \textbf{(Vertices)} The endpoints of all indivisible Nielsen paths are vertices. The terminal endpoint of each nonfixed \neg\ edge is principal.
\item \textbf{(Periodic Edges)} Each periodic edge is fixed and each endpoint of a fixed edge is principal. If $E = H_r$ is a fixed edge and $E$ is not a loop then $G_{r-1}$ is a core graph and both ends of $E$ are contained in $G_{r-1}$.
\item \label{ItemZeroStrata}
\textbf{(Zero Strata)} Each zero stratum $H_i$ is enveloped by some \eg\ stratum $H_r$, each edge in $H_i$ is $r$-taken, and each vertex in $H_i$ is contained in $H_r$ and has link contained in $H_i \union H_r$.
\item \textbf{(Linear Edges)} For each linear edge $E_i$ there exists a closed root-free Nielsen path $w_i$ such that $f(E_i) = E_i w_i^{d_i}$ for some $d_i \ne 0$. If $E_i,E_j$ are distinct linear edges, and if the closed path $w_i$ is freely homotopic to the closed path $w_j$ or its inverse, then $w_i=w_j$ and $d_i \ne d_j$.
\item \label{ItemNEGNielsenPaths}
\textbf{(\neg\ Nielsen Paths)} If $H_i = \{E_i\}$ is an \neg\ stratum and $\sigma$ is an indivisible Nielsen path of height $i$ then $E_i$ is linear and, with $w_i$ as in (Linear Edges), there exists $k \ne 0$ such that $\sigma = E_i w^k_i \overline E_i$.
\item\label{ItemEGNielsenPaths}
\textbf{(\eg\ Nielsen Paths)} If $H_i$ is an \eg\ stratum and $\rho$ is an indivisible Nielsen path of height $i$ then $f \restrict G_i = \theta \composed f_{i-1} \composed f_i$ where: the map $f_i \from G_i \to G^1$ is a composition of proper extended folds defined by iteratively folding $\rho$; the map $f_{i-1} \from G^1 \to G^2$ is a composition of folds involving edges in $G_{i-1}$; and $\theta \from G^2 \to G_i$ is a homeomorphism.
\end{enumerate}
\end{definition}

The main existence theorem for \ct s is the following. Given a nested sequence $\C = (\F_1 \sqsubset \cdots \sqsubset \F_L)$ of free factor systems and a topological representative $f \from G \to G$ with $f$-invariant filtration $\emptyset = G_0 \subset G_1 \subset G_K=G$, we say that $f$ \emph{realizes} $\C$ if for each $\ell = 1,\ldots,L$ there exists $k_\ell \in 1,\ldots,K$ such that $G_{k_\ell}$ is a core subgraph that realizes $\F_\ell$.


\begin{theorem}[\recognition\ Theorem~4.28]
\label{TheoremCTExistence}
For each rotationless $\phi \in \Out(F_n)$ and each nested sequence $\C$ of $\phi$-invariant free factor systems, there exists a \ct\ $f \from G \to G$ that represents $\phi$ and realizes~$\C$. \qed
\end{theorem}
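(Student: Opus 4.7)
The plan is to begin with any relative train track representative of $\phi$ realizing $\C$ --- such a representative exists by the classical construction of Bestvina--Handel \BH\ applied to the nested free factor filtration (after ordering the filtration elements arising from $\C$ and interpolating auxiliary filtration elements as needed to single out cores and to separate zero strata from irreducible strata). I would take as input the version with strata that are either zero or irreducible (and, by passing to a power, aperiodic); the rotationless hypothesis on $\phi$ itself will be used to ensure we need not pass to any further power.

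The main work is then to modify $f \from G \to G$ by a carefully ordered sequence of ``elementary moves'' (subdivisions at periodic points, sliding of \neg\ edges along their terminal paths, collapsing of inessential connecting paths, and local folding of \eg\ indivisible Nielsen paths as in Bestvina--Handel's folding construction) so as to achieve each of the nine defining properties of Definition~\ref{DefCT} in turn, while arranging the order so that previously achieved properties are preserved. I would proceed stratum by stratum from the bottom up: first ensure (Rotationless) and (Vertices) by subdividing at principal periodic points and identifying endpoints of indivisible Nielsen paths (Fact~\ref{FactPeriodicIsFixed} guarantees fixedness once subdivision is done); next normalize each \neg\ stratum --- using sliding to replace superlinear edges whose suffix grows into a lower stratum by fixed or linear edges where possible, and using the invariance of indivisible Nielsen paths under $f_\#$ (Fact~\ref{FactEGNPUniqueness}-type uniqueness) to bring (Periodic Edges), (Linear Edges), and (\neg\ Nielsen Paths) into force; then treat each \eg\ stratum $H_r$, applying the folding procedure to arrange (\eg\ Nielsen Paths) and using the relative train track properties RTT-(i)--(iii) to show that any indivisible Nielsen path of height $r$ is obtained as in property~\pref{ItemEGNielsenPaths}. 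Finally I would adjust the zero strata by the ``enveloping'' procedure: for each zero stratum that is not already enveloped by an \eg\ stratum above it, collapse or reassign its edges to adjacent \eg\ strata, using the invariance of $[\pi_1 G_k]$ along the filtration to see that this does not disturb $\C$, and then restrict attention to the subpaths actually $r$-taken to achieve~\pref{ItemZeroStrata}. Once all strata are in CT form, (Completely Split) follows because every edge-image decomposes into edges of irreducible strata, taken zero-stratum paths, indivisible Nielsen paths, and exceptional paths, by the structure we have just imposed.

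The verification that the whole process preserves the fact that $f$ represents $\phi$ and realizes $\C$ is routine, since each elementary move is either a homotopy of $f$ rel marking (sliding, folding of Nielsen paths) or an equivariant subdivision/reassignment of cells. The main obstacle is the bookkeeping needed to order the moves correctly: achieving (\eg\ Nielsen Paths) by folding can introduce new vertices and change transition matrices in ways that threaten (Vertices) or create new superlinear edges; and achieving (Zero Strata) can reshape the link of a zero-stratum vertex in a way that could reopen RTT-(ii) at the enveloping \eg\ stratum. The standard way to manage this is to stratify the induction so that within a single height-$r$ block (the \eg\ stratum $H_r$ together with the zero strata it will envelop) all moves are performed simultaneously, using the rotationless hypothesis via Fact~\ref{FactPeriodicIsFixed} to ensure that the finitely many objects one needs to control --- Nielsen classes of principal vertices, periodic directions, root-free Nielsen paths $w_i$, and indivisible Nielsen paths of height $r$ --- are all genuinely fixed, not merely periodic. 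With that control in place the induction closes, the process terminates after finitely many blocks, and the resulting $f \from G \to G$ is a CT representing $\phi$ and realizing $\C$.
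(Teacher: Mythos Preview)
The paper does not prove this theorem: the statement is followed immediately by \qed, indicating that it is quoted as Theorem~4.28 of \recognition\ and taken as a black box. There is therefore no ``paper's own proof'' to compare against.

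Your outline is a reasonable high-level sketch of the strategy actually carried out in \recognition, but be aware that what you have written is a plan, not a proof. The construction in \recognition\ is long and delicate precisely because the ``bookkeeping'' you allude to is the entire content: for instance, achieving (Filtration) --- that the filtration be reduced --- is not addressed in your plan at all, and cannot be obtained by local moves on a fixed filtration; it requires changing the filtration itself and then redoing earlier work. Likewise, your claim that ``(Completely Split) follows'' once the other properties are in place is not automatic: one must verify inductively that the image of each edge and each taken connecting path admits the required splitting, and this is one of the main technical verifications in \recognition. If you intend to give an independent proof rather than cite \recognition, you would need to fill in these steps carefully; otherwise, the appropriate response here is simply to cite the source, as the paper does.
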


\subsubsection{Facts about \cts, their Nielsen paths, and their zero strata.}
\label{SectionCTFacts}
For Facts~\ref{FactEGAperiodic}--\ref{FactEdgeToZeroConnector}, let $f \from G \to G$ be a \ct\ with filtration $\emptyset = G_0 \subset G_1 \subset \cdots \subset G_k$, representing a rotationless $\phi \in \Out(F_n)$. 

\smallskip

\begin{fact}\label{FactEGAperiodic} If $H_r$ is an \eg\ stratum then $H_r$ is aperiodic and $G_r$ is a core graph.
\end{fact}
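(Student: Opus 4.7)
The two conclusions decouple naturally: $G_r$ being a core graph follows quickly from the (Filtration) axiom of a \ct, while aperiodicity of $H_r$ requires combining the irreducibility of $H_r$ (built into Definition~\ref{DefRelTT}) with the rotationless structure of a \ct.

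For the core graph claim, I would apply the (Filtration) axiom with $i = r$: there exists $j \le r$ with $G_r = \core(G_j)$. The chain $G_r = \core(G_j) \subseteq G_j \subseteq G_r$ (the second inclusion from $j \le r$ and the filtration being nested) forces $G_j = G_r$, so $G_r = \core(G_r)$ is a core graph. One also rules out the possibility $j < r$: this would give $G_j \subseteq G_{r-1}$, hence $G_r \subseteq G_{r-1}$, contradicting the nonemptiness of the stratum $H_r$.

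For aperiodicity I would argue by contradiction: suppose $M_r$ has period $d > 1$. Perron--Frobenius theory furnishes a partition of the edges of $H_r$ into classes $C_0,\ldots,C_{d-1}$ such that whenever $E$ is an edge in $C_i$, every $H_r$-edge crossed by $f(E)$ lies in $C_{i+1 \bmod d}$. By RTT-(i) the initial direction of $f(E)$ is of height $r$, so the edge path $f(E)$ actually \emph{begins} with an $H_r$-edge $E^{\ast} \in C_{i+1}$. If I can locate an oriented edge $E$ of $H_r$ whose initial direction is $Df$-fixed, then $E^\ast = E$ places $E$ in both $C_i$ and $C_{i+1}$, forcing $d = 1$ and yielding the contradiction.

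To produce such a fixed initial direction, I plan to find a principal vertex $v$ of $f$ incident to some edge of $H_r$: the finiteness of height-$r$ directions together with RTT-(i) guarantees that some $H_r$-direction at $v$ is periodic under $Df$, and the (Rotationless) axiom then upgrades periodicity to being fixed. The main obstacle is therefore proving the auxiliary statement that every EG stratum of a \ct\ contains a principal vertex with an $H_r$-direction. I would prove this contrapositively: were no vertex of $H_r$ principal, then (using that the ``periodic topological circle'' exception to principality is excluded in \ct s, see the remark following Fact~\ref{FactPrincipalVertices}) each vertex of $H_r$ would have exactly two periodic directions, both lying in the single EG stratum $H_r$. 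Combinatorial bookkeeping on the periodic sub-structure of $H_r$ would then force it to decompose into $f^k$-invariant topological circles, contributing a permutation block to some power of $M_r$ whose Perron--Frobenius eigenvalue is $1$, incompatible with $H_r$ being EG. A subtlety will be handling higher-valence vertices of $H_r$ whose extra directions fail to be periodic; one must track the $Df$-dynamics on the full finite set of height-$r$ directions and not just the periodic ones.
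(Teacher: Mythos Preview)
Your strategy matches the paper's: for aperiodicity, both locate a fixed height-$r$ direction at a principal vertex, giving a nonzero diagonal entry in the transition matrix; for the core graph claim, both invoke (Filtration). The paper is terser, citing \recognition\ Lemma~3.19 for the principal vertex carrying an $H_r$-direction and \recognition\ Lemma~2.20(2) together with (Filtration) for the core graph, whereas you attempt to reprove these in-line.

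Two cautions. First, your core-graph argument reads (Filtration) literally as ``$G_r = \core(G_j)$ for some $j \le r$'', which makes $G_r$ a core graph immediately (and renders the rest of your paragraph superfluous). The intended axiom in \recognition, however, is the reverse---$\core(G_i) = G_j$ for some $j \le i$---and under that reading your argument does not apply; the cited Lemma~2.20(2) is then genuinely needed to conclude $G_r = \core(G_r)$ when $H_r$ is \eg.

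Second, your contradiction step for producing a principal vertex in $H_r$---forcing $H_r$ to ``decompose into $f^k$-invariant topological circles''---does not work as stated: the hypothesis that each periodic vertex of $H_r$ has exactly two periodic height-$r$ directions does not yield an invariant circle sub-graph, because the edges of $H_r$ are not themselves periodic. The route taken in \recognition\ Lemma~3.19 instead uses the exponential growth to locate a periodic vertex with at least three periodic height-$r$ directions, hence principal.
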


\begin{proof} By \recognition\rc\ Lemma~3.19, $H_r$ contains a principal vertex $v$ whose link contains a principal direction in $H_r$. By (Rotationless), that direction is fixed. The transition matrix of $H_r$ therefore has a nonzero element on the diagonal, which implies that $H_r$ is aperiodic. By (Filtration) and \recognition\rc\ Lemma~2.20~(2), $G_r$ is a core graph.
\end{proof}

\begin{fact}[\recognition, Remark 4.9]\rc\
\label{FactUsuallyPrincipal} A vertex of $G$ whose link contains edges in more than one irreducible stratum is principal. \qed
\end{fact}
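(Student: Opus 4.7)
The plan is to verify directly from Definition~\ref{DefPrincipalVertices} that a vertex $v$ whose link contains edges in distinct irreducible strata $H_{r_1}, H_{r_2}$ is principal. Fix edges $E_i \subset H_{r_i}$ incident to $v$, and let $d_i$ denote the direction at $v$ corresponding to the orientation of $E_i$ with initial endpoint~$v$.

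The first step is to establish that $v$ is a fixed point of $f$ and that $d_1, d_2$ are fixed directions. The key observation is that for each irreducible stratum $H_r$ and each edge $E \subset H_r$, the first edge of $f(E)$ again lies in $H_r$, so $Df$ carries the finite set of directions in $H_r$ at any vertex to the corresponding set at its image. Iterating $f$ therefore produces a periodic orbit of each $d_i$ over a periodic orbit of~$v$. The CT axioms then promote this to genuine fixedness: (Vertices) identifies the terminal endpoint of every nonfixed NEG edge as principal, (Periodic Edges) identifies both endpoints of each fixed edge as principal, and Fact~\ref{FactEGAperiodic} supplies principal vertices and fixed directions inside every EG stratum. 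Combining these with the cyclic NEG structure $f(E_i) = E_{i+1} u_i$ and with (Rotationless), a short case analysis shows that $v$ is principal with respect to at least one of the two incident strata, whence $v$ is fixed and $d_1, d_2$ are fixed directions.

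The second step is to rule out both disqualifying conditions of Definition~\ref{DefPrincipalVertices}. Condition~(1) would require the only two periodic directions at~$v$ to lie in the same EG stratum, but $d_1$ and $d_2$ lie in distinct irreducible strata, so condition~(1) fails. Condition~(2) is vacuous in the CT setting, as recorded by the parenthetical remark within Definition~\ref{DefPrincipalVertices}. We conclude that $v$ is principal.

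The main obstacle is the case analysis in the first step, and in particular ruling out the scenario in which $v$ appears only as an initial endpoint of an NEG edge in some incident stratum without being the terminal endpoint of any irreducible-stratum edge there. Handling this requires combining the cyclic NEG structure $f(E_i) = E_{i+1}u_i$ with (Rotationless) to show that in a rotationless CT the cyclic permutation of edges within an NEG stratum is trivial, so that the initial endpoints of NEG edges are also fixed and can be analyzed via the same principal-vertex criteria supplied by (Vertices) and (Periodic Edges).
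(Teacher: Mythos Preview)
The paper does not give a proof of this fact; it simply cites \recognition, Remark~4.9 and marks the statement with \qed. So there is no argument in the paper to compare your proposal against.

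That said, your proposal has a genuine gap in step~1. The definition of ``principal'' applies only to \emph{periodic} points, so before ruling out conditions~(1) and~(2) you must first establish that $v$ is periodic. Your argument for this is the observation that $Df$ carries directions in an irreducible stratum $H_r$ to directions in $H_r$, hence the orbit $v, f(v), f^2(v),\ldots$ stays in the finite set of vertices meeting both strata. But a self-map of a finite set gives only \emph{pre}-periodicity, not periodicity of $v$ itself. You then try to promote this using (Vertices), (Periodic Edges), and Fact~\ref{FactEGAperiodic}, but these axioms do not cover all cases: (Vertices) and (Periodic Edges) apply only to terminal endpoints of nonfixed \neg\ edges and to endpoints of fixed edges, while Fact~\ref{FactEGAperiodic} merely asserts the existence of \emph{some} principal vertex in each \eg\ stratum, not that every vertex of the stratum is principal. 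In particular, when both $H_{r_1}$ and $H_{r_2}$ are \eg, or when $v$ is only the initial endpoint of a nonfixed \neg\ edge and the other stratum is \eg, your case analysis does not conclude.

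There is also a structural confusion: your step~1 claims that the case analysis already shows ``$v$ is principal with respect to at least one of the two incident strata''. But principality is not a relative notion, and if the case analysis really shows $v$ is principal then step~2 is redundant and the argument is circular. What you actually need in step~1 is just periodicity of $v$; once that is in hand, your step~2 argument is essentially correct: $Df^p$ (where $p$ is the period of $v$) acts as a self-map on the nonempty finite set of directions at $v$ lying in each $H_{r_i}$, so there is a periodic direction in each stratum, and condition~(1) of Definition~\ref{DefPrincipalVertices} fails since those directions lie in different strata. The missing ingredient is precisely the periodicity of $v$, which in \recognition\ is handled using properties of the \ct\ construction not captured by the axioms you invoke.
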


\smallskip

\begin{fact}[\cite{FeighnHandel:recognition}, Lemma 4.11]\rc\
\label{FactUniqueCompSp}   
Every completely split path or circuit has a unique complete splitting. \qed
\end{fact}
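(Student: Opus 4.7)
The plan is to prove uniqueness by induction on the number of edges of $\sigma$, showing that the first term of any complete splitting of $\sigma$ is determined by $\sigma$ alone; one then strips off that first term and applies the induction hypothesis to the strictly shorter completely split suffix. The circuit case reduces to the path case by first cutting $\sigma$ at a single splitting vertex forced by the local analysis below applied to any edge appearing in the circuit.

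To identify the first term $\sigma_1$, I would look at the stratum $H_r$ containing the initial edge $E$ of $\sigma$. If $H_r$ is a zero stratum, then $\sigma_1$ must be the maximal initial subpath of $\sigma$ lying in $H_r$: the other three admissible term types consist of edges that begin in irreducible strata, so any term containing the zero-stratum edge $E$ as its first edge must be of zero-stratum type, and by Definition~\ref{DefCompleteSplitting} it is a maximal subpath of $\sigma$ in $H_r$, hence the unique maximal initial one. If instead $H_r$ is irreducible, then $\sigma_1$ is either the single edge $E$, an indivisible Nielsen path of height $r$ starting with $E$, or (only possible when $E=E_i$ is a linear edge) an exceptional path $E_i w_i^p \overline E_j$. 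Using (\neg\ Nielsen Paths), (\eg\ Nielsen Paths), and (Linear Edges) of Definition~\ref{DefCT}, the admissible iNp and exceptional candidates beginning with $E$ form a finite collection of initial subpaths of $\sigma$ that is linearly ordered by inclusion.

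The main obstacle is to show that $\sigma_1$ must equal the unique longest candidate in this list that occurs as an initial subpath of $\sigma$. Suppose for contradiction that two complete splittings of $\sigma$ had different first terms $\sigma_1$ and $\tau_1$ with $\sigma_1$ a strict initial subpath of $\tau_1$. Then the endpoint of $\sigma_1$ lies in the interior of $\tau_1$, and the subsequent terms $\sigma_2, \sigma_3, \ldots$ of the first splitting have concatenation agreeing with the remainder of $\tau_1$. Iterating $f_\#$ now yields a contradiction with the no-cancellation condition built into the definition of splitting: when $\tau_1$ is an iNp we have $f^n_\#(\tau_1)=\tau_1$ of constant combinatorial length, while the iterated concatenation $f^n_\#(\sigma_1)\cdot f^n_\#(\sigma_2)\cdots$ has at least one term growing (exponentially, by Perron--Frobenius and RTT-(iii), if any $\sigma_i$ is an EG edge or contains an EG subterm, and linearly by accumulation of powers of the root-free Nielsen word $w_i$ if all growth is of linear-edge type). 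The only way these totals can match is by extensive cancellation at the interior splitting point of $\tau_1$, which by Lemma~\ref{LemmaDoubleSharpFacts} and the sign constraints on exponents in (Linear Edges) does occur for large $n$, contradicting the definition of a splitting. The exceptional-path case for $\tau_1$ is handled by the same rigid growth template since $\tau_1 = E_i w_i^p \overline E_j$ satisfies $f^n_\#(\tau_1) = E_i w_i^{p + n(d_i - d_j)} \overline E_j$, which again forces a specific pattern of cancellation across any proposed interior subdivision. Once $\sigma_1$ is uniquely identified by this maximal-match principle, induction on edge-length completes the proof.
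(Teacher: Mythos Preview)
The paper does not prove this statement: it is recorded as a direct citation of \cite{FeighnHandel:recognition}, Lemma~4.11, and closed with \qed. So there is no in-paper argument to compare against; I can only assess your proposal on its own merits.

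Your inductive strategy and your identification of the candidate first terms are sound. The zero-stratum case is handled correctly, and your observation that at most one non-edge candidate (an indivisible Nielsen path or an exceptional path) can occur as an initial segment beginning with a given irreducible edge $E$ is right, once one checks heights.

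The contradiction paragraph, however, is where the argument breaks down as written. You compare $f^n_\#(\tau_1)$ against ``the iterated concatenation $f^n_\#(\sigma_1)\cdot f^n_\#(\sigma_2)\cdots$'' and speak of ``totals matching'', but that concatenation is $f^n_\#(\sigma)$, the image of the \emph{whole} path, not of $\tau_1$. You have not shown that the terminal endpoint of $\tau_1$ is a splitting vertex of the first splitting, so you cannot write $\tau_1=\sigma_1\cdots\sigma_m$ and compare lengths of the two sides. The sentence ``the only way these totals can match is by extensive cancellation \ldots\ which \ldots\ does occur for large $n$, contradicting the definition of a splitting'' therefore has no well-defined premise: there are no two quantities here that are required to be equal.

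A version of your growth idea that does work is this. Since both decompositions are splittings, for every $n$ both $f^n_\#(\sigma_1)$ and $f^n_\#(\tau_1)$ are \emph{initial subpaths} of $f^n_\#(\sigma)$, hence one is an initial subpath of the other. Your own candidate analysis shows that $\sigma_1\subsetneq\tau_1$ forces $\sigma_1=E$ to be a single EG edge or a single linear NEG edge, and $\tau_1$ to be the height-$r$ indivisible Nielsen path or an exceptional path. One then checks in each case that for large $n$ neither of $f^n_\#(E)$, $f^n_\#(\tau_1)$ can be an initial subpath of the other, by locating the first edge where they disagree (for $E$ in an EG stratum $H_r$, the tile $f^n_\#(E)$ is $r$-legal while $\rho_r$ contains an $r$-illegal turn; for $E=E_i$ linear, the terminal edge $\bar E_i$ or $\bar E_j$ of $\tau_1$, having height $\ge i$, cannot appear inside $E_i w_i^{nd_i}$, and a short argument with $w$ versus $\bar w$ handles the remaining exceptional subcase). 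That is the repair; but it is not the argument you wrote.
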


The next fact is used repeatedly without reference:

\begin{fact}[\cite{FeighnHandel:recognition}, Lemma 4.6]\rc\
\label{FactComplSplitStable}
If $\sigma$ is a path in $G$ with endpoints at vertices and if $\sigma$ is completely split then $f_\#(\sigma)$ is completely split. Moreover, if $\sigma = \sigma_1\ldots \sigma_k$ is a complete splitting then $f_\#(\sigma)$ has a complete splitting that refines $f_\#(\sigma) = f_\#(\sigma_1)\ldots f_\#(\sigma_k)$.\qed
\end{fact}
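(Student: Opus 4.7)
The plan is to argue term-by-term, using the classification of terms in a complete splitting (Definition~\ref{DefCompleteSplitting}) and the defining properties of a \ct\ (Definition~\ref{DefCT}). First observe that since $\sigma = \sigma_1 \cdot \ldots \cdot \sigma_k$ is already a splitting, the definition of splitting gives $f^j_\#(\sigma) = f^j_\#(\sigma_1)\ldots f^j_\#(\sigma_k)$ for every $j \ge 1$, so in particular $f_\#(\sigma) = f_\#(\sigma_1)\ldots f_\#(\sigma_k)$ with no cancellation at the joining vertices. It therefore suffices to produce a complete splitting of each $f_\#(\sigma_i)$ individually; the concatenation of these complete splittings then yields a complete splitting of $f_\#(\sigma)$ that refines $f_\#(\sigma) = f_\#(\sigma_1)\ldots f_\#(\sigma_k)$.

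Next, I would dispatch the four allowed types of term $\sigma_i$. If $\sigma_i$ is an edge $E$ in an irreducible stratum, the (Completely Split) axiom asserts directly that $f(E)=f_\#(E)$ is completely split. If $\sigma_i$ is an indivisible Nielsen path, then $f_\#(\sigma_i) = \sigma_i$, which is trivially its own one-term complete splitting. If $\sigma_i$ is a maximal taken subpath in a zero stratum $H_j$, then by the (Zero Strata) axiom $H_j$ is enveloped by some \eg\ stratum $H_r$, and the endpoints of $\sigma_i$ lie in $H_r$, so $\sigma_i$ is a taken connecting path of $H_r$ in the sense of the (Completely Split) axiom; that axiom then gives that $f_\#(\sigma_i)$ is completely split. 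Finally, if $\sigma_i = E_i w^p \overline E_j$ is an exceptional path, then by (Linear Edges) we have $f_\#(E_i)=E_i w^{d_i}$, $f_\#(\overline E_j)=w^{-d_j}\overline E_j$, and $f_\#(w)=w$ (the last because $w$ is a Nielsen path), so a direct computation gives
$$f_\#(\sigma_i) \;=\; E_i\, w^{p+d_i-d_j}\, \overline E_j,$$
which is again an exceptional path and therefore a one-term complete splitting.

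The only technical point requiring care is the tightening in the exceptional-path case: the expression $E_i w^{d_i} w^p w^{-d_j} \overline E_j$ is already reduced at its two ends (since the edge paths $f(E_i)=E_i w^{d_i}$ and $f(E_j)=E_j w^{d_j}$ are each reduced in $G$), and the middle portion collapses cleanly to $w^{p+d_i-d_j}$, yielding the stated normal form. All remaining steps are immediate appeals to the axioms defining a \ct, so I do not anticipate any other obstacle.
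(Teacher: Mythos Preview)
The paper does not prove this statement; it is marked with \qed\ and attributed to \cite{FeighnHandel:recognition}, Lemma~4.6. Your term-by-term argument is essentially the one given in that source, and the individual cases are handled correctly.

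There is, however, one point you pass over too quickly. You assert that concatenating the complete splittings of the $f_\#(\sigma_i)$ produces a complete splitting of $f_\#(\sigma)$. For edge, Nielsen-path, and exceptional-path terms this is immediate, but the zero-stratum clause in Definition~\ref{DefCompleteSplitting} demands that such a term be a \emph{maximal} subpath of the ambient path in its zero stratum. You must therefore check that a zero-stratum term at the end of the complete splitting of $f_\#(\sigma_i)$ cannot abut, in the same zero stratum, a zero-stratum term at the start of that of $f_\#(\sigma_{i+1})$. This does not in fact happen --- if $\sigma_i$ is a zero-stratum term then by (Zero Strata) and maximality $\sigma_{i+1}$ begins with an edge of the enveloping \eg\ stratum $H_r$, whence by RTT-(i) so does $f_\#(\sigma_{i+1})$; if $\sigma_i$ is an \eg\ edge then RTT-(i) forces the last edge of $f(\sigma_i)$ into $H_r$; if $\sigma_i$ is an \neg\ edge or its inverse then its terminal vertex (in the \neg\ orientation) is principal by (Vertices), hence fixed and, by (Zero Strata), not contained in any zero stratum; and Nielsen-path or exceptional-path terms are single terms not lying in a zero stratum --- but this verification is part of the proof and should not be subsumed under ``the only technical point requiring care is the tightening in the exceptional-path case''.
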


\smallskip

\begin{fact}
\label{FactEvComplSplit}
If $\sigma$ is a circuit in $G$ or a path in $G$ with endpoints at vertices then $f^k_\#(\sigma)$ is completely split for all sufficiently large~$k \ge 0$. 
\end{fact}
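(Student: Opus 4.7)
The plan is to induct on the maximum filtration height $r$ such that $\sigma$ crosses an edge of $H_r$; the base case $r=0$ is vacuous since $\sigma$ is then trivial.

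For the inductive step, the easy subcase is when $H_r$ is a zero stratum: the transition matrix of $H_r$ vanishes so $f(E) \subset G_{r-1}$ for every edge $E \in H_r$, and combined with $f(G_{r-1}) \subset G_{r-1}$ this gives $f_\#(\sigma) \subset G_{r-1}$. Applying the inductive hypothesis to $f_\#(\sigma)$ produces $N \ge 1$ with $f^N_\#(\sigma)$ completely split, and Fact~\ref{FactComplSplitStable} handles larger exponents.

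When $H_r$ is irreducible, I would decompose $\sigma$ at its height-$r$ edges as
\[
\sigma = \gamma_0 \cdot E_{i_1} \cdot \gamma_1 \cdot E_{i_2} \cdots E_{i_k} \cdot \gamma_k,
\]
with each $E_{i_j}$ a signed height-$r$ edge and each $\gamma_j$ either trivial or a connecting path of $H_r$. Granted this is a splitting, the conclusion follows: by (Completely Split) each $f(E_{i_j})$ is completely split, hence so is $f^n_\#(E_{i_j})$ for all $n \ge 1$ by Fact~\ref{FactComplSplitStable}; the inductive hypothesis applied to each $\gamma_j$ yields $N_j$ with $f^{N_j}_\#(\gamma_j)$ completely split; and for $N = 1 + \max_j N_j$, the iterate $f^N_\#(\sigma)$ decomposes along the splitting into completely split pieces, which concatenate to a complete splitting.

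The key technical task, and the main obstacle, is verifying that the proposed decomposition is indeed a splitting, i.e.\ that no cancellation occurs across any junction under any iterate. In the \eg\ case this follows from RTT-(i) and RTT-(iii): the initial and terminal directions of each $f^n_\#(E_{i_j})$ remain of height $r$, while those of $f^n_\#(\gamma_j)$ remain in $G_{r-1}$, so the turns at the junctions in $f^n_\#(\sigma)$ are between directions of distinct heights and hence nondegenerate; RTT-(ii) additionally ensures that $\gamma_j$ remains a connecting path under iteration. In the \neg\ case, where $f(E_i) = E_{i+1} u_i$ with $u_i$ of height $< r$, the terminal direction of $f^n_\#(E_i)$ may lie in $G_{r-1}$ when $u_i$ is nontrivial, so one cannot use a height argument; here the analysis invokes the principal-vertex structure at terminal endpoints of nonfixed \neg\ edges coming from (Vertices) and (Rotationless), together with (Completely Split), to rule out cancellation. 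The circuit case is handled identically with cyclic indexing.
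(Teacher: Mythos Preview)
Your inductive strategy has a genuine gap at the step where you claim the decomposition $\sigma = \gamma_0 E_{i_1} \gamma_1 \cdots E_{i_k} \gamma_k$ is a splitting. In the \eg\ case your argument only treats the turns between an $E_{i_j}$ and an adjacent nontrivial $\gamma_j$; it says nothing about the turn $\{\bar E_{i_j}, E_{i_{j+1}}\}$ when $\gamma_j$ is trivial. Such a turn is a height-$r$ turn and may be illegal: if $H_r$ supports an indivisible Nielsen path $\rho_r = \alpha\beta$ as in Fact~\ref{FactEGNPUniqueness}, the turn at the junction of $\alpha$ and $\beta$ is illegal, and the edge-by-edge decomposition of $\rho_r$ is certainly not a splitting. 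Any height-$r$ path with an $r$-illegal turn already furnishes a counterexample, and such paths are ubiquitous.

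The \neg\ case fails as well, not just for the vague reason you flag. Take a linear edge $E$ with $f(E)=Ew$ and the path $\sigma = E w^{-1} \bar E$. Your decomposition is $E \cdot w^{-1} \cdot \bar E$, but $f_\#(E)\, f_\#(w^{-1})\, f_\#(\bar E) = (Ew)(w^{-1})(\bar w \bar E)$ is not tight, so this is not a splitting. Invoking (Vertices) and (Rotationless) does not help here; the issue is cancellation between the height-$<r$ suffix of $f_\#(E)$ and $f_\#(\gamma)$, which those properties do not prevent.

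What is actually needed in the \eg\ case is the analysis of how $r$-illegal turns evolve under iteration: their number is nonincreasing and eventually constant, after which one obtains a splitting into $r$-legal pieces and indivisible Nielsen paths (see \BookOne\ Lemmas~4.2.5--4.2.6, or Lemma~\ref{LemmaEGPathSplitting} here). The \neg\ case likewise requires the ``hard splitting'' arguments of \BookOne\ Section~4.1. The paper itself does not attempt a self-contained proof: it cites \recognition\ Lemma~4.25 for paths and reduces circuits to paths via \BookOne\ Lemmas~4.1.1--4.1.2.
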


\begin{proof}
When $\sigma$ is a path this is \cite{FeighnHandel:recognition}\rc\ Lemma 4.25. When $\sigma$ is a circuit then, by \BookOne\ Lemma 4.1.2, $\sigma$ may be written as a based closed path $\sigma_1$ which splits at its base point, and so the path $f^k_\#(\sigma_1)$ completely splits for sufficiently large~$k$. By \BookOne\ Lemma 4.1.1 any complete splitting of the path $f^k_\#(\sigma_1)$ gives a splitting of the circuit $f^k_\#(\sigma)$, which is evidently a complete splitting of the circuit.
\end{proof}

\begin{fact}[\cite{FeighnHandel:recognition}, Lemma 4.21]\rc\
\label{FactNEGEdgeImage}
Each nonfixed \neg\ stratum $H_i$ is a single edge $E_i$ which, with its \neg\ orientation, has a splitting $f(E_i) = E_i \cdot u_i$ where $u_i$ is a nontrivial, closed, completely split circuit of height $< i$.  \qed
\end{fact}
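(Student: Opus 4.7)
The plan is to derive each of the four substantive claims in the statement---that $H_i$ is a single edge, that $u_i$ is nontrivial and closed, that $f(E_i)=E_i\cdot u_i$ is a splitting, and that $u_i$ is completely split of height $<i$---from the defining properties of a CT in Definition~\ref{DefCT}. The bulk of the work goes into showing that the stratum consists of a single edge; the other claims then follow quickly from the (Rotationless), (Vertices), and (Completely Split) properties together with Fact~\ref{FactUniqueCompSp}.

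For the single-edge claim, I would write the edges of $H_i$ as $E_1,\ldots,E_N$ with the cyclic \neg\ structure $f(E_j)=E_{j+1}u_j$ from Definition~\ref{DefRelTT}, and argue by contradiction from $N\ge 2$. Let $p_j$ denote the initial endpoint of $E_j$ and $d_j$ its initial direction, so that the formula for $f(E_j)$ yields $f(p_j)=p_{j+1}$ and $Df(d_j)=d_{j+1}$ with indices mod $N$; in particular $d_1$ is a periodic direction. The key step is to show that $p_1$ is principal. Invoking Definition~\ref{DefPrincipalVertices} together with the remark following Fact~\ref{FactPrincipalVertices} (which rules out the ``circle of periodic points'' case in a CT), the only possible obstruction to principality would be that $p_1$ has exactly two periodic directions, both lying in a single \eg\ stratum---incompatible with the periodic \neg\ direction $d_1$. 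Hence $p_1$ is principal, and (Rotationless) then forces $Df(d_1)=d_1$, giving $d_2=d_1$ and therefore $E_2=E_1$, contradicting $N\ge 2$. Thus $N=1$, and we set $E_i=E_1$, $u_i=u_1$.

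With $H_i=\{E_i\}$ in hand, the rest of the assertions fall into place. Since $E_i$ is not a fixed edge, $u_i$ is nontrivial, and $u_i\subset G_{i-1}$ has height $<i$ by the definition of an \neg\ stratum. The terminal endpoint $q_i$ of $E_i$ is principal by (Vertices) and hence fixed by (Rotationless); comparing the endpoints of $f(E_i)=E_i u_i$ with those of $E_i$ then shows that both endpoints of $u_i$ equal $q_i$, so $u_i$ is closed at $q_i$. By (Completely Split), the path $f(E_i)$ admits a complete splitting, unique by Fact~\ref{FactUniqueCompSp}; the atomicity of irreducible-stratum edges as terms of a complete splitting forces $E_i$ to appear as the first term, so the remaining terms concatenate to a complete splitting of $u_i$. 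Coarsening this complete splitting of $f(E_i)$ by grouping all terms beyond $E_i$ yields the splitting $f(E_i)=E_i\cdot u_i$ of Definition~\ref{DefSplittings}.

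The hardest step will be ruling out $N\ge 2$ for a cyclic \neg\ stratum: this requires coordinating several CT axioms---(Vertices), (Periodic Edges) through the remark under Fact~\ref{FactPrincipalVertices}, and (Rotationless)---and admits no purely syntactic shortcut. Once $N=1$ is established, the remaining assertions follow in a routine manner from the complete-splitting and rotationless properties of a CT.
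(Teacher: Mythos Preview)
The paper does not supply its own proof here: the statement is recorded as a Fact with a \qed, deferring entirely to \cite{FeighnHandel:recognition}, Lemma~4.21. So there is nothing in this paper to compare your argument against; your task was really to reconstruct the cited lemma's proof, and your outline does this correctly in its overall structure.

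One step deserves more care. Your justification that $E_i$ is the first term of the complete splitting of $f(E_i)$ --- ``the atomicity of irreducible-stratum edges as terms of a complete splitting forces $E_i$ to appear as the first term'' --- is not quite an argument: being an allowable term type does not by itself force $E_i$ to be a term. What you need is to rule out that the first term $\sigma_1$ is an indivisible Nielsen path or an exceptional path beginning with $E_i$. The Nielsen path case is immediate from (\neg\ Nielsen Paths): such a path would be $E_i w^k \bar E_i$, crossing $E_i$ twice, whereas $f(E_i)=E_i u_i$ with $u_i\subset G_{i-1}$ crosses $E_i$ once. For the exceptional case $\sigma_1=E_i w^p\bar E_b$ with $E_b$ a linear edge of lower height, note that by (Linear Edges) the common root-free Nielsen path $w$ satisfies $u_i=w^{d_i}$ and has height strictly less than the height of $E_b$; hence $\bar E_b$ cannot occur in $u_i=w^{d_i}$, contradicting $\sigma_1$ being an initial subpath of $f(E_i)$. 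With this filled in, your derivation that $u_i$ is completely split (as the concatenation of the remaining terms) and that $f(E_i)=E_i\cdot u_i$ is a splitting goes through as you describe.
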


\bigskip

\begin{fact}\label{FactPrincipalVertices}
For each periodic vertex $x \in G$, exactly one of the following holds:
\begin{enumerate}
\item $x$ is principal (and therefore fixed); 
\item $x$ is the unique periodic point in its Nielsen class, there are exactly two periodic directions based at $x$, and both of those directions are in the same \eg\ stratum.
\end{enumerate}
\end{fact}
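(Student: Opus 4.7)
The plan is to reduce Fact~\ref{FactPrincipalVertices} to showing that case~(2) of Definition~\ref{DefPrincipalVertices} --- namely that $x$ lies on a topological circle $C \subset G$ of periodic points each with exactly two periodic directions --- cannot occur when $f$ is a \ct. Granted that, ``principal'' coincides with ``not case~(1) of Definition~\ref{DefPrincipalVertices}'', which is case~(2) of the Fact; mutual exclusivity is then automatic from Definition~\ref{DefPrincipalVertices}, and the ``therefore fixed'' clause follows from the (Rotationless) property of Definition~\ref{DefCT}.

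Suppose for contradiction that such a circle $C$ containing $x$ exists. The key claim is that each edge $E$ of $C$ is a periodic edge of~$f$. Every interior point $p$ of $E$ has valence~$2$, and its two directions are the orientations of $E$ at $p$; by the hypothesis on $C$ both are periodic, so $p$ itself is periodic. It therefore suffices to prove the sub-claim: if every interior point of an edge $E$ is periodic under $f$, then $E$ is a periodic edge.

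To prove the sub-claim, suppose $E$ is not a periodic edge; the goal is to show that $\Per(f) \cap \Int(E) = \bigcup_k \Fix(f^k) \cap \Int(E)$ is countable, contradicting that this set equals the uncountable interval $\Int(E)$. Divide into cases by the stratum containing $E$. If $E$ lies in an \eg\ stratum $H_r$, then the single edge $E$ is $r$-legal, and iterating RTT-(iii) shows $f^k(E)$ is reduced with edge-length comparable to $\lambda_r^k$ where $\lambda_r > 1$ is the Perron-Frobenius eigenvalue of $H_r$ (using Fact~\ref{FactEGAperiodic}); parameterizing $E$ by arc length, $f^k \colon E \to G$ is piecewise linear with slope of absolute value comparable to $\lambda_r^k > 1$ on each linear piece, so $|\Fix(f^k) \cap \Int(E)| \le |f^k_\#(E)|$, which is finite for each $k$. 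If $E$ lies in a nonfixed \neg\ stratum, then by Fact~\ref{FactNEGEdgeImage} one has a splitting $f(E) = E \cdot u$ with $u$ nontrivial and of strictly lower height, and iterating yields $f^k_\#(E) = E \cdot u \cdot f_\#(u) \cdots f^{k-1}_\#(u)$ whose non-$E$ tail lies in strata of height strictly below $E$; a direct parameter computation (the image $f^k(p)$ of an interior point $p$ at arc-length~$t$ lies either in the interior-disjoint tail or at arc-length $t \cdot |f^k_\#(E)|$ in the initial copy of $E$, forcing $t=0$) shows $f^k$ has no fixed point in $\Int(E)$ at all. In either case the uncountability of $\Int(E)$ is contradicted, so $E$ must be a periodic edge.

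Applying (Periodic Edges) of Definition~\ref{DefCT} twice --- each periodic edge is fixed, and each endpoint of a fixed edge is principal --- yields that every vertex of $C$, in particular $x$, is principal. But by Definition~\ref{DefPrincipalVertices} the hypothesis that $x$ satisfies case~(2) is exactly the statement that $x$ is \emph{not} principal, a contradiction. Hence case~(2) of Definition~\ref{DefPrincipalVertices} never occurs in a \ct, completing the reduction and proving the Fact.

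The main obstacle is the sub-claim: upgrading periodicity of every interior point of $E$ to periodicity of $E$ itself. The \eg\ case leans on the length-growth $|f^k_\#(E)| \sim \lambda_r^k$ together with the finiteness bound $|\Fix(f^k) \cap \Int(E)| \le |f^k_\#(E)|$ coming from piecewise linearity, while the \neg\ nonfixed case requires a careful computation using $f(E) = E \cdot u$ and the fact that the tail $u$ is disjoint from $E$ in every subsequent iterate. Both steps hinge on the \ct\ structure preventing $f$ from folding along $E$ under iteration.
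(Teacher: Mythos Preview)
Your strategy is the paper's: rule out case~(2) of Definition~\ref{DefPrincipalVertices} in a \ct\ by showing each edge of such a circle $C$ is periodic, then applying (Periodic Edges) to conclude the vertices of $C$ are principal. The paper dispatches the sub-claim in half a sentence, and indeed it has a one-line proof that avoids your stratum case analysis entirely: since $f$ sends vertices to vertices, any $p\in\Int(E)$ with $f(p)$ a vertex has $f^k(p)$ a vertex for all $k\ge 1$, hence $p$ cannot be periodic; so $f(\Int(E))$ contains no vertex and $f(E)$ is a single edge. Iterating, each $f^j(E)$ is a single edge, and any periodic interior point then forces $f^k(E)=E$.

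Your growth argument is repairable but has loose ends. You omit zero strata. In the \eg\ case the claim that each linear piece of $f^k\restrict E$ has slope ``comparable to $\lambda_r^k$'' is false---pieces whose itinerary passes through fixed edges of $G_{r-1}$ can have slope~$1$; what actually rescues finiteness is that a piece of $f^k\restrict E$ landing back in $E$ with slope~$1$ would force $f^j(E)$ to be a single edge for $j=1,\ldots,k$ with $f^k(E)=E$, contradicting that $E$ lies in an \eg\ stratum. Both your \eg\ and \neg\ computations also silently assume $f$ is linear on edges, which is standard but should be stated.
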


\begin{proof} If $x$ is contained in a topological circle $C$ of periodic points then, applying (Periodic Edges), each edge of $C$ is fixed and (1) holds. If $x$ is contained in no such circle then exactly one of items (1) or (2) holds, by definition of principal.
\end{proof}

\noindent \textbf{Remark:} The above argument shows that circles as Definition~\ref{DefPrincipalVertices}~\pref{ItemBadCircle}, the definition of principal vertices, do not exist in \ct's.

\paragraph{Nielsen paths.}

\begin{fact}[(\recognition\ Lemma~4.13]\rc\
\label{FactPNPFixed}
Every periodic Nielsen path is a (fixed) Nielsen path.
\end{fact}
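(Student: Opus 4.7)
The plan is to reduce to the indivisible case and induct on height, leveraging the rigid CT structure.

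First, any periodic Nielsen path decomposes into indivisible periodic Nielsen paths, so if each indivisible factor is shown to be a (period one) Nielsen path then so is their concatenation. I therefore assume $\sigma$ is indivisible of period $k$ and height $r$, and induct on~$r$.

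By (Vertices) the endpoints of $\sigma$ are vertices, and since $f^k_\#(\sigma)=\sigma$ they are $f^k$-fixed. By Fact~\ref{FactPrincipalVertices} each endpoint $x$ is either principal (hence $f$-fixed by (Rotationless)) or non-principal with exactly two periodic directions in a common EG stratum; in the latter case the Nielsen class of $x$ equals $\{x\}$, so the two endpoints of $\sigma$ coincide and $\sigma$ is a closed path based at $x$. I would then rule out a nontrivial $f$-permutation of those two directions by arguing that such a permutation combined with $f^k_\#(\sigma)=\sigma$ would contradict the rotationless hypothesis via Fact~\ref{FactPeriodicIsFixed}.

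Next I split by the type of $H_r$. If $H_r$ is NEG then by Fact~\ref{FactNEGEdgeImage} it is a single edge $E_r$. A fixed $E_r$ splits off any containing path, forcing $\sigma=E_r$ by indivisibility; this is already a Nielsen path. A superlinear $E_r$ yields unbounded twisting in $f^k(E_r)$, ruling out a periodic Nielsen path of height $r$. If $E_r$ is linear with $f(E_r)=E_r\cdot w_r^{d_r}$ and $w_r$ a root-free Nielsen path of height less than $r$, mimicking the argument behind (NEG Nielsen Paths) forces $\sigma=E_r w_r^m \bar E_r$ for some $m\in\Z$; by the inductive hypothesis $w_r$ is a Nielsen path of $f$, and then $f(\sigma) = E_r w_r^{d_r}\cdot w_r^m \cdot w_r^{-d_r}\bar E_r$ tightens directly to $\sigma$.

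The main obstacle is the EG case. Here (EG Nielsen Paths) factors $f\restrict G_r$ through iterated folding of an indivisible Nielsen path of height~$r$, and an analogous factorization for $f^k$ exhibits $\sigma$ as an indivisible Nielsen path of $f^k$. A priori, $f_\#$ could cyclically permute $\sigma$ through a $k$-orbit of indivisible periodic Nielsen paths of height $r$; ruling this out uses the rotationless hypothesis on $\phi$ (the attracting lamination associated to $H_r$ is $\phi$-fixed, not merely periodic, by Fact~\ref{FactPeriodicIsFixed}) together with the rigidity of the fold data in (EG Nielsen Paths), which together force $f_\#(\sigma)=\sigma$.
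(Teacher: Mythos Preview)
The paper gives no proof of this fact; it is simply cited as \recognition\ Lemma~4.13, in keeping with the paper's stated convention that ``Facts'' are citations from the literature. So there is no in-paper argument to compare against.

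Evaluating your proposal on its own terms, there are genuine gaps. The most serious is a circularity: the \ct\ axioms (\neg\ Nielsen Paths) and (\eg\ Nielsen Paths), and facts such as Fact~\ref{FactEGNPUniqueness}, are stated for indivisible \emph{Nielsen paths} (period one), not periodic ones. You invoke them---or unnamed arguments ``behind'' them---to constrain the form of a period-$k$ path $\sigma$, but these statements say nothing about $\sigma$ until you already know it has period one. In the \eg\ case this bites hardest: (\eg\ Nielsen Paths) presupposes the existence of an indivisible height-$r$ Nielsen path for $f$ itself, which you do not yet have; you only know one exists for $f^k$.

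Your \eg\ argument is also incomplete on its own terms. That the lamination $\Lambda_r$ is $\phi$-fixed (Fact~\ref{FactPeriodicIsFixed}) does not by itself prevent $f_\#$ from permuting a putative $k$-orbit of indivisible periodic Nielsen paths of height~$r$; you need a concrete mechanism connecting fixedness of $\Lambda_r$ to fixedness of $\sigma$, and ``rigidity of the fold data'' is not one. Similarly, in the non-principal endpoint case, Fact~\ref{FactPeriodicIsFixed} concerns conjugacy classes, laminations, and free factor systems, not directions at a vertex, so invoking it to rule out a direction swap is not justified as written.
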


\begin{fact}\label{FactNielsenCircuit}
If $\sigma$ is an $f_\#$-periodic path or circuit then $\sigma$ is $f_\#$-fixed, $\sigma$ is completely split, and all terms in the complete splitting of $\sigma$ are fixed edges or indivisible Nielsen paths.
\end{fact}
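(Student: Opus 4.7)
The plan is to address the three claims in order. For the first --- that $\sigma$ is $f_\#$-fixed, not merely periodic --- I would split by cases. If $\sigma$ is a circuit then its conjugacy class in $F_n$ is $\phi$-periodic, hence $\phi$-fixed by Fact~\ref{FactPeriodicIsFixed}\pref{ItemPeriodicClassFixed} (using that $\phi$ is rotationless), so $f_\#(\sigma) = \sigma$. If $\sigma$ is a nontrivial path then, being $f^k_\#$-fixed for some $k \ge 1$, it is by definition a periodic Nielsen path, and Fact~\ref{FactPNPFixed} promotes it to a (fixed) Nielsen path. Complete-splitness of $\sigma$ then follows at once: Fact~\ref{FactEvComplSplit} gives that $f^k_\#(\sigma)$ is completely split for all sufficiently large $k$, and since $\sigma = f^k_\#(\sigma)$, the path or circuit $\sigma$ is itself completely split.

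For the structural claim about the terms of the complete splitting, the main tool is a counting argument driven by the uniqueness of complete splittings. Write the unique complete splitting (Fact~\ref{FactUniqueCompSp}) as $\sigma = \sigma_1 \cdot \ldots \cdot \sigma_m$. By Fact~\ref{FactComplSplitStable}, each $f_\#(\sigma_i)$ is completely split and concatenating these individual complete splittings gives a complete splitting of $f_\#(\sigma) = \sigma$; by uniqueness, this concatenation equals $\sigma_1 \cdot \ldots \cdot \sigma_m$. Since each $f_\#(\sigma_i)$ is a nontrivial completely split path contributing at least one term, and the total number of terms on both sides is $m$, each $f_\#(\sigma_i)$ must be a single-term complete splitting, and therefore must equal $\sigma_i$ itself.

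It remains to check, for each of the four term types allowed by Definition~\ref{DefCompleteSplitting}, which can satisfy $f_\#(\sigma_i) = \sigma_i$. An edge in an irreducible stratum is fixed by $f$ exactly when it is a fixed edge. An indivisible Nielsen path is automatically fixed. A taken zero-stratum term of height $j$ has image of height strictly less than $j$ (since the transition matrix of a zero stratum is zero), hence cannot equal itself. An exceptional path $E_i w^p \overline E_j$ maps under $f_\#$ to $E_i w^{p + d_i - d_j} \overline E_j$, which differs from the original whenever $d_i \ne d_j$; by the (Linear Edges) clause of Definition~\ref{DefCT} this inequality is forced when $E_i \ne E_j$, while in the remaining case $E_i = E_j$ the path is actually an indivisible Nielsen path by the (\neg\ Nielsen Paths) clause. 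This eliminates proper exceptional paths and zero-stratum terms, leaving only fixed edges and indivisible Nielsen paths.

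The substantive step is the counting argument, which hinges on uniqueness of complete splittings together with the fact that $f_\#(\sigma_i)$ is nontrivial for each term (immediate from $f$ being a homotopy equivalence); the subsequent enumeration of term types is a routine consequence of the defining properties of a \ct.
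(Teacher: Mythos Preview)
Your proof is correct and follows the same core idea as the paper --- using Fact~\ref{FactComplSplitStable} together with uniqueness of complete splittings (Fact~\ref{FactUniqueCompSp}) to force each term to be a Nielsen path --- though you establish fixedness of $\sigma$ at the outset while the paper derives it at the end from fixedness of the terms. One small wrinkle: in the circuit case the complete splitting is unique only up to cyclic rotation, so your counting yields $f_\#(\sigma_i) = \sigma_{i+c}$ for some shift $c$ rather than $f_\#(\sigma_i)=\sigma_i$ directly; each $\sigma_i$ is then a \emph{periodic} Nielsen path, and one more appeal to Fact~\ref{FactPNPFixed} (which you have already used) gives $c=0$ and completes the argument.
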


\begin{proof} Suppose $\sigma$ is $f_\#$ periodic, say $f^N_\#(\sigma)=\sigma$. By Fact~\ref{FactEvComplSplit}, $\sigma = f^{kN}_\#(\sigma)$ is completely split for sufficiently large $k$. By Fact~\ref{FactComplSplitStable}, applied to $\sigma = f^N_\#(\sigma)$, all terms in the complete splitting of $\sigma$ are $f_\#$-periodic Nielsen paths. By Fact~\ref{FactPNPFixed}, all terms are Nielsen paths, and so $\sigma$ is $f_\#$-fixed.
\end{proof}

Our next few facts are concerned with indivisible Nielsen paths of \eg\ height.

\begin{fact}
\label{FactEGNPUniqueness}
For each \eg\ stratum~$H_r$, up to reversal there is at most one indivisible Nielsen path $\rho_r$ of height~$r$ (\recognition\rc\ Corollary~4.19). Moreover, if $\rho_r$ exists then $\rho_r$ may be written uniquely as a concatenation $\rho = \alpha\beta$ where $\alpha$ and $\beta$ are $r$-legal paths, each begins and ends with edges of $H_r$, and the turn $\{Df(\bar\alpha),Df(\beta)\}$ is degenerate (\cite{BestvinaHandel:tt} Lemma~5.11).
\qed\end{fact}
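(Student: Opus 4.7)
The plan is to exploit RTT-(iii) together with the Perron--Frobenius expansion of the EG stratum $H_r$ on its $r$-legal paths. For any $r$-legal path $\gamma$ of positive $H_r$-length, iterating $f_\#$ preserves $r$-legality and expands the $H_r$-edge count like a positive power of the PF eigenvalue $\lambda>1$; hence no Nielsen path of height $r$ can be $r$-legal. Consequently any indivisible Nielsen path $\rho$ of height $r$ admits a decomposition $\rho = \gamma_1 \gamma_2 \cdots \gamma_k$ into maximal $r$-legal subpaths meeting at illegal turns of height $r$, with $k \ge 2$; each interior $\gamma_i$ starts and ends with an $H_r$-edge because the bounding illegal turns are of height $r$.

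My approach to the decomposition claim is to show $k=2$. Suppose $k \ge 3$, so $\gamma_2$ starts and ends with $H_r$-edges. Under $f^N_\#$ the $H_r$-length of $f^N_\#(\gamma_2)$ grows like $\lambda^N$, while the cancellation at each adjacent illegal turn is uniformly bounded by $\BCC(f)$ (Fact~\ref{FactBCCSimplicial}). The equation $f^N_\#(\rho)=\rho$ thus forces subpaths of $f^N_\#(\gamma_2)$ of unbounded $H_r$-length to sit inside $\rho$ across iterates. A pigeonhole on the vertex sequences produces an iterate under which some proper subpath of $\rho$ with vertex endpoints is $f_\#$-periodic; by Fact~\ref{FactPNPFixed} this subpath is a Nielsen path, contradicting indivisibility of $\rho$. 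Hence $k=2$ and $\rho = \alpha \beta$. The equation $f_\#(\alpha\beta)=\alpha\beta$, combined with $r$-legality and positive $H_r$-length of $\alpha$ and $\beta$, forces cancellation at the central illegal turn under a single application of $Df$; that this occurs after one iterate rather than some higher iterate uses aperiodicity of $H_r$ (Fact~\ref{FactEGAperiodic}) together with the (Rotationless) property of CTs, which fixes periodic directions in $H_r$. Thus $\{Df(\bar\alpha),Df(\beta)\}$ is degenerate. That $\alpha$ begins (and $\beta$ ends) with an edge of $H_r$ follows by a similar indivisibility argument: otherwise one could strip a maximal lower-height initial segment from $\rho$ and, by $f$-invariance of $G_{r-1}$ and (Vertices), produce a shorter periodic subpath.

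For uniqueness up to reversal, let $\rho'=\alpha'\beta'$ be another indivisible Nielsen path of height $r$. Both central illegal turns consist of pairs of $H_r$-directions identified by $Df$. Aperiodicity of $H_r$ together with (Rotationless) shows that the common image direction is fixed, so the two pairs of directions are determined by a single fixed direction in $H_r$, and the two junction vertices coincide up to the swap $\rho\leftrightarrow\bar\rho$. Then $\alpha$ and $\alpha'$ begin with the same $H_r$-direction at the junction, are both $r$-legal, and land at matched endpoints; comparing the iterates $f^N_\#(\alpha)$ and $f^N_\#(\alpha')$ against a common host path forces $\alpha=\alpha'$, and symmetrically $\beta=\beta'$. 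The main obstacle is the quantitative bookkeeping needed to rule out $k>2$: one must precisely control how the images of the interior legal pieces $\gamma_2,\ldots,\gamma_{k-1}$ embed into $\rho$ under iteration, balancing the $\lambda^N$ expansion against the uniform bound $\BCC(f)$ on cancellation at each interior illegal turn.
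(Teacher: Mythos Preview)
The paper does not prove this Fact; it is stated with citations to \recognition\ Corollary~4.19 (for uniqueness) and \cite{BestvinaHandel:tt} Lemma~5.11 (for the decomposition), with a \qed\ immediately after. So there is no ``paper's own proof'' to compare against --- you are attempting to supply what the paper takes as input from the literature.

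Your decomposition argument contains a genuine error. You claim that when computing $f^N_\#(\rho)$ from the $r$-legal pieces $f^N_\#(\gamma_i)$, ``the cancellation at each adjacent illegal turn is uniformly bounded by $\BCC(f)$.'' This is false. Bounded cancellation controls how far $f_\#(\sigma)$ strays from $f(\sigma)$, but it does \emph{not} bound the overlap between $f^N_\#(\gamma_{i})$ and $f^N_\#(\gamma_{i+1})$ when these are concatenated and tightened. Indeed, in the actual case $k=2$ with $\rho=\alpha\beta$, the paths $f^N_\#(\alpha)$ and $f^N_\#(\beta)$ each grow like $\lambda^N$ in $H_r$-length, yet tighten to $\rho$ of fixed length --- so the cancellation at the single illegal turn grows like $\lambda^N$. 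Your bound would therefore rule out $k=2$ as well, which is absurd. The correct route to $k=2$ (as in \cite{BestvinaHandel:tt} and \BookOne\ Lemma~4.2.6) is a splitting argument: since the number of $r$-illegal turns in $f^k_\#(\rho)$ is nonincreasing and equals that of $\rho$, the path $\rho$ splits into subpaths each with at most one $r$-illegal turn; indivisibility then forces a single piece and hence a single illegal turn.

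Your uniqueness argument is also incomplete. The assertion that ``the two pairs of directions are determined by a single fixed direction in $H_r$'' does not follow from aperiodicity and (Rotationless) alone --- there is no a priori reason $H_r$ has only one fixed direction that could receive an illegal turn. The uniqueness in \recognition\ Corollary~4.19 uses the (EG Nielsen Paths) property of \cts, which encodes a fold factorization of $f\restrict G_r$ through which the Nielsen path is tracked; this is substantially more than a direction-counting argument.
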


\begin{fact}
\label{FactNielsenBottommost} If $H_r$ is an \eg\ stratum, if there exists an indivisible Nielsen path $\rho_r$ of height~$r$, and if $\rho_r = a_0 b_1 a_1 b_2 \cdots a_{k-1} b_k$ is the decomposition into maximal subpaths $a_i$ in $H_r$ and $b_i$ in $G_{r-1}$, then:
\begin{enumerate}
\item \label{ItemNoZeroStrata} No zero stratum is enveloped by $H_r$.
\item Each $b_i$ is a Nielsen path.
\item \label{ItemBottommostEdges}
For each edge $E \subset H_r$ and each $k \ge 0$, the path $f^k_\#(E)$ splits into terms each of which is an edge of $H_r$ or one of the Nielsen paths $b_i$ in the decomposition of $\rho_r$. Furthermore, each term in the complete splitting of $f^k_\#(E)$ is an edge of $H_r$, a fixed edge, or an indivisible Nielsen path.
\end{enumerate}
\end{fact}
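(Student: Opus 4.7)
The plan is to prove the three assertions in the order (2), (3), (1), relying on a single structural observation that unlocks the rest. I claim first that $\rho_r = a_0 \cdot b_1 \cdot a_1 \cdots b_k$ is a splitting in the sense of Definition~\ref{DefSplittings}. Each junction between an $a_i$ (which ends with an edge of $H_r$) and an adjacent $b_j$ (which begins with an edge of $G_{r-1}$) involves one direction of height $r$ and one of lower height, and RTT-(i) immediately declares such mixed-height turns legal. Moreover, $Df$ carries $H_r$-directions to $H_r$-directions and, because $f$ respects the filtration, carries $G_{r-1}$-directions to $G_{r-1}$-directions, so no cancellation can occur at any of these junctions under any positive iterate of $f_\#$.

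For (2), I apply $f_\#$ to this splitting: since $f_\#(\rho_r) = \rho_r$, we obtain $\rho_r = f_\#(a_0) \cdot f_\#(b_1) \cdot f_\#(a_1) \cdots$, and by RTT-(ii) each $f_\#(b_i)$ is a connecting path of $H_r$. To conclude $f_\#(b_i) = b_i$, I invoke the (EG Nielsen Paths) condition of Definition~\ref{DefCT}, which describes $f \restrict G_r$ as an iterated folding of $\rho_r = \alpha\beta$ (with $\alpha,\beta$ as in Fact~\ref{FactEGNPUniqueness}) composed with a fold of lower edges and a homeomorphism. This rigid description of $f \restrict G_r$, together with Fact~\ref{FactPNPFixed} which promotes periodicity of $b_i$ under $f_\#$ to genuine fixedness, forces $f_\#(b_i) = b_i$ for each~$i$.

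For (3), I induct on $k$, the base case $k=0$ being trivial. For the inductive step, the induction hypothesis presents $f^k_\#(E) = E_1 \eta_1 E_2 \eta_2 \cdots$ as a splitting with $E_i \in H_r$ and each $\eta_j$ some $b_{\sigma(j)}$; applying $f_\#$ and using (2) reduces us to expressing $f_\#(E_i)$ for each $E_i \in H_r$ as a splitting into $H_r$-edges and $b_j$'s. This follows by combining (Completely Split) --- which makes $f(E_i)$ completely split --- with the rigidity coming from $\rho_r$ being the unique INP of height $r$ (Fact~\ref{FactEGNPUniqueness}): in the $r$-legal form of $f(E_i)$, every connecting path of $H_r$ appearing between two successive $H_r$-edges is forced by the folding description in (EG Nielsen Paths) to coincide with one of the $b_j$'s. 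The second sentence of (3) then follows by refining each $b_j$ via Fact~\ref{FactNielsenCircuit}: the complete splitting of the Nielsen path $b_j$ consists of fixed edges and indivisible Nielsen paths, and the $H_r$-edges are already atomic.

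Finally, (1) falls out from (3). If some zero stratum $H_s$ were enveloped by $H_r$, then by (Zero Strata) every edge of $H_s$ would be $r$-taken and would appear as a maximal $H_s$-subpath of $f^l_\#(E)$ for some $E \in H_r$ and $l \ge 1$; but (3) confines the complete splitting of $f^l_\#(E)$ to $H_r$-edges, fixed edges, and indivisible Nielsen paths, and none of these can contain an edge of a zero stratum (a fixed edge lies in an irreducible stratum by definition, and a recursive use of Fact~\ref{FactNEGEdgeImage} together with the structure of indivisible Nielsen paths of EG height shows that every edge of an indivisible Nielsen path lies in an irreducible stratum). The main obstacle will be the precise matching in Step~3 of the connecting paths of $H_r$ appearing in $f_\#(E)$ with the $b_j$'s of $\rho_r$; this is where the folding description of (EG Nielsen Paths) is genuinely needed and where the argument requires the most care.
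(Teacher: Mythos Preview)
The paper's proof is a two-line citation: items \pref{ItemNoZeroStrata}, (2), and the $k=1$ case of the first sentence of \pref{ItemBottommostEdges} are all drawn directly from Lemma~4.24 of \recognition, the general $k$ follows by induction, and the ``furthermore'' clause is Corollary~4.12 of \recognition. Your proposal is essentially an attempt to reconstruct the content of that cited lemma from scratch, which is a reasonable exercise, but the reconstruction has a real gap.

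The gap is exactly where you yourself flag it. Your splitting $\rho_r = a_0 \cdot b_1 \cdot a_1 \cdots$ is correct (mixed-height turns are legal by RTT-(i)), and applying $f_\#$ does give $\rho_r = f_\#(a_0) \cdot f_\#(b_1) \cdots$. But this is \emph{not} the maximal $H_r/G_{r-1}$ decomposition of $\rho_r$: each $f_\#(a_i)$ begins and ends in $H_r$ but may contain interior $G_{r-1}$-subpaths, so the $f_\#(b_i)$'s need not line up with the original $b_j$'s. You therefore cannot read off $f_\#(b_i) = b_i$, nor even that $b_i$ is $f_\#$-periodic, from the splitting alone. Your appeal to (EG Nielsen Paths) at this point is a gesture, not an argument: you say the folding description ``forces'' the identification but never say how. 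The same issue recurs in your step (3), where the claim that every $G_{r-1}$-connecting path inside $f(E)$ coincides with some $b_j$ is precisely the substance of Lemma~4.24 of \recognition, and it genuinely requires tracking what the proper extended folds of (EG Nielsen Paths) do to the $b_j$'s. Until you actually carry out that folding analysis --- showing that the folds $f_i$ of (EG Nielsen Paths) identify only $H_r$-edges and hence carry each $b_j$ over unchanged --- both (2) and the base case of (3) remain unproved, and your derivation of (1) from (3) is therefore also incomplete.
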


\begin{proof} Lemma~4.24 of \recognition\ implies \pref{ItemNoZeroStrata}, (2) and the first statement of  \pref{ItemBottommostEdges} for the case $k=1$; the case $k >1$ follows by induction.  The furthermore part of  \pref{ItemBottommostEdges} follows from Corollary~4.12 of \recognition.\rc
\end{proof}


\begin{fact}\label{FactEGNielsenCrossings}
If $H_r$ is an \eg\ stratum, and if $\rho_r$ is an indivisible Nielsen path of height $r$, then $\rho_r$ crosses each edge of $H_r$ at least once, the initial oriented edges of $\rho_r$ and $\bar\rho_r$ are distinct oriented edges of $H_r$, and:
\begin{enumerate}
\item\label{ItemEGNielsenNotClosed}
$\rho_r$ is not closed if and only if it crosses some edge of $H_r$ exactly once, and in this case:
\begin{enumerate}
\item \label{ItemEGParageometricOneInterior}
At least one endpoint of $\rho_r$ is not in $G_{r-1}$.
\item \label{ItemNoClosed} There does not exist a height~$r$ fixed conjugacy class. In particular, there does not exist a closed, height~$r$ Nielsen path.
\item \label{ItemNongeomFFS}
There exists a proper free factor system $\F$ such that for each line $\ell \in \B$, $\ell$ is carried by $\F$ if and only if the realization of $\ell$ in $G$ is a concatenation of edges of $G \setminus H_r$ and copies of $\rho_r$.
\end{enumerate}
\item\label{ItemEGNielsenClosed}
$\rho_r$ is closed if and only if it crosses each edge of $H_r$ exactly twice, and in this case:
\begin{enumerate}
\item \label{ItemEGNielsenPointInterior}
The endpoint of $\rho_r$ is not in $G_{r-1}$.
\item \label{ItemFewClosed} The only height~$r$ fixed conjugacy classes are those represented by $\rho_r$, its inverse, and their iterates.
\end{enumerate}
\end{enumerate}
\end{fact}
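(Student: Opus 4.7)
The plan is to leverage the canonical decomposition $\rho_r = \alpha\cdot\beta$ from Fact~\ref{FactEGNPUniqueness}, in which $\alpha, \beta$ are $r$-legal paths each beginning and ending with an edge of $H_r$, and the unique $r$-illegal turn of $\rho_r$ lies at the central junction. By RTT-(iii) any $r$-cancellation in forming $f_\#(\rho_r)=\rho_r$ is confined to this central turn, so there is a single ``overhang'' path $c$ satisfying $f_\#(\alpha)=\alpha\cdot c$ and $f_\#(\beta)=c^{-1}\cdot\beta$; this identity is the algebraic engine of the argument.

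I first treat the preliminary assertions. The initial edges of $\rho_r$ and $\bar\rho_r$ are respectively the initial edge of $\alpha$ and the reverse of the terminal edge of $\beta$, both in $H_r$ by Fact~\ref{FactEGNPUniqueness}. These two oriented edges are distinct, for if they coincided then $\rho_r=E\gamma\bar E$ forces the interior path $\gamma$ to be $f_\#$-fixed, contradicting indivisibility. To see every edge of $H_r$ is crossed by $\rho_r$, let $v\in\Z_{\ge 0}^{E(H_r)}$ be the $H_r$-crossing count vector of $\rho_r$ and $M_r$ its transition matrix. The overhang identity yields $M_r v = v + 2v_c$ where $v_c$ is the $H_r$-content of $c$, so $v \le M_r^k v$ for all $k$. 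Since $M_r$ is primitive by Fact~\ref{FactEGAperiodic}, $M_r^N v$ has strictly positive entries for large $N$; combined with Fact~\ref{FactNielsenBottommost}~\pref{ItemBottommostEdges}, which constrains the $H_r$-edges appearing in iterates of $H_r$-edges to those occurring in $\rho_r$, this forces the support of $v$ to equal $E(H_r)$.

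The dichotomy in items~(1) and~(2) follows from a combinatorial count in the spirit of \cite{BestvinaHandel:tt}~Lemma~5.11: using the legality of $\alpha,\beta$ together with the degenerate turn at their junction, one shows that each unoriented edge of $H_r$ is crossed by $\rho_r$ at most twice, and that any single crossing must sit at one of the two outer endpoints of $\rho_r$. If every $H_r$-edge is crossed exactly twice, the pairing of $\alpha$-crossings with $\beta$-crossings is perfect and the identification propagated outward by the illegal turn forces the two outer endpoints of $\rho_r$ to coincide, so $\rho_r$ is closed; otherwise an ``exposed'' endpoint carries a once-crossed edge into $H_r$, and $\rho_r$ is not closed. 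Items~(1a) and~(2a) follow, since an exposed endpoint is an interior vertex of $H_r$ and hence not in $G_{r-1}$.

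For (1b) and (2b), Fact~\ref{FactNielsenCircuit} says any height-$r$ fixed conjugacy class is completely split into fixed edges and iNps, so by Fact~\ref{FactEGNPUniqueness} it contains $\rho_r^{\pm 1}$ as a term; in the non-closed case the exposed endpoint of $\rho_r$ prevents any closed concatenation of allowed terms from containing $\rho_r$, giving~(1b), while in the closed case such concatenations are exactly the powers $[\rho_r^k]$, giving~(2b). For~(1c), take a Stallings fold collapsing $\rho_r$ to a loop along a once-crossed edge of $H_r$; the resulting marked graph has a proper core subgraph whose $\pi_1$-system is the desired $\F$, and Fact~\ref{FactLineRealizedCarried} gives the stated characterization of lines carried by $\F$. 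The main obstacle is the combinatorial pairing at the central illegal turn underlying the at-most-twice count and the closure/exposure dichotomy, where the careful analysis of \cite{BestvinaHandel:tt}~Lemma~5.11 is essential.
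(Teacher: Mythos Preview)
Your arguments for items~(1b), (2b), and the Stallings-fold sketch for~(1c) are essentially correct and agree with the paper's own treatment. There are, however, genuine gaps in the preliminary assertions and in the crossing-count dichotomy.

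For the claim that $\rho_r$ crosses every $H_r$-edge: your matrix inequality $v\le M_r^N v$ points the wrong way---knowing that $M_r^N v$ has full support does not force $v$ to have full support---and Fact~\ref{FactNielsenBottommost}\pref{ItemBottommostEdges} does not say what you attribute to it: it constrains the $G_{r-1}$-subpaths of $f^k_\#(E)$ to be the $b_i$'s, not which $H_r$-edges appear. Likewise, the assertion that $\rho_r=E\gamma\bar E$ forces $\gamma$ to be $f_\#$-fixed is unjustified; there is no splitting $E\cdot\gamma\cdot\bar E$ a priori, and nothing in your setup produces one. The paper obtains the distinct-initial-edges statement by citing \recognition\ Corollary~4.19~eg-(i) (which derives from \BookOne\ Theorem~5.1.5~eg-(i)).

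The central gap is the at-most-twice crossing count and the closed/not-closed dichotomy. You attribute this to \BH~Lemma~5.11, but that lemma only supplies the $\alpha\beta$ decomposition you already used; the crossing count is \BH~Theorem~5.15 (see also \BookOne\ Lemma~5.2.5 and Proposition~5.3.1), whose proof is a fold analysis requiring that $f$ be \emph{stable} (respectively $\F$-Nielsen minimized). In the \ct\ setting the substitute for stability is the defining property (EG Nielsen Paths), which supplies exactly the fold factorization $f\restrict G_r=\theta\circ f_{r-1}\circ f_r$ needed to run the argument. You never invoke (EG Nielsen Paths), and without it there is no mechanism producing the at-most-twice bound or the pairing that forces closure. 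The paper makes this substitution explicit, citing \BH~Theorem~5.15 and noting that its proof goes through with (EG Nielsen Paths) in place of stability; the paper then cites \BookOne\ Lemma~5.1.7 and Proposition~5.3.1 for items~(1a), (1c), and~(2a).
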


\begin{proof} The statement about initial oriented edges is found in \recognition\ Corollary~4.19 eg-(i),\rc\ which derives from \BookOne\ Theorem~5.1.5 \hbox{eg-(i)}. The statements regarding numbers of edge crossings of $H_r$ made by $\rho_r$ are found in \BH\ Theorem~5.15 but with a hypothesis saying ``$f$ is stable'', but all that is needed for the proof of Theorem~5.15 to go through are the properties of folds contained in (EG Nielsen Paths); see also \BookOne\ Lemma~5.2.5 for the same statements but with a hypothesis saying that ``$f$ is $\F$-Nielsen minimized''. 

Item~\pref{ItemNongeomFFS} is just Lemma~5.1.7 of \BookOne.

If $\rho_r$ crosses some edge of $H_r$ exactly once then the hypotheses of \BookOne\ Lemma~5.1.7 hold, the conclusion of which says that $\rho_r$ is not closed and that at most one of its endpoints is in a noncontractible component of $G_{r-1}$, but by Facts~\ref{FactNielsenBottommost} and~\ref{FactContrComp}, every component of $G_{r-1}$ is noncontractible. If $\rho_r$ crosses every edge of $H_r$ exactly twice, it follows by \BookOne\ Proposition~5.3.1 that $\rho_r$ is a closed path which begins and ends with distinct edges and that its endpoint is not contained in $G_{r-1}$.

It remains to prove items~\pref{ItemNoClosed} and~\pref{ItemFewClosed}. Consider a height~$r$ circuit~$c$ fixed by~$f_\#$. By Fact~\ref{FactEvComplSplit}, $c$ completely splits. Each term of the complete splitting must be a fixed edge of height~$<r$ or an indivisible Nielsen path of height~$\le r$, including at least one term of height~$r$ which, by Fact~\ref{FactEGNPUniqueness}, must be $\rho_r$ or its inverse. Up~to switching orientations of $c$ and of $\rho_r$, we may assume that $c_i=\rho_r$ is a term of $\sigma$ with terminal endpoint $x \not\in G_{r-1}$. Consider the term $c_{i+1}$ following $c_i$, with initial endpoint~$x$. Since no fixed edge in $G_r$ is incident to $x$, $c_{i+1}$ cannot be a fixed edge. If $\rho_r$ is not closed then the only candidate for $c_{i+1}$ is the inverse of $\rho_r$, which is not allowed to follow $\rho_r$ in a complete splitting. If $\rho_r$ is closed then the only allowed candidate for $c_{i+1}$ is another copy of~$\rho_r$, and by induction $c$ is an iterate of $\rho_r$.
\end{proof}

Our last fact about Nielsen paths is concerned with zero strata and with \neg\ strata that are \emph{superlinear}, meaning not fixed and not linear.

\begin{fact}
\label{FactNoSuperlinearNielsen}
If $H_r$ is a zero stratum or an \neg\ superlinear stratum then no Nielsen path crosses any edge of $H_r$.
\end{fact}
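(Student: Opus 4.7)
The plan is to argue by induction on height, after reducing the statement to the case of indivisible Nielsen paths via the structural decomposition from Fact~\ref{FactNielsenCircuit}.

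First, I would reduce to indivisible Nielsen paths. Let $\sigma$ be any Nielsen path. By Fact~\ref{FactNielsenCircuit}, $\sigma$ admits a complete splitting each of whose terms is either a fixed edge or an indivisible Nielsen path. A fixed edge $E$ cannot lie in $H_r$: if $H_r$ is NEG superlinear then by definition no edge of $H_r$ is fixed; if $H_r$ is a zero stratum then $f$-invariance of the filtration together with the vanishing transition matrix of $H_r$ forces $f(E)\subset G_{r-1}$, contradicting $f(E)=E\subset H_r$. So a fixed-edge term is disjoint from $H_r$, and it suffices to show that no indivisible Nielsen path crosses any edge of $H_r$.

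Next, I would induct on the height $t$ of an indivisible Nielsen path $\rho$, aiming to show $\rho$ crosses no edge of $H_r$. If $t=r$ the two cases are handled directly: when $H_r$ is a zero stratum, the argument from the reduction step shows $f(\rho)\subset G_{r-1}$, so $f_\#(\rho)$ has height less than $r$, contradicting $\rho = f_\#(\rho)$ of height $r$; when $H_r$ is NEG superlinear, property~\pref{ItemNEGNielsenPaths} of Definition~\ref{DefCT} asserts that the existence of an indivisible Nielsen path of height $r$ on an NEG stratum forces $E_r$ to be linear, contradicting the hypothesis.

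If $t>r$, I split by the type of the top stratum $H_t$. When $H_t$ is EG, Fact~\ref{FactNielsenBottommost} decomposes $\rho$ as $a_0 b_1 a_1 b_2\cdots a_{k-1} b_k$ with $a_i\subset H_t$ and each $b_i$ a Nielsen path of height $<t$; since edges of $H_r$ have height $r<t$, every crossing of an edge of $H_r$ by $\rho$ occurs inside some $b_i$, and the inductive hypothesis (applied to the iNP terms in the complete splitting of $b_i$, guaranteed to exist by Fact~\ref{FactNielsenCircuit}) rules this out. When $H_t$ is NEG and $\rho$ has height $t$, property~\pref{ItemNEGNielsenPaths} of Definition~\ref{DefCT} gives $\rho = E_t w^k \overline E_t$ where $w$ is a closed Nielsen path of height $<t$; any crossing of $H_r$ by $\rho$ comes from $w$, handled again by induction. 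Finally, if $H_t$ is an NEG fixed stratum then the only indivisible Nielsen path of height $t$ is the single fixed edge $E_t$, which trivially does not cross $H_r$ when $r<t$.

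The main obstacle is the $t=r$ case for zero strata; the remaining work is bookkeeping about the shapes of indivisible Nielsen paths in a CT, which Facts~\ref{FactNEGEdgeImage}, \ref{FactNielsenBottommost}, and Definition~\ref{DefCT}~\pref{ItemNEGNielsenPaths} package for us. The induction is well-founded because every appeal to the hypothesis is to a Nielsen path of strictly smaller height.
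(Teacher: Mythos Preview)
Your approach is essentially the paper's: reduce to indivisible Nielsen paths via Fact~\ref{FactNielsenCircuit}, then argue by height (the paper phrases this as a minimal-height argument rather than an induction, but the content is identical, and your direct argument for the zero-stratum base case---$f(\rho)\subset G_{r-1}$ forces a height drop---is a clean alternative to the paper's use of (Zero Strata) to confine $\rho$ entirely to $H_i$).

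There is one genuine omission: in your $t>r$ case split you treat $H_t$ of type EG and NEG but not the possibility that $H_t$ is a zero stratum. This is a real case, and the paper does handle it (by showing upfront that the height of the minimal offending indivisible Nielsen path never lands in a zero stratum). The fix is immediate---your own $t=r$ argument applies verbatim to any zero stratum $H_t$, showing no indivisible Nielsen path can have height $t$---but as written your case analysis is not exhaustive.

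One minor point: your treatment of the ``NEG fixed'' case is off. Under the paper's conventions, fixed edges and indivisible Nielsen paths are disjoint term types (see the phrasing in Fact~\ref{FactNielsenCircuit} and Definition~\ref{DefCompleteSplitting}), so a fixed edge $E_t$ is not counted as an indivisible Nielsen path. With that convention, (\neg\ Nielsen Paths) already says there is no indivisible Nielsen path of height $t$ unless $E_t$ is linear, so the fixed and superlinear NEG subcases are both vacuous and need no separate discussion.
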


\begin{proof} Suppose some Nielsen path crosses an edge $E$ of $H_r$, and let $i$ be the minimal height of all such paths. By Fact~\ref{FactNielsenCircuit}, there is an indivisible Nielsen path $\rho_i$ of height $i$ that crosses~$E_r$. Note that $H_i$ is note a zero stratum for otherwise, by (Zero Strata), it would follows that $\rho_i$ is contained entirely in $H_i$, but $f^k(H_i) \intersect H_i = \emptyset$ for sufficiently large $k$, contradicting that $f^k_\#(\rho_i)=\rho_i$. 

\textbf{Case 1: $i=r$.} We have shown that $H_i=H_r$ is not a zero stratum, and if $H_r$ is \neg\ superlinear then by applying (\neg\ Nielsen Paths) we obtain a contradiction. 

\textbf{Case 2: $i > r$.} If $H_i = \{E_i\}$ is \neg\ then by (\neg\ Nielsen Paths) it follows that $H_i$ is linear and that $\rho_i = E_i w_i^s \overline E_i$ where $w_i$ is a closed Nielsen path of height $<i$, but then $w_i$ crosses $E_r$, contradicting minimality of height. If $H_i$ is \eg\ then by Fact~\ref{FactNielsenBottommost} we have $\rho_i = a_0 b_1 a_1 b_2 \cdots a_{k-1} b_k$ where the $a$'s are paths in $H_i$ and the $b$'s are Nielsen paths of height $<i$, but then one of the $b$'s must cross $E_r$, again contradicting minimality of height. We have already ruled out the possibility that $H_i$ is a zero stratum.
\end{proof}

\paragraph{Zero strata.}

\begin{fact} \label{FactContrComp}
For each filtration element $G_r$ the following are equivalent: 
\begin{enumerate}
\item \label{ItemHasContrComp}
$G_r$ has a contractible component; 
\item \label{ItemIsZeroStrat}
$H_r$ is a zero stratum;
\item \label{ItemIsContrComp}
$H_r$ is a contractible component of $G_r$.
\end{enumerate}
\end{fact}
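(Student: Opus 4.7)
The plan is to establish the equivalences via the cycle $(3) \Rightarrow (1) \Rightarrow (2) \Rightarrow (3)$; the first implication is immediate. The main obstacle is the contractibility step in $(2) \Rightarrow (3)$, since a priori a loop in a zero stratum could have $f$-image landing in $G_{r-1}$, and ruling this out requires exploiting the bijectivity of $\phi$ on conjugacy classes together with the $\phi$-invariance of carried sets.

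For $(2) \Rightarrow (3)$, suppose $H_r$ is a zero stratum. By the (Zero Strata) axiom, $H_r$ is enveloped by some \eg\ stratum $H_s$ with $s > r$, so $H_r$ is a connected component of $G_{s-1}$, every vertex of $H_r$ lies in $H_s$, and the link of each such vertex is contained in $H_r \cup H_s$. Since $G_{r-1} \subset G_{s-1}$ and no edge of $G_{s-1}$ outside $H_r$ is incident to a vertex of $H_r$, no edge of $G_{r-1}$ is incident to a vertex of $H_r$; hence $G_r = G_{r-1} \sqcup H_r$ as subcomplexes, and $H_r$ is a component of $G_r$. For contractibility, suppose $H_r$ contained a nontrivial loop $\gamma$ with conjugacy class $[\gamma] \in \C$. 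Since $f(H_r) \subset G_{r-1}$, the class $\phi([\gamma])$ is carried by $[\pi_1 G_{r-1}]$. Let $S \subset \C$ be the set of conjugacy classes carried by $[\pi_1 G_{r-1}]$; the $f$-invariance of $G_{r-1}$ makes $[\pi_1 G_{r-1}]$ a $\phi$-invariant free factor system, so $\phi(S) \subset S$, and since $\phi$ acts bijectively on $\C$, in fact $\phi(S) = S$. Thus $[\gamma] = \phi^{-1}(\phi([\gamma])) \in S$, contradicting that $\gamma \subset H_r$ is disjoint from $G_{r-1}$.

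For $(1) \Rightarrow (2)$, I would induct on $r$, assuming the Fact at all smaller indices, and rule out each non-zero stratum type for $H_r$. If $H_r$ is \eg, Fact~\ref{FactEGAperiodic} says $G_r$ is a core graph, contradicting~(1). If $H_r$ is a fixed non-loop edge, (Periodic Edges) forces $G_{r-1}$ to be core with both endpoints of $H_r$ in $G_{r-1}$, so $G_r$ is core, again contradicting~(1). For the remaining cases (fixed loop or nonfixed \neg\ edge), I first deduce from the inductive hypothesis that $G_{r-1}$ has no contractible component: otherwise $H_{r-1}$ would be a zero stratum enveloped by some \eg\ stratum $H_s$ with $s \ge r$, but then $s = r$ would force $H_r$ to be \eg\ while $s > r$ would place $H_r$ in the enveloping range and force it to be a zero stratum, each contradicting the present case. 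Given that $G_{r-1}$ is core, a fixed loop $H_r$ is itself noncontractible and alters no component of $G_{r-1}$ to a contractible one; and for a nonfixed \neg\ edge $H_r = \{E\}$, Fact~\ref{FactNEGEdgeImage} writes $f(E) = E \cdot u$ with $u$ a nontrivial closed circuit in $G_{r-1}$ based at the terminal vertex of $E$, so the component of $G_r$ containing $E$ contains a noncontractible component of $G_{r-1}$ and is noncontractible. In every subcase $G_r$ has no contractible component, contradicting~(1), so $H_r$ must be a zero stratum, completing the induction.
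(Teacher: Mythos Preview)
Your proof is correct. The $(2)\Rightarrow(3)$ step matches the paper's argument closely: the paper also invokes (Zero Strata) to see that $H_r$ is a component of $G_r$, and then uses that $f_\#$ is a bijection on circuits to rule out any nontrivial loop in $H_r$; your phrasing via $\phi$-invariance of the conjugacy classes carried by $[\pi_1 G_{r-1}]$ is the same argument in slightly different language.

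Where you diverge is in $(1)\Rightarrow(2)$. The paper simply cites Lemma~4.15 of \recognition\ for the equivalence of \pref{ItemHasContrComp} and \pref{ItemIsZeroStrat}, whereas you supply a direct inductive case analysis on the stratum type of $H_r$, using Fact~\ref{FactEGAperiodic} for the EG case, (Periodic Edges) for the fixed non-loop case, and the enveloping structure of zero strata to control $G_{r-1}$ in the remaining NEG cases. Your argument is self-contained within the paper's own list of CT axioms and earlier facts, which is a virtue in a document that is explicitly intended as a central reference; the paper's citation is shorter but outsources the content. Both approaches are sound.
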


\begin{proof}
The equivalence of \pref{ItemHasContrComp} and~\pref{ItemIsZeroStrat} is Lemma~4.15 of \recognition.\rc\ That \pref{ItemIsZeroStrat} implies \pref{ItemIsContrComp} is a consequence of (Zero Strata) which implies that $H_r$ is a component of $G_r$, and the fact that $f_\#$ is a bijection on circuits. That \pref{ItemIsContrComp} implies \pref{ItemHasContrComp} is obvious.
\end{proof}

\begin{fact}\label{FactEdgeToZeroConnector}
For any edge $E \subset G$, if $f_\#(E)$ is contained in a zero stratum~$H_t$ enveloped by the \eg\ stratum $H_s$ then $E$ is an edge in some other zero stratum $H_{t'} \ne H_t$ that is also enveloped by $H_s$.
\end{fact}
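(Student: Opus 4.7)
The plan is to determine the stratum of $E$ by elimination, then use the local structure provided by RTT-(i) together with (Zero Strata) to force the two envelopes to coincide. I would proceed in three steps.

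First I would rule out the case that $E$ lies in an irreducible stratum $H_k$. If $H_k$ is \neg, then Fact~\ref{FactNEGEdgeImage} gives a splitting $f(E) = E \cdot u$, so $f_\#(E)$ contains the edge $E \in H_k$; but $H_k \ne H_t$ (different stratum types), contradicting $f_\#(E) \subset H_t$. If $H_k$ is \eg, then RTT-(i) says $Df$ preserves the set of height-$k$ directions, so the initial edge of $f(E)$ lies in $H_k$. Since $f$ is a topological representative, $f(E)$ is already a reduced edge path, hence $f_\#(E) = f(E)$ contains an edge of $H_k \ne H_t$, again a contradiction.

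Next I would eliminate the possibility that $E \in H_t$. The transition matrix of the zero stratum $H_t$ vanishes, so $f(E)$ crosses no edge of $H_t$, whence $f_\#(E) \subset H_t$ would force $f_\#(E)$ to be the trivial path at a single vertex. On the other hand, every edge of $H_t$ is $s$-taken by (Zero Strata), so $E$ is a taken connecting path of $H_s$; by (Completely Split) the path $f_\#(E)$ admits a complete splitting and is therefore nontrivial. This rules out $E \in H_t$, so $E$ lies in a zero stratum $H_{t'}$ with $t' \ne t$.

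The main obstacle is the final step: showing that the \eg\ stratum $H_{s'}$ enveloping $H_{t'}$ is actually $H_s$. Choose an endpoint $v$ of $E$. Then $v$ is a vertex of $H_{t'}$, so by (Zero Strata) it is a vertex of $H_{s'}$, which provides a direction $d'$ at $v$ of height $s'$. Applying RTT-(i) to $H_{s'}$ gives that $Df(d')$ also has height $s'$. On the other hand, $f(v)$ is an endpoint of $f_\#(E) \subset H_t$, hence a vertex of $H_t$, and (Zero Strata) applied to $H_t$ forces the link of $f(v)$ to be contained in $H_t \cup H_s$. The direction $Df(d')$ at $f(v)$ therefore corresponds to an edge of $H_t \cup H_s$. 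Because distinct strata are disjoint, the conditions ``$Df(d') \in H_{s'}$'' and ``$Df(d') \in H_t \cup H_s$'' together force $s' \in \{t, s\}$; but $s' = t$ is impossible since $H_{s'}$ is \eg\ while $H_t$ is a zero stratum. Hence $s' = s$, which is what we wanted.
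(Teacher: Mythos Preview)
Your argument is correct and follows essentially the same route as the paper's proof: rule out irreducible strata for $E$, then use (Zero Strata) plus RTT-(i) at a vertex of $E$ to pin down the envelope. The paper compresses your first step by observing uniformly that if $E$ lies in an irreducible stratum $H_i$ then the complete splitting of $f_\#(E)$ contains an edge of $H_i$, and it disposes of the case $t'=t$ at the very end with the one-liner that $f(H_t)\cap H_t\ne\emptyset$ would contradict the zero transition matrix; but the logical content is the same. One small citation gap: Fact~\ref{FactNEGEdgeImage} covers only \emph{nonfixed} \neg\ edges, so in your \neg\ case you should also note that a fixed edge has $f(E)=E$, which is equally incompatible with $f_\#(E)\subset H_t$.
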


\begin{proof} Since the path $f_\#(E)$ is completely split and contained in the zero stratum $H_t$, its complete splitting must have just the single term $f_\#(E)$ which must be a taken connecting path of $H_s$. If the oriented edge $E$ is contained in an irreducible stratum $H_i$ then some term in the complete splitting of $f_\#(E)$ is an edge of $H_i$, contradiction. It follows that $E$ is contained in some zero stratum $H_{t'}$ enveloped by some \eg\ stratum~$H_{s'}$. 

By (Zero Strata) applied to $H_{t'}$ there exists an oriented edge $E_0 \subset H_{s'}$ having the same initial vertex as $E$. The paths $f_\#(E_0)$ and $f_\#(E)$ therefore have the same initial vertex, and by (Zero Strata) applied to $H_t$ the link of this vertex is contained in $H_{s} \union H_t$. By RTT-(i) the initial direction of $f_\#(E_0)$ is contained in $H_{s'}$, and so $s=s'$. 

If $H_t = H_{t'}$ then $f(H_t) \intersect H_t \ne \emptyset$, contradicting the definition of a zero stratum.
\end{proof}

\subsubsection{Facts about principal lifts, principal directions, and principal rays.} 
\label{SectionLiftFacts}
From here to the end of the section we fix a rotationless $\phi \in \Out(F_n)$ represented by a \ct\ $f \from G \to G$. We describe several natural structures on $f$ that are associated to principal automorphisms representing $\phi$: the definition of principal lifts is taken directly from \recognition; and the definitions of principal directions and principal rays are formulated based on results from \recognition.

Recall the natural bijection between the set of automorphisms $\Phi$ representing $\phi$, and the set of lifts $\ti f \from \wt G \to \wt G$ of $f \from G \to G$ via the universal covering map $\wt G \mapsto G$, where $\ti f$ corresponds to $\Phi$ if and only if the extension $\hat f$ to $\bdy\wt G \approx \bdy F_n$ equals the extension $\wh\Phi$ to $\bdy F_n$.

\begin{definition}[\recognition\ Definition~3.1] Given a lift $\ti f \from \wt G \to \wt G$ with corresponding automorphism $\Phi$ representing $\phi$, we say that $\ti f$ is a \emph{principal lift}\index{principal lift} if $\Phi$ is a principal automorphism.
\end{definition}

\begin{fact}[\recognition\ Corollary 3.17, Corollary 3.22, Corollary 3.27]\rc\
\label{FactPrincipalLift}
A lift $\ti f \from \wt G \to \wt G$ is principal if and only if there is a principal vertex $v \in G$ and a lift $\ti v \in \wt G$ such that $\ti f(\ti v) = \ti v$. \qed
\end{fact}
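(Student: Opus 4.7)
The plan is to translate between three kinds of data attached to the lift $\ti f$: fixed vertices $\ti v \in \wt G$ of $\ti f$, fixed directions at such $\ti v$, and fixed points of $\wh\Phi$ in $\bdy F_n \approx \bdy \wt G$. The key preparatory observation is that any vertex $\ti v$ fixed by $\ti f$ projects to a vertex $v \in G$ that is periodic and hence fixed (by Fact~\ref{FactPrincipalVertices} combined with the (Rotationless) clause of Definition~\ref{DefCT}); and that each fixed direction $d$ of $Df$ at $v$ lifts to a direction $\ti d$ at $\ti v$ fixed by $D\ti f$. Iterating the initial edge $\ti E_d$ of $\ti d$ under $\ti f_{\#\#}$, the paths $\ti f^k_{\#\#}(\ti E_d)$ form a nested sequence whose union is a ray $\ti R_d$ in $\wt G$ accumulating to a point $\xi_d \in \bdy F_n$ fixed by $\wh\Phi$, with distinct fixed directions giving distinct $\xi_d$'s, and (outside degenerate cases handled by Fact~\ref{LemmaFixPhiFacts}) with $\xi_d \in \Fix_+(\wh\Phi) \subset \Fix_N(\wh\Phi)$.

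For the backward direction, assume $\ti v$ is a fixed lift of a principal vertex $v \in G$. By the Remark following Fact~\ref{FactPrincipalVertices}, the circle case~\itemref{ItemBadCircle} of Definition~\ref{DefPrincipalVertices} cannot occur in a \ct, so negating the definition reduces to the negation of case~\itemref{ItemEGNonprincipal}. I would consider three subcases: (i) $v$ has $\ge 3$ fixed directions, whence the preparatory observation produces $\ge 3$ distinct points of $\Fix_N(\wh\Phi)$; (ii) $v$ has exactly two fixed directions but not both in one \eg\ stratum, in which case the two resulting points of $\Fix_N(\wh\Phi)$ cannot be endpoints of an axis or of a lifted generic leaf, because generic leaves live in a single \eg\ stratum by Fact~\ref{FactLamsBasicProps}~\pref{ItemLamFFSConnected} and CT structure, and any $\phi$-fixed axis of height $r$ would similarly force both endpoints into a single \eg\ stratum; (iii) $v$ is not the unique periodic point in its Nielsen class, in which case by Fact~\ref{FactPNPFixed} there is a fixed Nielsen path $\rho$ from $v$ to another fixed vertex $v'$, whose lift based at $\ti v$ produces an additional fixed lift $\ti v'$ contributing further fixed directions and hence further points of $\Fix_N(\wh\Phi)$. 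In each subcase the definition of principal automorphism is satisfied.

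For the forward direction, assume $\Phi$ is principal. Using the decomposition $\Fix_N(\wh\Phi) = \bdy \Fix(\Phi) \cup \Fix_+(\wh\Phi)$ from Fact~\ref{LemmaFixPhiFacts}, I would split on whether $\Fix(\Phi)$ has positive rank. If it does, then $\Fix(\Phi)$ acts cocompactly on its minimal subtree $T \subset \wt G$ by Fact~\ref{FactFiniteRankSubgroup}~\pref{ItemFiniteRankInNormalizer}, and $\ti f$ preserves $T$ because it commutes with the action of $\Fix(\Phi)$ via Fact~\ref{FactFPBasics}; cocompactness together with rotationlessness then yields a vertex $\ti v \in T$ fixed by $\ti f$. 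If $\Fix(\Phi)$ is trivial then $\Fix_+(\wh\Phi)$ has at least two elements; for any $\xi \in \Fix_+(\wh\Phi)$, any ray in $\wt G$ converging to $\xi$ shares a cofinal subray with its $\ti f_\#$-image, and a standard walk-back argument using the Bounded Cancellation Lemma (Fact~\ref{FactBCCSimplicial}) produces a fixed vertex $\ti v$ on that ray. In either case $\ti v$ projects to a fixed vertex $v \in G$, and one must verify $v$ is principal: if $v$ satisfied case~\itemref{ItemEGNonprincipal} of Definition~\ref{DefPrincipalVertices}, then direct computation using the preparatory observation would show $\Fix_N(\wh\Phi)$ consists of exactly the two endpoints of a lifted generic leaf of the attracting lamination associated with the relevant \eg\ stratum, contradicting principality of $\Phi$.

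The main obstacle is establishing the bijective correspondence between the set $\Fix_+(\wh\Phi)$ and the fixed directions at fixed lifts of $\ti f$ (together with the verification that in the exceptional two-direction same-stratum case, those directions yield precisely the endpoints of a lifted generic leaf). This bundle of correspondences is the substance of Corollaries~3.17, 3.22, and 3.27 of \recognition, which is why the fact can be cited cleanly once those structural results are in hand.
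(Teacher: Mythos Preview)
The paper itself gives no proof here: the statement is followed immediately by a \qed, and the entire content is the citation to Corollaries~3.17, 3.22, 3.27 of \recognition. So there is nothing to compare your argument against except the cited source, and your final paragraph already correctly identifies that the substantive work lives there.

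Your outline is a reasonable reconstruction of the shape of those arguments, but as a standalone proof it has real gaps. In the backward direction your three subcases do not exhaust the ways a vertex can be principal: negating clause~\pref{ItemEGNonprincipal} of Definition~\ref{DefPrincipalVertices} also allows $v$ to be the unique periodic point in its Nielsen class while having exactly one periodic (hence fixed) direction, and your argument produces only a single point of $\Fix_N(\wh\Phi)$ in that case. Also, your reasoning in subcase~(ii) that the two points are not the endpoints of an axis is not justified by the citation you give; what is actually needed is the fact (from Fact~\ref{FactSingularRay}, which appears \emph{after} this statement in the paper) that a nonlinear fixed direction iterates to a point of $\Fix_+(\wh\Phi)$, which by Fact~\ref{LemmaFixPhiFacts} is disjoint from $\bdy\Fix(\Phi)$ and hence is not fixed by any covering translation. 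In the forward direction, the ``walk-back'' argument from a point of $\Fix_+(\wh\Phi)$ to a fixed vertex of $\ti f$ is exactly the hard part (this is Lemma~4.36 of \recognition, packaged here as Fact~\ref{FactSingularRay}~\pref{ItemAttractorToDirection}), and the bounded cancellation lemma alone does not supply it.

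In short: your sketch points at the right objects, but both directions lean on the ray/direction correspondence that this paper records only later as Fact~\ref{FactSingularRay}, and the case analysis needs tightening. Since the paper treats this as a black-box citation, the cleanest move is to do the same.
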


\begin{definition}[Principal directions] 
\label{DefPrincipalDirection}
Consider an oriented edge $E \subset G$ whose initial vertex is a principal vertex of $f$ and whose initial direction is fixed by $f$ but $E$ is not a fixed edge of $f$. If $E$ is nonlinear then we say that $E$ is a \emph{principal direction of $f$ in $G$}.\index{principal direction} Given a principal lift $\ti f \from \wt G \to \wt G$, and given an oriented edge $\wt E \subset \wt G$ with initial vertex $\ti v \in \Fix(\ti f)$, we say that $\wt E$ is a \emph{principal direction of $\ti f$ in $\wt G$} if its projection $E \subset G$ is a principal direction of $f$ in $G$. We also say that $E$ (or $\wt E$) is an \neg\ or \eg\ principal direction depending on the nature of the stratum of $G$ containing $E$.
\end{definition}

The concept of a ``principal ray'' is based on Lemmas~3.26 and~4.36 of \recognition\ which we compile together in the statement of Fact~\ref{FactSingularRay} to follow, after which we use it to formally define principal rays and then give its proof. The hypotheses on $\Phi$ and $\ti f$ in Fact~\ref{FactSingularRay} are satisfied when $\Phi$ is principal, equivalently $\ti f$ is a principal lift. While most of its numerous applications are in the principal case, there are a few applications in the nonprincipal case. 

%
%
%
%

\begin{fact}\label{FactSingularRay} 
Let $\Phi \in \Aut(F_n)$ represent $\phi$ with associated lift $\ti f \from \wt G \to \wt G$, suppose that $\Fix(\ti f) \ne \emptyset$, and suppose that there does not exist a nontrivial $\gamma \in F_n$ such that the set $\Fix(\wh\Phi) = \Fix(\hat f)$ is equal to $\Fix(\hat\gamma)$. For each $\ti v \in \Fix(\ti f)$ and each oriented edge $\wt E \subset \wt G$ with initial vertex $\ti v$ and with fixed initial direction, let $v,E \subset G$ be the projections of $\ti v, \wt E$. With this notation, the following hold:
\begin{enumerate}
\item \label{ItemFixedDirectionRay}
For each $\ti v$, $\wt E$ as above such that $E = H_r$ is a fixed edge, there exists a ray $\wt R \subset \wt G$ with initial edge $\wt E$ that converges to some $\ti\xi \in \Fix_N(\wh\Phi)$ and that projects to a ray in~$G_r$.
\item \label{ItemDirectionToAttractor}
For each $\ti v$, $\wt E$ as above such that $E$ is not a fixed edge, letting $f(E) = Eu$, letting $\ti u \subset \wt G$ be the lift of $u$ for which $\ti f(\wt E) = \wt E \ti u$, and letting $H_r$ be the stratum containing $E$, the following hold. There exists a properly nested sequence of oriented paths $\wt E \subset \ti f(\wt E) \subset\ti f^2_\#(\wt E) \subset \ldots$, each an initial segment of the previous, whose union is a ray $\wt R \subset \wt G$ that contains no fixed point other than $\ti v$ and that converges to some $\ti\xi \in \Fix_N(\wh\Phi)$; we say in this case that $\wt E$ iterates to $\ti\xi$. Furthermore:
\begin{enumerate}
\item \label{ItemComplSplitRay}
There exists a splitting $f(\wt E) = \wt E \cdot \ti u$ and an induced splitting
$$\wt R = \underbrace{\wt E \cdot \ti u \cdot \ti f_\#(\ti u) \cdot \ti f^2_\#(\ti u) \cdot \ldots \cdot \ti f^k_\#(\ti u)}_{\ti f^k_\#(\wt E)} \cdot \ti f^{k+1}_\#(\ti u) \cdot \ldots \qquad\qquad (*)
$$
\item\label{ItemEGPrincipalRay}
If $E$ is not an \neg-linear edge then $\ti\xi \in \Fix_+(\wh\Phi)$. Furthermore, if $H_r$ is \eg\ then the weak accumulation set of $\xi$ is the unique attracting lamination of height~$r$.
\item\label{ItemRayEndsAtAttr} 
If $E$ is an \neg-linear edge then $\ti\xi \in \bdy\Fix(\Phi)$.
\item\label{ItemRayHeight}
$\wt R$ projects to a ray in $G_r$, and if $H_r$ is an \neg-stratum then $\wt R \setminus \wt E$ projects to a ray in $G_s$ where $s$ is the height of the circuit~$u$.
\end{enumerate}
\item \label{ItemAttractorToDirection}
For each $\ti\xi \in \Fix_+(\wh\Phi)$ the following hold:
\begin{enumerate}
\item\label{ItemAttractorToDirectionExists}
There exists $\ti v \in \Fix(\ti f)$ and $\wt E$ such that $\wt E$ iterates to $\ti\xi$ as above.
\item\label{ItemAttractorToDirectionUnique}
For any ray $\wt R \subset \wt G$ with ideal endpoint $\ti\xi$, finite endpoint $\ti v$, and initial oriented edge $\wt E$, if $\Fix(\ti f) \intersect \wt R = \{\ti v\}$ then the initial direction of $\wt E$ is fixed by $\ti f$, $\wt E$ iterates to $\ti\xi$, and the projected edge $E$ is neither fixed nor linear.
\end{enumerate}
\end{enumerate}
\end{fact}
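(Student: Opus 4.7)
The plan is to derive this as the indicated compilation of \recognition\ Lemmas~3.26 and~4.36, deducing each clause from the \ct\ structure on $f \from G \to G$ combined with the fixed-point analysis of Fact~\ref{LemmaFixPhiFacts}. The non-exceptional hypothesis on $\Fix(\wh\Phi)$ gives us the disjoint union $\Fix(\wh\Phi) = \bdy\Fix(\Phi) \sqcup \Fix_-(\wh\Phi) \sqcup \Fix_+(\wh\Phi)$ and the fact that points of $\Fix_+(\wh\Phi)$ are topological attractors for $\overline\Phi$ on $\wt G \cup \bdy\wt G$; these are exactly what is needed to place the endpoint $\ti\xi$ of each constructed ray in the correct piece of $\Fix_N(\wh\Phi)$, and conversely to distinguish the cases in clause (2) and uniqueness in clause (3).

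For clause (1), $\wt E$ is fixed pointwise by $\ti f$, so its terminal vertex is fixed and projects to a principal vertex by (Periodic Edges). One then extends inductively: at the current fixed vertex, if some outgoing direction in $G_r$ is $Df$-fixed and lies in a fixed edge, append it; otherwise the direction lies in a non-fixed edge and clause (2) applied at that vertex produces a ray continuation. All selections stay in $G_r$, so $\wt R$ projects into $G_r$; $\ti f$-invariance of $\wt R$ forces its ideal endpoint into $\Fix(\wh\Phi)$, and the endpoint cannot be a repeller because $\wt R$ is assembled from attracting directional data, so $\ti\xi \in \Fix_N(\wh\Phi)$. For clause (2), the splitting $f(E) = E \cdot u$ is given by Fact~\ref{FactNEGEdgeImage} in the \neg\ case; in the \eg\ case it follows because the fixed initial direction of $E$ together with RTT-(i) and RTT-(iii) make the turn between the terminal direction of $E$ and the initial direction of $u$ persistently legal under iteration, which is the defining property of a splitting. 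Once the splitting is in hand, the identity $\ti f^k_\#(\wt E) = \wt E \cdot \ti u \cdot \ti f_\#(\ti u) \cdots \ti f^{k-1}_\#(\ti u)$ follows, the nesting is proper because $\ti u$ is nontrivial (else $E$ is a fixed edge), and the union $\wt R$ is the asserted ray with $\ti\xi := \lim \wt R \in \Fix(\wh\Phi)$; clause (2d) on heights is automatic since each $\ti f^k_\#(\ti u)$ has height bounded by the height of $u$.

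The dichotomy in clauses (2b) and (2c) comes from the type of $E$. If $E$ is \neg-linear with $f(E) = E w^{d}$, then $\ti u = \ti w^d$ where $w$ is a closed root-free Nielsen path; the ray $\wt R$ is then an initial segment followed by repeated lifts of $w$, and since $w$ represents a nontrivial element of $\Fix(\Phi)$, the ideal endpoint is one of the two points of $\bdy\Fix(\Phi)$ fixed by the corresponding inner automorphism (Fact~\ref{FactFPBasics}), giving (2c). If $E$ is not linear --- either in an \eg\ stratum or \neg-superlinear --- then the lengths $\norm{\ti f^k_\#(\ti u)}$ grow without bound; combined with Fact~\ref{FactBCCSimplicial} (Bounded Cancellation), this shows that any nearby line or ray is pulled into $\wt R$ under iterates of $\overline\Phi$, so $\ti\xi$ is an attractor and lies in $\Fix_+(\wh\Phi)$. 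In the \eg\ case, a cofinal subray of $\wt R$ arises as an iterate of $\wt E \subset H_r$, and since $\L(\phi)$ is in bijection with \eg\ strata of $f$ with generic leaves obtained as limits of such iterates, the weak accumulation set of the projected ray equals the unique attracting lamination of height~$r$.

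For clause (3)(a), given $\ti\xi \in \Fix_+(\wh\Phi)$, pick any $\ti v_0 \in \Fix(\ti f)$ (nonempty by hypothesis) and consider the ray $\wt S$ in $\wt G$ from $\ti v_0$ to $\ti\xi$. Since $\ti\xi$ is a topological attractor, $\ti f^k_\#(\wt S)$ converges to $\wt S$ on compact sets as $k\to\infty$, and proper discontinuity of the $F_n$-action produces a fixed vertex $\ti v$ on $\wt S$ past which $\ti f$ fixes the initial direction; this vertex and the outgoing edge $\wt E$ are the desired data, matching \recognition\ Lemma~3.26. For (3)(b), if $\wt R$ has endpoints $\ti v, \ti\xi$ with $\Fix(\ti f)\cap\wt R = \{\ti v\}$, then $\ti f(\wt R)$ has the same endpoints so equals $\wt R$, forcing the initial direction of $\wt E$ to be $Df$-fixed. $E$ cannot be a fixed edge, else the terminal vertex of $\wt E$ would be a second interior fixed point; $E$ cannot be \neg-linear, else by clause (2c) the constructed ray converges to a point of $\bdy\Fix(\Phi)$, contradicting $\ti\xi \in \Fix_+(\wh\Phi)$ and the disjointness from Fact~\ref{LemmaFixPhiFacts} under the non-exceptional hypothesis. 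The main obstacle is clause (3)(a): one needs to ensure that the approximating fixed vertex obtained from iteration sits precisely at the base of a ray of the form described in clause (2), which requires carefully coupling the attractor property of $\ti\xi$ with the combinatorial structure of the splittings produced by the \ct.
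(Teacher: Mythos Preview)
Your proof is essentially correct and parallels the paper's closely: both defer items~(1), the main clause and splitting of~(2), and item~(3) to \recognition\ Lemmas~3.26 and~4.36, and both supply a direct argument only for the dichotomy~(2b)/(2c). Your sketches of the cited arguments are serviceable though a bit loose in places --- e.g.\ your inductive extension in~(1) does not say what to do when the current fixed vertex has no further fixed outgoing direction in $G_r$, and in~(3b) you implicitly use that the $\ti f_\#$-invariant ray $\wt R$ must coincide with the iterated ray $\bigcup_k \ti f^k_\#(\wt E)$, which deserves a sentence.

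The one genuine divergence is how you handle~(2b)/(2c). You argue the forward direction: if $E$ is non-linear then $\abs{f^k_\#(u)} \to \infty$, and bounded cancellation then shows $\ti\xi$ is a topological attractor, hence lies in $\Fix_+(\wh\Phi)$. The paper instead runs the contrapositive: assuming $\ti\xi \notin \Fix_+(\wh\Phi)$, the non-exceptional hypothesis and Fact~\ref{LemmaFixPhiFacts} force $\ti\xi \in \bdy\Fix(\Phi)$; then $\Fix(\ti f)$ is $\Fix(\Phi)$-invariant and accumulates on $\bdy\Fix(\Phi)$, so points $x$ along $\wt R$ stay within bounded distance of $\Fix(\ti f)$, giving a uniform bound on $d(x,\ti f(x))$; reading this off the splitting~$(*)$ bounds $\abs{f^k_\#(u)}$ uniformly, hence $E$ is linear. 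Your direct route is fine but the attractor step (``nearby rays are pulled into $\wt R$'') needs a line or two of bounded-cancellation bookkeeping to be airtight; the paper's contrapositive sidesteps that by exploiting the accumulation of fixed points near $\bdy\Fix(\Phi)$, which is already packaged in Fact~\ref{LemmaFixPhiFacts}.
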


\begin{definition}[Principal rays and linear rays] 
\label{DefSingularRay} 
In the context of Fact~\ref{FactSingularRay}~\pref{ItemDirectionToAttractor} where $\wt E \subset \wt G$ is a non-fixed edge with fixed initial direction, we say that $\wt R$ is \emph{the ray in $\wt G$ generated by $\wt E$}, and that \emph{$\wt R$ represents $\ti\xi$}, and that \emph{$\wt E$ iterates to $\ti\xi$}. If $\wt E$ is a linear edge then we say that $\wt R$ is a \emph{linear ray}, and if $\wt E$ is a principal direction then $\wt R$ is a \emph{principal ray}. We use similar language downstairs in $G$ regarding the edge $E$ and the ray $R = E \cdot u \cdot f_\#(u) \cdot f^2_\#(u) \cdot \ldots$. 

Note that if $\Phi \in P(\phi)$ with associated principal lift $\ti f$ then by definition we have a bijection between the set of principal rays of $\ti f$ in $\wt G$ and the set of principal directions of $\ti f$ in $\wt G$, which associates to each principal ray its initial direction. Adding in Fact~\ref{FactSingularRay} items~\pref{ItemRayEndsAtAttr} and~\pref{ItemAttractorToDirection} we also have a surjection from the set of principal rays of $\ti f$ in $\wt G$ to the set $\Fix_+(\wt\Phi)$, which associates to each principal ray $\wt R$ its endpoint. Similarly, we have a bijection between principal rays of $f$ in $G$ and principal directions of $f$ in $G$.
\end{definition}

\smallskip\textbf{Remark.} In Section~\refRK{SectionAsymptotic} we shall focus on those principal rays which are generated by \neg\ principal directions, and we shall show that such rays are invariants of $\phi$ in that they can be characterized independently of relative train tracks. Such rays, in the appropriate context, are referred to as ``eigenrays'' of $\phi$.

\begin{proof}[Proof of Fact~\ref{FactSingularRay}] Item~\pref{ItemFixedDirectionRay} is proved in \recognition\ Lemma~3.26. The main clause of item~\pref{ItemDirectionToAttractor} is exactly  \recognition\ Lemma~4.36~(1), whose proof implicitly proves item~\pref{ItemComplSplitRay} as well. Also, item~\pref{ItemAttractorToDirectionExists} is the same as \recognition\ Lemma~4.36~(2), and the proof of the latter given in \recognition\ establishes item~\pref{ItemAttractorToDirectionUnique} as well. We note that while Lemma~4.36 assumes that $\ti f$ is principal, its proof holds under our present weaker hypotheses on~$\ti f$. Item~\pref{ItemRayHeight} is clear from the construction of~$\wt R$; see also \recognition\ Lemma~3.26~(2) and~(3).

The ``Furthermore'' clause of \pref{ItemEGPrincipalRay} follows from \recognition\ Lemma~3.26~(2). For the rest of the proof of \pref{ItemEGPrincipalRay} and \pref{ItemRayEndsAtAttr}, $E$ is linear if and only if $f_\#(u)=u$, and we must prove that this holds if and only if $\ti\xi \not\in \Fix_+(\wh\Phi)$. 

If $E \subset G$ is linear then $f^k_\#(u) = u$ for all $k \ge 1$. By construction $\ti f$ takes the initial vertex of $\ti u$ to its terminal vertex and so, letting $T_{\ti u}$ be the covering transformation which does the same to those two vertices, it follows that $\ti f$ commutes with $T_{\ti u}$, that $T_{\ti u}(\ti\xi)=\ti\xi$, and that $\ti\xi \in \bdy \Fix(\wh\Phi)$. It follows by Fact~\ref{LemmaFixPhiFacts} that $\ti\xi \not\in \Fix_+(\wh\Phi)$.

Assuming that $\ti\xi \not\in \Fix_+(\wh\Phi)$, and using the hypotheses on $\ti f$, it follows that $\ti\xi \in \bdy\Fix(\Phi)$ by applying Fact~\ref{LemmaFixPhiFacts} and Definition~\ref{DefPrinicipalAndRotationless} (which derive from \cite{GJLL:index}). In particular the subgroup $\Fix(\Phi)$ is nontrivial. The nonempty set $\Fix(\ti f)$ is $\Fix(\Phi)$ invariant and it accumulates on $\bdy\Fix(\Phi)$. Since the ray $\wt R$ ends at $\ti\xi \in \bdy\Fix(\Phi)$ it follows that for $x \in \wt R$ there is a uniform upper bound to $d(x,\Fix(\ti f))$ and so there is a uniform upper bound to $d(x,\ti f(x))$. Applying \pref{ItemComplSplitRay} it follows that there is a uniform upper bound to the length of $f^k_\#(\ti u)$, which implies that $E$ is linear.
\end{proof}

\subparagraph{Uniqueness of principal rays.} From item~\pref{ItemDirectionToAttractor} of Fact~\ref{FactSingularRay}~\pref{ItemDirectionToAttractor} we see that 
each principal direction iterates to a unique attracting fixed point. In the other direction, from item~\pref{ItemAttractorToDirection} we see that each attracting fixed point is iterated to by some principle direction. Although uniqueness need not hold, in the \eg\ case it almost holds, as we show next in Lemma~\ref{LemmaPrincipalRayUniqueness} which also covers the nonprincipal case. We will need this result in \PartThree\ in our study of weak attraction.

For the statement, consider an indivisible Nielsen path of \eg\ height having the form $\rho=\alpha_1 \bar\alpha_2$ as described in Fact~\ref{FactEGNPUniqueness}. That description was taken from \recognition\ Lemma~2.11 and we need one additional fact from that same source, namely that each of $\alpha_1,\alpha_2$ is an initial segment of the ray generated by its fixed initial direction. 

\begin{lemma}\label{LemmaPrincipalRayUniqueness}
Assume the same hypotheses as Fact~\ref{FactSingularRay}, and let $H_r$ be an \eg\ stratum.
\begin{enumerate}
\item\label{ItemNPExchangeSuff}
If $\wt E$ is a height~$r$ oriented edge with $\ti f$-fixed initial direction generating a ray $\wt R$, and if $\ti\rho = \alpha_1 \bar\alpha_2$ is a height~$r$ indivisible Nielsen path decomposed as in Fact~\ref{FactEGNPUniqueness}, and if $\wt E$ and $\ti\rho$ have the same initial direction, then $\wt R \intersect \ti\rho = \alpha_1$, and the ray $\wt R' =\alpha_2 \union (\wt R - \alpha_1)$ is generated by the initial direction of $\alpha_2$; these rays $\wt R,\wt R'$ clearly represent the same point of $\Fix_+(\wh\Phi)$.

We say in this case that $\wt R'$ is obtained from $\wt R$ by exchanging across the Nielsen path~$\ti\rho$. 
\item\label{ItemTwoPrincipalRays}
Given distinct \eg\ edges $\wt E_1 \ne \wt E_2$ of height~$r$ with fixed initial directions and generating rays $\wt R_1,\wt R_2$:
\begin{enumerate}
\item\label{ItemNPExchangeNecess}
If $\wt R_1,\wt R_2$ represent the same point of $\Fix_+(\wh\Phi)$ then $\wt R_2$ is obtained from $\wt R_1$ by exchanging across some height~$r$ indivisible Nielsen path for $\ti f$.
\item\label{ItemNPPrincipalRaysDisjoint}
If $\wt R_1,\wt R_2$ represent distinct points of $\Fix_+(\wh\Phi)$ then $\interior(\wt R_1) \intersect \interior(\wt R_2) = \emptyset$.
\end{enumerate}
\end{enumerate}
\end{lemma}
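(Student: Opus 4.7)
For Part~(1), my plan is to extract the cancellation structure in the Nielsen equation $\tilde f_\#(\ti\rho)=\ti\rho$. Writing $\ti\rho=\alpha_1\bar\alpha_2$ as in Fact~\ref{FactEGNPUniqueness}, the paths $\alpha_1,\alpha_2$ are $r$-legal meeting at the unique illegal turn of $\ti\rho$ at their common terminal vertex $v'$, and RTT-(iii) makes both $\tilde f(\alpha_1),\tilde f(\bar\alpha_2)$ legal. The tightening of their concatenation, which must equal $\alpha_1\bar\alpha_2$, cancels at $v'$ and produces a common path $\gamma$ emanating from $v'$ with $\tilde f(\alpha_1)=\alpha_1\cdot\gamma$ and $\tilde f(\alpha_2)=\alpha_2\cdot\gamma$. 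The initial direction of $\gamma$ at $v'$ cannot equal the initial direction of $\bar\alpha_2$, since the former makes a legal turn with the last direction of $\alpha_1$ (inside the legal path $\tilde f(\alpha_1)$) while the latter makes the illegal turn of $\ti\rho$. As $\alpha_1$ is an initial segment of the unique ray generated by the initial direction of $\wt E$, we get $\alpha_1\subset\wt R$, and the continuation of $\wt R$ past $\alpha_1$ begins with $\gamma$ rather than $\bar\alpha_2$, proving $\wt R\cap\ti\rho=\alpha_1$. Iterating the equations $\tilde f(\alpha_i)=\alpha_i\gamma$ yields the identical tail $\gamma\cdot\tilde f(\gamma)\cdot\tilde f^2(\gamma)\cdots$ following both $\alpha_1$ in $\wt R$ and $\alpha_2$ in the ray generated by the initial direction of $\alpha_2$; hence that ray coincides with $\wt R'=\alpha_2\cup(\wt R-\alpha_1)$. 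Since $\wt R,\wt R'$ share an infinite tail they represent the same point of $\Fix_+(\wh\Phi)$.

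For Part~(2)(a), given $\wt R_1,\wt R_2$ with common ideal endpoint $\ti\xi$, in the tree $\wt G$ they share a common terminal subray $C$; letting $w$ be its initial vertex and writing $\wt R_i=A_i\cdot C$, the concatenation $A_1\bar A_2$ is tight at $w$ because merging first at $w$ forces the last edges of $A_1,A_2$ to differ, so $A_1\bar A_2$ is the unique tree-geodesic from $\ti v_1$ to $\ti v_2$. Since $\tilde f$ fixes both endpoints, $\tilde f_\#(A_1\bar A_2)=A_1\bar A_2$, making this a Nielsen path; it has height~$r$ because each $\wt R_i\subset G_r$ by $f$-invariance of the filtration applied to $\wt E_i\in H_r$, and the initial edges $\wt E_i$ already realize height~$r$. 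For indivisibility, any nontrivial Nielsen decomposition of $A_1\bar A_2$ would require an interior fixed vertex, but Fact~\ref{FactSingularRay}\pref{ItemDirectionToAttractor} says the only fixed point of $\tilde f$ in each $\wt R_i$ is $\ti v_i$, so neither $w$ nor any interior vertex of $A_i$ is fixed. By the uniqueness of the height-$r$ indivisible Nielsen path up to reversal (Fact~\ref{FactEGNPUniqueness}), $A_1\bar A_2$ equals $\ti\rho_r$ or its reverse, identifying $A_1=\alpha_1$ and $A_2=\alpha_2$ and exhibiting $\wt R_2$ as the exchange of $\wt R_1$ across $\ti\rho_r$. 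I will also verify that $A_1,A_2$ are nontrivial, since an $A_i$ trivial would force $\wt R_1=\wt R_2$ and contradict $\wt E_1\ne\wt E_2$.

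For Part~(2)(b), my strategy is a dynamical intermediate-value argument. First I observe that $\tilde f(\wt R_i)=\wt R_i$ setwise: by Fact~\ref{FactSingularRay}\pref{ItemComplSplitRay} the splitting $\tilde f(\wt E_i)=\wt E_i\cdot\tilde u_i$ gives $\wt E_i\subset\tilde f(\wt E_i)$, so $\wt R_i=\bigcup_{k\ge 0}\tilde f^k(\wt E_i)=\bigcup_{k\ge 1}\tilde f^k(\wt E_i)=\tilde f(\wt R_i)$. Therefore $\tilde f$ preserves $\wt R_1\cap\wt R_2$ setwise. As a connected subset of a tree which cannot contain a common infinite tail when $\ti\xi_1\ne\ti\xi_2$, the intersection is compact---a point, an arc, or empty. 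Assuming $\interior(\wt R_1)\cap\interior(\wt R_2)$ is nonempty, the restriction of $\tilde f$ to this compact arc (or point) is a continuous self-map of a space homeomorphic to $[0,1]$ or a singleton, and hence has a fixed point by the intermediate value theorem. That fixed point lies in the interior of both rays, contradicting Fact~\ref{FactSingularRay}\pref{ItemDirectionToAttractor}, which restricts fixed points of $\tilde f$ in $\wt R_i$ to $\ti v_i$.

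The main obstacle I anticipate is the careful verification in Part~(2)(a) that $A_1\bar A_2$ is tight at the merge vertex $w$ and has no interior fixed vertex, together with the handling of degenerate configurations (coincident initial vertices, closed Nielsen paths in the case of $\ti\rho_r$ closed); each of these reduces to tree-theoretic and train-track bookkeeping built on $\wt E_1\ne\wt E_2$ and the uniqueness of fixed points on $\wt R_i$ in Fact~\ref{FactSingularRay}.
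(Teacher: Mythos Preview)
Your arguments for Part~(1) and Part~(2)(a) are correct and match the paper's approach; you are more explicit about verifying that the Nielsen path $A_1\bar A_2$ is tight, has height~$r$, and is indivisible, which the paper leaves implicit.

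Your Part~(2)(b), however, has a gap. The claim $\ti f(\wt R_i)=\wt R_i$ setwise is not justified, and in general fails. What Fact~\ref{FactSingularRay}\pref{ItemComplSplitRay} gives is $\wt R_i=\bigcup_k \ti f^k_\#(\wt E_i)$, not $\bigcup_k \ti f^k(\wt E_i)$. While $\ti f(\wt E_i)=\ti f_\#(\wt E_i)$ because $\wt E_i$ is a single edge, the image $\ti f(\ti f^k_\#(\wt E_i))$ need not be tight: a maximal $G_{r-1}$-subpath $\mu$ of $\wt R_i$ may have backtracking in $\ti f(\mu)$, and the points in that backtracked portion lie off $\wt R_i$. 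So you cannot conclude that $\ti f$ carries $\wt R_1\cap\wt R_2$ into itself, and the intermediate-value fixed-point argument collapses.

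The paper avoids this by proving (2)(a) and (2)(b) simultaneously via the contrapositive: it shows that if $\interior(\wt R_1)\cap\interior(\wt R_2)\ne\emptyset$ then the exchange structure---and hence equality of endpoints---is forced. Your own (2)(a) argument adapts directly to this weaker hypothesis: pick any $q\in\interior(\wt R_1)\cap\interior(\wt R_2)$, so $[x_1,q]\subset\wt R_1$ and $[x_2,q]\subset\wt R_2$; tightening $[x_1,q]\cdot[q,x_2]$ gives $[x_1,x_2]=[x_1,m]\cdot[m,x_2]$ with $[x_1,m]\subset\wt R_1$ and $[x_2,m]\subset\wt R_2$. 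Now run exactly your height-$r$ and indivisibility checks (noting $m\ne x_1,x_2$ since $x_i\notin\wt R_{3-i}$), identify $[x_1,x_2]$ with $\ti\rho_r$, and apply Part~(1). This yields $\ti\xi_1=\ti\xi_2$, establishing (2)(b) by contraposition.
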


\begin{proof} To prove~\pref{ItemNPExchangeSuff}, as said just before the lemma we have $\alpha_1 \subset \wt R$. Since $\wt R$ is $r$-legal and the terminal directions of $\alpha_1,\alpha_2$ form an $r$-illegal turn it follows that $\wt R \intersect \rho = \alpha_1$. Since $\alpha_1,\alpha_2$ are $r$-legal and $\ti f$ is (the lift of) a relative train track map, it follows that there is a nested sequence of $r$-legal paths $\gamma_1 \subset \gamma_2 \subset\cdots$, each an initial segment of the next and with lengths going to $+\infinity$, such that for $i=1,2$ we have $\ti f^k_\#(\alpha_i)=\alpha_i\gamma_k$. It follows that $\wt R = \alpha_1 \bigcup \union_k \gamma_k$ and so $\wt R' = \alpha_2 \union (\wt R - \alpha_1)= \alpha_2 \bigcup \union_k \gamma_k$ is the ray generated by the initial direction of $\alpha_2$. This proves~\pref{ItemNPExchangeSuff}.

Consider $\wt E_i, \wt R_i$, as in \pref{ItemTwoPrincipalRays}, $i=1,2$. Let $\xi_i \in \Fix_+(\wh\Phi)$ be represented by $\wt R_i$. Let $x_i \in \Fix(\ti f)$ be the initial points of $\wt E_i$, and note that neither of $x_1,x_2$ lies in the interior of $\wt R_1$ or $\wt R_2$ (Fact~\ref{FactSingularRay}~\pref{ItemDirectionToAttractor}). If $x_1=x_2$ then since $\wt E_1 \ne \wt E_2$ it follows that $\wt R_1 \intersect \wt R_2 = \{x_1\}=\{x_2\}$ and $\xi_1 \ne \xi_2$, and there is nothing to prove. If $x_1 \ne x_2$ consider the path $\rho = [x_1,x_2]$, a nontrivial Nielsen path of $\ti f$. If $\interior(\wt R_1) \intersect \interior(\wt R_2) \ne \emptyset$ it follows that for $i=1,2$ the initial direction of $\alpha_i$ generates the ray $\wt R_i$. By exchanging $\wt R_1$ across $\alpha_1$ we obtain a ray $\wt R'_2$ with initial segment $\alpha_2$, and since both $\wt R'_2$ and $\wt R_2$ are generated by the initial direction of $\alpha_2$ it follows that $\wt R_2 = \wt R'_2$. This proves both~\pref{ItemNPExchangeNecess} and~\pref{ItemNPPrincipalRaysDisjoint}.
\end{proof}

\subparagraph{Weak accumulation of attracting fixed points.}
The following lemma will be used in \PartThree.

\begin{lemma}\label{LemmaFixPlusAccumulation}
For each $\Phi \in P(\phi)$ and $P \in \Fix_+(\Phi)$ there is a conjugacy class $[a]$ that is weakly attracted to every line in the weak accumulation set of $P$. 
\end{lemma}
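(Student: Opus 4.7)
My plan: By Fact~\ref{FactSingularRay}\pref{ItemAttractorToDirection} together with the surjection from principal rays to $\Fix_+(\wh\Phi)$ explained in Definition~\ref{DefSingularRay}, there is a principal direction $\wt E$ at a principal fixed vertex $\ti v$ generating a principal ray $\wt R \subset \wt G$ that iterates to $P$. Let $E \subset G$ and $R \subset G$ denote the projections. The edge $E$ is neither a fixed edge nor a linear edge. My strategy is to produce a single conjugacy class $[a]$ such that the circuit representing $\phi^k[a]$ contains, for all sufficiently large $k$, an initial segment of $R$ of length tending to infinity. Once this is achieved the lemma follows at once: every finite subpath of every line $\ell$ in the weak accumulation set of $P$ is in particular a finite subpath of $R$, hence contained in some fixed finite initial segment $R[0,L]$, and therefore eventually contained in the circuit of $\phi^k[a]$, which is exactly weak attraction of $[a]$ to $\ell$.

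If $E$ lies in an \eg\ stratum $H_r$ then by Fact~\ref{FactSingularRay}\pref{ItemEGPrincipalRay} the weak accumulation set of $P$ equals the unique height-$r$ attracting lamination $\Lambda_r^+$, which is minimal. I would take $[a]$ to be the class of any circuit $c$ in $G$ crossing an edge $E' \in H_r$: by Lemma~\ref{LemmaDoubleSharpFacts}\pref{ItemCircuit}, $f^k_{\#\#}(E')$ is a subpath of $f^k_\#(c)$, and the iterates of an \eg\ edge $E'$ converge weakly to a generic leaf of $\Lambda_r^+$, so the containing circuits $f^k_\#(c)$ do as well. Minimality of $\Lambda_r^+$ then gives weak attraction of $[a]$ to every leaf, hence to every line in the accumulation set.

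In the \neg\ superlinear case, Fact~\ref{FactNEGEdgeImage} gives the splitting $f(E) = E \cdot u$ with $u$ a closed loop at $w$, the terminal vertex of $E$, and Fact~\ref{FactSingularRay}\pref{ItemComplSplitRay} factors $R = E \cdot u \cdot f_\#(u) \cdot f^2_\#(u) \cdots$. Both $v$ and $w$ are fixed vertices ($w$ since $u$ is a closed loop there), and both are principal: for $w$ this is Fact~\ref{FactUsuallyPrincipal}, since the link of $w$ contains $E \in H_r$ together with edges of $u$ from lower strata. I would then pick an oriented edge $E_0$ ending at $v$ whose terminal direction at $v$ is a fixed direction distinct from the initial direction of $E$ at $v$, and an oriented edge $E_1$ starting at $w$ whose initial direction at $w$ is a fixed direction whose forward $Df$-orbit avoids the (eventually fixed) terminal direction of $f^k(E)$ at $w$. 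Extending $E_0 \cdot E \cdot E_1$ to any loop $c_0$ at $v$ then makes the turns at $v$ and at $w$ around $E$ legal, so the subpath $E_0 \cdot E \cdot E_1$ is a splitting of $c_0$. Iterating preserves this splitting, and hence $f^k_\#(c_0)$ contains $f^k_\#(E)$ as a subpath; since $E$ is neither fixed nor linear, $|f^k_\#(E)| \to \infty$, and $f^k_\#(E)$ is precisely the initial segment of $R$ of length $|f^k_\#(E)|$, completing the strategy.

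The main technical obstacle is the construction of $E_0$ and $E_1$ in the \neg\ case: one requires fixed directions at the principal vertices $v$ and $w$ that avoid the specified bad directions (namely the initial direction of $E$ at $v$, and the limiting terminal direction of $f^k(E)$ at $w$). Rotationlessness of $\phi$ guarantees that the bad direction at $w$ stabilizes to a fixed direction after finitely many iterations of $Df$, and principality of $v$ and $w$ generically supplies enough fixed directions to permit the required choices. In any remaining degenerate configuration where some choice is blocked, I would replace $E$ by a long iterate $f^{k_0}_\#(E)$, which is an initial segment of $R$ ending at a possibly more favorable vertex, and passing via Fact~\ref{FactEvComplSplit} to a completely split circuit containing this initial segment concluded by the same subpath-containment argument.
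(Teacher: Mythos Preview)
Your overall strategy---produce a circuit whose $f_\#$-iterates contain arbitrarily long initial segments of the principal ray $R$---is sound and close in spirit to the paper's argument. But both of your case analyses have genuine gaps.

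\textbf{The \eg\ case is incorrect as written.} You claim that \emph{any} circuit $c$ crossing an edge $E'\subset H_r$ is weakly attracted to $\Lambda_r^+$, justified via $(f^k)_{\#\#}(E')$ being a subpath of $f^k_\#(c)$. The containment is true, but $(f^k)_{\#\#}(E')$ need not grow: it is obtained from $f^k_\#(E')$ by trimming up to $\BCC(f^k)$ from each end, and $\BCC(f^k)$ is not bounded in $k$. Concretely, if $H_r$ is geometric with closed indivisible Nielsen path $\rho_r$, then $\rho_r$ crosses every edge of $H_r$ (Fact~\ref{FactEGNielsenCrossings}) yet $f^k_\#(\rho_r)=\rho_r$ for all $k$, so $(f^k)_{\#\#}(E')$ is a subpath of the fixed-length path $\rho_r$ and stays bounded. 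The circuit $\rho_r$ is therefore a counterexample to your claim. This is exactly the dichotomy captured by Fact~\ref{FactAttractingLeaves}: a circuit is weakly attracted to $\Lambda_r^+$ if and only if some iterate \emph{splits} with an $H_r$-edge term, which is strictly stronger than merely crossing~$H_r$. You could repair this case by invoking Fact~\ref{FactAttractingLeaves} and exhibiting one such circuit, but you have not done so.

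\textbf{The \neg\ case is incomplete.} The existence of the auxiliary edges $E_0,E_1$ hinges on there being, at each of $v$ and $w$, a second fixed direction satisfying your avoidance conditions. Principality of a vertex does not by itself guarantee multiple fixed directions in the required configuration, and your fallback (``replace $E$ by $f^{k_0}_\#(E)$ ending at a more favorable vertex'') is not an argument: you have not shown that the obstruction actually disappears after iteration, nor how to close up the resulting path into a circuit with the needed splitting.

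\textbf{What the paper does instead.} The paper avoids both the case split and the auxiliary-edge search by working directly with the ray's own splitting $R=E\cdot u\cdot f_\#(u)\cdot f^2_\#(u)\cdots$ from Fact~\ref{FactSingularRay}\pref{ItemComplSplitRay}. For large $k_0$ the closed paths $f^{k_0}_\#(u)$ and $f^{k_0+1}_\#(u)$ have stabilized initial and terminal directions, and the concatenation $\sigma=f^{k_0}_\#(u)\cdot f^{k_0+1}_\#(u)$ is a circuit whose iterates are $f^i_\#(\sigma)=f^{k_0+i}_\#(u)\cdot f^{k_0+i+1}_\#(u)$---i.e.\ two consecutive blocks of the ray. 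Since the block lengths go to infinity while any finite subpath $\alpha$ of a line in the accumulation set of $P$ occurs infinitely often in $R$, eventually $\alpha$ sits inside one such pair of blocks, hence inside $f^i_\#(\sigma)$. This is both shorter and uniform across the \eg\ and \neg\ cases; the circuit is extracted from $R$ itself rather than built by hand from extra edges.
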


\begin{proof} Let $\ti f \from \wt G \to \wt G$ be the principal lift corresponding to $\Phi$. Applying Fact~\ref{FactSingularRay}, let $\wt R$ be a principal ray with endpoint~$P$, let $\wt E$ be its initial oriented edge, a principal direction of~$\ti f$. Consider the splitting of $\wt R$ expressed in item~\pref{ItemComplSplitRay} of Fact~\ref{FactSingularRay}. Since $u$ is not fixed by $f_\#$ we have $\Length(f^k_\#(u)) \to +\infinity$ as $k \to +\infinity$. For sufficiently large $k$, say $k \ge k_0$, the paths $f^k_\#(u)$ are circuits whose initial directions are independent of $k$, as are their terminal directions. Let $[a]$ be the conjugacy class represented by the circuit $\sigma = f^{k_0}_\#(u) \cdot f^{k_0+1}_\#(u)$, and we have a circuit $f^i_\#(\sigma) = f^{k_0+i}_\#(u) \cdot f^{k_0+i+1}_\#(u)$. 

Given a line $\ell$ in $G$ which is in the accumulation set of $P$, and given a finite subpath $\alpha$ of~$\ell$, let $U_\alpha$ be the set of lines whose realization in $G$ contains $\alpha$ as a subpath. Choosing a lift $\ti\alpha \subset G'$, the ray $\wt R$ contains infinitely many translates of $\ti\alpha$. The length of these translates are fixed while the lengths of the paths $\ti f^j_\#(u)$ go to $+\infinity$ with $j$, so infinitely many of these paths intersect translates of $\ti\alpha$ in $\wt R$, and hence infinitely many of the paths $\ti f^j_\#(u) \cdot \ti f^{j+1}_\#(u)$ contain translates of $\ti\alpha$. It follows that infinitely many iterates $f^i_\#(\sigma)$ contain $\alpha$ as a subpath and so are in $U_\alpha$. Since the $U_\alpha$ form a weak neighborhood basis of $\ell$, we have proved that $f^i_\#(\sigma)$ weakly accumulates on $\ell$, i.e.\ $[a]$ is weakly attracted to $\ell$. 
\end{proof}

\subsubsection{Properties of \eg\ strata.} 

Recall Fact~\ref{FactEvComplSplit}, which derives from \recognition\rc\ Lemma~4.25, saying that for any \ct\ $f \from G \to G$ and any $\sigma$ which is either a path with endpoints at vertices or a circuit, there exists some $k \ge 1$ so that $f^k(\sigma)$ completely splits. We prove two lemmas with more quantitative control on the exponent $k$, achieved at the expense of discarding ``completeness'' of the splitting, using instead a somewhat coarser kind of splitting. The lemmas are stated for general relative train tracks---\cts\ are not needed---and for circuits whose height is equal to the height of some \eg\ stratum. Their proofs derive from \BookOne\ Section~4.2, although the first lemma can be derived from the \eg\ case of the proof of \recognition\rc\ Lemma~4.25 by adding a little quantitative information.

\begin{lemma}
\label{LemmaEGPathSplitting}
Let $f \from G \to G$ be a relative train track and $H_r \subset G$ an aperiodic \eg\ stratum. For each $L$ there exists an integer $k = k_L \ge 0$ such that for each height $r$ path or circuit $\sigma$ whose endpoints, if any, are vertices, if $\sigma$ has length $\le L$ then $f^k_\#(\sigma)$ splits into terms each of which is an edge or indivisible periodic Nielsen path of height~$r$ or a path in~$G_{r-1}$.
\end{lemma}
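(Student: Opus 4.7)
My plan is to reduce to a finite case analysis and then study the behavior of iterates of $f_\#$ near the height-$r$ illegal turns of $\sigma$. Since $G$ has only finitely many edges, there are only finitely many edge paths and circuits in $G$ of length $\le L$ whose endpoints are vertices. Hence it suffices to prove, for each individual height-$r$ path or circuit $\sigma$ with endpoints at vertices, the existence of some $k_\sigma$ so that $f^{k_\sigma}_\#(\sigma)$ admits a splitting of the required form; then $k_L$ can be taken as the maximum of the $k_\sigma$ over this finite set.

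Fix such a $\sigma$ and decompose it as $\sigma = \xi_0 \cdot \eta_1 \cdot \xi_1 \cdots \eta_m \cdot \xi_m$ where each $\eta_j$ is an $r$-legal subpath whose first and last edges lie in $H_r$, and each $\xi_j$ lies in $G_{r-1}$ (possibly trivial). The junctions between adjacent $\eta_j$ and $\xi_j$ pieces are automatically legal by RTT-(i), while the junctions between consecutive $\eta_j$ and $\eta_{j+1}$ (through a vertex of $H_r$) are precisely the height-$r$ illegal turns of $\sigma$. Properties RTT-(i) and RTT-(iii) imply that $f_\#$ creates no new height-$r$ illegal turns, so every such illegal turn of $f^k_\#(\sigma)$ descends from one of $\sigma$'s finitely many.

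Under iteration the $H_r$-length of $f^k_\#(\eta_j)$ grows geometrically by the Perron-Frobenius eigenvalue $\lambda>1$, while by the Bounded Cancellation Lemma (Fact~\ref{FactBCCSimplicial}) cancellation at each illegal-turn juncture is bounded by $\BCC(f^k)$. At each such juncture, for $k$ larger than a threshold depending on $\sigma$, one of two things happens: either the cancellation fully consumes the $H_r$-portion on one side, absorbing the illegal turn and pushing it into $G_{r-1}$ via the images of the adjacent $\xi$'s; or the cancellation stabilizes, meaning the local configuration around the illegal turn becomes constant (up to legal-endpoint trimming) for all larger $k$. In the stabilized case, the uniqueness clause of Fact~\ref{FactEGNPUniqueness} identifies this local configuration as a subpath of the unique indivisible height-$r$ Nielsen path $\rho_r=\alpha\beta$ (up to reversal); by iterating until the stabilization has ``filled out'' we obtain $\rho_r$ itself as a block straddling the illegal turn. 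Once stabilization has occurred at every illegal turn, $f^{k_\sigma}_\#(\sigma)$ decomposes as an alternation of $r$-legal height-$r$ pieces, copies of $\rho_r$, and subpaths of $G_{r-1}$. Refining by chopping each $r$-legal height-$r$ piece edge-by-edge at its vertices yields the target decomposition, since each new juncture is legal by RTT-(i) or by $r$-legality of the piece.

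The main obstacle I anticipate is verifying that this decomposition is a \emph{splitting} in the strong sense of Definition~\ref{DefSplittings}, i.e.\ that all further applications of $f_\#$ preserve the decomposition points with no cancellation across them. This reduces to showing that at every juncture the flanking terms meet in a legal turn, which is guaranteed by RTT-(i) for $H_r$-to-$G_{r-1}$ boundary junctions, by $r$-legality for interior junctions of an $r$-legal height-$r$ piece, and by Fact~\ref{FactEGNPUniqueness} together with the stabilization analysis for the junctions at the ends of each $\rho_r$-block (where the extremal $H_r$-edges of $\rho_r$ sit in $r$-legal territory relative to the neighboring terms). Once these three cases are handled, preservation under further iteration of $f_\#$ follows termwise from the Bounded Cancellation Lemma applied equivariantly in $\wt G$.
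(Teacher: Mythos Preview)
Your overall strategy---reduce to a single $\sigma$, track the finitely many height-$r$ illegal turns under iteration, and argue that each either disappears or settles into a Nielsen configuration---matches the paper's. But two of your steps are genuine gaps, not just elisions.

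First, the growth-versus-cancellation comparison does not work as written. You compare the Perron--Frobenius growth of the $H_r$-length of $f^k_\#(\eta_j)$ against $\BCC(f^k)$, but $\BCC(f^k)$ itself grows exponentially in $k$ (roughly like $\mathrm{Lip}(f)^k$), and there is no reason $\lambda$ should dominate the Lipschitz constant. So this inequality proves nothing. The correct bookkeeping iterates $f$ one step at a time using the fixed constant $\BCC(f)$, but then you must explain why the configuration near each surviving illegal turn has \emph{bounded total length} independent of $k$. That is exactly the content of \BookOne\ Lemma~4.2.5: the set $P_r$ of height-$r$ paths with one illegal turn, $H_r$-edges at both ends, and uniformly bounded iterates is finite and $f_\#$-invariant, hence each such path is eventually a periodic Nielsen path. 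Your ``stabilization'' claim is an informal restatement of this lemma, and your appeal to Fact~\ref{FactEGNPUniqueness} does not substitute for it: that fact describes the structure of $\rho_r$ \emph{if} it exists, but does not show that a stabilized-but-not-yet-identified configuration must equal~$\rho_r$.

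Second, the splitting verification at the ends of a $\rho_r$-block is not a legality check. A splitting (Definition~\ref{DefSplittings}) requires that no cancellation occurs under \emph{any} iterate, which for a Nielsen term adjacent to an $r$-legal term is not guaranteed by RTT-(i) or by Fact~\ref{FactEGNPUniqueness}; the initial directions of $\rho_r$ and $\bar\rho_r$ are distinct $H_r$-edges, but the turn at the juncture could still be illegal. The paper avoids this by invoking \BookOne\ Lemma~4.2.6, which directly asserts that once the illegal-turn count is constant, $f^{K_2}_\#(\sigma)$ \emph{splits} into $r$-legal pieces and elements of $P_r$. You would need to reprove that lemma to make your argument self-contained.
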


\begin{proof} Since there are only finitely many circuits and paths with endpoints at vertices that have length $\le L$, it suffices to consider just a single $\sigma$ and find a $K$ such that $f^K_\#(\sigma)$ satisfies the conclusions. 

Let $P_r$ be the set of paths $\rho$ of height $r$ such that for each $k \ge 0$ the path $f^k_\#(\rho)$ begins and ends with an edge in $H_r$ and has exactly one $r$-illegal turn, and such that the number of edges in $f^k_\#(\rho)$ is bounded independently of $k$. By \BookOne\ Lemma 4.2.5, $P_r$ is a finite $f_\#$ invariant set, from which it follows that there exists $K_1$ such that for each $\rho \in P_r$ the path $f^{K_1}_\#(\rho)$ is a periodic Nielsen path, which therefore splits into indivisible periodic Nielsen paths of height $r$ and paths in $G_{r-1}$. 

Since $f_\#$ applied to an $r$-legal path is $r$-legal, and since a subpath of an $r$-legal path is $r$-legal, it follows that the number of maximal $r$-legal paths in $f^i_\#(\sigma)$ is a nonincreasing function of~$i$. The number of illegal turns of $f^i_\#(\sigma)$ in $H_r$ is therefore nonincreasing, and so is constant for sufficiently large $i$, say $i \ge K_2$. Applying Lemma~4.2.6 of \BookOne\ it follows that $f^{K_2}_\#(\sigma)$ splits into subpaths each of which is $r$-legal or is one of the paths in $P_r$. Letting $K=K_1+K_2$ it follows that $f^K_\#(\sigma)$ splits into $r$-legal paths and indivisible periodic Nielsen paths of height $r$. Since an $r$-legal path splits into edges in $H_r$ and paths in $G_{r-1}$, this finishes the proof.
\end{proof}

We also need a form of Lemma~\ref{LemmaEGPathSplitting} whose conclusion has a stronger uniformity. This will be used in the analysis of limit trees in Section~\refRK{SectionLimitTrees}.

\begin{lemma}
\label{LemmaEGUnifPathSplitting}
Let $f \from G \to G$ be a relative train track and $H_r \subset G$ an aperiodic \eg\ stratum. For each $M$ there exists an integer $d \ge 0$ such that for each height $r$ path or circuit $\sigma$ with endpoints, if any, at vertices, if $\sigma$ contains at most $M$ edges in the subgraph $H_r$ then $f^d_\#(\sigma)$ splits into terms each of which is an edge or indivisible periodic Nielsen path of height $r$ or a path in~$G_{r-1}$.
\end{lemma}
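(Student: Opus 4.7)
The plan is to reduce to Lemma~\ref{LemmaEGPathSplitting} by decomposing $\sigma$ at its $H_r$-content. First, I would write $\sigma$ as an alternating concatenation $\sigma = \alpha_0 \beta_1 \alpha_1 \beta_2 \cdots \beta_s \alpha_s$ where each $\beta_j$ is a maximal subpath whose edges all lie in $H_r$, and each $\alpha_i$ is the (possibly trivial) intervening subpath in $G_{r-1}$. By hypothesis $\sigma$ has at most $M$ edges in $H_r$, so each individual $\beta_j$ has length at most $M$, and in particular $s \le M$. (When $\sigma$ is a circuit, the decomposition is taken cyclically; the argument is the same.)

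The key point to establish is that this decomposition is a splitting, i.e.\ that for every $i \ge 1$ one has $f^i_\#(\sigma) = f^i_\#(\alpha_0) f^i_\#(\beta_1) \cdots f^i_\#(\alpha_s)$ with no cancellation at the junctions. At the junction between an $\alpha_{j-1}$ and $\beta_j$ (and symmetrically $\beta_j$ and $\alpha_j$), one outgoing direction lies in $H_r$ while the other lies in $G_{r-1}$. Because $f$ preserves the filtration, $Df^i$ sends directions of height $<r$ to directions of height $<r$; and by RTT-(i) it sends directions of height $r$ to directions of height $r$. So under every iterate the two sides of the turn remain in different strata, hence are distinct directions, hence the turn remains nondegenerate and no cancellation occurs. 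This is the splitting claim.

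Next, I would apply Lemma~\ref{LemmaEGPathSplitting} with $L := M$ to obtain an integer $k = k_M$, independent of $\sigma$, such that for each height-$r$ path or circuit of length $\le M$, its $k$-th iterate splits into terms that are edges of $H_r$, indivisible periodic Nielsen paths of height $r$, or paths in $G_{r-1}$. Since each $\beta_j$ has length $\le M$ and height $r$, this produces such a splitting of $f^k_\#(\beta_j)$. On the other hand, since $f(G_{r-1}) \subset G_{r-1}$, each $f^k_\#(\alpha_i)$ is a path in $G_{r-1}$ and constitutes a single term of the desired type. Setting $d = k_M$ and composing these splittings with the coarse splitting from the first step produces the required splitting of $f^d_\#(\sigma)$.

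I don't anticipate a real obstacle; the only delicate step is verifying that the alternation $\alpha_0 \cdot \beta_1 \cdot \alpha_1 \cdots$ is genuinely a splitting. What makes the uniformity in $M$ possible is that the only non-uniformity in the data is the lengths of the $\alpha_i$'s, but those subpaths lie entirely in $G_{r-1}$ where iteration does nothing interesting relative to the target splitting form, and the interaction with the $H_r$-pieces is killed by the mixed-stratum legality coming from RTT-(i).
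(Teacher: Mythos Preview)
Your identification of the delicate step is correct, but the argument you give for it fails. Legality of the turn $\{\bar\alpha_{j-1},\beta_j\}$ only tells you that in the \emph{untightened} image $f^i(\alpha_{j-1})\, f^i(\beta_j)$ there is no immediate cancellation at that junction; it does \emph{not} tell you that the concatenation of the \emph{tightened} pieces $f^i_\#(\alpha_{j-1})\, f^i_\#(\beta_j)$ is tight. The point is that $\beta_j$, being a maximal $H_r$-segment, may contain illegal height-$r$ turns; internal cancellation in $f^i(\beta_j)$ can then eat the initial (or terminal) $H_r$-edge, so that $f^i_\#(\beta_j)$ begins with a $G_{r-1}$-edge and can cancel against $f^i_\#(\alpha_{j-1})$. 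Mixed turns give splitting points for $r$-legal paths, but your $\beta_j$'s are not $r$-legal.

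Here is a clean counterexample. Suppose there is an indivisible height-$r$ Nielsen path $\rho_r$ with at least one nontrivial maximal $G_{r-1}$-subpath, and write its alternating decomposition as $\rho_r = a_0\, b_1\, a_1 \cdots b_k\, a_k$ with $a_i \subset H_r$, $b_i \subset G_{r-1}$. Exactly one $a_j$ contains the unique illegal turn of $\rho_r$; every other $a_i$ is a subpath of the $r$-legal half of $\rho_r$, hence $r$-legal, hence $f_\#(a_i)$ is strictly longer than $a_i$. If your decomposition were a splitting, then $\rho_r = f_\#(\rho_r)$ would equal the tight concatenation $f_\#(a_0)\, f_\#(b_1)\cdots f_\#(a_k)$, whose length strictly exceeds that of $\rho_r$. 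Contradiction.

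The paper's proof circumvents this by inducting on $M$ and using bounded cancellation: if $\sigma$ is short it falls directly under Lemma~\ref{LemmaEGPathSplitting}; if $\sigma$ is long it must contain a \emph{long} $G_{r-1}$-subpath $\beta$, long enough that after applying $f^D_\#$ (with $D$ from the inductive hypothesis) not all of $f^D_\#(\beta)$ is cancelled. The inductive hypothesis applied to the two flanking pieces $\alpha,\gamma$ (each with fewer $H_r$-edges) yields splittings that end and begin, respectively, with $H_r$-edges or height-$r$ Nielsen paths, and \emph{that} is what makes the mixed turns into genuine splitting points via RTT-(i).
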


\begin{proof} In this proof, all paths and circuits are in $G$ and have endpoints, if any, at vertices.

If a path or circuit $\alpha$ of height $r$ has a splitting satisfying the conclusions of the lemma, the terms being edges and indivisible Nielsen paths of height $r$ and paths in $G_{r-1}$, then $f_\#(\alpha)$ also has a splitting satisfying the conclusions. We are therefore free to increase the exponent on $f_\#$ as needed in this proof, which we shall do without mention.

\subparagraph{Case 1: $\sigma$ is a height $r$ path.} We prove the lemma in this case by induction on~$M$. If $\sigma$ is $r$-legal, in particular if $M=1$, we can take $d=0$. Assume by induction that the exponent $D=d_{M-1}$ works when $\sigma$ has $\le M-1$ edges in $H_r$. 

Let $B \ge 0$ be a bounded cancellation constant for $f^D$. Let $L$ be a constant such that if $\beta$ is a path of length $> L$ then the path $f_\#(\beta)$ has length $\ge 2B+1$; the constant $L$ exists because the lift of $f$ to the universal cover of $G$ is a quasi-isometry. Given a path $\alpha \beta \gamma$, if $\beta$ has length $> L$ then not all of $f^D_\#(\beta)$ is cancelled when $f^D_\#(\alpha) f^D_\#(\beta) f^D_\#(\gamma)$ is tightened to $f^D_\#(\alpha \beta \gamma)$: at most $B$ initial edges of $f^D_\#(\beta)$ cancel with $f^D_\#(\alpha)$, and at most $B$ terminal edges cancel with $f^D_\#(\gamma)$. 

Let $d_M$ be the maximum of $D$ and the constant $K=k_{M+ML-L}$ from Lemma~\ref{LemmaEGPathSplitting}. Let $\sigma$ be a height $r$ path with exactly $M$ edges in $H_r$. 

We first reduce to the subcase that $\sigma$ begins and ends with edges in $H_r$. In the case of a general path $\sigma$, we can write $\sigma = \alpha \tau \beta$ where $\alpha,\beta$ are the longest initial and terminal segments in $G_{r-1}$. Knowing that the path $f^{d_M}_\#(\tau)$ satisfies the conclusions, it has a splitting $f^{d_M}_\#(\tau) = \alpha' \cdot \tau' \cdot \beta'$ where $\alpha',\beta'$ are its longest initial and terminal segments in $G_{r-1}$ and where $\tau'$ satisfies the conclusions, and so $f^{d_M}_\#(\sigma) = [f^{d_M}_\#(\alpha) \alpha'] \cdot \tau' \cdot [\beta' f^{d_M}_\#(\beta)]$ has a splitting that satisfies the conclusions, completing the reduction.

Suppose now that $\sigma$ begins and ends with edges in $H_r$. If $\sigma$ has length $\le M+ML-L$ then $h^K_\#(\sigma)$ satisfies the conclusions. If~$\sigma$ has length $>M+ML-L$ then, since $\sigma$ begins and ends with edges in $H_r$, we can write $\sigma = \alpha \beta \gamma$ where $\beta$ in $G_{r-1}$ has length $> L$ and $\alpha,\gamma$ each have between~$1$ and $M-1$ edges in $H_r$. The induction hypothesis implies that $f^D_\#(\alpha)$ and $f^D_\#(\gamma)$ each satisfy the conclusions, so there are splittings
\begin{align*}
f^D_\#(\alpha) &= \alpha' \cdot \beta_1 \\
f^D_\#(\gamma) &= \beta_2 \cdot \gamma'
\end{align*}
where $\beta_1$, $\beta_2$ are each maximal subpaths in $G_{r-1}$, the paths $\alpha',\gamma'$ satisfy the conclusions, and the terminal edge of $\alpha'$ and the initial edge of $\gamma'$ are both in~$H_r$. Our choice of $L$ guarantees that $\beta' =  [\beta_1 f^D_\#(\beta) \beta_2]$ is nontrivial, and so 
\begin{align*}
f^D_\#(\sigma) & = [f^D_\#(\alpha) f^D_\#(\beta) f^D_\#(\gamma)] \\
  &= \alpha' \beta' \gamma'
\end{align*}
This is a splitting of $f^D_\#(\sigma)$, because the turns $\{\bar\alpha',\beta'\}$, $\{\bar\beta',\gamma'\}$ are legal by RTT-(i), and $\alpha',\gamma'$ have splittings that satisfy the conclusions. It follows that $f^D_\#(\sigma)$ has a splitting that satisfies the conclusions.

\subparagraph{Case 2: $\sigma$ is a height $r$ circuit.} Suppose that $\sigma$ contains exactly $M$ edges in~$H_r$. Using the constant $d_M$ from Case~1, let $B'$ be a bounded cancellation constant for $f^{d_M}$. Let $L'$ be a constant such that if $\beta$ is a path of length $>L'$ then the path $f^{d_M}_\#(\beta)$ has length $\ge 2B'+1$. Let $d'_M$ be the maximum of $d_M$ and the constant $K'=k_{M(L'+1)}$ from Lemma~\ref{LemmaEGPathSplitting}. If $\sigma$ has length $\le M(L'+1)$ then $f^{K'}_\#(\sigma)$ satisfies the conclusions. If $\sigma$ has length $>M(L'+1)$ then $\sigma$ has a maximal subpath $\beta$ in $G_{r-1}$ of length $> L'$, with a complementary subpath $\tau$ that begins and ends in $H_r$ and has exactly $M$ edges in $H_r$. Applying Case~1, the path $f^{d_M}_\#(\tau)$ satisfies the conclusions, so there is a splitting 
$$f^{d_M}_\#(\tau) = \beta_1 \cdot \tau' \cdot \beta_2
$$
where $\beta_1,\beta_2$ are maximal subpaths in $G_{r-1}$, $\tau'$ satisfies the conclusions, and $\tau'$ begins and ends with edges of $H_r$. The choice of $L'$ guarantees that $\beta' = [\beta_2 f^{d_M}_\#(\beta) \beta_1]$ is nontrivial, and so $f^{d_M}_\#(\sigma) = \beta' \tau'$. This is a splitting of the circuit $f^{d_M}_\#(\sigma)$, because $\{\bar\beta',\tau'\}$ and $\{\bar\tau',\beta'\}$ are legal by RTT-(i) and $\tau'$ has a splitting that satisfies the conclusions. It follows that $f^{d_M}_\#(\sigma)$ has a splitting that satisfies the conclusions.

\bigskip

In conclusion, we have proved the lemma with $d = \max\{d_M,d'_M\}$.
\end{proof}

\subsection{Properties of Attracting Laminations}
\label{SectionLams} 

In Section~\ref{SectionAttractingLams} we recalled the definition of attracting laminations, which is formulated invariantly without regard to relative train tracks. We also reviewed there several facts which are formulated invariantly (albeit their proofs often depend on relative train tracks). In this section we prove various facts and lemmas about attracting laminations which are explicitly formulated from the relative train track point of view.

\subsubsection{The relation between \eg\ strata and attracting laminations.} Basic to the theory of attracting laminations is the relation between \eg\ strata of relative train track maps and attracting laminations. We describe the basic facts about this relation, including information about free factor supports and about attracting neighborhoods. 

The first fact is compiled from results in \BookOne\ Section~3, from Definition~3.1.5 to Definition~3.1.14. A relative train track map is said to be \emph{\eg-aperiodic} if each of its \eg\ strata is aperiodic.

\begin{fact} 
\label{FactLamsAndStrata} 
For any $\phi \in \Out(F_n)$ and any relative train track representative $f \from G \to G$, $\phi$~acts as the identity on $\L(\phi)$ if and only if $f$ is \eg-aperiodic. If this is the case then there exists a bijection between attracting laminations $\Lambda \in \L(\phi)$ and \eg-strata $H_r \subset G$ such that $\Lambda$ corresponds to $H_r$ if and only if the following equivalent statements hold:
\begin{enumerate}
\item Each generic leaf of $\Lambda$ has height~$r$.
\item $\F_\supp(\Lambda) \sqsubset [\pi_1 G_r]$ and $\F_\supp(\Lambda) \not\sqsubset [\pi_1 G_{r-1}]$.  \qed
\end{enumerate}
\end{fact}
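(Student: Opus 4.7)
The plan is to treat this as essentially a compilation of the constructions in \BookOne\ Section~3, verifying that the bijective correspondence and the equivalences come out correctly; the main task is to trace the construction of a lamination from an \eg\ stratum and see how height characterizes the correspondence on both sides.

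First I would handle the ``$\phi$ fixes $\L(\phi)$ iff $f$ is \eg-aperiodic'' equivalence. In one direction, if $H_r$ is an irreducible but non-aperiodic \eg\ stratum, the Perron--Frobenius structure of its transition matrix decomposes its edges into cyclically permuted blocks; applying the lamination construction (below) to each block produces laminations on which $\phi$ acts by a nontrivial permutation. Conversely, if every \eg\ stratum is aperiodic then the lamination associated to each such stratum is individually $\phi$-invariant, and since (by the bijection below) these are all of $\L(\phi)$, $\phi$ acts as the identity on~$\L(\phi)$.

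Next I would construct the bijection. Given an aperiodic \eg\ stratum $H_r$, by Fact~\ref{FactEGAperiodic} some vertex $v \in H_r$ has a fixed direction in $H_r$. Lift to $\ti v$ fixed by a lift $\ti f$, let $\wt E$ be the principal edge at $\ti v$, and iterate using Fact~\ref{FactSingularRay}\pref{ItemDirectionToAttractor}: the nested sequence $\wt E \subset \ti f_\#(\wt E) \subset \cdots$ yields a ray $\wt R$ lying entirely in $\wt G_r$ but containing infinitely many edges of $\wt H_r$. Doing this at both endpoints of a suitably chosen \eg\ turn produces a bi-infinite line $\ti\ell$, and its weak closure in $\B$ is declared to be $\Lambda_r$. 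Aperiodicity and RTT-(iii) guarantee that $\ti\ell$ is birecurrent and the weak orbit closure is minimal, so this is genuinely an attracting lamination, and its generic leaves (weak translates of $\ti\ell$) have height exactly $r$. To go the other way, given $\Lambda \in \L(\phi)$ with generic leaf~$\ell$, let $r$ be the maximum height of an edge crossed infinitely often by~$\ell$; RTT-(iii) plus birecurrence force $H_r$ to be \eg\ (no \neg\ or zero stratum can carry the generic leaf of an attracting lamination in this way), and the two constructions are mutually inverse. Distinctness is automatic from heights.

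Finally I would prove (1)$\iff$(2). By Fact~\ref{FactLamsBasicProps}\pref{ItemLamFFSConnected} we have $\F_\supp(\Lambda) = \F_\supp(\ell)$ for any generic leaf~$\ell$. If (1) holds, then the realization of $\ell$ lies in $G_r$ so $\F_\supp(\ell) \sqsubset [\pi_1 G_r]$, and $\ell$ crosses an edge of $H_r$ so by Fact~\ref{FactLineRealizedCarried} $\F_\supp(\ell) \not\sqsubset [\pi_1 G_{r-1}]$, giving (2). Conversely, if (2) holds, then $\Lambda$ is carried by $[\pi_1 G_r]$ so by Fact~\ref{FactLineRealizedCarried} every leaf of $\Lambda$ (generic or not) has realization contained in~$G_r$, i.e.\ height~$\le r$; and since $\F_\supp(\Lambda) \not\sqsubset [\pi_1 G_{r-1}]$, some leaf crosses an edge outside $G_{r-1}$, which by weak density of the generic leaf propagates to the generic leaf, forcing its height to be exactly~$r$.

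The main obstacle is the lamination-to-stratum direction of the bijection: one must argue that the highest stratum visited infinitely often by a generic leaf of an attracting lamination cannot be \neg\ or zero. For \neg\ strata one uses Fact~\ref{FactNEGEdgeImage} to rule out exponential accumulation; for zero strata one invokes (Zero Strata) to see that infinitely many crossings force the leaf into an enveloping \eg\ stratum, which then becomes the true height. All of this is carried out in detail in \BookOne\ Section~3, and the present fact is essentially a repackaging of that material.
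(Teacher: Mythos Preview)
The paper gives no proof of this fact; it is stated as a direct citation to \BookOne\ Section~3 (Definition~3.1.5 through Definition~3.1.14), and the trailing \qed\ signals exactly that. Your proposal is in the same spirit --- you correctly identify it as a repackaging of that material and your final sentence says as much.

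That said, your sketch has a technical misstep worth flagging: the statement concerns an arbitrary relative train track representative, not a \ct, yet you invoke Facts~\ref{FactEGAperiodic}, \ref{FactSingularRay}, and~\ref{FactNEGEdgeImage}, all of which are stated and proved in this paper only under \ct\ hypotheses (rotationless $\phi$, principal vertices, complete splittings, etc.). For a general relative train track map these tools are not available as stated, and some are logically downstream of the lamination theory you are trying to establish. The arguments in \BookOne\ Section~3 work directly with the transition matrix and the $r$-legal structure of an \eg\ stratum (aperiodicity gives a positive diagonal entry in some power, hence an eventually fixed direction; iterating that edge produces the generic leaf), without the \ct\ machinery. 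Your outline of the (1)$\iff$(2) equivalence is fine and uses only general facts.
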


\emph{Notational convention.} The bijection arising from Fact~\ref{FactLamsAndStrata} is often denoted using subscripts as $\Lambda_r \leftrightarrow H_r$.

\medskip

The next two facts are concerned with relative train track maps $f \from G \to G$ that may not be \eg-aperiodic. Note that the straightened iterates $f^k_\# \from G \to G$ are relative train track map, and one of them is \eg-aperiodic. 

The proof of the following is an easy consequence of the definitions and the arguments of \BookOne\ Section~3.

\begin{fact}
Given $\phi \in \Out(F_n)$, a relative train track representative $f \from G \to G$, and an \eg-periodic iterate $f^k_\#$, the bijection $\Lambda_r \leftrightarrow H_r$ between $\L(\phi)=\L(\phi^k)$ and the set of \eg\ strata of $f^k_\#$ is natural in the following sense: if $\phi(\Lambda_r)=\Lambda_{r'}$ then $H_{r'} \subset f(H_r) \subset H_{r'} \union G_{r'-1}$. In particular, $\Lambda_r$ is fixed by $\phi$ if and only $H_r$ is an \eg-aperiodic stratum for $f$. \qed
\end{fact}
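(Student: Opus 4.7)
My approach combines the Perron-Frobenius cyclic decomposition of the transition matrix of each \eg\ $f$-stratum with a generic-leaf bridge between $\phi$'s action on laminations and $f$'s action on strata.

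I first observe $\L(\phi)=\L(\phi^k)$ and invoke Fact~\ref{FactLamsAndStrata} for the \eg-aperiodic map $f^k_\#$ to obtain the stated bijection. I then exhibit the filtration of $f^k_\#$ as a refinement of that of $f$: by Perron-Frobenius, an \eg\ $f$-stratum $H_s$ whose transition matrix has period $p_s \mid k$ admits a cyclic decomposition $H_s = C_0^s \sqcup \cdots \sqcup C_{p_s-1}^s$ satisfying $f(C_i^s) \subset C_{(i+1)\bmod p_s}^s \cup G_{s-1}$, and the $C_i^s$ are precisely the \eg-aperiodic strata of $f^k_\#$ sitting inside $H_s$; the case $p_s=1$ recovers the \eg-aperiodic $f$-strata. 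Since $f$ preserves its own filtration, $f_\#$ preserves $f$-height on lines, so $\phi(\Lambda_r)=\Lambda_{r'}$ forces $H_r$ and $H_{r'}$ to share a common parent $f$-stratum $H_s$. Writing $H_r=C_i^s$, a direct check using that generic leaves of $\Lambda_r$ are weak limits of $f^{km}_\#$-iterates of $C_i^s$ edges identifies $H_{r'}=C_{(i+1)\bmod p_s}^s$.

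For the upper inclusion $f(H_r)\subset H_{r'}\cup G_{r'-1}$, the cyclic formula immediately gives $f(H_r) = f(C_i^s)\subset C_{(i+1)\bmod p_s}^s \cup G_{s-1} = H_{r'}\cup G_{s-1}$, and $G_{s-1}$ lies below $H_{r'}$ in the refined filtration, so $G_{s-1}\subset G_{r'-1}$. For the lower inclusion $H_{r'}\subset f(H_r)$, I fix a generic leaf $\ell$ of $\Lambda_r$, so $f_\#(\ell)$ is a generic leaf of $\Lambda_{r'}$; by birecurrence together with \eg-aperiodicity of $H_{r'}$ and minimality of $\Lambda_{r'}$, $f_\#(\ell)$ crosses every edge of $H_{r'}$. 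For any such edge $c$ of $H_{r'}$, $c$ appears in $f(E)$ for some edge $E$ of $\ell$. The edges of $\ell$ lie either in $G_{s-1}$ or in some cyclic class $C_l^s$ contained in $\ell$'s filtration element; the first case is excluded since $f(G_{s-1})\subset G_{s-1}$ contains no $H_s$-edge, while the second forces $(l+1)\equiv(i+1)\pmod{p_s}$, hence $l=i$ and $E\in H_r$, so $c\in f(H_r)$.

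The main obstacle is the structural bookkeeping: installing the Perron-Frobenius cyclic decomposition inside the refined filtration of $f^k_\#$ and verifying that $\phi$'s permutation of $\L(\phi)$ matches $f$'s cyclic permutation of sub-strata. Once this is in place, both inclusions reduce to the cyclic formula and the generic-leaf identification. The ``in particular'' clause is then immediate: $\phi$ fixes $\Lambda_r$ iff the cyclic period $p_s$ of its parent stratum is one, iff $H_r = H_s$ is itself an \eg-aperiodic $f$-stratum.
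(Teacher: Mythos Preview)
Your proof is correct. The paper itself gives no proof of this fact: it is stated with a terminal \qed\ and prefaced only by the remark that it ``is an easy consequence of the definitions and the arguments of \BookOne\ Section~3.'' Those arguments are precisely the Perron--Frobenius cyclic decomposition of an irreducible transition matrix together with the tile description of generic leaves, so your approach is exactly what the authors have in mind.

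Two minor comments. First, for the inclusion $H_{r'}\subset f(H_r)$ your generic-leaf argument works, but it is slightly indirect: once you know $H_r=C_i^s$ and $H_{r'}=C_{(i+1)\bmod p_s}^s$, the inclusion follows immediately from irreducibility of the transition matrix $M$ of $H_s$ (an edge of $H_s$ not crossed by any $f(E)$ would give a zero row of $M$), together with the cyclic block structure which forces the preimage to lie in $C_i^s$. Second, in your leaf argument you implicitly use that the $H_s$-edges of $f_\#(\ell)$ coincide with those of $f(\ell)$; this is justified because a generic leaf $\ell$ of $\Lambda_r$ is $s$-legal (being a limit of tiles $(f^k)^m_\#(E)$, which are $s$-legal by RTT-(iii)), so straightening $f(\ell)$ affects only $G_{s-1}$-subpaths. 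You may want to make that explicit.
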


\subsubsection{Tiles and applications.} We review from 
Section~3 of \BookOne\ the relations between attracting laminations and tiles of \eg\ strata. In particular Fact~\ref{FactLeafAsLimit} states the relative train track characterization of attracting laminations. We give some other applications regarding weak attraction of paths and circuits, principal rays, generic leaves, and attracting neighborhood bases.

\smallskip

\textbf{Fixed notation:} For the remainder of the section we fix $\phi \in \Out(F_n)$ acting as the identity on $\L(\phi)$, and we fix $f \from G \to G$ a relative train track representative of $\phi$. The conclusions of Fact~\ref{FactLamsAndStrata} therefore hold, in particular $f$ is \eg-aperiodic. We also fix $\Lambda^+ \in \L(\phi)$ and its corresponding \eg-stratum $H_r \subset G$. 

\smallskip

Recall from Definition~3.1.7 of \BookOne\ that for any integer $k \ge 0$, a \emph{$k$-tile of height $r$} is a path of the form $f^k_\#(E)$, where $E$ is an edge of $H_r$; we may drop $k$ and/or $r$ if they are clear from the context. 


\begin{fact} 
\label{FactTiles}
With the notation fixed as above we have:
\begin{enumerate}
\item \label{ItemTileDecomp} 
For each $k$, each generic leaf of $\Lambda^+$ has a decomposition into subpaths each of which is either a $k$-tile of height $r$ or a path in $G_{r-1}$. 
\item \label{ItemUndergraphPieces} Let $\{\mu_i\}$ be the collection of maximal subpaths of $G_{r-1}$ that occur in $1$-tiles and suppose that $\tau_0$ is   a maximal subpath  of $G_{r-1}$ in a $k$-tile $\tau$. Then $\tau_0 = f^m_\#(\mu_i)$ for some $m \le k-1$ and some $\mu_i$; moreover, there are $m$-tiles immediately preceding and following $\tau_0$ in $\tau$.
\item \label{ItemTileExhausted} 
Every generic leaf of $\Lambda^+$ can be exhausted by tiles of height $r$, meaning that it can be written as an increasing union of subpaths each of which is a tile of height $r$. 
\item \label{ItemTilePF}
There exists $p$ such that each for each $k \ge i \ge 0$, each $k+p$-tile of height $r$ contains each $i$ tile of height $r$.
\item \label{ItemTileExpFac}
Letting $\lambda > 1$ be the Perron-Frobenius eigenvalue of the transition matrix of $H_r$, and letting $\ell_r(\tau)$ denote the number of $H_r$ edges of a path $\tau$, for each edge $E \subset H_r$ the ratio $\ell_r(f^{k+1}_\#(E)) \bigm/ \ell_r(f^k_\#(E))$ approaches $\lambda$ as $r \to +\infinity$.
\end{enumerate}
\end{fact}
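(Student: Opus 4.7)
The approach rests on two engines: Perron--Frobenius for the aperiodic irreducible transition matrix $M_r$ of $H_r$, and the RTT axioms (crucially RTT-(i) and RTT-(iii)), which together guarantee that $r$-legal turns iterate without cancellation among $H_r$-edges.

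Item~\pref{ItemTileExpFac} is immediate: $\ell_r(f^k_\#(E))$ is the $E$-th column sum of $M_r^k$, and by Perron--Frobenius for aperiodic irreducible non-negative matrices one has $M_r^k = \lambda^k\mathbf{w}\mathbf{v}^T + O(\mu^k)$ for some $\mu < \lambda$, so consecutive column-sum ratios tend to $\lambda$. For item~\pref{ItemTilePF}, choose $p$ with $M_r^p$ strictly positive, so every $p$-tile $f^p_\#(E')$ contains every edge of $H_r$. For $k \ge i \ge 0$, write $f^{k+p}_\#(E) = f^i_\#\bigl(f^{k+p-i}_\#(E)\bigr)$; since $k+p-i \ge p$, the inner path contains each edge $E'$ of $H_r$, and RTT-(i) prevents cancellation under $f^i_\#$ at these edges, so the $(k+p)$-tile contains each $i$-tile $f^i_\#(E')$ as a subpath.

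For item~\pref{ItemUndergraphPieces} I induct on $k$, writing $\tau = f^k_\#(E) = f_\#(f^{k-1}_\#(E))$. RTT-(i) lets me iterate the one-level decomposition of $f^{k-1}_\#(E)$ termwise without cancellation: each $H_r$-edge becomes a $1$-tile whose maximal $G_{r-1}$-subpaths are precisely the $\mu_i$ (contributing $m = 0$), while each $G_{r-1}$-subpath, which by induction has the form $f^{m'}_\#(\mu_{i'})$ with $m' \le k-2$, becomes $f^{m'+1}_\#(\mu_{i'})$. In either case the flanking $m$-tiles come from the $m$-fold iterates of the $H_r$-edges flanking $\mu_i$ inside its defining $1$-tile.

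Items~\pref{ItemTileDecomp} and~\pref{ItemTileExhausted} use that each generic leaf $\ell$ is $r$-legal (since $\Lambda^+$ is minimal and $r$-legality is a closed condition preserved under weak limits of the $r$-legal tiles) and is birecurrent with weak closure $\Lambda^+$. For~\pref{ItemTileDecomp}, the homeomorphism $f^k_\#$ of $\B$ preserves $\Lambda^+$, so $\ell = f^k_\#(\ell')$ for some leaf $\ell' \in \Lambda^+$; taking the one-edge decomposition $\ell' = \cdots E_{-1}\beta_{-1}E_0\beta_0 E_1\cdots$ and applying $f^k_\#$ termwise, with RTT-(i) at $H_r$-turns ruling out cancellation at the junctions, produces the desired decomposition of $\ell$. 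For~\pref{ItemTileExhausted}, aperiodicity supplies an edge $E'$ and $N$ with $E' \subset f^N_\#(E')$, whence $f^{jN}_\#(E') \subset f^{(j+1)N}_\#(E')$ by RTT-(i); birecurrence of $\ell$ together with the attracting-neighborhood-basis property of Definition~\ref{DefAttractingLaminations} lets me place these nested tiles as subpaths of $\ell$ whose union exhausts~$\ell$. The main technical care is verifying that the termwise iteration in~\pref{ItemTileDecomp} is a genuine splitting, which reduces in each case to applying RTT-(i) at the $r$-legal junctions between $H_r$-edges and adjacent directions.
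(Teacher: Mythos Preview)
Your arguments for items~\pref{ItemUndergraphPieces}, \pref{ItemTilePF}, and~\pref{ItemTileExpFac} are correct and follow the same lines as the paper, which cites \BookOne\ and \BH\ together with Perron--Frobenius.

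Two issues remain. First, a minor one: you assert that $\Lambda^+$ is minimal, but that is false in general---nongeometric \eg\ strata can have nongeneric leaves (compare Proposition~\ref{PropGeomLams}~\pref{ItemAllGeneric}, where minimality is established only in the geometric case). What you actually need for \pref{ItemTileDecomp}, and what is true, is that every \emph{generic} leaf lies in the weak closure of the tiles; since $r$-legality is a weakly closed condition, generic leaves are $r$-legal and your push-forward argument $\ell = f^k_\#(\ell')$ then works.

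Second, and more seriously, your argument for~\pref{ItemTileExhausted} has a real gap. You produce an abstractly nested family $f^{jN}_\#(E')$ and assert that birecurrence plus the attracting-neighborhood axiom let you ``place'' them inside $\ell$ so that their union is all of~$\ell$. But knowing that each $f^{jN}_\#(E')$ occurs \emph{somewhere} in $\ell$ does not give a single nested chain of occurrences, and even if you choose occurrences by working backward from larger and larger tiles, nothing prevents the union from being a proper subray. Neither birecurrence nor the neighborhood-basis axiom addresses this. The missing ingredient is that the $k$-tile decompositions of $\ell$ produced in~\pref{ItemTileDecomp} are \emph{compatible} as $k$ varies: writing $\ell = f_\#(\ell')$ and pushing forward the $k$-tile decomposition of $\ell'$ shows that the $(k+1)$-tile decomposition of $\ell$ coarsens the $k$-tile decomposition. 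Fixing an $H_r$-edge $E_0$ in $\ell$, the $k$-tiles $\sigma_k \ni E_0$ therefore genuinely nest, and one then needs a growth argument (using Perron--Frobenius, essentially as in the proof of Fact~\ref{FactAttrNhdBasis}) to show the union is all of $\ell$ rather than a ray. The paper defers this to \BookOne\ Lemma~3.1.10, where generic leaves are constructed as such increasing unions; your sketch does not supply the argument.
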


\begin{proof}
Items~\pref{ItemTileDecomp} and~\pref{ItemTileExhausted} are contained in \BookOne, Lemma 3.1.10 (3) and~(4), respectively.
Item~\pref{ItemUndergraphPieces} follows from    Lemma 5.8 of \BH\  and an obvious induction argument.  To prove item~\pref{ItemTilePF}, choose $p$ so that the $p^{\text{th}}$ power of the transition matrix for $H_r$ is positive, and so for each $i \ge 0$ the $(p+i)^{\text{th}}$ power is also positive; item~\pref{ItemTilePF} for $k=0$ follows immediately, and induction establishes it for all higher values of~$k$. 

Evidently every subpath of $\ell$ is a subpath of a generic leaf of $\Lambda$ and so $\ell$ is itself a leaf of~$\Lambda$. Also, it follows by \pref{ItemTilePF} that $\ell$ is birecurrent. 

Item~\pref{ItemTileExpFac} follows from the Perron-Frobenius Theorem applied to the transition matrix~$M$, part of which says that for any non-negative nontrivial vector $V$ indexed by the edges of $H_r$ we have $\lim_{r \to +\infinity} \abs{M^{k+1} \cdot V} \bigm/ \abs{M^k  \cdot V} = \lambda$ where $\abs{\cdot}$ denotes sum of coordinates; apply this to the column vector $V$ whose coordinates are all $0$ except for a single $1$ corresponding to the edge~$E$.
\end{proof}

\subparagraph{Characterizing attracting laminations.} The following fact gives the relative train track characterization of the set of attracting laminations of~$\phi$. In the logical structure of \BookOne\ this fact is very closely intertwined with Fact~\ref{FactLamsAndStrata}, but we state it here as an explicit, separate observation in its own right. Recall from Section~\ref{SectionLineDefs} the definition of weak attraction for the action of $f_\#$ on $\wh\B(G)$.

\begin{fact}
\label{FactLeafAsLimit}
Continuing with the fixed notation, for any line $\ell \in \B$, $\ell$ is a leaf of $\Lambda^+$ if and only if some (any) edge of $H_r$ is weakly attracted to $\ell$ under the action of $f_\#$ on $\wh\B(G)$.
\end{fact}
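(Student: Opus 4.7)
The plan is to prove both directions using the tile technology from Fact~\ref{FactTiles}, in particular items~\pref{ItemTileDecomp}, \pref{ItemTileExhausted}, and the Perron-Frobenius-type uniformity in~\pref{ItemTilePF}. The ``some $\iff$ any'' clause will fall out of the proof because \pref{ItemTilePF} lets us swap the initial edge at the cost of increasing the exponent by a fixed constant~$p$.

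For the direction ``edges attracted to $\ell$ implies $\ell$ is a leaf'', suppose $f^k_\#(E) \to \ell$ weakly for some edge $E \subset H_r$. Every finite subpath $\alpha$ of $\ell$ then occurs as a subpath of $f^k_\#(E)$ for all sufficiently large $k$. By Fact~\ref{FactTiles}~\pref{ItemTileDecomp}, each such tile $f^k_\#(E)$ appears as a subpath of every generic leaf of $\Lambda^+$, so $\alpha$ is a subpath of some (any) fixed generic leaf $\ell_0$ of $\Lambda^+$. Since the basic neighborhoods $V(G,\alpha)$ form a weak neighborhood basis of $\ell$, this says $\ell$ lies in the weak closure of $\ell_0$, which by Definition~\ref{DefAttractingLaminations} equals $\Lambda^+$.

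For the converse, assume $\ell \in \Lambda^+$ and fix an arbitrary edge $E \subset H_r$; I will show $f^k_\#(E) \to \ell$. Let $\alpha$ be any finite subpath of $\ell$; it suffices to show that $\alpha$ is a subpath of $f^k_\#(E)$ for all sufficiently large $k$. Since $\ell$ lies in the weak closure of a generic leaf $\ell_0$, the path $\alpha$ occurs as a subpath of $\ell_0$. By Fact~\ref{FactTiles}~\pref{ItemTileExhausted}, $\ell_0$ is an increasing union of tiles of height~$r$, so $\alpha$ is contained in some tile $f^i_\#(E')$ for some edge $E' \subset H_r$ and some $i \ge 0$. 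Now apply Fact~\ref{FactTiles}~\pref{ItemTilePF}: with the constant $p$ provided there, for every $k \ge i$ the $(k+p)$-tile $f^{k+p}_\#(E)$ contains the $i$-tile $f^i_\#(E')$, and hence contains $\alpha$. Thus $\alpha$ is a subpath of $f^{k'}_\#(E)$ for all $k' \ge i+p$, which gives weak convergence $f^k_\#(E) \to \ell$.

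I expect no serious obstacle: the content of the lemma is essentially a repackaging of Fact~\ref{FactTiles} together with the definition of $\Lambda^+$ as the weak closure of a generic leaf. The one point to keep straight is the quantifier swap in ``some (any)''; both directions follow automatically once \pref{ItemTilePF} is invoked, since it makes the weak limit of $f^k_\#(E)$ independent of the choice of edge $E \subset H_r$ (any such limit must contain every tile of every edge up to the shift by~$p$).
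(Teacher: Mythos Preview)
Your proof is correct and follows essentially the same route as the paper's, using Fact~\ref{FactTiles}~\pref{ItemTileDecomp}, \pref{ItemTileExhausted}, and~\pref{ItemTilePF} in the same way. One small citation slip: in your ``if'' direction, the claim that the specific tile $f^k_\#(E)$ appears in every generic leaf does not follow from \pref{ItemTileDecomp} alone (which only says the leaf decomposes into \emph{some} $k$-tiles); you need \pref{ItemTilePF} as well, exactly as the paper does, to get that a $(k+p)$-tile occurring in the leaf contains your particular $k$-tile.
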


\begin{proof} For the ``only if'' direction, suppose that $\ell$ is a leaf of $\Lambda^+$, and consider first the case that $\ell$ is a generic leaf. By Fact~\ref{FactTiles}~\pref{ItemTileExhausted} any subpath of $\ell$ is a subpath of some $k$-tile, which by Fact~\ref{FactTiles}~\pref{ItemTilePF} is a subpath of some tile $f^{k+p}_\#(E)$ for some edge $E$ of~$H_r$. It follows that $E$ is weakly attracted to $\ell$ by iteration of $f_\#$. From this it also follows that $E$ is weakly attracted to every weak limit of a generic leaf of $\Lambda^+$, which includes every leaf of~$\Lambda^+$.

For the ``if'' direction, consider any edge $E$ of $H_r$, and suppose $E$ is weakly attracted to a line $\ell$. Choose a generic leaf $\beta$ of~$\Lambda^+$. Each subpath of $\ell$ is contained in $f^k_\#(E)$ for some~$k$, which by Fact~\ref{FactTiles}~\pref{ItemTilePF} is contained in any $r$-tile of the form $f^{k+p}_\#(E')$. By Fact~\ref{FactTiles}~\pref{ItemTileDecomp}, some such tile $f^{k+p}_\#(E')$ is a subpath of $\beta$, proving that $\ell$ is in the weak closure of $\beta$ which is just~$\Lambda^+$.
\end{proof}

\subparagraph{Weak attraction of paths and circuits.} Next we characterize those paths and circuits in $G$ which are weakly attracted to~$\Lambda_+$.


\begin{fact} 
\label{FactAttractingLeaves}
Continuing with the fixed notation, we have:
\begin{enumerate}
\item\label{ItemSplitAnEdge} Given a path or circuit $\sigma \subset G$ the following are equivalent: 
\begin{enumerate}
\item\label{ItemSigmaWeaklyAttracted}
$\sigma$ is weakly attracted to $\Lambda^+$ under the action of $f$ of $\wh\B(G)$.
\item\label{ItemIteratedEdgeInHr}
There exists $k \ge 0$ and a splitting of $f^k_\#(\sigma)$, one term of which is an edge in $H_r$.
\item\label{ItemEventualEdgeInHr}
For all sufficiently large $k$, $f^k_\#(\sigma)$ is completely split and one term of the complete splitting is an edge of $H_r$.
\end{enumerate}
\item\label{ItemAllCircuitsRepelled} No conjugacy class in $F_n$ is weakly attracted to $\Lambda^+$ by iteration of $\phi^\inv$. More generally, letting $U$ be an attracting neighborhood of $\Lambda^+$, for any conjugacy class $[a]$ there are only finitely many values of $k \ge 0$ for which $\phi^{-k}[a] \in U$.
\end{enumerate}
\end{fact}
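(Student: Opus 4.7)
The plan is to prove (c) $\Rightarrow$ (b) $\Rightarrow$ (a) $\Rightarrow$ (c) in Part~(1) and then derive Part~(2) by a short formal argument. The implication (c) $\Rightarrow$ (b) is immediate since a complete splitting is a splitting. For (b) $\Rightarrow$ (a), if $f^k_\#(\sigma)$ admits a splitting with an $H_r$-edge $E$ as a term, then the defining property of splittings gives $f^j_\#(E) \subset f^{k+j}_\#(\sigma)$ for all $j \ge 0$; by Fact~\ref{FactLeafAsLimit}, every finite subpath of any chosen generic leaf $\ell$ of $\Lambda^+$ appears inside $f^j_\#(E)$ for $j$ large, hence inside $f^{k+j}_\#(\sigma)$, proving weak attraction of $\sigma$ to $\ell$.

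The main implication is (a) $\Rightarrow$ (c). Assume $\sigma$ is weakly attracted to $\Lambda^+$. By Fact~\ref{FactEvComplSplit} the iterate $f^{k_0}_\#(\sigma)$ is completely split for some $k_0$, say with complete splitting $\tau_1 \cdot \ldots \cdot \tau_m$; by Fact~\ref{FactComplSplitStable} the complete splitting of $f^{k_0+j}_\#(\sigma)$ refines the termwise splitting $f^j_\#(\tau_1) \cdot \ldots \cdot f^j_\#(\tau_m)$ for all $j \ge 0$. I argue by contradiction: suppose no term in the complete splitting of any $f^j_\#(\tau_i)$ is an $H_r$-edge. By Lemma~\ref{LemmaDoubleSharpFacts}~\pref{ItemDisjointCopies} the images $f^j_{\#\#}(\tau_i)$ sit with disjoint interiors inside $f^{k_0+j}_\#(\sigma)$; combined with the Bounded Cancellation Lemma~\ref{FactBCCSimplicial}, any sufficiently long subpath of a generic leaf of $\Lambda^+$ occurring inside $f^{k_0+j}_\#(\sigma)$ must be essentially localized inside the iterate of a single term $\tau_i$, up to boundedly short overflows at either end. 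The task then reduces to showing that no single $\tau_i$ of any of the four complete-splitting types from Definition~\ref{DefCompleteSplitting} can be weakly attracted to $\Lambda^+$ without eventually producing an $H_r$-edge term in the complete splitting of some iterate: indivisible Nielsen paths are $f_\#$-fixed and hence do not accumulate, exceptional paths $E_i w^p \overline{E}_j$ weakly accumulate on the axis of the periodic $w$ (not on a generic leaf of $\Lambda^+$), taken zero-stratum subpaths are handled via the enveloping EG stratum using (Zero Strata), and an edge of an irreducible stratum $H_s$ with $s \ne r$ is treated by induction on $s$, tracing $H_r$-content through lower-stratum pieces of tiles using Fact~\ref{FactTiles}~\pref{ItemUndergraphPieces}. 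The main obstacle will be this last inductive case when $s > r$: an $H_s$-edge $E'$ may support deep nesting of $H_r$-content within the lower-stratum subpieces of its tiles, and one must verify that the recursion eventually surfaces an $H_r$-edge term in the complete splitting of some iterate.

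Part~(2) admits a short formal proof. Fix a conjugacy class $[a]$ and an attracting neighborhood $U$ of $\Lambda^+$, so $\phi(U) \subset U$. If $\phi^{-k}[a] \in U$ for some $k \ge 0$, then for every $0 \le j \le k$ one has $\phi^{-j}[a] = \phi^{k-j}(\phi^{-k}[a]) \in \phi^{k-j}(U) \subset U$, so the set $S = \{k \ge 0 \colon \phi^{-k}[a] \in U\}$ is downward closed in $\N$, hence either finite or all of $\N$. In the latter case $[a] \in \bigcap_{j \ge 0} \phi^j(U)$, and since $\{\phi^j(U)\}_{j \ge 0}$ is by definition a weak neighborhood basis of some generic leaf $\ell$ of $\Lambda^+$, the circuit in $G$ realizing $[a]$ must contain every finite subpath of the realization of $\ell$ in $G$. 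This is impossible because that circuit has fixed finite length, while $\ell$ admits finite subpaths of arbitrarily large length; hence $S$ is finite. In particular no conjugacy class is weakly attracted to $\Lambda^+$ under iteration by $\phi^{-1}$, since attraction would force $S$ to be cofinite in $\N$.
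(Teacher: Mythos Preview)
Your argument for Part~(2) is correct and matches the paper's: both use $\phi(U) \subset U$, the fact that $\{\phi^j(U)\}$ is a neighborhood basis of a generic leaf, and that a generic leaf is not a circuit (the paper cites Lemma~3.1.16 of \BookOne\ for this; your length argument achieves the same).

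For Part~(1), your implications (c)$\Rightarrow$(b)$\Rightarrow$(a) are fine. The paper takes a different and much shorter route through the cycle: it cites Corollary~4.2.4 of \BookOne\ for (a)$\Leftrightarrow$(b), and obtains (b)$\Rightarrow$(c) directly from Fact~\ref{FactComplSplitStable} together with the evident observation that for each edge $E$ of $H_r$ and each $i \ge 1$ some term of the complete splitting of $f^i_\#(E)$ is an edge of $H_r$. This avoids entirely the case analysis you embark on for (a)$\Rightarrow$(c).

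Your direct attack on (a)$\Rightarrow$(c) has a genuine gap, which you yourself flag: the case where $\tau_i$ is an edge of an irreducible stratum $H_s$ with $s > r$ is not resolved. The ``induction on $s$'' you propose does not obviously get off the ground, since the $G_{s-1}$-pieces of the tiles of $H_s$ can themselves contain edges of strata both above and below $r$, and tracing how $H_r$-content eventually surfaces as a complete-splitting term through this recursion is essentially the substance of Corollary~4.2.4 of \BookOne. Your localization step is also underspecified: a long leaf subpath inside $f^{k_0+j}_\#(\sigma)$ may straddle several consecutive terms, and you would still need a pigeonhole over the finitely many indices $i$ plus a nested-subpath argument to pin the attraction on a single fixed $\tau_i$. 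These could plausibly be repaired, but the cleaner fix is the paper's route: prove (a)$\Rightarrow$(b) (or cite it), and then observe that once an $H_r$-edge term appears in a splitting it persists in the complete splittings of all further iterates, giving~(c).
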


\begin{proof} The equivalence of \pref{ItemSigmaWeaklyAttracted} and~\pref{ItemIteratedEdgeInHr} is in Corollary 4.2.4 of \BookOne, and the equivalence of~\pref{ItemIteratedEdgeInHr} and~\pref{ItemEventualEdgeInHr} follows from Fact~\ref{FactComplSplitStable} and the evident fact that for each edge $E$ of $H_r$ and each $i \ge 1$ some term of the complete splitting of $f^i_\#(E)$ is an edge of $H_r$.  To prove item \pref{ItemAllCircuitsRepelled}, let $c$ be the circuit in $G$ representing $[a]$. Since $\phi$ fixes $\Lambda^+$, we have $\phi(U) \subset U$. By Lemma 3.1.16 of \BookOne\ a generic leaf $\ell$ is not a circuit, and combined with the fact that $\{\phi^{k}(U)\}$ is a weak neighborhood basis of $\ell$, it follows that there exists $I$ such that $[a] \not\in \phi^I(U)$, and so $\phi^{-i}[a] \not\in U$ for $i \ge I$.
\end{proof}

\subparagraph{Principal rays.} 
The next fact contains applications of tiles to the understanding of rays in leaves.

\begin{corollary}\label{CorTilesExhaustRay}
Continuing with the notation fixed above, for any leaf $\ell$ of $\Lambda^+$, and for any subray $R \subset \ell$ whose initial vertex is principal and whose initial edge is in $H_r$, $R$ is a principal ray if and only if $R$ is the union of a nested increasing sequence of tiles in $\ell$. 
\end{corollary}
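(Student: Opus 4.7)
The plan is to prove both directions by connecting the tile decomposition of $R$ directly with the principal ray formula $R_E = \bigcup_k f^k_\#(E)$ from Fact~\ref{FactSingularRay}~\pref{ItemComplSplitRay}.

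For the ``only if'' direction, suppose $R$ is a principal ray with initial edge $E \subset H_r$. By Definition~\ref{DefSingularRay} applied downstairs and Fact~\ref{FactSingularRay}~\pref{ItemComplSplitRay}, $R = \bigcup_{k \ge 0} f^k_\#(E)$, with each $f^k_\#(E)$ a nested initial segment of $R$. Since $E \in H_r$, each $f^k_\#(E)$ is by definition a $k$-tile of height~$r$, and these tiles all lie in $\ell$ because $R \subset \ell$. This gives the required nested increasing sequence of tiles in $\ell$ whose union is $R$.

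For the ``if'' direction, write $R = \bigcup_i \tau_i$ with nested tiles $\tau_1 \subset \tau_2 \subset \cdots$ in $\ell$, say $\tau_i = f^{k_i}_\#(E_i')$ with $E_i' \subset H_r$. Since the $\tau_i$ cover $R$ and the initial vertex $v$ lies in $R$, for all sufficiently large $i$ the tile $\tau_i$ is an initial subpath of $R$ starting at $v$ with initial edge $E$; after truncating we assume this for all $i$, and by Fact~\ref{FactTiles}~\pref{ItemTileExpFac} we have $k_i \to \infty$. The heart of the argument is to show that the initial direction $d(E)$ of $E$ at $v$ is $Df$-fixed. Since $\tau_i$ begins at $v$ with oriented edge $E$ we have $Df^{k_i}(d(E_i')) = d(E)$. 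In a \ct\ representing a rotationless outer automorphism, iteration of $Df$ sends every direction to a fixed direction after finitely many steps: vertex orbits eventually reach fixed vertices (since periodic vertices are fixed, by Fact~\ref{FactPrincipalVertices} and the \textbf{(Rotationless)} axiom), and at a fixed vertex the finite set of incident directions is permuted by $Df$ with every periodic orbit actually fixed (also by \textbf{(Rotationless)}). Hence for all $i$ with $k_i$ large enough, $Df^{k_i}(d(E_i')) = d(E)$ is necessarily a fixed direction.

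Because $E \in H_r$ is EG, $E$ is neither a fixed edge nor a linear edge (both are NEG types). Combined with the principal vertex $v$ and the now-established fixedness of $d(E)$, Definition~\ref{DefPrincipalDirection} shows that $E$ is a principal direction; by Fact~\ref{FactSingularRay}~\pref{ItemDirectionToAttractor} and Definition~\ref{DefSingularRay}, $E$ generates a principal ray $R_E = \bigcup_{k \ge 0} f^k_\#(E)$ starting at $v$ with initial edge $E$. To conclude $R = R_E$, pass to $\wt G$: choose the lift $\ti v$ of $v$ fixed by the principal lift $\ti f$, and let $\wt E,\wt R,\wt R_E$ be the lifts based at $\ti v$. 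By Fact~\ref{FactSingularRay}~\pref{ItemEGPrincipalRay}, $\wt R_E$ has ideal endpoint $\ti\xi \in \Fix_+(\wh\Phi)$; since two rays in a tree from a common vertex with a common initial edge coincide iff they share the ideal endpoint, it suffices to identify the endpoint of $\wt R$. The tile $\ti f^{k_i}_\#(\wt E)$ is a prefix of $\wt R_E$ whose terminal vertex tends to $\ti\xi$; applying the same rotationless-direction argument now to the finitely many preimages of $(\ti v,d(\wt E))$ under $\ti f^{k_i}$ at $\ti v$ shows that for $k_i$ large enough the lifted tile $\ti\tau_i = \ti f^{k_i}_\#(\wt{E_i'})$ must in fact be $\ti f^{k_i}_\#(\wt E)$, i.e., the $k_i$-th nested prefix of $\wt R_E$. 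Taking the union yields $\wt R = \wt R_E$, hence $R = R_E$ is principal. The main obstacle is precisely this last uniqueness step for the lifted tiles; the fixedness of $d(\wt E)$ under $D\ti f$ and the eventual constancy of $D\ti f^k$ on the finite direction set at $\ti v$ are the key inputs that collapse all possibilities $\wt{E_i'}$ to $\wt E$ for large~$k_i$.
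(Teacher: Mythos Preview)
Your ``only if'' direction is fine and matches the paper. Your setup for the ``if'' direction is also correct and parallels the paper's argument: after truncation each $\tau_i$ is an initial segment of $R$ with initial edge $E$, the exponents $k_i\to\infty$, and the rotationless property forces the initial direction $d(E)$ to be $Df$-fixed, so $E$ is a principal direction generating a principal ray $R_E$.

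The gap is in your final step, where you attempt to show $\wt R=\wt R_E$ by arguing that for large $k_i$ the lifted tile $\ti\tau_i=\ti f^{k_i}_\#(\wt{E_i'})$ equals $\ti f^{k_i}_\#(\wt E)$. This is not justified and is generally false. The edge $E_i'$ satisfies only $Df^{k_i}(d(E_i'))=d(E)$; there is no reason for $E_i'$ to equal $E$, and in the universal cover the lift $\wt{E_i'}$ is based at some vertex $\ti v_i'$ with $\ti f^{k_i}(\ti v_i')=\ti v$, which can be arbitrarily far from~$\ti v$. Your appeal to ``finitely many preimages of $(\ti v,d(\wt E))$ under $\ti f^{k_i}$ at $\ti v$'' conflates the finite vertex set downstairs with the infinite vertex set upstairs; there is no finiteness to exploit here.

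The fix, which is the paper's approach, avoids identifying $E_i'$ altogether. Since $H_r$ has finitely many edges, pass to a subsequence on which $E_i'=E'$ is constant. Choose $m_0$ so that every $m_0$-tile has fixed initial direction, and set $\tau_0=f^{m_0}_\#(E')$; then $\tau_i=f^{k_i-m_0}_\#(\tau_0)$ and the fixed initial direction of $\tau_0$ must be $E$. Since $\tau_0$ is $r$-legal it splits as $\tau_0=E\cdot\beta$, so $f^{k_i-m_0}_\#(E)$ is an initial segment of $\tau_i\subset R$. As $k_i-m_0\to\infty$ this exhibits $R_E=\bigcup_k f^k_\#(E)$ as a subray of $R$ with the same initial vertex and initial edge; two such rays in a tree coincide, giving $R=R_E$.
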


\begin{proof}
The ``only if'' direction follows from the definition of a principal ray. Suppose $R$ is the union of oriented tiles $\tau_1 \subset \tau_2 \subset \tau_3 \subset \cdots \subset \ell$ where $\tau_i$ is an $m_i$ tile, then $m_i \to +\infinity$ as $i \to +\infinity$. Eventually $\tau_i$ contains the initial point of $R$, and so after throwing away finitely many terms, each $\tau_i$ is an initial segment of $R$. There is an $m_0$ so that the initial direction of each $m_0$-tile is fixed. After passing to a subsequence, there is an $m_0$-tile $\tau_0$ such that $\tau_i = f^{m_i-m_0}_\#(\tau_0)$ for each $i \ge 1$, and so $R$ is the principal ray associated to the initial direction of $\tau_0$. 
\end{proof}

\subparagraph{Generic leaves.} The following fact characterizes generic leaves in terms of their asymptotic behavior.

\begin{fact}\label{FactTwoEndsGeneric}
Continuing with the notation fixed above, for any leaf $\ell$ of $\Lambda^+$, the following are equivalent:
\begin{enumerate}
\item \label{ItemGenericLeaf}
$\ell$ is a generic leaf of $\Lambda^+$.
\item \label{ItemBirecurrentLeaf}
$\ell$ is birecurrent and has height $r$.
\item \label{ItemDoubleHeightLeaf}
Both ends of $\ell$ have height $r$.
\end{enumerate}
\end{fact}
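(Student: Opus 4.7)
The proof goes by showing $(1) \Rightarrow (2) \Rightarrow (3) \Rightarrow (1)$. Implication $(1) \Rightarrow (2)$ is immediate: a generic leaf is birecurrent by Definition~\ref{DefAttractingLaminations}, and has height $r$ by the bijection between $\L(\phi)$ and the \eg\ strata in Fact~\ref{FactLamsAndStrata}. For $(2) \Rightarrow (3)$, since $\ell$ has height $r$ it contains some finite subpath $\alpha$ that crosses an edge of $H_r$; birecurrence of $\ell$ then forces $\alpha$ to appear in every subray, so both subrays of $\ell$ cross $H_r$ infinitely often and hence have height at least $r$, and combined with $\ell$ having total height $r$, both ends have height exactly $r$.

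For $(3) \Rightarrow (1)$, the main content, assume both ends of $\ell$ have height $r$. The plan is to prove the following structural claim: for every finite subpath $\gamma$ of every generic leaf $\beta$ of $\Lambda^+$, and every subray $R$ of $\ell$, the path $\gamma$ is a subpath of $R$. This claim yields the three things needed for genericity. First, applied with $R = \ell$, it shows every generic leaf is in the weak closure of $\ell$, so that weak closure equals $\Lambda^+$ (the reverse inclusion holds because $\ell \in \Lambda^+$ and $\Lambda^+$ is closed). Second, any finite subpath $\alpha$ of $\ell$ is itself a finite subpath of some generic leaf $\beta$ (since $\ell$ lies in the weak closure of every generic leaf by Fact~\ref{FactLamsAndStrata}'s setup), so the claim applied with $\alpha$ gives birecurrence of $\ell$. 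Third, long subpaths of $\ell$ furnish weak open sets $U$ whose $\phi^p$-iterates form an attracting neighborhood basis, where $p$ is the period of $\Lambda^+$ (here one uses that $\phi(\Lambda^+) = \Lambda^+$ by \eg-aperiodicity). Combined with the observation that $\ell$ cannot be the axis of a generator of a rank-one free factor, since such an axis is periodic but $\ell$ contains $H_r$-tiles whose length grows exponentially by Fact~\ref{FactTiles}~\pref{ItemTileExpFac}, this establishes that $\ell$ is a generic leaf of $\Lambda^+$.

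To prove the structural claim, choose a tile $\tau_0 = f^{k_0}_\#(E_0)$ containing $\gamma$ as provided by Fact~\ref{FactTiles}~\pref{ItemTileExhausted}, and let $p$ be the integer from Fact~\ref{FactTiles}~\pref{ItemTilePF}. Given a subray $R$ of $\ell$, use that $R$ has height $r$ to choose a long finite subpath $\sigma \subset R$ starting and ending with $H_r$-edges and containing many internal $H_r$-edges. Since $\sigma$ is a finite subpath of a leaf of $\Lambda^+$, it appears in some generic leaf $\beta'$, and by Fact~\ref{FactTiles}~\pref{ItemTileExhausted} applied in $\beta'$ it is contained in some tile $\tau' = f^{k'}_\#(E')$; by insisting that $\sigma$ contain more $H_r$-edges than any tile of height less than $k_0 + p$ (possible using Fact~\ref{FactTiles}~\pref{ItemTileExpFac}, which bounds the $H_r$-edge count of a tile in terms of its height), one forces $k' \ge k_0 + p$ so that by Fact~\ref{FactTiles}~\pref{ItemTilePF} the tile $\tau'$ contains a copy of $\tau_0$. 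The main obstacle is promoting this to $\tau_0 \subset \sigma$: here I plan to exploit Fact~\ref{FactTiles}~\pref{ItemUndergraphPieces}, which forces the maximal $G_{r-1}$-subpaths of $\tau'$ to take the rigid form $f^m_\#(\mu_j)$ and similarly constrains the $G_{r-1}$-subpaths of $\sigma$ (inherited from $\beta'$), so that as $\sigma$ is enlarged the associated sub-tiles of any fixed height $k_0$ within $\tau'$ are eventually engulfed by $\sigma$, producing a copy of $\tau_0$, and hence of $\gamma$, inside $\sigma \subset R$.
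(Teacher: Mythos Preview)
Your cycle $(1)\Rightarrow(2)\Rightarrow(3)$ is fine, but the argument for $(3)\Rightarrow(1)$ has a real gap and also does more work than necessary.

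\textbf{The gap in the structural claim.} In your last paragraph you use tile \emph{exhaustion} (Fact~\ref{FactTiles}~\pref{ItemTileExhausted}) to produce a tile $\tau'$ \emph{containing} $\sigma$, and then must argue that a copy of $\tau_0$ lies \emph{inside} $\sigma$. Your appeal to Fact~\ref{FactTiles}~\pref{ItemUndergraphPieces} does not close this: knowing that the $G_{r-1}$-pieces of $\tau'$ are of the form $f^m_\#(\mu_j)$ with adjacent $m$-tiles gives you no control over where $\sigma$ sits inside $\tau'$, nor any mechanism forcing a full $k_0$-tile to be engulfed by $\sigma$. The correct tool is the tile \emph{decomposition} (Fact~\ref{FactTiles}~\pref{ItemTileDecomp}): a generic leaf $\beta'$ decomposes as an alternating concatenation of $k$-tiles and $G_{r-1}$-paths, so any subpath $\sigma\subset\beta'$ with more $H_r$-edges than two $k$-tiles can carry must fully contain some $k$-tile, which by~\pref{ItemTilePF} (for $k\ge k_0+p$) contains every $k_0$-tile, in particular $\tau_0\supset\gamma$. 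This is exactly how the paper proceeds.

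\textbf{The route through (2).} The paper does not prove $(3)\Rightarrow(1)$ directly; it proves $(3)\Rightarrow(2)$ by the tile argument just described (establishing birecurrence only), and then cites \BookOne\ Lemma~3.1.15 for $(2)\Rightarrow(1)$. By going directly for genericity you take on the burden of verifying the attracting-neighborhood condition and the ``not a rank-one axis'' condition from Definition~\ref{DefAttractingLaminations}. Your one-sentence treatment of the attracting neighborhood is not a proof. It \emph{can} be salvaged: once you know $\ell$ and a generic leaf $\ell'$ have the same weak closure, they have exactly the same finite subpaths, hence lie in exactly the same basic open sets $V(\alpha)$, so any attracting neighborhood of $\ell'$ is already one for $\ell$. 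But you did not say this. Likewise, your axis argument (``$\ell$ contains tiles of exponentially growing length, a periodic line cannot'') is wrong as stated: a periodic line contains subpaths of every length. The correct observation is that the closure of such an axis is a single point in $\B$, whereas you have shown the closure of $\ell$ is all of $\Lambda^+$. Given these fixes your route works, but it is longer than the paper's and hinges on the same tile-decomposition step you got wrong.
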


\begin{proof} The implications \pref{ItemGenericLeaf} $\implies$ \pref{ItemBirecurrentLeaf} $\implies$ \pref{ItemDoubleHeightLeaf} are obvious, and \pref{ItemBirecurrentLeaf} $\implies$ \pref{ItemGenericLeaf} follows from \BookOne\ Lemma~3.1.15. 

We prove \pref{ItemDoubleHeightLeaf} $\implies$ \pref{ItemBirecurrentLeaf}. Consider a generic leaf $\beta$ of $\Lambda^+$. For each $k \ge 0$, applying Fact~\ref{FactTiles}~\pref{ItemTileDecomp} to $\beta$ it follows that there exists $N_k > 0$ so that any subpath of $\beta$ that contains at least $N_k$ edges of $H_s$ contains a $k$-tile. Since every subpath of $\ell$ occurs as a subpath of $\beta$, every subpath of $\ell$ that contains at least $N_k$ edges of $H_s$ contains a $k$-tile. Since both ends of $\ell$ have height $s$ it follows that every initial or terminal ray of $\ell$ contains a $k$-tile for all~$k$. By Fact~\ref{FactTiles}~\pref{ItemTileExhausted}, each subpath of $\beta$, and hence each subpath $\sigma$ of $\ell$, is contained in a subpath which is a tile, and hence $\sigma$ occurs as a subpath of every initial or terminal ray  of $\ell$. This proves that $\ell$ is birecurrent.
\end{proof}

\subparagraph{Construction of an attracting neighborhood basis.}
Recall the remark following Definition~\ref{DefAttractingLaminations} which, for each attracting lamination of $\phi$ that is fixed by $\phi$, demonstrates existence of an attracting neighborhood for the action of~$\phi$. The following fact explains, under an additional constraint regarding existence of a certain fixed line, how to find a simple description of attracting neighborhoods expressed in terms of relative train track maps. 

\begin{fact}\label{FactAttrNhdBasis}
For any $\phi \in \Out(F_n)$, any $\Lambda \in \L(\phi)$ which is fixed by $\phi$, and any relative train track representative $f \from G \to G$ with \eg-aperiodic stratum $H_r$ corresponding to $\Lambda$, if there exists a generic leaf $\ell \in \Lambda$ which is fixed by $\phi$ with fixed orientation, then the realization of $\ell$ in $G$ has a nested sequence of finite subpaths of the form $\gamma_0 \subset \gamma_1 \subset \gamma_2 \subset $ whose union is~$\ell$, such that each $\gamma_i$ begins and ends with an edge of $H_r$ and such that the sequence of sets $W(\gamma_0) \supset W(\gamma_1) \supset W(\gamma_2) \supset \cdots$ is a nested attracting neighborhood basis for $\Lambda$. 
\end{fact}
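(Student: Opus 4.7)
\emph{Strategy.} The plan is to lift $\ell$ into the universal cover to expose its principal-ray structure, then construct the $\gamma_i$ as iterated doublings of an edge at a fixed vertex. Lift $\ell$ to a line $\tilde\ell \subset \wt G$ and choose the lift $\ti f \from \wt G \to \wt G$ of $f$ so that $\wh{\ti f}$ fixes both endpoints $\ti\xi^\pm$ of $\tilde\ell$; this is available because $\ell$ is $\phi$-fixed with preserved orientation. Then $\ti f_\#(\tilde\ell) = \tilde\ell$ as lines in $\wt G$, and Lemma~\ref{LemmaDoubleSharpFacts}~\pref{ItemSharpNhd}, applied with extensions $\delta \subset \tilde\ell$, shows that $\ti f_{\#\#}$ carries every finite subpath of $\tilde\ell$ into $\tilde\ell$.

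\emph{Principal-ray decomposition.} Because $\ell$ is a generic leaf of the $\phi$-fixed attracting lamination $\Lambda$, both ends of $\ell$ have height $r$ (Fact~\ref{FactTwoEndsGeneric}), and both endpoints $\ti\xi^\pm$ lie in $\Fix_+(\wh{\ti f})$, since $\ell$ is weakly attracted to itself under iteration of $\phi$. By Fact~\ref{FactSingularRay}~\pref{ItemAttractorToDirection} there exist fixed vertices and non-fixed-edge fixed directions generating principal rays to $\ti\xi^\pm$, and Lemma~\ref{LemmaPrincipalRayUniqueness} (Nielsen-exchange) lets me align these rays so they meet at a common fixed vertex $\ti v \in \tilde\ell$. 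Thus $\tilde\ell = \wt R^- \cup \wt R^+$ where $\wt R^\pm$ are the principal rays from $\ti v$ terminating at $\ti\xi^\pm$. Let $\wt E^\pm$ be the initial edges of $\wt R^\pm$ at $\ti v$.

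\emph{Inductive construction.} For each $i \ge 0$ set $\tilde\gamma_i = \overline{\ti f^i_\#(\wt E^-)} \cdot \ti f^i_\#(\wt E^+)$, the concatenation at $\ti v$. The turn at $\ti v$ between the distinct fixed directions $\wt E^\pm$ is legal, so the concatenation is a splitting, giving $\ti f_\#(\tilde\gamma_i) = \tilde\gamma_{i+1}$. By Fact~\ref{FactSingularRay}~\pref{ItemComplSplitRay}, the initial segments $\ti f^i_\#(\wt E^\pm)$ of $\wt R^\pm$ are nested with union $\wt R^\pm$, so $\tilde\gamma_i \subset \tilde\gamma_{i+1}$ and $\bigcup_i \tilde\gamma_i = \tilde\ell$. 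Discarding finitely many initial terms ensures each remaining $\tilde\gamma_i$ begins and ends with an $H_r$-edge, available because both ends of $\ell$ have height $r$, forcing $\ti f^i_\#(\wt E^\pm)$ eventually to cross $H_r$-edges at its far end. Project to $G$ to obtain $\gamma_i \subset \ell$.

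\emph{Attracting basis property and main obstacle.} Lemma~\ref{LemmaDoubleSharpFacts}~\pref{ItemOneToTwoSharps} gives $f_{\#\#}(\gamma_i) = \gamma_{i+1}$ with initial and terminal segments of length at most $\BCC(f)$ removed; by Fact~\ref{FactTiles}~\pref{ItemTileExpFac}, the number of $H_r$-edges in $\ti f^i_\#(\wt E^\pm)$ grows like $\lambda^i$ with $\lambda > 1$, so the one-step extension at each end between $\gamma_i$ and $\gamma_{i+1}$ eventually exceeds $\BCC(f)$, giving $\gamma_i \subset f_{\#\#}(\gamma_i)$ after discarding further initial terms. For $\mu \in W(\gamma_i)$, Lemma~\ref{LemmaDoubleSharpFacts}~\pref{ItemSharpNhd} yields $\phi(\mu) = f_\#(\mu) \supset f_{\#\#}(\gamma_i) \supset \gamma_i$, so $\phi(W(\gamma_i)) \subset W(\gamma_i)$. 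The nesting $W(\gamma_0) \supset W(\gamma_1) \supset \cdots$ is immediate from $\gamma_i \subset \gamma_{i+1}$, and $\{W(\gamma_i)\}$ is a weak neighborhood basis of $\ell$ since the $\gamma_i$ exhaust $\ell$. The main obstacle is the principal-ray decomposition in the second paragraph: verifying that both endpoints $\ti\xi^\pm$ are attractors of $\wh{\ti f}$ (ruling out boundary-of-fixed-subgroup points) and aligning the two rays so that they meet at a common fixed vertex $\ti v \in \tilde\ell$ with fixed directions along $\tilde\ell$ on both sides.
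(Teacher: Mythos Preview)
Your approach is genuinely different from the paper's, and it has real gaps.

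\textbf{The hypothesis is weaker than you are using.} The statement is for an arbitrary \emph{relative train track} representative $f$, not a \ct. Fact~\ref{FactSingularRay}, the notion of principal lift, principal direction, principal ray, and Lemma~\ref{LemmaPrincipalRayUniqueness} are all stated (see Section~\ref{SectionLiftFacts}) under the standing hypothesis that $f$ is a \ct\ representing a rotationless $\phi$. None of this machinery is available for a general relative train track map, so your principal-ray decomposition is not licensed by the hypotheses. The paper's proof, by contrast, uses only the $k$-tile decomposition of~$\ell$ (Fact~\ref{FactTiles}) and Perron--Frobenius growth, which hold for any relative train track map with an aperiodic \eg\ stratum.

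\textbf{The alignment step is not justified even in the \ct\ case.} Lemma~\ref{LemmaPrincipalRayUniqueness} concerns two height-$r$ rays representing the \emph{same} point of $\Fix_+(\wh\Phi)$; it says nothing about rays with different ideal endpoints, so it cannot be used to arrange that the rays to $\ti\xi^+$ and $\ti\xi^-$ share a common fixed base vertex on~$\tilde\ell$. In fact Lemma~\ref{LemmaEGPrincipalRays} shows that a fixed generic leaf generally decomposes as $\overline{R'}\cdot\gamma\cdot R$ with a possibly nontrivial Nielsen path~$\gamma$ between the two rays; your decomposition $\tilde\ell = \wt R^- \cup \wt R^+$ would force $\gamma$ to be trivial, which you have not shown. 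You yourself flag this as ``the main obstacle,'' and indeed it is one.

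\textbf{The attracting-neighborhood conclusion is incomplete.} You prove $\phi(W(\gamma_i)) \subset W(\gamma_i)$ and that $\{W(\gamma_i)\}$ is a neighborhood basis of~$\ell$, but the definition of an attracting neighborhood (Definition~\ref{DefAttractingLaminations}) requires that $\{\phi^k(W(\gamma_i))\}_{k\ge 0}$ itself be a neighborhood basis of~$\ell$. From $\gamma_i \subset f_{\#\#}(\gamma_i)$ alone you do not get that iterating $f_{\#\#}$ on $\gamma_i$ exhausts~$\ell$. The paper handles this by arranging that $f_\#(\gamma_i)$ is itself a subpath of~$\ell$ (because the $k$-tile splitting is pushed forward to the $(k+1)$-tile splitting of the same line) and that $f_\#(\gamma_i) \supset \gamma_{i+1}$; then induction gives $f^j_\#(\gamma_i) \supset \gamma_{i+j}$ and the attracting property follows directly.
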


\newcommand\pf{\text{PF}}
\renewcommand\O{\mathcal{O}}

\begin{proof} Let $\lambda>1$ be the Perron-Frobenius eigenvalue of the transition matrix of $H_r$ and let $\ell_r(\tau)$ denote the number of $H_r$ edges of a path $\tau$ in $G$. Applying Fact~\ref{FactAttractingLeaves}~\pref{ItemTileExpFac} we may pick $k$ so that under the action of $f$ the number of $H_r$ edges of a $k$ tile is multiplied by at least $\lambda'=(\lambda+1)/2>1$, that is to say,
$$\ell_r(f^{k+1}_\#(E)) \bigm/ \ell_r(f^k_\#(E)) > \lambda' \qquad\qquad (*)
$$

We may decompose $\ell$ uniquely as an alternating concatenation of $0$-tiles (edges of $H_r$) and subpaths each either maximal in $G_{r-1}$ or trivial, yielding the \emph{$0$-tile decomposition of $\ell$} which we denote 
$$\ell = \ldots E_{i-1} \, \mu_i \, E_i \, \mu_{i+1} \, E_{i+1} \ldots
$$
Pushing forward by iteration of $f_\#$ and using that $f_\#(\ell)=\ell$, by induction we obtain for each integer $i \ge 0$ the $i$-tile decomposition of $\ell$, a bi-infinite concatenation of $i$-tiles and subpaths each either maximal in $G_{r-1}$ or trivial. In particular, denoting $\tau_i = f^k_\#(E_i)$ and $\nu_i = f^k_\#(\mu_i)$ we obtain the $k$-tile and the $(k+1)$-tile decompositions
\begin{align*}
\ell &= \ldots \tau_{i-1} \, \nu_i \, \tau_i \, \nu_{i+1} \, \tau_{i+1} \ldots \\
     &=  \ldots f_\#(\tau_{i-1}) \, f_\#(\nu_i) \, f_\#(\tau_i) \, f_\#(\nu_{i+1}) \, f_\#(\tau_{i+1}) \ldots
\intertext{Note that we have decompositions}
\tau_i &= E_{a(i)} \, \mu_{a(i)+1} \, \ldots \, \mu_{a(i+1)-1} E_{a(i+1)-1} \\
f_\#(\tau_i) &= E_{b(i)} \, \mu_{b(i)+1} \, \ldots \, \mu_{b(i+1)-1} E_{b(i+1)-1}
\end{align*}
for strictly increasing sequences $a(i)$ and $b(i)$. Note also that the first edge of $f_\#(E_{a(i)})$ is $E_{b(i)}$ and the last edge of $f_\#(E_{a(i+1)-1})$ is $E_{b(i+1)-1}$.

Applying $(*)$ we have $$\frac{b(i+1) - b(i)}{a(i+1)-a(i)} > \lambda' > 1
$$
from which it follows that the integer sequence $b(i)-a(i)$ is strictly increasing for $i \in \Z$ and so is negative for $i$ near $-\infinity$ and positive near $+\infinity$. We may therefore choose $I \ge 1$ so that for all $i \ge I$ we have $b_{-i} < a_{-i}$ and $a_i < b_i$.
It follows that the sequence of paths $\gamma_i = E_{a(-i)} \, \mu_{a(-i)+1} \, \ldots \mu_{a(i)-1} E_{a(i)-1}$,  $i \ge I$ is nested, its union is all of $\ell$, each begins and ends with an edge of $H_r$, and $f_\#(\gamma_i)$ contains $\gamma_{i+1}$ (as a subpath). It follows in turn that $W(\gamma_i)$ is a neighborhood basis of $\ell$, and by induction that $f^j_\#(\gamma_i)$ contains $\gamma_{i+j}$ for all $j \ge 0$ and so $W(\gamma_i)$ is an attracting neighborhood.
\end{proof}

\subsubsection{\eg\ principal rays and $\Fix_N$} 
\label{SectionEGPrincipalRays}
For any outer automorphism $\phi$, any expanding lamination, any generic leaf $\ell$ which $\phi$ fixes preserving orientation, and any $\Phi \in \Aut(F_n)$ representing $\phi$ and fixing a lift $\ti\ell$ of $\ell$, both endpoints of $\ti\ell$ are contained in $\Fix_+(\wh\Phi)$. This fact, reminiscent of Nielsen theory, is stated in \recognition\ Remark~3.6. We need a version of this fact which starts not with a generic leaf but only with an \eg\ principal ray $R$. We will need this fact in the proof of Lemma~\refRK{LemmaEigenrayDefs}.

For the statement we need a definition. Consider a \ct\ $f \from G \to G$ and a linear \neg\ edge $E$ with $f(E) = Ew^k$ as in (Linear Edges) in the Definition of a \ct\ (\recognition\ Definition~4.7, or Definition~\ref{DefCT}). The ray generated by $E$ has the form $Ew^{\infinity} = Ewww\cdots$, called the linear ray generated by $E$. The ray $Ew^{-\infinity} = E w^\inv w^\inv w^\inv \cdots$ will be called the \emph{exceptional ray} of $E$. Note that the open ray obtained from an exceptional ray by removing its initial point is a nested increasing union of pretrivial paths.

\begin{lemma} \label{LemmaEGPrincipalRays}
Consider a rotationless $\phi \in \Out(F_n)$, a \ct\ representative $f \from G \to G$, and an \eg-stratum $H_r$ with associated $\Lambda \in \L(\phi)$. For any principal direction $E \subset H_r$ generating a principal ray $R$ in $G$ with initial vertex $v$, there exists a leaf of $\Lambda$ of the form $\ell = \overline R' \gamma R$ such that $f_\#(\ell)=\ell$ preserving orientation, $\gamma$ is a (possibly trivial) Nielsen path in $G_{r-1}$, and one of the following holds:
\begin{enumerate}
\item\label{ItemPrincipleRayGeneric}
$R'$ is the ray generated by some nonfixed edge $E' \subset G_r$ with fixed initial direction.
\item\label{ItemExceptionalRay}
$R'$ is the exceptional ray of some linear edge $E' \subset G_{r-1}$.
\item\label{ItemFixedRay}
$\gamma$ is trivial and $R'$ splits into Nielsen paths and fixed edges contained in $G_{r-1}$.
\end{enumerate}
Furthermore, for any $\Phi \in P(\phi)$ with associated principle lift $\ti f \from \wt G \to \wt G$ fixing a lift $\ti v$ of $v$, letting $\wt R$ be the lift of $R$ with initial vertex $\ti v$, letting $\xi \in \bdy F_n$ be the limit point of $\wt R$, letting $\ti\ell$ be the lift of $\ell$ containing $\wt R$, and letting $\xi'$ be the opposite end of $\ti\ell$, we have $\ti f_\#(\ti\ell)=\ti\ell$ preserving orientation, and furthermore:
\begin{enumeratecontinue}
\item\label{ItemOppEndInFixN} $\xi \in \Fix_+(\wh\Phi)$; $\xi' \in \Fix_N(\wh\Phi)$; and $\xi' \in \Fix_+(\wh\Phi)$ if and only if $E'$ is a principal direction.
\end{enumeratecontinue}
\end{lemma}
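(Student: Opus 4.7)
My plan is to lift $R$ to a principal ray in the universal cover and extend it backwards to an $\ti f$-fixed line projecting to the desired leaf $\ell$. First, I fix $\Phi \in P(\phi)$ with associated principal lift $\ti f \from \wt G \to \wt G$ fixing a lift $\ti v$ of $v$ (via Fact~\ref{FactPrincipalLift}), and let $\wt R$ be the lift of $R$ starting at $\ti v$, which converges to $\xi \in \Fix_+(\wh\Phi)$ by Fact~\ref{FactSingularRay}~\pref{ItemEGPrincipalRay}. The goal is to construct an $\ti f$-fixed ray $\wt R'$ starting at some $\ti f$-fixed vertex $\ti v'$ connected to $\ti v$ by a lift $\ti\gamma$ of a (possibly trivial) indivisible Nielsen path in $G_{r-1}$, so that $\ti\ell := \overline{\wt R'} \cdot \ti\gamma \cdot \wt R$ is a genuine bi-infinite line in $\wt G$ and projects to a leaf of~$\Lambda$.

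The construction and classification of $\wt R'$ will follow from a case analysis of the fixed-direction structure at $\ti v$ (or at $\ti v'$ once we commit to a nontrivial $\gamma$), driven by the CT axioms (Rotationless), (Periodic Edges), (Linear Edges), (NEG Nielsen Paths), and (EG Nielsen Paths). If a fixed direction at $\ti v$ distinct from the direction of $\wt E$ comes from a nonfixed edge $E' \subset G_r$, Fact~\ref{FactSingularRay}~\pref{ItemDirectionToAttractor} produces the ray $R'$ generated by $E'$ and places us in case~\pref{ItemPrincipleRayGeneric} with $\gamma$ trivial. If we prefer (or are forced) to use the exceptional ray of a linear edge $E' \subset G_{r-1}$ instead of its forward linear ray, we are in case~\pref{ItemExceptionalRay}, with the defining Nielsen path $w$ furnished by (Linear Edges). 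If the only fixed directions at $\ti v$ in the appropriate strata are generated by fixed edges, then (Periodic Edges) together with the structure of indivisible Nielsen paths from (NEG Nielsen Paths) and (EG Nielsen Paths) allows $\wt R'$ to unfold as a concatenation of fixed edges and indivisible Nielsen paths entirely inside $G_{r-1}$, giving case~\pref{ItemFixedRay}. When no suitable direction occurs at $\ti v$ itself, the definition of principal vertex (Definition~\ref{DefPrincipalVertices}) forces $\ti v$ to be Nielsen-equivalent to another fixed vertex $\ti v'$; I set $\ti\gamma$ to be a lift of the connecting indivisible Nielsen path and rerun the direction analysis at $\ti v'$, which then succeeds in one of cases~\pref{ItemPrincipleRayGeneric} or~\pref{ItemExceptionalRay}.

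It remains to verify that $\ell$ is a leaf of $\Lambda$ and to check the endpoint classification in item~\pref{ItemOppEndInFixN}. For the leaf property, every finite subpath of $\ell$ contained in $\wt R$ lies in some tile of $H_r$ by Corollary~\ref{CorTilesExhaustRay}, and tiles appear as subpaths of every generic leaf of $\Lambda$ by Fact~\ref{FactLeafAsLimit} together with Fact~\ref{FactTiles}; subpaths meeting $\ti\gamma$ or $\wt R'$ must be matched to iterates of the undergraph pieces of generic-leaf tiles via Fact~\ref{FactTiles}~\pref{ItemUndergraphPieces}, showing that $\ell$ lies in the weak closure of generic leaves and hence in $\Lambda$. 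I expect this matching, particularly in cases~\pref{ItemExceptionalRay} and~\pref{ItemFixedRay}, to be the main technical obstacle, since the backward piece lives in $G_{r-1}$ and must be recognized as an iterated undergraph pattern. For the endpoint classification, $\xi \in \Fix_+(\wh\Phi)$ is already given by Fact~\ref{FactSingularRay}~\pref{ItemEGPrincipalRay}; that $\xi' \in \Fix_+(\wh\Phi)$ exactly when $E'$ is a principal direction follows by applying Fact~\ref{FactSingularRay}~\pref{ItemEGPrincipalRay} again when $\wt R'$ is a principal ray, and by Fact~\ref{FactSingularRay}~\pref{ItemRayEndsAtAttr} (placing the endpoint in $\bdy\Fix(\Phi) \subset \Fix_N(\wh\Phi)$) in the linear, exceptional, and fixed-edge subcases.
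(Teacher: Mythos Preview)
Your approach has a genuine gap: extending $\wt R$ backward by an arbitrarily chosen fixed-direction ray at $\ti v$ (or at a Nielsen-equivalent $\ti v'$) does not guarantee that the resulting line is a leaf of $\Lambda$. The turn $\{\overline E', E\}$ you create at $v$ may simply not be taken by any generic leaf of $\Lambda$, and then no amount of tile-matching will place $\ell$ in the weak closure of generic leaves. Fact~\ref{FactTiles}~\pref{ItemUndergraphPieces} tells you which $G_{r-1}$-subpaths occur in tiles, but it does not tell you which turns at $v$ are taken by leaves; your proposed verification step is exactly the step that fails. Concretely, a principal vertex $v$ can have several fixed directions in $G_r$ (even several in $H_r$, or some in lower irreducible strata), and nothing in your case analysis singles out the one that actually appears adjacent to $E$ inside a generic leaf.

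The paper avoids this by working in the opposite direction: it starts from a \emph{split leaf segment} $\overline{E''}\cdot\alpha\cdot E$ that is already a subpath of a generic leaf of $\Lambda$, and every subsequent manipulation---applying $f^k_\#$, passing to split leaf subsegments, taking weak limits---preserves membership in $\Lambda$. The trichotomy \pref{ItemPrincipleRayGeneric}--\pref{ItemFixedRay} then arises from a case analysis on the last non-Nielsen term $\beta_1$ in the complete splitting of $\alpha$ (minimized over height), not from a survey of fixed directions at $v$. In particular, cases~\pref{ItemExceptionalRay} and~\pref{ItemFixedRay} do not come from choosing an exceptional ray or a fixed-edge ray at $v$; they arise as weak limits of iterated split leaf segments when $\beta_1$ is an exceptional path or a linear edge. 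Once $\ell$ is built this way, item~\pref{ItemOppEndInFixN} follows from Fact~\ref{FactSingularRay} exactly as you outline.
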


\begin{proof} Once~\pref{ItemPrincipleRayGeneric}--\pref{ItemFixedRay} are proved, the rest is proved by writing $\ti\ell = \wt {\overline R}' \cdot \ti \gamma \cdot \wt R$ and applying Fact~\ref{FactSingularRay}.

We turn now to the construction of the leaf $\ell$. Define a \emph{split leaf segment of $\Lambda$ in $G$} to be a completely split path $\delta$ in $G_r$ such that some generic leaf $\ell$ of $\Lambda$ splits as $\ell =  \bar\rho_- \cdot \delta \cdot \rho_+$.  We use without comment that the $f_\#$ image of a split leaf segment is a split leaf segment, and that every concatenation of consecutive terms of the complete splitting of a split leaf segment, which we shall call ``a split leaf subsegment'', is a split leaf segment. Since each generic leaf of $\Lambda$ crosses every edge of $H_r$ infinitely often, there exists a split leaf segment of the form $\overline E'' \cdot \alpha \cdot E$ such that $E''$ is an edge of $H_r$ and $\alpha$ is a (possibly trivial) completely split path in $G_{r-1}$. 

Suppose first that $\alpha$ is trivial or a Nielsen path. After replacing $\overline E'' \cdot \alpha \cdot E$ by its image under some iterate $\ti f^k_\#$ so that $\ti f^k_\#(E'')$ has fixed initial direction, and then passing to a split leaf subsegment, we may assume that $E''$ has fixed initial direction and so is a principal direction. Letting $E'=E''$ and letting $R'$ be the ray generated by $E'$,  the line $\ell = \overline R' \alpha R$ satisfies~\pref{ItemPrincipleRayGeneric} with $E' \subset H_r$.

Suppose now that $\alpha$ is nontrivial and not a Nielsen path. Discarding $E''$ we narrow our focus to split leaf segments of the form $\alpha \cdot E$ where $\alpha$ is a completely split path in $G_{r-1}$ and is not a Nielsen path. There is a unique splitting $\alpha = \beta_2 \cdot \beta_1 \cdot \gamma$ such that $\gamma$ is the longest Nielsen path which is a terminal segment of the complete splitting of $\alpha$, and $\beta_1$ is the term of the complete splitting preceding $\gamma$; note that $\beta_2$ may be trivial but $\beta_1$ is nontrivial. Amongst all such paths $\alpha \cdot E = \beta_2 \cdot \beta_1 \cdot \gamma \cdot E$ under consideration, let $s<r$ be the minimum height of $\beta_1$. Discarding $\beta_2$ we further narrow our focus to split leaf segments of the form $\beta_1 \cdot \gamma \cdot E$ where $\gamma$ is a Nielsen path in $G_{r-1}$ and $\beta_1$ is a height~$s$ complete splitting term which is neither a fixed edge nor an indivisible Nielsen path. Note also that $\beta_1$ is not contained in a zero stratum, because by applying (Zero Strata) of the definition of a \ct\ (\recognition\ Definition~4.7, or Definition~\ref{DefCT}) it follows that no vertex in a zero stratum is fixed by $f$, but the terminal point of $\beta_1$, which equals the initial point of $\gamma$, is fixed. We now go through the remaining cases of $\beta_1$. 

If $\beta_1 = \overline E'$ where $E'$ is an \eg\ or \neg\ edge with fixed initial direction then, taking $R'$ to be the ray generated by $E'$, we are done with $\ell = \overline R' \cdot \gamma \cdot R$ satisfying~\pref{ItemPrincipleRayGeneric}. If $\beta_1 = E'$ where $E'$ is an \eg\ edge with nonfixed terminal direction then, after replacing $\beta_1 \cdot \gamma \cdot E$ by its image under some iterate $\ti f^k_\#$ so that $\ti f^k_\#(E')$ has fixed terminal direction, and then passing to a split leaf subsegment, we reduce to the previous case. If $\beta_1 = E_s$ is a nonfixed, nonlinear \neg\ edge with fixed initial direction then $f(\beta_1) = E_s \cdot u$ where $u$ is of height~$<s$ and is not a Nielsen path, and so after one replacing $\beta_1 \cdot \gamma \cdot E$ with its image under $\ti f_\#$ and passing to a split leaf subsegment, we obtain an example with smaller~$s$, a contradiction. If $\beta_1 = E_s$ is a nonfixed, linear \neg\ edge with fixed initial direction and $f(E_s) = E_s \cdot u$ then $f^k_\#(\beta_1 \cdot \gamma \cdot E)$ contains a split leaf subsegment of the form $u^k \cdot \gamma \cdot f^k_\#(E)$ and passing to a weak limit one obtains $\ell = \overline R' R$ satisfying \pref{ItemFixedRay} with $R' = \bar\gamma u^{-\infinity}$. In the remaining case wheree $\beta_1 = E_i w^p \overline E_j$ is an exceptional path, with $f(E_i) = E_i w^{d_i}$, $f(E_j) = E_j w^{d_j}$, then $f^k_\#(\beta_1 \cdot \gamma \cdot E)$ contains a split leaf subsegment of the form $E_i w^{p + k(d_i - d_j)} \overline E_j \cdot \gamma \cdot f^k_\#(E)$, and passing to a weak limit one obtains $\ell = \overline R' \cdot \gamma \cdot R$ where $R'$ is one of the two rays $E_j w^{-\infinity}$ or $E_j w^{+\infinity}$ depending on whether $d_i-d_j$ is positive or negative; this is a linear ray if $d_j > 0$ and an exceptional ray otherwise.
\end{proof}

\subsubsection{Pushing forward attracting laminations.} One commonly used result about attracting laminations says that for each $\Phi \in \Aut(F_n)$ and each $\Phi$-invariant free factor of $A$, the inclusion $A \inject F_n$ induces an injection from the set of attracting laminations of the outer automorphism class of $\Phi \restrict A \in \Aut(A)$ to the set of attracting laminations for the outer automorphism class of $\Phi \in \Aut(F_n)$ itself. This is proved by an easy relative train track argument, using that every outer automorphism preserving a free factor is represented by a relative train track map having a filtration element representing that free factor. We need a generalization of this result which applies without the assumption that $A$ is a free factor; in the conclusion we must sacrifice injectivity of the induced map.

Given a finite rank subgroup $A \subgroup F_n$, the embedding $\bdy A \inject \bdy F_n$ induces an embedding $\wt\B(A) \inject \wt\B(F_n)$ which in turn induces a continuous map $\beta_A \from \B(A) \to \B(F_n)$, defined by saying that the $\beta_A$-image of an $A$-orbit in $\wt\B(A)$ is the unique $F_n$-orbit in $\wt\B(F_n)$ containing the given $A$-orbit.

The following lemma will be used in the proof of Proposition~\ref{PropGeomLams}.


\begin{lemma}\label{LemmaPushLam}
For each $\phi \in \Out(F_n)$, each representative $\Phi \in \Aut(F_n)$, and each finite rank $\Phi$-invariant subgroup $A \subgroup F_n$, letting $\phi \restrict A \in \Out(A)$ denote the outer automorphism class of $\Phi \restrict A$, for each dual lamination pair $\Lambda^\pm \in \L^\pm(\phi \restrict A)$ the following hold:
\begin{enumerate}
\item \label{ItemPushLamIsLam}
$\beta_A(\Lambda^+) \in \L(\phi)$ and $\beta_A(\Lambda^-) \in \L(\phi^\inv)$
\item \label{ItemPushLamSupp}
Letting $B \subgroup A$ represent the free factor support of $\Lambda^\pm$ in $A$, the free factor supports of $\beta_A(\Lambda^+)$ and of $\beta_A(\Lambda^-)$ in $F_n$ are each equal to the free factor support of $B$ in~$F_n$.
\item \label{ItemPushLamDual}
$\beta_A(\Lambda^+)$, $\beta_A(\Lambda^-)$ are a dual lamination pair in $\L^\pm(\phi)$.
\end{enumerate}
\end{lemma}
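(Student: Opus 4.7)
The plan is to replace $\phi$ by a positive power so that $\phi \restrict A$ is rotationless and fixes both $\Lambda^+$ and $\Lambda^-$; since this affects neither free factor supports nor duality, the general case follows at the end. I would fix a \ct\ $\bar f \from \bar G \to \bar G$ representing $\phi \restrict A$ and realizing the free factor $[B]$ of $A$ as a core subgraph $\bar G_B \subset \bar G$, with \eg\ stratum $H_r$ corresponding to $\Lambda^+$; by standard CT theory $H_r$ admits an $\bar f_\#$-fixed generic leaf $\ell$ with fixed orientation. Then fix a marked graph $G$ for $F_n$, a map $f \from G \to G$ representing $\phi$, and the lift $\ti f \from \wt G \to \wt G$ corresponding to $\Phi$. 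The key geometric observation is that the $A$-minimal subtree $T_A \subset \wt G$, identified $A$-equivariantly with $\wt{\bar G}$, is preserved by the straightened action $\ti f_\#$: since $\Phi(A) = A$ forces $\wh\Phi(\bdy A) = \bdy A$, the straightening of $\ti f$ applied to any line in $T_A$ again has endpoints in $\bdy A$ and so lies in $T_A$; under this identification $\ti f_\# \restrict T_A$ realizes a lift $\ti{\bar f}_\#$ of $\bar f_\#$. Let $\iota \from \bar G \to G$ denote the immersion induced by the projection through $\wt G \to G$; it realizes $A \inject F_n$ on $\pi_1$.

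For part~(1), I apply Fact~\ref{FactAttrNhdBasis} to produce a nested attracting neighborhood basis $\{V(\bar G, \gamma_i)\}$ of $\ell$ in $\B(A)$ with $\bar f_\#(\gamma_i) \supset \gamma_{i+1} \supset \gamma_i$. Choosing a $\ti{\bar f}_\#$-fixed lift $\ti\ell \subset T_A$ and nested lifts $\ti\gamma_i \subset \ti\ell$, let $\gamma'_i \subset G$ be the projection of $\ti\gamma_i$. I claim $\{V(G, \gamma'_i)\}$ is a nested attracting neighborhood basis of $\beta_A(\ell)$ for $\phi$; establishing the forward invariance $\phi(V(G, \gamma'_i)) \subset V(G, \gamma'_i)$ is the main obstacle. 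Given $\mu \in V(G, \gamma'_i)$, choose a lift $\ti\mu \subset \wt G$ passing through $\ti\gamma_i$; by the $\#\#$-Lemma~\ref{LemmaDoubleSharpFacts}, $\ti f_\#(\ti\mu)$ contains $\ti f_{\#\#}(\ti\gamma_i)$, which equals $\ti f_\#(\ti\gamma_i)$ trimmed by at most $\BCC(f)$ on each end. The downstairs relation $\bar f_\#(\gamma_i) \supset \gamma_{i+1} \supset \gamma_i$ lifts to $\ti f_\#(\ti\gamma_i) \supset \ti\gamma_{i+1} \supset \ti\gamma_i$ with a one-sided buffer that grows with $i$; by Perron--Frobenius growth of $H_r$ (Fact~\ref{FactTiles}~\pref{ItemTileExpFac}), this buffer exceeds $\BCC(f)$ for all sufficiently large $i$, so $\ti f_{\#\#}(\ti\gamma_i) \supset \ti\gamma_i$ and hence $\phi(\mu) \in V(G, \gamma'_i)$. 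The neighborhood basis property follows symmetrically: any finite subpath $\tau'$ of the realization of $\beta_A(\ell)$ in $G$ has the form $\iota(\tau)$ for a subpath $\tau$ of $\ell$ in $\bar G$, and the downstairs basis property $\bar f^k_\#(V(\bar G, \gamma_i)) \subset V(\bar G, \tau)$ lifts and projects by the same $\#\#$-argument to $\phi^k(V(G, \gamma'_i)) \subset V(G, \tau')$. The remaining attracting-lamination requirements are routine: closedness of $\beta_A(\Lambda^+)$ by Fact~\ref{FactLinesClosed}; density of $\beta_A(\ell)$ by continuity of $\beta_A$ combined with closedness; birecurrence of $\beta_A(\ell)$ from birecurrence of $\ell$ (immersions preserve subpath occurrences); and if $\beta_A(\ell)$ were the axis of a generator $g$ of a rank-one free factor of $F_n$ then $\Fix(\hat g) \subset \bdy A$ would force $g^m \in A$ for some $m \ge 1$ by Fact~\ref{FactBoundaries} and Fact~\ref{FactFiniteRankSubgroup}~\pref{ItemFiniteRankInNormalizer}, making $\ell$ the axis of $g^m$ in $A$ and contradicting that a generic leaf is not a circuit.

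For part~(2), the minimality of $[B]$ as $\F_\supp(\ell)_A$ forces the realization of $\ell$ in $\bar G$ to cross every edge of $\bar G_B$, since otherwise the subgraph of edges crossed by $\ell$ would define a proper sub-free-factor of $B$ still carrying $\ell$. Hence the realization of $\beta_A(\ell) = \iota(\ell)$ in $G$ crosses every edge of $\iota(\bar G_B)$. For any free factor system $\F$ of $F_n$ realized by a core subgraph $K \subset G$ we then have: $\F$ carries $\beta_A(\ell)$ if and only if the realization of $\beta_A(\ell)$ in $G$ lies in $K$ (Fact~\ref{FactLineRealizedCarried}) if and only if $\iota(\bar G_B) \subset K$ if and only if $B$ is carried by $\F$ in $F_n$ (since $\iota_*(\pi_1 \bar G_B) = B$ up to conjugacy). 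Taking meets gives $\F_\supp(\beta_A(\ell))_{F_n} = \F_\supp(B)_{F_n}$, and by Fact~\ref{FactLamsBasicProps}~\pref{ItemLamFFSConnected} this equals $\F_\supp(\beta_A(\Lambda^+))_{F_n}$; the identical argument for $\Lambda^-$ completes~(2). For part~(3), duality of $\Lambda^+, \Lambda^-$ in $\L^\pm(\phi \restrict A)$ means $\F_\supp(\Lambda^+)_A = \F_\supp(\Lambda^-)_A = \{[B]\}_A$, so by~(2) the free factor supports of $\beta_A(\Lambda^\pm)$ in $F_n$ both equal $\F_\supp(B)_{F_n}$, and Fact~\ref{FactLamsBasicProps}~\pref{ItemDualityDef} identifies them as a dual pair in $\L^\pm(\phi)$.
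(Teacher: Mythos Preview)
Your proof has a genuine gap at its foundation: the claimed identification of $T_A$ (the $A$-minimal subtree of $\wt G$) with $\wt{\bar G}$ (the universal cover of your \ct) is invalid. You chose $\bar G$ as a \ct\ representing $\phi \restrict A$, which is a constraint on the topological representative, not on the marked graph; there is no reason for $\bar G$ to equal the Stallings quotient $T_A/A$, and in general it does not. Consequently $\iota \from \bar G \to G$ is only a $\pi_1$-injective map, not an immersion, and the relation $\bar f_\#(\gamma_i) \supset \gamma_{i+1}$ in $\bar G$ does \emph{not} lift to $\ti f_\#(\ti\gamma_i) \supset \ti\gamma_{i+1}$ in $T_A$: tightening $\iota$-images introduces cancellation that you have not accounted for. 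Your buffer argument compares the $H_r$-edge growth (measured in $\bar G$) against $\BCC(f)$ (measured in $G$), but these live in different metrics, and the missing ingredient is exactly the quasi-isometric embedding constant of $\ti\iota \from \wt{\bar G} \to \wt G$.

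The paper's proof confronts this directly: it keeps the relative train track $h \from H \to H$ for $\phi \restrict A$ and the $F_n$-representative $g \from G \to G$ as separate objects connected only by a $\pi_1$-injective map $\mu \from H \to G$, and then carefully transfers the attracting-neighborhood property through $\mu$ using several layers of bounded cancellation (involving $\BCC(h)$, $\BCC(\mu)$, $\BCC(g\circ\mu)$, and a homotopy-track constant $T$), together with a quantitative claim that the buffer in $\wt G$ grows linearly because $\ti\mu$ is a quasi-isometric embedding. Your part~(2) argument has a second, independent problem: not every free factor system of $F_n$ is realized as a core subgraph of your fixed $G$, so the equivalence you state only tests $\F_\supp(\beta_A(\ell))$ against those particular $\F$, which is not enough to compute the support. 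The paper instead derives (2) from (1) purely algebraically via the Kurosh Subgroup Theorem, with no reference to marked graphs.
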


Note that unlike Fact~\ref{FactMalnormalRestriction} the notation $\phi \restrict A$ does not presuppose well-definedness independent of the choice of $\Phi$. However, it is at least well-defined up to post-composing $\Phi$ by an inner automorphism determined by an element of~$A$.

\begin{proof} Since duality of laminations is defined by having the same free factor support, clearly \pref{ItemPushLamSupp}$\implies$\pref{ItemPushLamDual}. 

Also \pref{ItemPushLamIsLam}$\implies$\pref{ItemPushLamSupp}, which we see as follows. We add a superscript ``$A$'' to certain notations, e.g.\ $[\cdot]^A$ denotes conjugacy classes in $A$ and $\F^\A_\supp(\cdot)$ denotes free factor support in~$A$; with no superscript these notations retain their ordinary meaning in~$F_n$. By Fact~\ref{FactSubgroupSupport}, $\F_\supp(B)$ is the free factor support of the set of lines carried by the one component subgroup system $[B]$, and so $\beta_A(\Lambda^+)$ is carried by $\F_\supp(B)$, proving that $\F_\supp(\beta_A(\Lambda^+)) \sqsubset \F_\supp(B)$. For the opposite direction, suppose that $\C = \{[C_1],\ldots,[C_K]\}$ is a free factor system in $F_n$ that supports $\beta_A(\Lambda^+)$. By the Kurosh Subgroup Theorem the collection of conjugacy classes $\{[A \intersect C_k^g]^A \suchthat g \in F_n, 1 \le k \le K\}$ forms a free factor system in $A$, and by supposition it carries $\Lambda^+$ in $A$. It follows that $\{[B]^A\} = \F_\supp^A(\Lambda^\pm) \sqsubset \C$ which implies that $B \subset C_k^g$ for some $k=1,\ldots,K$ and some $g \in F_n$, which implies that $\F_\supp(B) \sqsubset \C$. Since this is true for all $\C$ supporting $\beta_A(\Lambda^+)$ we have $\F_\supp(B) \sqsubset \F_\supp(\beta_A(\Lambda^+))$, and so $\F_\supp(\beta_A(\Lambda^+)) = \F_\supp(B)$. The same is true for $\Lambda^-$ by replacing $\phi$ with $\phi^\inv$, which proves~\pref{ItemPushLamSupp}.

We prove~\pref{ItemPushLamIsLam} just for $\Lambda^+$; the same follows for $\Lambda^-$ by replacing $\phi$ with $\phi^\inv$. For the proof we drop the ``$+$'' superscript and simply write $\Lambda$ for $\Lambda^+$. We adapt the proof of \BookOne\ Lemma~3.1.9. We may freely pass to any positive power of $\phi$, and so in particular we may assume $\phi \restrict A$ fixes $\Lambda$ and so $\phi$ fixes $\beta_A(\Lambda)$. 

Pick a relative train track map $h \from H \to H$ representing $\phi \restrict A$ with filtration elements $H_i$ and strata $H_i \setminus H_{i-1}$, having in particular an aperiodic \eg\ stratum $H_r \setminus H_{r-1}$ corresponding to $\Lambda$. After passing to a positive power, there is an edge $E \subset H_r \setminus H_{r-1}$ and a point $x \in \interior(E)$ such that $h(x)=x$. Up in the universal cover choose lifts $\wt E \subset H$ of $E$, $\ti x \in \interior(E)$ of $x$, and $\ti h \from \wt H \to \wt H$ of $h$ so that $\ti h(x)=x$. After post-composing $\Phi$ by an inner automorphism determined by an element of~$A$, we may assume that $\ti h$ is the lift of $h$ corresponding to $\Phi \restrict A$. Letting $\ti\ell = \union_{i=1}^\infinity \ti h^k_\#(E) \in \wt\B(H)$, its downstairs image $\ell \in \B(H)$ is a generic leaf of the realization of $\Lambda$ in $H$, as shown in the proof of \BookOne\ Lemma~3.1.9. By definition the leaf $\ell \in \Lambda$ is birecurrent in $\B(A)$ and dense in $\Lambda$, and these evidently imply that the leaf $\beta_A(\ell) \in \beta_A(\Lambda)$ is birecurrent in $\B(F_n)$ and dense in $\beta_A(\Lambda)$. It remains to prove existence of an attracting neighborhood of $\beta_A(\ell)$ in $\B(F_n)$ under the action of~$\phi$. 

The leaf $\ti\ell$ crosses edges of $\wt H_r \setminus \wt H_{r-1}$ infinitely often on both directions; enumerate those crossings in order as $\wt E_k$, $k \in \Z$, with $\wt E = \wt E_0$. For each integer $k \ge 0$ let $\ti\ell_k$ be the subsegment of $\ti \ell$ starting with $\wt E_{-k}$ and ending with $\wt E_k$, and so $\union_k \ti\ell_k = \ti\ell$. Let $\ell_k \in \wh\B(H)$ be the downstairs image of $\ti\ell_k$, so $\ell$ is a weak limit of $\ell_k$ as $k \to +\infinity$. After passing to a positive power we may assume that the $h_\#$-image of each edge of $H_r \setminus H_{r-1}$ crosses at least two edges, and we obtain the inclusion $\ti h_\#(\ti \ell_k) \supset \ti\ell_{2k}$ (c.f.\ \BookOne\ Lemma~2.5.1). From this inclusion, together with Lemma~\ref{LemmaDoubleSharpFacts}~\pref{ItemOneToTwoSharps}, it follows that
\begin{description}
\item[$(*)$] For each $d \ge 0$, if $k \ge \BCC(h) + d$ then $\ti h_{\#\#}(\ti\ell_k) \supset \ti\ell_{k+d}$. 
\end{description}
because both components of $\ti\ell_{2k} - \ti\ell_{k+d}$ have length at least $k-d \ge \BCC(h)$.

Property~$(*)$ with $d=1$ is a key step in the proof of \BookOne\ Lemma~3.1.9 for showing that $\ell$ has an attracting neighborhood under iteration of $\phi \restrict A$. The strategy of our proof that $\beta_A(\ell)$ has an attracting neighborhood under iteration of $\phi$ is to use bounded cancellation arguments to transfer this key step over to the setting of a topological representative of $\phi$. 

Pick an $F_n$-marked graph $G$ and a $\pi_1$-injective map $\mu \from H \to G$ which, with appropriate choices of bases points and paths between them, induces the injection $A \inject F_n$. Let $\ti\mu \from \wt H \to \wt G$ be an $A$-equivariant lift of~$\mu$. Pick a homotopy equivalence $g \from G \to G$ representing~$\phi$, and so $g \composed \mu$ is homotopic to $\mu \composed h$. Let $\ti g \from \wt G \to \wt G$ be the lift corresponding to the automorphism~$\Phi$. It follows that the homotopy between the maps $g \composed \mu$ and $\mu \composed h \from H \to G$ lifts to an $A$-equivariant homotopy between the maps $\ti g \composed \ti \mu$ and $\ti\mu \composed \ti h \from \wt H \to \wt G$. By compactness of~$H$, the track of each point under this homotopy has diameter in $\wt G$ bounded by a uniform constant~$T$.

Working with the realizations of $\Lambda$ in $H$ and of $\beta_A(\Lambda)$ in $G$, the realizations in $G$ of the leaves of $\beta_A(\Lambda)$ are precisely the $\mu_\#$-images of the realizations in $H$ of the leaves of $\Lambda$. In particular, $\ell' = \mu_\#(\ell)$ is the realization in $G$ of $\beta_A(\ell) \in \beta_A(\Lambda)$. We must prove that $\ell'$ has an attracting neighborhood in $\B(G)$ under the action of~$g_\#$.

In $\wt G$ let $\ti\ell'_k = \ti\mu_{\#\#}(\ti\ell_k)$ and in $G$ let $\ell'_k = \mu_{\#\#}(\ell_k)$. Applying Lemma~\ref{LemmaDoubleSharpFacts}~\pref{ItemDblSharpContain} we have a nested sequence $\ti\ell'_1 \subset \ti\ell'_2 \subset \cdots$ whose union is contained in $\ti\ell' = \ti\mu_\#(\ti\ell)$. Letting $V'_k = V(\ell'_k;G)$ we obtain a nested sequence of weakly open sets $V'_1 \supset V'_2 \supset \cdots$ of $\B(G)$. We shall prove:
\begin{itemize}
\item[] (a) The sequence $V'_1 \supset V'_2 \supset \cdots$ is a weak neighborhood basis for $\ell'$.
\item[] (b) For all sufficiently large $k$, say $k \ge K$, we have $g_\#(V'_k) \subset V'_{k+1}$.
\end{itemize}
from which it immediately follows that $V'_K$ is a weakly attracting neighborhood of $\ell'$ under iteration of $g_\#$.

We claim that:
\begin{description}
\item[$(**)$] For each $M \ge 0$ there exists $e_M$ such that if $e \ge e_M$ then for each $k$ both components of $\ti\ell'_{k+e} - \ti\ell'_k$ have length $> M$. 
\end{description}
To prove the claim, from the nesting of these arcs we obtain a bijection correspondence between the endpoints of $\ti\ell'_{k+e}$ and the endpoints of $\ti\ell'_k$, so it is sufficient to prove that the distance between corresponding endpoints is $>M$. We have $\ti\ell'_{k+e} - \ti\ell'_k = \ti\mu_{\#\#}(\ti\ell_{k+e}) - \ti\mu_{\#\#}(\ti\ell_k)$. For each arc $\alpha \subset \wt H$, by Lemma~\ref{LemmaDoubleSharpFacts}~\pref{ItemOneToTwoSharps} the arc $\ti\mu_{\#\#}(\alpha) \subset \wt G$ is obtained from the arc $\ti\mu_\#(\alpha)$ by trimming away at most $\BCC(\mu)$ from both ends of $\ti\mu_\#(\alpha)$, and so we get a bijective correspondence between the endpoints of $\ti\mu_\#(\alpha)$ and those of $\ti\mu_{\#\#}(\alpha)$ so that corresponding endpoints have distance $\le \BCC(\mu)$. It therefore suffices to show that corresponding endpoints of the arcs $\ti\mu_\#(\ti\ell_{k+e})$, $\ti\mu_\#(\ti\ell_k)$ have distance $>M-2\BCC(\mu)$. Using that $\ti\mu \from \wt H \to \wt G$ is an $(a,c)$ quasi-isometric embedding for some $a \ge 1$, $c \ge 0$, it is sufficient to show that corresponding endpoints of $\ti\ell_{k+e},\ti\ell_k$ have distance $> a(M - 2 \BCC(\mu) + c)$, which is guaranteed by choosing $e > e_M = a(M-2\BCC(\mu)+c)$. 

From the claim it follows that $\union_k \ti\ell'_k = \ell'$, and so $V'_1 \supset V'_2 \supset \cdots$ is a neighborhood basis of $\ell'$ in $\B(G)$, verifying~(a). We shall also use the claim, with an appropriate choice of $M$, to verify~(b). 

We have:
\begin{align*}
\ti\ell'_{k+d}   &= \ti\mu_{\#\#}(\ti\ell_{k+d}) \subset \ti\mu_{\#\#}(h_{\#\#}(\ti\ell_k)) \\
\intertext{which follows by applying $(*)$ and Lemma~\ref{LemmaDoubleSharpFacts}, as long as $k \ge K_d= \BCC(h) + d$. We make an appropriate choice of $d$ below. Applying Lemma~\ref{LemmaDoubleSharpFacts} again we have:}
\ti\ell'_{k+d}    &\subset (\ti\mu \composed \ti h)_{\#\#}(\ti\ell_k) \subset (\ti\mu \composed \ti h)_\#(\ti\ell_k) \\
\intertext{Let $M = \max\{T,\BCC(g \composed \mu)\}$, choose $e \ge e_M$ and let $d=2e+1$, and so}
\ti\ell'_{k+2e+1} &\subset (\ti\mu \composed \ti h)_\#(\ti\ell_k) \\
\intertext{Applying $(**)$ twice, using $M \ge T$ the first time and $M \ge \BCC(g \composed \mu)$ the second, together with a few more applications of Lemma~\ref{LemmaDoubleSharpFacts}, we have:}
\ti \ell'_{k+e+1} &\subset (\ti g \composed \ti\mu)_\#(\ti\ell_k) \\
\ti \ell'_{k+1} &\subset (\ti g \composed \ti \mu)_{\#\#}(\ti \ell_k) \subset \ti g_{\#\#} (\ti \mu_{\#\#}(\ti\ell_k)) = \ti g_{\#\#}(\ti \ell'_k)
\end{align*}
and so $g_\#(V'_k) \subset V'_{k+1}$.
\end{proof}

\bigskip

\section{Geometric \eg\ strata and geometric laminations}
\label{SectionGeometric}

A geometric outer automorphism $\phi \in F_n$, as defined in Section~4 of \cite{BestvinaHandel:tt}, is one modeled by a pseudo-Anosov homeomorphism $f \from S \to S$ of a surface with nonempty boundary. The broader concept of a geometric \eg\ stratum $H_r = G_r \setminus G_{r-1}$ of a relative train track map $f \from G \to G$ was introduced in Definition~5.1.4 of \BookOne: roughly speaking $H_r$ is geometric if the restriction $f \from G_r \to G_r$ is modeled by a 2-dimensional dynamical system obtained by gluing together the restriction $f \from G_{r-1} \to G_{r-1}$ with a pseudo-Anosov surface homeomorphism $h \from S \to S$, by attaching all but one component of $\bdy S$ to $G_{r-1}$. 

In Section~\ref{SectionGeometricModelsAndStrata} we review and reformulate the definition of geometric strata from Definition~5.1.4 of \BookOne, couching it in terms of existence of the 2-dimensional dynamical system alluded to above, which is formally described in Definitions~\ref{DefWeakGeomModel} and~\ref{DefGeometricStratum} where it is dubbed a \emph{geometric model} for $f$ and $H_r$. 

In Section~\ref{SectionGeomModelComplement} we study a natural graph of groups decomposition of $F_n$ associated to a geometric stratum called the \emph{peripheral splitting}, which records group theoretic information underlying the topological manner in which the geometric model is glued together from the surface $S$ and the graph $G$. 

In Sections~\ref{SectionLamsGeomStratum} and~\ref{SectionLaminationGeometricity} we study the dual lamination pair of a geometric stratum. Proposition~\ref{PropGeomLams}, which is based on the geometric case of the proof of Proposition 6.0.8 of \BookOne, shows that the laminations of a geometric stratum $H_r$ can be identified with the stable and unstable laminations of the pseudo-Anosov homeomorphism $h \from S \to S$. Lemma~\ref{containmentSymmetry} shows that the duality relation between attraction laminations of an outer automorphism and those of its inverse preserves inclusion of laminations. Proposition~\ref{PropGeomEquiv} is a new result which proves that ``geometricity'' is actually an invariant of an attracting lamination, indeed of a dual lamination pair, not just of a stratum. 

In Sections~\ref{SectionPreservingSurface} and~\ref{SectionFreeBoundaryInvariant} we establish some complementarity properties for geometric strata that are analogous to properties of subsurfaces and their complements. The results of these sections will be used in \PartTwo\ to establish certain invariance properties of elements of $\IA_n(\Z/3)$.

\subsection{Defining and characterizing geometric strata}  
\label{SectionGeometricModelsAndStrata} 

In this section we review the definition of geometric strata, and other results from \BookOne\ concerning their properties. In particular, given a \ct\ $f \from G \to G$ and an \eg\ stratum $H_r$, geometricity of $H_r$ is defined in Definition~5.1.4 of \BookOne\ in terms of an object that we term a ``weak geometric model'', formalized in Definition~\ref{DefWeakGeomModel}. In Definition~\ref{DefGeomModel} we reformulate geometricity again in terms of a (stronger) ``geometric model''. The difference between the two models is the context in which they are constructed: a ``weak'' geometric model is a homotopy model for the restriction of $f$ to $G_r$; a ``strong'' geometric model is a homotopy model for all of $f$ which is a certain extension of a weak geometric model.

Fact~\ref{FactGeometricCharacterization} asserts the equivalence of various formulations of geometricity of an \eg\ stratum. The proof of Fact~\ref{FactGeometricCharacterization}, which follows several results of \BookOne, takes up subsections~\ref{SectionGeometricFFS} and~\ref{SectionGeometricityProof}.

\subsubsection{Defining weak geometric models and geometric strata.} 
\label{SectionGeometricDefs}
The definitions of various 1-dimensional topological representatives of elements of $\Out(F_n)$ --- train track maps and relative train track maps \BH, improved relative train track maps \BookOne, and \cts\ \recognition --- can each be broken into two parts: static data consisting of a filtered marked graph satisfying certain conditions; and dynamic data consisting of a homotopy equivalence of that graph, again satisfying various conditions.

Given a \ct\ $f \from G \to G$ and an \eg\ stratum $H_r \subset G$, Definition~\ref{DefWeakGeomModel} of a weak geometric model and~\ref{DefGeomModel} of a geometric model, are similarly broken up into static data and dynamic data. The static data describes how to glue up a 2-complex from a surface and a graph, and how to mark that 2-complex by a certain homotopy equivalence to~$G$. The dynamic data describes a dynamical system on the 2-complex obtained by gluing together a homeomorphism of the surface and a homotopy equivalence of the graph, with conditions that describe relations amongst the parts of the dynamical system and the given \ct\ $f$, these relations all expressed as commutative or homotopy commutative diagrams.

\medskip\noindent
\textbf{Notation fixed for the remainder of Section~\ref{SectionGeometricModelsAndStrata}.} We fix the following:

\textbf{$\bullet$} A rotationless $\phi \in \Out(F_n)$;  

\textbf{$\bullet$} A \ct\ $f \from G \to G$ representing $\phi \in \Out(F_n)$;  

\textbf{$\bullet$} An \eg\ stratum $H_r \subset G$.


\begin{definition}
\label{DefWeakGeomModel}
\textbf{Weak geometric model for the \eg\ stratum $H_r$ of $f \from G_r \to G_r$.} \\
The static data of the weak geometric model is as follows:
\begin{enumerate}
\item  \label{ItemWeakGMSurface}
We are given a compact, connected surface $S$ of negative Euler characteristic having nonempty boundary with components $\bdy S = \bdy_0 S \union\cdots\union \bdy_m S$ ($m \ge 0$). We refer to $\bdy_0 S$ as the \emph{upper boundary} of $S$ and to $\bdy_1 S, \ldots, \bdy_m S$ as the \emph{lower boundaries}. 
\item  \label{ItemWeakGMAttach}
For each lower boundary $\bdy_i S$, $i=1,\ldots,m$, we are given a homotopically nontrivial closed edge path $\alpha_i \from \bdy_i S \to G_{r-1}$. The map $\alpha_i$ need not be a local embedding. 
\item \label{ItemWeakGMY}
We let $Y$ be the 2-complex defined as the quotient of the disjoint union $S \disjunion G_{r-1}$ obtained by gluing each lower boundary circle $\bdy_i S$ to $G_{r-1}$ using $\alpha_i$ as a gluing map. We let $j \from S \disjunion G_{r-1} \to Y$ denote the quotient map. 
\end{enumerate}
Several subsets of the domain of the quotient map $j \from S \disjunion G_{r-1} \to Y$ are topologically embedded in $Y$, and we will by convention identify these subsets with their images in $Y$. These include: $G_{r-1}$ and $\bdy_0 S$, each identified with a subcomplex of $Y$; the noncompact surface $\interior(S)$ is identified with an open subset of $Y$; and the noncompact surface-with-boundary $\interior(S) \union \bdy_0 S$ is identified with an open subset of $Y$. 
\begin{enumeratecontinue}
\item \label{ItemEmbAndRetract}
We are given an embedding $G_r \inject Y$ which extends the embedding $G_{r-1} \inject Y$, and we identify $G_r$ with its image in $Y$. These satisfy the following properties:
\begin{enumerate}
\item \label{ItemInteriorBasePoint}
$G_r \cap \partial_0 S$ is a single point denoted $p_r$, and there is a closed indivisible Nielsen path $\rho_r$ of height~$r$ in $G_r$ and based at $p_r$, such that the loop $\bdy_0 S$ based at $p_r$ and the path $\rho_r$ are homotopic rel base point in $Y$.
\item \label{ItemOpenDiscInS}
$Y - (G_r \union \bdy_0 S)$ is homeomorphic to the open 2-disc.
\item \label{ItemDefRetract}
There is a deformation retraction $d \from Y \to G_r$ such that the restriction $d \restrict \bdy_0 S$ is a parameterization of $\rho_r$.
\item \label{ItemGrEmbedsInY} The composition $G_{r-1} \xrightarrow{j} Y \xrightarrow{d} G_r$ is the inclusion map. 
\item\label{ItemRelInteriorHr}
The interior of $H_r$ in $G_r$ equals $H_r - G_{r-1} = H_r \intersect (Y - G_{r-1}) = H_r \intersect (\interior(S) \union \bdy_0 S)$.
\end{enumerate}
\end{enumeratecontinue}
Subitems~\pref{ItemOpenDiscInS}--\pref{ItemRelInteriorHr} follow quickly from the main item~\pref{ItemEmbAndRetract} and subitem~\pref{ItemInteriorBasePoint}; we include them here for clarity at the risk of redundancy. Item~\pref{ItemOpenDiscInS} is true because if not then by an application of Van Kampen's theorem the graph $G_r \union \bdy_0 S$ would be $\pi_1$-injective in $Y$, contradicting that $\bdy_0 S$ and $\rho_r$ are homotopic rel $p_r$ in~$Y$. The existence of a deformation retraction $d \from Y \to G_r$ which is locally injective on $\bdy_0 S$ is immediate, and since $d \restrict \bdy_0 S$ and $\rho_r$ are two immersed loops homotopic rel $p$ in $G_r$ it follows that they are equal up to reparameterization. Items \pref{ItemGrEmbedsInY}, \pref{ItemRelInteriorHr} follow immediately.

The dynamic data of the weak geometric model is as follows:
\begin{enumeratecontinue}
\item \label{ItemWGMDynSyst}
We are given a homotopy equivalence $h \from Y \to Y$, and we are given a homeomorphism $\Psi \from (S,\bdy_0 S) \to (S,\bdy_0 S)$ with pseudo-Anosov mapping class $\psi \in \MCG(S)$, subject to the following compatibility conditions: 
\begin{enumerate}
\item\label{ItemCTHomComm}
The maps $(f \restrict G_r) \composed d$ and $d \composed h \from Y \to G_r$ are homotopic.
\item\label{ItemPsAnHomComm}
The maps $j \composed \Psi$ and $h \composed j \from S \to Y$ are homotopic. 
\end{enumerate}
\end{enumeratecontinue}
To summarize, a weak geometric model of the \ct\ $f \from G_r \to G_r$ for the \eg\ stratum $H_r$ is a tuple of data $(Y,d,S,(\bdy_i S)_{i=0}^m,j,(\alpha_i)_{i=1}^m,h,\Psi)$ as described and satisfying the conditions above. Our usual terminology will suppress all of the data except $Y$, saying that \emph{$Y$ is a weak geometric model of $f \from G_r \to G_r$ for $H_r$} (or ``with respect to $H_r$'', or other such phrases). When the \ct\ $f$ is understood we simply say \emph{$Y$ is a weak geometric model for $H_r$}. This completes Definition~\ref{DefWeakGeomModel}. 
\end{definition}

\begin{definition}
\label{DefGeometricStratum}
\textbf{Geometricity of the \eg\ stratum $H_r$.}
We say that the \eg\ stratum $H_r$ of the \ct\ $f \from G \to G$ is \emph{geometric} if a weak geometric model of $f \from G_r \to G_r$ for $H_r$ exists. 
\end{definition}

\subparagraph{Remark: Comparing definitions of geometricity: Definition~\ref{DefGeometricStratum} versus Definition~5.1.4 of \BookOne.} These two definitions of geometricity of $H_r$ are logically equivalent. The definitions are similarly structured, defining geometricity in terms of the existence of a certain homotopy model for $f \from G_r \to G_r$ which incorporates a pseudo-Anosov surface homeomorphism, but the models used in the two definitions have some differences of expression and mathematical detail. Nonetheless existence of these two models is equivalent, because each is equivalent to existence of a closed, indivisible Nielsen path $\rho_r$ of height~$r$. For Definition~\ref{DefWeakGeomModel} this is part of Fact~\ref{FactGeometricCharacterization} stated below. For \BookOne\ Definition~5.1.4 this equivalence is proved by quoting results of \BookOne\ as follows: combining Definition~5.1.4 with Theorem~5.1.5~(eg-iii) one obtains existence of $\rho_r$; and in the other direction, if a closed, indivisible Nielsen path $\rho_r$ of height~$r$ exists then by Lemma~5.1.7 (or see Fact~\ref{FactEGNielsenCrossings}~\pref{ItemEGNielsenClosed} above) the path $\rho_r$ crosses each edge of $H_r$ exactly twice, and then by Proposition~5.3.1, $H_r$ satisfies Definition~5.1.4. 

The reader may also be interested in a more direct comparison between the two types of geometric models. Existence of a geometric model as in Definition~\ref{DefWeakGeomModel} fairly directly implies existence of the model built into the \BookOne\ definition. First, in Definition~\ref{DefWeakGeomModel} we do not express annulus neighborhoods of $\bdy_i S$ as they are expressed in \BookOne, although the structure of those neighborhoods and the requirements thereon are easily recovered. Also, our item~\pref{ItemWeakGMAttach} does not require the attaching maps $\alpha_i$ to be local embeddings, as they are required to be in \BookOne; that requirement is easily restored by homotoping the attaching maps $\alpha_i$, however doing so may destroy the possibility of the embedding $G_r \inject Y$. Fortunately, while our item~\pref{ItemEmbAndRetract} does require this embedding and the deformation retraction $d \from Y \to G$, all that is required in place of $d$ in the \BookOne\ definition is a homotopy equivalence $Y \mapsto G$. Also, our \pref{ItemPsAnHomComm} requires commutativity only up to homotopy, not commutativity on the nose as required in \BookOne, but such stronger commutativity is also easily recovered. From this discussion it is easy to fill in the details and prove that geometricity of $H_r$ as in Definition~\ref{DefGeometricStratum} implies geometricity of $H_r$ as in \BookOne.  

In the other direction, most items of Definition~\ref{DefGeometricStratum} are recovered from Definition~5.1.4 of \BookOne\ in a similarly easy fashion, with the exception of item~\pref{ItemEmbAndRetract}, where we require the embedding $G_r \inject Y$ and deformation retraction $d \from Y \to G_r$. The reader who knows the proof of \BookOne\ Proposition~5.3.1, or who follows closely the proof of Fact~\ref{FactGeometricCharacterization} which is modeled on \BookOne\ Proposition~5.3.1, will see that if one weakens \pref{ItemEmbAndRetract} appropriately by replacing the deformation retraction $d \from Y \to G_r$ with a homotopy equivalence as in \BookOne, then the full strength of \pref{ItemEmbAndRetract} can nevertheless be recovered (see the ``Remark'' shortly into the proof of Fact~\ref{FactGeometricCharacterization}).

In fact the gist our proof of Fact~\ref{FactGeometricCharacterization}, immediately below, will be to go through the proof of \BookOne\ Proposition~5.3.1, tweaking it and using various of its parts, in order to directly verify item~\pref{ItemEmbAndRetract} and other items of Definition~\ref{DefWeakGeomModel}.

\begin{fact}[Characterization of geometric strata]
\label{FactGeometricCharacterization}
The following are equivalent:
\begin{enumerate}
\item\label{ItemEGStratumIsGeometric}
$H_r$ is a geometric stratum.
\item\label{ItemEGNPIsClosed}
There exists a closed, height~$r$ indivisible Nielsen path $\rho_r$.
\item\label{ItemEGNPCrossesEachTwice}
There exists a height~$r$ indivisible Nielsen path $\rho_r$ which crosses each edge of $H_r$ exactly twice.
\end{enumerate}
Furthermore, if these hold then each component of $G_{r-1}$ is noncontractible.
\end{fact}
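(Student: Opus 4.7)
The three equivalences split into two essentially formal parts and one substantive construction. For the equivalence of (2) and (3), Fact~\ref{FactEGNPUniqueness} ensures that up to reversal there is at most one height $r$ indivisible Nielsen path, and Fact~\ref{FactEGNielsenCrossings} parts~\pref{ItemEGNielsenNotClosed} and~\pref{ItemEGNielsenClosed} give the dichotomy that an indivisible Nielsen path $\rho_r$ of height $r$ is closed if and only if it crosses each edge of $H_r$ exactly twice. The implication (1)$\Rightarrow$(2) is immediate from item~\pref{ItemInteriorBasePoint} of Definition~\ref{DefWeakGeomModel}, which lists a closed indivisible height $r$ Nielsen path $\rho_r$ based at $p_r$ as part of the static data of any weak geometric model.

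For (3)$\Rightarrow$(1), I will follow the proof of \BookOne~Proposition~5.3.1, adapted to output exactly the data of a weak geometric model as in Definition~\ref{DefWeakGeomModel}. Using Fact~\ref{FactNielsenBottommost}, decompose $\rho_r = a_0 b_1 a_1 \cdots a_{k-1} b_k$ with each $a_i$ a maximal subpath in $H_r$ and each $b_j$ a (nontrivial) Nielsen path in $G_{r-1}$. Because $\rho_r$ is closed and crosses each edge of $H_r$ exactly twice, the succession of direction-pairs taken by $\rho_r$ at each vertex of $H_r$ assembles into a cyclic order on the $H_r$-directions at that vertex; this yields a ribbon (fat) graph thickening $S$ of $H_r$ whose boundary $\bdy S$ has a distinguished component $\bdy_0 S$ corresponding to the trace of $\rho_r$ around the ribbons, with remaining components $\bdy_1 S,\ldots,\bdy_m S$ carrying natural attaching maps $\alpha_i \from \bdy_i S \to G_{r-1}$ read off from the $b_j$ sub-Nielsen paths. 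Setting $Y = (S \disjunion G_{r-1})/\{\alpha_i\}$, the embedding $G_r \inject Y$ sends $G_{r-1}$ by inclusion and $H_r$ to the spine of the ribbon graph $S$, and a Van Kampen argument using the homotopy $\bdy_0 S \simeq \rho_r$ in $Y$ yields both the $2$-disc condition~\pref{ItemOpenDiscInS} and the existence of a deformation retraction $d \from Y \to G_r$ restricting to a parameterization of $\rho_r$ on $\bdy_0 S$. For the dynamical data, the pseudo-Anosov $\Psi \from S \to S$ is produced by applying the Thurston construction to the invariant train-track structure induced by $f \restrict H_r$ on the ribbon graph $S$, with $\rho_r$ supplying the transverse saddle structure; the homotopy equivalence $h \from Y \to Y$ is then built by combining $\Psi$ on $S$ with $f \restrict G_{r-1}$ on $G_{r-1}$, and the compatibilities~\pref{ItemCTHomComm} and~\pref{ItemPsAnHomComm} follow from the commutativities established in the proof of \BookOne~Proposition~5.3.1. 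The main obstacle is verifying that the direction-pairing at each vertex of $H_r$ genuinely assembles into a consistent cyclic order; this is the technical core of \BookOne~Proposition~5.3.1 and rests on the train-track axioms RTT-(i)--(iii) together with the structural information from Fact~\ref{FactEGNielsenCrossings}~\pref{ItemEGNielsenClosed}.

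For the ``furthermore'' clause, suppose for contradiction that some component of $G_{r-1}$ were contractible. By Fact~\ref{FactContrComp} applied to the filtration element $G_{r-1}$, this is equivalent to $H_{r-1}$ being a zero stratum. By property (Zero Strata) in Definition~\ref{DefCT}, $H_{r-1}$ is enveloped by some \eg\ stratum $H_s$, and the defining strict inequalities $u < r-1 < s$ of enveloping force $s \geq r$. If $s = r$, we contradict Fact~\ref{FactNielsenBottommost}~\pref{ItemNoZeroStrata}, which forbids any zero stratum enveloped by $H_r$ whenever a height $r$ indivisible Nielsen path exists. If $s > r$, then by the definition of enveloping every $H_j$ with $r-1 < j < s$ must be a zero stratum, which in particular would force the \eg\ stratum $H_r$ to be a zero stratum, a contradiction. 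Hence no component of $G_{r-1}$ is contractible.
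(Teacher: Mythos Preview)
Your argument is largely correct and in places cleaner than the paper's, but the comparison is worth spelling out.

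For \pref{ItemEGStratumIsGeometric}$\Rightarrow$\pref{ItemEGNPIsClosed} you take the shortcut of reading $\rho_r$ directly off Definition~\ref{DefWeakGeomModel}\pref{ItemInteriorBasePoint}, and that is legitimate: the existence of a closed indivisible height~$r$ Nielsen path is literally part of the static data. The paper instead argues indirectly by showing that the $f_\#$-invariant circuit $d_\#(\bdy_0 S)$ must contain $\rho_r$ as a complete-splitting term and that $\rho_r$ is closed. The reason the paper takes the longer route is explained in its subsequent Remark: that argument uses only that $d$ is a homotopy equivalence, and thereby shows that geometricity in the sense of \BookOne\ Definition~5.1.4 (which does not build $\rho_r$ into the data) also implies~\pref{ItemEGNPIsClosed}. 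Your shortcut is fine for the fact as stated but does not recover that extra content.

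For \pref{ItemEGNPCrossesEachTwice}$\Rightarrow$\pref{ItemEGStratumIsGeometric}, your ribbon-graph description is essentially a dual viewpoint on the paper's mapping-cylinder construction $Y=M(\rho_r)$: in the paper, $S$ is obtained from the square $Q=[0,1]^2$ by identifying along $[0,1]\times\{0\}$ the pairs of subintervals mapping to the same edge of $H_r$, which is exactly the thickening you describe. Where your sketch diverges is in the dynamical data. You build $h$ by gluing $\Psi$ on $S$ to $f\restrict G_{r-1}$; the paper instead defines $h$ directly as the composition $Y\xrightarrow{d}G_r\xrightarrow{f}G_r\hookrightarrow Y$, which makes \pref{ItemCTHomComm} automatic. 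More importantly, your appeal to ``the Thurston construction'' for $\Psi$ is vague and not how the paper proceeds: following \BookOne~Corollary~5.3.8, one first lifts $f$ to a homotopy equivalence $f_\kappa$ of the spine $\kappa\subset S$ permuting the boundary classes, then invokes the Dehn--Nielsen--Baer theorem to realize it by a homeomorphism of $S$, and finally checks pseudo-Anosovness by the absence of non-peripheral periodic conjugacy classes. Your sketch also omits the verifications (via \BookOne~Lemma~5.3.9, using that the filtration is reduced) that the $\alpha_i$ are homotopically nontrivial and that the quotient map $j$ makes no extra identifications, which are needed for items~\pref{ItemWeakGMAttach} and~\pref{ItemWeakGMY}.

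Your proof of the ``furthermore'' clause via Fact~\ref{FactContrComp}, (Zero Strata), and the enveloping dichotomy $s=r$ versus $s>r$ is correct; the paper simply cites Fact~\ref{FactNielsenBottommost}, which is the same argument packaged differently.
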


The proof is found in Section~\ref{SectionGeometricityProof}, with preliminary material in Section~\ref{SectionGeometricFFS}.

\subsubsection{Defining geometric models.}
\label{SectionDefiningGeometricModels}
We continue with the notation fixed in Section~\ref{SectionGeometricDefs}.

A ``weak'' geometric model for $H_r$ extends by a unique construction to a (``strong'') geometric model for $H_r$. The former consists of a certain 2-complex $Y$ and homotopy equivalence $h \from Y \to Y$ which is a homotopy model of the restricted map $f \from G_r \to G_r$, the latter is an extension of $h \from Y \to Y$ to a homotopy model $h \from X \to X$ for the entire map $f \from G \to G$.


\begin{definition} 
\label{DefGeomModel}
\textbf{Geometric model for the \eg\ stratum $H_r$ of $f \from G \to G$.} \index{geometric model} 
\\
Given a weak geometric model $Y$ of $f \from G_r \to G_r$ relative to $H_r$, and incorporating all the notation of Definition~\ref{DefWeakGeomModel}, a geometric model of $f \from G \to G$ relative to $H_r$ is defined as follows:
\begin{enumerate}
\item\label{ItemFullGeomModel}
Let $X$ be the 2-complex obtained as the quotient of the disjoint union $G\, \disjunion\, Y$ obtained by identifying the two copies of $G_r$, one embedded in $G$ and the other embedded in~$Y$, via the identity map on~$G_r$. We identify $G,Y$ with their image subcomplexes in~$X$ under the quotient map $G \, \disjunion \, Y \to X$.
\item\label{ItemDefRetrRestrict}
Let $d \from X \to G$ be the deformation retraction obtained by extension of the deformation retraction $d \from Y \to G_r$, requiring that the restriction of $d$ to $G \setminus G_r = X \setminus Y$ be the identity.
\item\label{ItemXYDefRetrExt}
Let $h \from X \to X$ be the homotopy equivalence obtained by extension of the homotopy equivalence $h \from Y \to Y$, requiring that $h \restrict (G \setminus G_r) = f \restrict (G \setminus G_r)$. Note that the maps $d \composed h$ and $f \composed d \from X \to G$ are homotopic. 
\end{enumerate}
This completes the defining items~\pref{ItemFullGeomModel}--\pref{ItemXYDefRetrExt} of the geometric model. Again, the terminology in full is that ``$X$ (with accompanying data) is a geometric model of $f \from G \to G$ for $H_r$'', and in brief we say that ``$X$ is a geometric model for $H_r$''. We will also say that $Y \subset X$ is the weak geometric submodel.

We continue with an additional item that follows immediately from the above defining items, plus some definitions/terminologies/notations which are used in connection with geometric models.

\begin{enumeratecontinue}
\item\label{ItemAttPts}
The frontier in $X$ of the subset $\interior(S) \subset Y \subset X$ is the disjoint union of $j(\bdy S)$ with the finite set 
$$\interior(S) \intersect (\union_{s>r} H_s) = \bigl(H_r \intersect (\union_{s>r} H_s)\bigr) - \{p_r\}
$$
Each element of the latter set is a vertex of $H_r$ called an \emph{attaching point} of $X$.
\end{enumeratecontinue}
\noindent
The composition $S \xrightarrow{j} Y \xrightarrow{d} G_r \subset G$ is equal to the composition $S \xrightarrow{j} Y \subset X \xrightarrow{d} G$, and by abusing notation we shall write this map as $d \restrict S \from S \to G$. Subject to choices of base points and paths between them, the map $d \restrict S$ induces a homomorphism $d_* \from \pi_1 S \to \pi_1 G \approx F_n$, and an application of Van Kampen's theorem implies that this homomorphism is injective (or see Lemma~\ref{LemmaLImmersed}). The image subgroup $d_*(\pi_1 S) \subgroup F_n$ has conjugacy class denoted $[\pi_1 S]$ or just~$[S]$, and this conjugacy class is well-defined independent of choices of base points and paths; we refer to $[\pi_1 S]$ as the \emph{surface subgroup system} of the geometric model.

Also, for $0 \le i \le m$ the closed curve $d \restrict \bdy_i S = \gamma_i$ with its two orientations determines an unordered pair of inverse conjugacy classes in $F_n$ denoted $[\bdy_i S]^\pm = \{[\bdy_i S],[\bdy_i S]^\inv\}$; the assignment of one as the positive and the other as the negative orientation is usually not relevant, except for situations where an orientation of $S$ and the induced boundary orientation on $\bdy S$ is under consideration (see also \recognition\ Definition~4.1, ``unoriented conjugacy classes''). Note that $[\bdy_i S]$ need not be a root-free conjugacy class, and if $i \ne j$ then the conjugacy classes $[\bdy_i S]$, $[\bdy_j S]$ might have a common power or might even be equal. However, $[\bdy_0 S]$ is always root-free and never has a common power with $[\bdy_i S]$, $i=1,\ldots,m$. We refer to the set $[\bdy S]^\pm = \union_{i=0}^m [\bdy_i S]^\pm$ as the \emph{peripheral conjugacy classes} of the geometric model. We also say that $[\bdy_0 S]^\pm$ are the \emph{top} peripheral conjugacy classes, and that $\union_{i=1}^m[\bdy_1 S]^\pm$ are the \emph{bottom} peripheral conjugacy classes. When orientation is irrelevant we shall abuse notation and work with $[\bdy S] = \{[\bdy_0 S],\ldots,[\bdy_m S]\}$ instead of with $[\bdy S]^\pm$.

This completes Definition~\ref{DefGeomModel} with all its associated terminology/notation.
\end{definition}

\subsubsection{Invariant free factor systems associated to a geometric model.}
\label{SectionGeometricFFS}
In this section we continue with the notation fixed in Section~\ref{SectionGeometricDefs}.

Before starting the proof of Fact~\ref{FactGeometricCharacterization} we state and prove a lemma describing some properties of various $\phi$-invariant free factor systems associated to a geometric model, in particular the free factor supports of the peripheral conjugacy classes $\bdy S$ and of the subgroup $\pi_1 S$ itself. 

For the reader who wishes to get quickly to the proof of Fact~\ref{FactGeometricCharacterization} in Section~\ref{SectionGeometricityProof}, only item~\pref{ItemTopIsNotLower} of Lemma~\ref{LemmaScaffoldFFS} is needed for that proof. The other items of Lemma~\ref{LemmaScaffoldFFS} will be used in various later contexts employing geometric models.


\begin{lemma} 
\label{LemmaScaffoldFFS} If $X$ is a geometric model of $f \from G \to G$ and $H_r$ then, adopting all the notation of \ref{DefGeomModel}, we have:
\begin{enumerate}
\item\label{ItemBdyAndSurfSupps}
$\F_\supp[\bdy S] = \F_\supp[\pi_1 S]$
\item\label{ItemTopIsNotLower}
$\F_\supp[\bdy_0 S] \not\sqsubset [\pi_1 G_{r-1}]$
\item\label{ItemInvFFSFromGeometric}
Each of the following free factor systems is $\phi$-invariant: 
$$\F_\supp[\pi_1 S] = \F_\supp[\bdy S], \quad \F_\supp[\bdy_0 S], \quad\text{and}\quad \F_\supp\{[\bdy_1 S],\ldots,[\bdy_m S]\}
$$
\item\label{ItemRelFFS}
$\F_\supp\bigl([\pi_1 G_{r-1}],[\pi_1 S]\bigr) = [\pi_1 G_r]$
\item\label{ItemFFSLowerBdys}
Given $t < u < r$ such that $G_t,G_u$ are core filtration elements, if $\F_\supp\{[\bdy_1 S],\ldots,[\bdy_m S]\} \sqsubset [\pi_1 G_t]$ then $[\pi_1 G_u] \not\sqsubset \F_\supp\{[\pi_1 G_t],[\pi_1 S]\}$.
\end{enumerate}
\end{lemma}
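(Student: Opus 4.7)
Items (2) and the $\phi$-invariance portion of (3) come quickly. For (2), Definition~\ref{DefWeakGeomModel}~\pref{ItemInteriorBasePoint} identifies $[\bdy_0 S]$ with the closed height-$r$ indivisible Nielsen path $\rho_r$; by Fact~\ref{FactEGNielsenCrossings}~\pref{ItemEGNielsenClosed} this crosses edges of $H_r$ and so has height exactly $r$, hence is not carried by $[\pi_1 G_{r-1}]$. For the $\phi$-invariance in (3), the homeomorphism $\Psi \from (S,\bdy_0 S) \to (S,\bdy_0 S)$ preserves $\bdy_0 S$ setwise and permutes $\bdy_1 S,\ldots,\bdy_m S$; combining with Definition~\ref{DefWeakGeomModel}~\pref{ItemWGMDynSyst} and naturality of $\F_\supp$ (Fact~\ref{FactFFSPolyglot}~\pref{ItemFFSPolyNatural}) yields $\phi$-invariance of all three free factor supports named in~(3).

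Item (1) supplies the remaining equality in~(3). The inclusion $\F_\supp[\bdy S] \sqsubset \F_\supp[\pi_1 S]$ is immediate. For the reverse, let $\F$ be any free factor system of $F_n$ carrying $[\bdy S]$. Restricting $\F$ along the inclusion $\pi_1 S \inject F_n$ via the Kurosh Subgroup Theorem (in the style of Fact~\ref{FactGrushko}) produces a free factor system $\F|_{\pi_1 S}$ of $\pi_1 S$ that carries each boundary class $[\bdy_i S]$ as a conjugacy class in $\pi_1 S$. By Stallings' transversality theorem \cite{Stallings:transversality}, as cited in the introduction, every nontrivial free factor decomposition of $\pi_1 S$ omits at least one boundary component; hence $\F|_{\pi_1 S} = \{[\pi_1 S]\}$, so $\pi_1 S$ is conjugate into a single component of $\F$ and $\F$ carries $[\pi_1 S]$.

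For item~(4), the inclusion $\F_\supp\{[\pi_1 G_{r-1}],[\pi_1 S]\} \sqsubset [\pi_1 G_r]$ is immediate since the deformation retraction $d \from Y \to G_r$ sends both subgroup systems into $\pi_1 G_r$. For the reverse, connectedness of $S$ and the embedding $G_r \inject Y$ in Definition~\ref{DefWeakGeomModel}~\pref{ItemEmbAndRetract} imply that a unique component $C_0$ of $G_r$ meets $H_r$; every other component of $G_r$ lies in $G_{r-1}$ and is trivially carried. Van Kampen's theorem applied to $Y$ presents $\pi_1 C_0$ as a graph of groups with vertex groups $\pi_1 S$ and the fundamental groups of components of $G_{r-1}$ adjacent to $S$, with nontrivial cyclic edge groups $\pi_1 \bdy_i S$. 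If $\F$ carries both $[\pi_1 G_{r-1}]$ and $[\pi_1 S]$, pass to its Bass--Serre tree $T$ with trivial edge stabilizers: each vertex group of the decomposition fixes a vertex of $T$, and each nontrivial cyclic edge group forces its two adjacent vertex groups to fix the unique vertex that the edge group stabilizes. Connectedness of the underlying graph of groups then propagates this to all vertex groups, so $\pi_1 C_0$ fixes a common vertex of $T$ and is carried by~$\F$.

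Item~(5) is the main obstacle. The plan is to establish $\F_\supp\{[\pi_1 G_t],[\pi_1 S]\} \sqsubset [\pi_1(G_t \cup H_r)]$ and then show separately that $[\pi_1 G_u] \not\sqsubset [\pi_1(G_t \cup H_r)]$. First, $G_t \cup H_r$ is a core subgraph of $G$, since every $G_{r-1}$-endpoint of an $H_r$-edge lies on some image $\alpha_i(\bdy_i S)$, and the hypothesis lets us homotope each such image into $G_t$. The inclusion $[\pi_1 G_t] \sqsubset [\pi_1(G_t \cup H_r)]$ is automatic. For $[\pi_1 S] \sqsubset [\pi_1(G_t \cup H_r)]$, after homotoping each $\alpha_i$ into $G_t$ (which does not change $[\pi_1 S]$ as a conjugacy class of subgroups in $F_n$), form the subcomplex $Y' = G_t \cup S \subset Y$; restricting $d$ shows $Y'$ deformation retracts onto a subgraph of $G_t \cup H_r$, and Van Kampen gives $\pi_1 S \subgroup \pi_1 Y'$, so $[\pi_1 S] \sqsubset [\pi_1(G_t \cup H_r)]$. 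For the second statement I invoke a general subgraph principle: for a connected core subgraph $D$ of the marked graph $G$, the relation $[\pi_1 D] \sqsubset [\pi_1 C]$ forces $D$ to be a subgraph of some component of $C$. (Proof sketch: the minimal $\pi_1 D$-invariant subtree $T_D \subset \wt G$ is, by minimality, contained in some $F_n$-translate of the minimal $\pi_1 C$-invariant subtree $T_C$, so the projection of $T_D$ to $G$ places each edge of $D$ inside $C$.) Applied to each component of $G_u$: it lies in $G_{r-1}$ and contains no $H_r$-edge, so being contained in $G_t \cup H_r$ forces it into $G_t$; summing over components would give $G_u \subseteq G_t$, contradicting the strict filtration inclusion $G_t \subsetneq G_u$. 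The most delicate technical point will be verifying that the restricted retraction $d|_{Y'}$ really has image inside $G_t \cup H_r$ and that the homotopy of the attaching maps genuinely preserves the conjugacy class $[\pi_1 S]$ in $F_n$.
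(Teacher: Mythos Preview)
Your arguments for items~(1)--(3) are essentially the paper's. For~(1) you route through Kurosh and then invoke Stallings' theorem on free splittings of surface groups, whereas the paper runs the Stallings transversality argument directly in $F_n$; these are the same idea packaged differently.

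For item~(4) you give a genuinely different and correct argument. The paper uses the (Filtration) property of a \ct: since $\F_\supp\{[\pi_1 G_{r-1}],[\pi_1 S]\}$ is $\phi$-invariant, lies between $[\pi_1 G_{r-1}]$ and $[\pi_1 G_r]$, and is not equal to $[\pi_1 G_{r-1}]$ by item~(2), reducedness forces it to equal $[\pi_1 G_r]$. Your Bass--Serre argument instead shows directly that any free factor system carrying both $[\pi_1 G_{r-1}]$ and $[\pi_1 S]$ must carry $[\pi_1 G_r]$, using that the nontrivial cyclic edge groups pin all vertex groups (and, as you should make explicit, the stable letters as well) to a single vertex of the tree. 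This has the advantage of not using the \ct\ hypothesis at all.

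Item~(5) has a real gap: you conflate the original data $Y,d,G_r,H_r$ with the data obtained after homotoping the $\alpha_i$ into $G_t$. Homotoping the attaching maps changes the $2$-complex; the old deformation retraction $d$ no longer applies, and there is no reason the original $G_t\cup H_r$ carries $[\pi_1 S]$ --- the $G_{r-1}$-subpaths of $\rho_r$ may well leave $G_t$, so even $[\bdy_0 S]$ need not be carried by $[\pi_1(G_t\cup H_r)]$. Likewise your claim that $G_t\cup H_r$ is a core subgraph is not justified: homotoping $\alpha_i$ does not move the attaching vertices of $H_r$-edges, so an $H_r$-endpoint in $G_{r-1}\setminus G_t$ can have valence~$1$ in $G_t\cup H_r$. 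The paper fixes exactly this by passing to a \emph{new} marked graph $G'$: one homotopes the $\alpha_i$ into $G_t$, takes a new spine $G'_r\supset G_{r-1}$ of the resulting complex with $H'_r=G'_r\setminus G_{r-1}$, and observes that $[\pi_1 G'_j]=[\pi_1 G_j]$ for $j=t,u,r-1,r$. In $G'$ one now has $[\pi_1 S]\sqsubset[\pi_1(G'_t\cup H'_r)]$ by construction, and your subgraph principle applied in $G'$ gives $[\pi_1 G'_u]\not\sqsubset[\pi_1(G'_t\cup H'_r)]$. Your outline becomes correct once you systematically replace $H_r$ by $H'_r$ and work in~$G'$.
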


\begin{proof} Let $G^0_r$ be the component of $G_r$ containing $H_r$, let $Y^0$ be the component of $Y$ containing the connected set $j(S)$, and note that the deformation retraction $d \from Y \to G_r$ restricts to a deformation retraction $d' \from Y^0 \to G^0_r$. Noting that $F'=\pi_1(Y^0)=\pi_1(G^0_r)$ is a free factor of $F_n$, clearly we have
$$\F_\supp\{[\bdy_1 S],\ldots,[\bdy_m S]\} \sqsubset \F_\supp[\bdy S] \sqsubset \F_\supp(\pi_1 S) \sqsubset [F']
$$
For the proofs of \pref{ItemBdyAndSurfSupps} and~\pref{ItemTopIsNotLower}, by restricting to $F'$ we may assume that $F'=F_n$ and so $Y=Y^0=X$ and $G=G_r = G^0_r$. 

Item~\pref{ItemBdyAndSurfSupps} will follow once we prove the reverse inclusion $\F_\supp(\pi_1 S)  \sqsubset \F_\supp[\bdy S]$, which we do using Stallings' method from \cite{Stallings:transversality}. That inclusion is obvious if $\F_\supp[\bdy S] = \{[F_n]\}$, so suppose that $\F_\supp[\bdy S]$ is a proper free factor system of~$F_n$. Let $L$ be an $F_n$-marked graph having a proper subgraph $K$ with noncontractible components such that $[\pi_1 K]=\F_\supp[\bdy S]$. Consider the $\pi_1$-injective map $f \from S \to L$ obtained by composing $S \xrightarrow{j} Y = X \xrightarrow{d} G \mapsto L$ where the final map is a homotopy equivalence preserving marking. We may homotope this map so that $f(\bdy S) \subset K$. By general position we may perturb $f$ to be transverse to the set $M$ consisting of the midpoints of the edges of $L \setminus K$. By $\pi_1$-injectivity of $f$, each component of $f^\inv(M)$ is a circle bounding a disc in the interior of $S$. Choosing an innermost such disc~$D$, and using that $G$ is an Eilenberg-Maclane space, by a further homotopy supported on a neighborhood of $D$ we may alter the map so as to remove $\bdy D$ from the inverse image of $M$. By induction, we may assume $f(S) \intersect M = \emptyset$. By a further homotopy rel $\bdy S$ we may push $S$ entirely into~$K$. Since $S$ is connected, we have pushed it into a single component of~$K$, from which it follows that $\F_\supp[\pi_1 S] \sqsubset [\pi_1 K]=\F_\supp[\bdy S]$.

Item~\pref{ItemTopIsNotLower} is a consequence of Fact~\ref{FactEGNielsenCrossings} and Definition~\ref{DefGeomModel}~\pref{ItemInteriorBasePoint} which together imply that $[\bdy_0 S]$ is represented by a height~$r$ circuit in~$G=G_r$. Here is another proof that avoids Fact~\ref{FactEGNielsenCrossings}, again based on Stallings method. Arguing by contradiction, if $\F_\supp(\bdy_0 S) \sqsubset [G_{r-1}]$ then the deformation retraction $d \from X \to G$ may be homotoped relative to $G_{r-1}$ so that it takes $\bdy_0 S$ to $G_{r-1}$. Again we may assume that $d$ is transverse to the set $M$ of midpoints of edges of $G \setminus G_{r-1}$. Each component of $f^\inv(M)$ is an embedded circle in $\interior(S)$ which is homotopically trivial in $X$. By $\pi_1$-injectivity of $j \from S \to X$, each such circle is homotopically trivial in $S$ and so bounds a disc in $\interior(S)$. Using this fact and proceeding just as above, we may alter $d$ by homotopy rel $G_{r-1}$ so that $d(S) \subset G_{r-1}$, contradicting that $d \from S \to G$ is a homotopy equivalence.

To prove~\pref{ItemInvFFSFromGeometric}, $\phi$-invariance of $[\pi_1 S]$ and of $[\bdy S]$ is built into Definition~\ref{DefGeomModel}, and so their free factor supports are $\phi$-invariant as well. From \pref{ItemTopIsNotLower} and Definition~\ref{DefGeomModel} it follows that $[\bdy_i S]$ is supported by $[\pi_1 G_{r-1}]$ if and only if $i \ne 0$, and together with $\phi$-invariance of $[\pi_1 G_{r-1}]$ it follows that $\phi$ preserves $[\bdy_0 S]$ and the set $\{[\bdy_1 S],\ldots,[\bdy_m S]\}$, and hence their free factor supports are preserved as well.

To prove~\pref{ItemRelFFS}, clearly $[\pi_1 G_{r-1}] \sqsubset [\pi_1 G_r]$, and from the definition of geometric models it follows that $[\pi_1 S] \sqsubset [\pi_1 G_r]$, and so $\F_\supp\bigl([\pi_1 G_{r-1}],[\pi_1 S]\bigr) \sqsubset [\pi_1 G_r]$. It follows from~\pref{ItemTopIsNotLower} that $\F_\supp\bigl([\pi_1 G_{r-1}],[\pi_1 S]\bigr)$ is not carried by $[\pi_1 G_{r-1}]$, and from \pref{ItemInvFFSFromGeometric} that it is $\phi$-invariant. But by (Filtration) in the definition of a \ct\ (Definition~\ref{DefSplittings}) there are no $\phi$-invariant free factor systems properly between $[\pi_1 G_{r-1}]$ and $[\pi_1 G_r]$, and so the equation~\pref{ItemRelFFS} holds.

To prove~\pref{ItemFFSLowerBdys}, by homotoping the attaching maps of the lower boundaries, namely the maps $\alpha_i \from \bdy_i S \to G_{r-1}$ for $i=1,\ldots,m$, we obtain a marked 2-complex $X'$ which deformation retracts to a filtered marked graph $G'$ such that $[\pi_1 G_j] = [\pi_1 G'_j]$ for $j=t,u,r-1,r$, and such that $G'_t \union S$ deformation retracts to a core subgraph of $G'_t \union H'_r$. It follows that $\F_\supp\{[\pi_1 G_t],[\pi_1 S]\} \sqsubset [\pi_1(G'_t \union H'_r)]$. Since the core subgraph $G'_u$ is not a subgraph of $G'_t \union H'_r$ it follows that $[\pi_1 G'_u] \not\sqsubset [\pi_1(G'_t \union H'_r)]$ and so $[\pi_1 G_u]=[\pi_1 G'_u] \not\sqsubset \F_\supp\{[\pi_1 G_t],[\pi_1 S]\}$.

\end{proof}

\subsubsection{Characterizing geometric strata: Proof of Fact \ref{FactGeometricCharacterization}}
\label{SectionGeometricityProof}

The ``furthermore'' clause follows from Fact~\ref{FactNielsenBottommost}.

The implication \pref{ItemEGStratumIsGeometric}$\implies$\pref{ItemEGNPIsClosed} for ``improved relative train track representatives'' is contained in \BookOne, Theorem~5.1.5, item eg-(iii), and the proof of this particular item is found on page~590 of \BookOne. 

In our present \ct\ context, we prove \pref{ItemEGStratumIsGeometric}$\implies$\pref{ItemEGNPIsClosed} as follows. Assuming $H_r$ to be geometric, consider a weak geometric model $Y$ as notated in Definition~\ref{DefGeometricStratum}, with quotient map $j \from G_{r-1} \disjunion S \to Y$. Since $\bdy_0 S$ is preserved by the pseudo-Anosov homeomorphism $\Psi \from S \to S$, it follows that the circuit $c = d_\#(\bdy_0 S)$ is invariant by~$f_\#$. By Fact~\ref{FactNielsenCircuit}, the circuit $c$ splits completely into fixed edges and indivisible Nielsen paths. By Lemma~\ref{LemmaScaffoldFFS}~\pref{ItemTopIsNotLower} the circuit $c$ has height~$r$, and so one of the terms of the complete splitting of $c$ must have height~$r$, but that term cannot be a fixed edge, so by Fact~\ref{FactEGNPUniqueness} that term is the unique (up to reversal) indivisible Nielsen path $\rho_r$ of height~$r$. The path $\rho_r$ must be closed for otherwise, by Fact~\ref{FactEGNielsenCrossings}~\pref{ItemNoClosed}, one of the endpoints of $\rho_r$ is not contained in $G_{r-1}$, but then the term of the complete splitting of $c$ incident to that endpoint is an edge of $H_r$ and so is not a fixed edge nor an indivisible Nielsen path, a contradiction.

\smallskip
\textbf{Remark.} Observe that this proof of \pref{ItemEGStratumIsGeometric}$\implies$\pref{ItemEGNPIsClosed} does not make use of the fact that $d \from Y \to G_r$ is a deformation retraction, only that it is a homotopy equivalence which restricts to the inclusion $G_{r-1} \subset G_r$. As such, this proof works just as stated using instead the definition of geometricity given in \BookOne\ Definition~5.1.4, and so that definition also implies item~\pref{ItemEGNPIsClosed}. Combining this with the proofs to come of \pref{ItemEGNPIsClosed}$\implies$\pref{ItemEGNPCrossesEachTwice}$\implies$\pref{ItemEGStratumIsGeometric}, we obtain a proof that geometricity as defined in \BookOne\ Definition~5.1.4 implies geometricity as defined here in Definition~\ref{DefGeometricStratum}.

\medskip

The implication \pref{ItemEGNPIsClosed}$\implies$\pref{ItemEGNPCrossesEachTwice} is the ``only if'' direction of Fact~\ref{FactEGNielsenCrossings}~\pref{ItemEGNielsenClosed}.

We now prove \pref{ItemEGNPCrossesEachTwice}$\implies$\pref{ItemEGStratumIsGeometric}. So, assume that there exists an indivisible Nielsen path $\rho_r$ of height~$r$ crossing each edge of $H_r$ exactly twice. By Fact~\ref{FactNielsenBottommost}~\pref{ItemNoZeroStrata} and Fact~\ref{FactContrComp}, each component of $G_{r-1}$ is noncontractible, a fact which we assume henceforth without comment. Let $G_r^0$ be the component of $G_r$ containing $H_r$, and hence also containing $\rho_r$. It suffices to construct a weak geometric model $Y^0$ of the restricted \ct\ $f \restrict G_r^0$ for its stratum $H_r$, because that automatically extends to a weak geometric model $Y$ of $f \restrict G_r$ for $H_r$ by extending the homotopy equivalence of $Y^0$ to a homotopy equivalence of $Y$ which, on $Y-Y^0 = G-G_r^0$, is equal to $f$. We may therefore assume that $G=G_r^0=G_r$.

In order to construct $Y$ we shall follow the method laid out in the proof of Proposition 5.3.1 of \BookOne. All of the hypotheses of that proposition are satisfied except that we replace the hypothesis ``$f$ is $\F$-Nielsen minimized'' by the statement (EG Nielsen Paths) of Definition~\ref{DefCT}. Lemma~4.18 of \recognition\rc\ says that with this replacement, the statement and proof of Proposition 5.3.1 of \BookOne\ remain valid. As said earlier, that proof mostly produces a weak geometric model, but among other things we must carefully note the steps of that proof in which item~\pref{ItemEmbAndRetract} of Definition~\ref{DefWeakGeomModel} is established. For that purpose we go through some details of the proof of \BookOne~Proposition~5.3.1, carefully citing various steps.

\medskip

Following \BookOne, page~571 in the proof of Proposition~5.3.1, let $Y=M(\rho_r)$ be the mapping cylinder of $\rho_r \from [0,1] \to G_r$, which recall is the target of the quotient map 
$$q \from Q \disjunion G_r \to M(\rho_r), \qquad Q = [0,1] \cross [0,1]
$$
that identifies $(s,0) \in Q$ to $\rho_r(s) \in G_r$ for each $s \in [0,1]$. The embedding $G_r \to Y= M(\rho_r)$ is simply the restriction of $q$. Denote the ``upper boundary'' of $Q$ to be $\bdy_u Q = \bigl(\{0\} \cross [0,1]\bigr) \union \bigl([0,1] \cross \{1\}\bigr) \union \bigl(\{1\} \cross [0,1]\bigr)$. Choose a deformation retraction $d_Q \from Q \to [0,1] \cross \{0\}$ that restricts to a homeomorphism $\bdy_u Q \to [0,1] \cross \{0\}$. Clearly $d_Q$ induces a deformation retraction $d \from M(\rho_r) =Y \to G_r$, and on $Q$ we have the commutative relation $d \composed q = q \composed d_Q$ (by construction, $d \composed q \restrict \bdy_u Q$ is a parameterization of the Nielsen path $\rho_r$). Note that already we have an embedding $G_r \mapsto Y$ and deformation retraction $d \from Y \to G_r$ satisfying item~\pref{ItemGrEmbedsInY}.

Next we define the surface $S$ and its boundary circles $\bdy_0 S,\ldots,\bdy_m S$ required for item~\pref{ItemWeakGMSurface} of Definition~\ref{DefWeakGeomModel}. Subdivide $[0,1] \approx [0,1] \cross 0 \subset Q$ into subintervals each mapped by $\rho_r$ to an edge of $G_r$, so each edge of $H_r$ occurs exactly twice. Define a quotient map $q_S \from Q \to S$ by identifying in pairs, consistent with $\rho_r$, those subintervals of $[0,1] \cross 0$ which are mapped to edges of $H_r$, and so $S$ is a compact surface with nonempty boundary. Clearly $q_S(\bdy_u Q)$ is a connected subset of $\bdy S$ and so is contained in a component which we define to be the upper boundary $\bdy_0 S$; the remaining components, if any, are enumerated arbitrarily as $\bdy_1 S,\ldots,\bdy_m S$.  

Next we define the quotient map $j \from S \,\disjunion\, G_{r-1} \to Y$ referred to in item~\pref{ItemWeakGMY} of Definition~\ref{DefWeakGeomModel}, although we do not yet specify exactly the identifications made by this quotient map that are needed in order to completely verify~\pref{ItemWeakGMY}. The quotient map $q$ clearly factors into two quotient maps as follows:
$$Q \disjunion G_r \xrightarrow{q_S} S \disjunion G_r \xrightarrow{j_r} M(\rho_r)=Y
$$
Consider the 1-complex $\kappa = q_S([0,1] \cross 0) \subset S$. The deformation retraction $d_Q \from Q \to [0,1] \cross 0]$ clearly induces a deformation retraction $S \to \kappa$ whose homotopy inverse is the inclusion $\kappa \inject S$. The edges of $\kappa$ whose interiors are contained in the interior of $S$ are labelled one-to-one by edges of $H_r$. The remaining edges of $\kappa$ form a subcomplex $\kappa_0$ labelled (not necessarily one-to-one) by edges of $G_{r-1}$. Clearly we have
$$\kappa_0 = \bdy S \setminus q_S(\bdy_u Q)
$$
Under the embedding of $G_r$ in $M(\rho_r)$, the restriction $j_r \restrict \kappa$ may be identified with the labelling map $\gamma \from \kappa \to G_r$, the image of which contains $H_r$. By restricting $j_r$ we there obtain a surjective map $j \from S \disjunion G_{r-1} \to Y$, and clearly this is a quotient map. 

We verify further items of Definition~\ref{DefWeakGeomModel} with citations from the proof of Proposition~5.3.1 on p.\ 578 of \BookOne, several of which are applications of \BookOne\ Lemma~5.3.9, using the fact that the relative train track map $f \restrict G_r$ is reduced; in our context this fact is part of property (Filtration) in the definition of a \ct. 

Next we verify \pref{ItemInteriorBasePoint} of Definition~\ref{DefWeakGeomModel}. The proof of Proposition~5.3.1 uses Lemma~5.3.9 to show that the quotient map $q_S$ identifies the endpoints of the arc $\bdy_u Q$, and so we obtain $\bdy_0 S$ as the quotient of $\bdy_u Q$ under that identification.  We have already seen that the quotient map $q \from Q \disjunion G_r \to Y$ restricts on $\bdy_u Q$ to a parameterization of $\rho_r$, and so it follows that the quotient map $j \from S \disjunion G_{r-1} \to Y$ restricts on $\bdy_0 S$ to a parameterization of $\rho_r$. It is also clear from the construction that $G_r \intersect \bdy_0 S$ is the base point $p_r$ of $\rho_r$, and we have verified Definition~\ref{DefWeakGeomModel}~\pref{ItemInteriorBasePoint}. 

Next, for $i=1,\ldots,m$ we define $\alpha_i$ and prove item~\pref{ItemWeakGMAttach} of Definition~\ref{DefWeakGeomModel}. At this stage we know that $\kappa_0 = \bdy S \setminus \bdy_0 S$ is the union of the lower boundary components $\bdy_1 S, \ldots, \bdy_m S$, and on each of these we define $\alpha_i = \gamma \restrict \bdy_i S \from \bdy_i S \to G_{r-1}$. The proof of Proposition~5.3.1 uses Lemma~5.3.9 to show that $\alpha_i$ is homotopically nontrivial. 

Next we verify item~\pref{ItemWeakGMY} of Definition~\ref{DefWeakGeomModel}. We have already verified that the quotient map $j \from S \disjunion G_{r-1} \to Y$ does indeed identify each $x \in \bdy_i S$ to $\alpha_i(x)$, for each $i=1,\ldots,m$. We must verify that $j$ identifies no other point pairs. By construction, the only point pairs of $S \disjunion G_{r-1}$ that are identified by $j$ are point pairs in $G_{r-1} \union \kappa$. Also, $j$ maps $\kappa_0$ to $G_{r-1}$ and maps the union of the interiors of the edges of $\kappa - \kappa_0$ injectively to the union of the interiors of the edges of $H_r$, which are disjoint from $G_{r-1}$. Letting $V$ be the set of vertices of $\kappa$ not in $\kappa_0$, we therefore need only consider point pairs in $G_{r-1} \union \kappa_0  \union V$. Also, the only identifications of point pairs of $G_{r-1} \union \kappa_0$ made by $j$ are those made by the union of the maps $\alpha_i \from \bdy_i S \to G_{r-1}$, which are defined just by restricting $j$ to the components of $\kappa_0 = \bdy_1 S \union\cdots\union \bdy_m S$. It therefore only remains to consider two points of $V$ or a point of $V$ and a point of $G_{r-1}$. We again cite the proof of Proposition~5.3.1, which uses Lemma~5.3.9 once again to show that for each $v \in V$ we have $j_r(\Link(\kappa, v)) = \Link(G_r, j_r(v))$, which implies that $j(v) = j_r(v) \not\in G_{r-1}$, and that if $v \ne w \in V$ then $j(v)=j_r(v) \ne j_r(w)=j(w)$. 

We turn to item~\pref{ItemWGMDynSyst} of Definition~\ref{DefWeakGeomModel}. Defining the homotopy equivalence $h \from Y \to Y$ to be the composition $Y \xrightarrow{d} G \xrightarrow{f} G \inject Y$, this clearly satisfies the dynamic Definition~\ref{DefWeakGeomModel}~\pref{ItemCTHomComm}. It remains to construct a homeomorphism $\Psi \from (S,\bdy_0 S) \to (S,\bdy_0 S)$ with pseudo-Anosov mapping class and to verity item~\pref{ItemPsAnHomComm} of Definition~\ref{DefWeakGeomModel}. These tasks are accomplished in the final portions of the proof of Proposition~5.3.1 on p.\ 579 of~\BookOne\ to which we make one last citation. In outline, the brunt of that proof is contained in Corollary~5.3.8 of~\BookOne\ which says that the homotopy equivalence inclusion $\kappa \inject S$, when composed with the map $S \xrightarrow{j} Y \xrightarrow{d} G \xrightarrow{f} G$ may be lifted to a homotopy equivalence $f_\kappa \from \kappa \to \kappa$ that permutes the free homotopy classes of the components of $\kappa_0$. One also checks that $f_\kappa$ fixes the free homotopy class of the circuit of $\kappa$ that is the image of $\bdy_0 S$ under the deformation retraction $S \to \kappa$. Conjugating $f_\kappa$ by the homotopy equivalence between $S$ and $\kappa$ one obtains a homotopy equivalence of $S$ preserving the free homotopy classes of $\bdy S$, in particular preserving the class of $\bdy_0 S$. By the Dehn-Nielsen-Baer theorem \cite{FarbMargalit:primer} this map is homotopic the desired homeomorphism $\Psi \from (S,\bdy_0 S) \to (S,\bdy_0 S)$. A further check that this map has no periodic conjugacy classes other than the boundary components implies that its mapping class is pseudo-Anosov.

This completes the proof of Fact~\ref{FactGeometricCharacterization}.

\subparagraph{Remark.} Unlike in the proof of Proposition~5.3.1, we do not in the end replace $\alpha_i$ with an immersion $\alpha'_i$ in the same free homotopy class, because this would destroy the mapping cylinder properties that are used to obtain the embedding $G_r \inject Y$ and the deformation retraction $d \from Y \to G_r$. In this regard, on the bottom of page~578 of \BookOne\ the proof of Proposition~5.3.1 incorrectly asserts that the mapping cylinder is homeomorphic to the space obtained by gluing $S$ to $Y$ using the immersions $\alpha'_i$ as gluing maps, but this assertion is inconsequential for anything else in \BookOne.

\subsection{Complementary subgraph and peripheral splitting}
\label{SectionGeomModelComplement}
For this section we fix a rotationless $\phi \in \Out(F_n)$, a representative \ct\ $f \from G \to G$, a geometric \eg-stratum~$H_r$, and a geometric model $X$ for $H_r$ with associated notation from Definition~\ref{DefGeomModel}. 

\begin{definition}[The complementary subgraph]
\label{DefComplSubgraph}
Define the \emph{complementary subgraph} of $X$ to be
$$L = \closure(X - \interior(S)) = (G \setminus H_r) \union \bdy_0 S
$$
This graph $L$ is naturally identified with the quotient of the disjoint union of $G \setminus H_r$ and an arc $E_\rho$ modulo the following identifications: if $p_r \in G \setminus H_r$---equivalently $p_r$ is not an interior point of $G_r$---then both endpoints of $E_\rho$ are identified to $p_r$; otherwise, the endpoints of $E_\rho$ are identified just with each other, forming a circle component of $L$. In either case, $E_\rho$ forms a loop in $L$ based at $p_r$ that is identified with~$\bdy_0 S$. Item~\pref{ItemInteriorBasePoint} of Definition~\ref{DefGeomModel} implies that the composition $E_\rho \inject (G \setminus H_r) \disjunion E_\rho \mapsto L \inject X \xrightarrow{d} G$ is a parameterization of the Nielsen path $\rho$. Definition~\ref{DefGeomModel}~\pref{ItemAttPts} implies that $L \intersect \interior(S)$ is just the set of attaching points of $X$.

Listing the noncontractible components of $L$ as $\{L_l\}$, associated to $L$ is the subgroup system in $\pi_1(X)\approx F_n$ defined by $[\pi_1 L] = \{[d_* \pi_1(L_l)]\}$. 
\end{definition}

Lemma~\ref{LemmaLImmersed} addresses malnormality, or failure thereof, for the subgroup systems of $F_n$ associated to the complementary subgraph $L$ and the surface $S$. The proof will be an application of Bass-Serre theory, depending on the construction the ``peripheral splitting'' of $F_n$ that is associated to $X$. This is a certain graph of groups presentation of $F_n$ obtained by splitting $X$ at the components of $\bdy S$ and the attaching points. In Section~\ref{SectionFreeBoundaryInvariant} we shall also use Bass-Serre theory to characterize the ``free boundary circles'' of $X$.

Lemma~\ref{LemmaLImmersed} will be used in several later arguments in this section, including the proofs of Propositions~\ref{PropGeomLams} and~\ref{PropVertToFree}.



\begin{lemma}
\label{LemmaLImmersed}
The maps $L \xrightarrow{d} X$ and $S \xrightarrow{j} X$ are $\pi_1$-injective (for the latter see also the discussion at the end of Definition~\ref{DefGeomModel}). Furthermore:
\begin{enumerate}
\item\label{ItemComplementMalnormal}
The subgroup system $[\pi_1 L]$ in $\pi_1(X) \approx F_n$ is malnormal and has one component for each noncontractible component $L_l$, that is, if $l \ne l'$ then $[d_*\pi_1 L_l] \ne [d_*\pi_1 L_{l'}]$. As a consequence:
\begin{enumerate}
\item\label{ItemComplementCircuits}
Distinct circuits in $L$ map via $d$ to distinct circuits in $G$. 
\item\label{ItemKInL}
For any subgraph $K \subgroup L$ with noncontractible components $\{K_j\}$, each map $K_j \inject G \inject X$ is $\pi_1$-injective, the subgroup system $[\pi_1 K] = \{[d_* \pi_1 K_j]\}$ in $\pi_1(X) \approx F_n$ is malnormal, and if $j \ne j'$ then $[d_* \pi_1 K_j] \ne [d_* \pi_1 K_{j'}]$.
\end{enumerate} 
\item\label{ItemSeparationOfSAndL} 
The subgroup $d_* \pi_1 S \subgroup F_n$ is its own normalizer (although it need not be malnormal). For any nontrivial $c \in \pi_1 S \inject F_n$ with conjugacy class $[c]$ in $F_n$, if $[c]$ is carried by $[\pi_1 L]$, or if there exists $c' \in \pi_1 S$ such that $c,c'$ are conjugate in $F_n$ but not in $\pi_1 S$, then $c$ is peripheral in $\pi_1 S$ meaning that $c$ is represented by a loop in $\bdy S$.
\end{enumerate}
\end{lemma}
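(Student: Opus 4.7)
The plan is to set up the \emph{peripheral splitting} of $F_n = \pi_1 X$ as a graph-of-groups decomposition arising from the geometric decomposition $X = S \union L$, and then read off all the conclusions from Bass--Serre theory. The intersection $S \intersect L$ is the disjoint union of the boundary circles $\bdy_0 S,\ldots,\bdy_m S$ and the finite set of attaching points, giving a graph of spaces whose vertex spaces are $S$ together with the components $L_l$ of $L$, whose edge spaces along each $\bdy_i S$ carry edge group $\Z$, and whose edge spaces at each attaching point carry trivial edge group. The attaching maps are the boundary inclusions $\bdy_i S \inject S$, the gluings $\alpha_i \from \bdy_i S \to L$ for $i \ge 1$, the identification $\bdy_0 S \cong E_\rho \subset L$ for $i=0$, and the vertex inclusions for the attaching points. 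Each of these is $\pi_1$-injective: the boundary inclusions because $S$ has negative Euler characteristic, and the attaching maps into $L$ by the nontriviality requirement in Definition~\ref{DefWeakGeomModel}~\pref{ItemWeakGMAttach}. Seifert--van Kampen for graphs of spaces then identifies $F_n$ with the fundamental group of this graph of groups, and in particular every vertex group injects into $F_n$, yielding the $\pi_1$-injectivity of $j\from S \to X$ and of each $L_l \inject X$.

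I would next pass to the Bass--Serre tree $T$, on which $F_n$ acts minimally and without inversions, with vertex stabilizers the conjugates of $d_* \pi_1 S$ or of the $d_* \pi_1 L_l$, and edge stabilizers either trivial or infinite cyclic generated by a conjugate of some $[\bdy_i S]$. The crucial external input is the classical fact that $\<[\bdy_0 S]\>,\ldots,\<[\bdy_m S]\>$ form a malnormal collection of maximal cyclic subgroups of $\pi_1 S$, a standard peripheral-structure result for compact surfaces with nonempty boundary and negative Euler characteristic. To prove item~\pref{ItemComplementMalnormal}, any nontrivial element of $g(d_* \pi_1 L_l)g^{-1} \intersect d_* \pi_1 L_{l'}$ must fix the $T$-geodesic from $g \cdot v_{L_l}$ to $v_{L_{l'}}$; every edge along this geodesic then has nontrivial stabilizer and so is a boundary-circle edge, forcing the path to alternate between $L$-vertices and $S$-vertices. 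At any intermediate $S$-vertex, two consecutive edges correspond to distinct boundary circles of $S$, so the malnormal-boundary fact makes their cyclic edge stabilizers intersect trivially, a contradiction unless the geodesic has length zero; then $l=l'$ and $g \in d_* \pi_1 L_l$. Consequences~\pref{ItemComplementCircuits} and~\pref{ItemKInL} follow from this distinctness and malnormality either directly, or after cutting each $L_l$ further along $K \intersect L_l$ to refine the peripheral splitting.

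For item~\pref{ItemSeparationOfSAndL}, self-normalization of $P := d_* \pi_1 S$ follows because any $g$ normalizing $P$ either fixes $v_S$ (hence $g \in P$) or moves $v_S$ to some $w \ne v_S$ with stabilizer also equal to $P$; but then the stabilizer of the path from $v_S$ to $w$ equals $P$, while any path stabilizer must be contained in a cyclic edge stabilizer, contradicting that $P$ is non-cyclic (since $\pi_1 S$ is free of rank $\ge 2$ by the Euler characteristic hypothesis). For the peripherality statement, if a nontrivial $c \in P$ has $[c]$ carried by $[\pi_1 L]$ then by definition some $m \ge 1$ and $h \in F_n$ give $h c^m h^{-1} \in d_* \pi_1 L_l$, so $c^m$ fixes both $v_S$ and $h^{-1} \cdot v_{L_l}$; the geodesic between them has length $\ge 1$, so $c^m$ fixes its first edge at $v_S$, whose stabilizer is $\<[\bdy_i S]\>$ for some $i$, and maximality of this cyclic subgroup in $P$ promotes $c^m \in \<[\bdy_i S]\>$ to $c \in \<[\bdy_i S]\>$, so $c$ is peripheral. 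The same argument, now applied to $c$ directly when $c,c' \in P$ are $F_n$-conjugate via some $g$ but not $\pi_1 S$-conjugate (so $g \cdot v_S \ne v_S$), shows $c$ fixes two distinct $S$-vertices and hence lies in a boundary-edge stabilizer, giving peripherality of $c$. The main obstacle I anticipate is assembling the peripheral splitting and verifying its edge-group and attaching-map structure with enough care to apply the malnormal-boundary input cleanly; once that is in place, Bass--Serre theory does all the work.
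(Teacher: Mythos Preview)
Your proposal is correct and follows essentially the same route as the paper: both construct the peripheral splitting of $F_n$ as a graph of groups with vertex groups $\pi_1 S$ and the $\pi_1 L_l$, edge groups infinite cyclic along $\bdy_i S$ and trivial at attaching points, then read off all conclusions from Bass--Serre theory together with the key fact that the peripheral subgroups of $\pi_1 S$ form a malnormal family (which the paper phrases via hyperbolic geometry of the universal cover of $S$). The only cosmetic difference is that the paper first blows up the attaching points to produce an honest graph-of-spaces $\wh X$, whereas you describe the same decomposition directly on $X$; and in the ``carried by $[\pi_1 L]$'' case you carefully pass through a power $c^m$ and then use maximality of the boundary cyclic subgroups to recover $c$, while the paper silently uses the already-established malnormality of $[\pi_1 L]$ to assume $c$ itself lies in a conjugate of some $\pi_1 L_l$.
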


For an example where $\pi_1 S$ is not malnormal, let $S$ have three boundary components, and attach the two lower boundary components to the same circuit in $G_{r-1}$.

To define the peripheral splitting of $F_n$ we shall use the method of \cite{ScottWall} by first constructing a graph of spaces, whose underlying 2-complex $\wh X$ is obtained from $X$ by blowing up the attaching points in a certain manner, as follows. 

\begin{definition}[Blowing up attaching points]
\label{DefBlowupAttaching}
Let the attaching points of $X$ be enumerated as $\{z_k\}_{k=1}^q$. Let $v_k \ge 1$ be the valence of $z_k$ in the graph~$L$. Choose a regular neighborhood $U$ of the finite set $\{z_k\}$, with component $U_k$ containing~$z_k$. We have $U_k = (U_k \intersect \interior(S)) \union (U_k \intersect L)$ where the subset $U_k \intersect \interior(S)$ is an open disc in $\interior(S)$ containing $z_k$, the subset $U_k \intersect L$ is a union of $v_k$ half-open arcs with common endpoint $z_k$, and the intersection of those two subsets is the point~$z_k$. Let $\wh X$ be the 2-complex obtained from $X$ by pulling each attaching point $z_k$ apart, replacing $U_k$ with the disjoint union of the two sets $U_k \intersect \interior(S)$ and $U_k \intersect L$ together with an arc $\eta_k$ having one endpoint $z^S_k$ identified with the copy of $z_k$ in $U_k \intersect \interior(S)$ and having opposite endpoint $z^L_k$ identified with the copy of $z_k$ in $U_k \intersect L$. We note that the quotient map $\wh X \to X$ obtained by collapsing each $\eta_k$ to the point $z_k$ is a homotopy equivalence, and so there is an induced isomorphism $\pi_1(\wh X) \to \pi_1(X) \approx F_n$ (well defined, as usual, up to inner automorphism).
\end{definition}

Recall that a \emph{graph of spaces} structure on $\wh X$ is defined by exhibiting $\wh X$ as a certain quotient of a collection of path connected \emph{edge spaces} and \emph{vertex spaces}. Each edge space is exhibited as the product of a compact arc and space called the \emph{core} of the edge space; we shall refer to the endpoints of the arc crossed with the core as the two \emph{endpoint cores} of that edge space, and the interior points of the arc crossed with the core as the \emph{interior cores}. The entire graph of spaces is obtained from the disjoint union of the vertex spaces and the edge spaces by attaching each boundary core to some vertex space via some $\pi_1$-injective map. Each edge space minus its two boundary cores forms an \emph{open edge space} which is foliated by interior cores and which maps homeomorphically via the quotient to an open subset of the graph of spaces. 

\begin{definition}[The peripheral graph of spaces]
\label{DefPeriphGraphOfSpaces}
Define the ``peripheral graph of spaces'' structure on $\wh X$ is defined as follows. The components of $\bdy S$ are enumerated as $\bdy_i S$, $i=0,\ldots,m$. Let $N$ be a closed regular neighborhood of $\bdy S$ in the surface $S$, chosen so that $N$ is disjoint from the closure of $U$. The component of $N$ containing $\bdy_i S$ is an annulus $N_i$ exhibited as the product of a compact arc and a circle. Let $\wh S = \interior(S) - \union_{i=0}^m \interior(N_i)$, a compact surface with boundary which is a deformation retract of~$\interior(S)$ and of~$S$ itself. The vertex spaces of $\wh X$ are $\wh S$ and the components of $L$. Each annulus $N_i$, $i=0,\ldots,m$ is an edge space with core being a circle: one endpoint core of $N_i$ is attached to $\wh S$ by the inclusion map $\bdy N_i \intersect \wh S \inject \wh S$, which is $\pi_1$-injective because $\wh S$ is not a disc; the other endpoint core of $N_i$ is attached to some component of $L$ by the restricted map $\bdy N_i \intersect \bdy S \xrightarrow{j} L$, and $\pi_1$-injectivity follows from Definition~\ref{DefWeakGeomModel}. Also, each closed arc $\eta_k$ is an edge space with core being a point, and with endpoint cores attached to $z^S_k \in \wh S$ and $z^L_k \in L$. 
\end{definition}

\begin{definition}[The peripheral splitting of $F_n$ associated to $X$] 
\label{DefPeripheralSplitting}
This is the graph of groups presentation $\Gamma(X)$ of $F_n \approx \pi_1(\wh X)$ that is associated to the graph of spaces structure on $\wh X$, as explained in \cite{ScottWall}. In detail, the underlying graph of $\Gamma(X)$ is the quotient of $\wh X$ obtained by collapsing each vertex space to a vertex of $\Gamma(X)$ and collapsing each edge space to an edge of $\Gamma$ by collapsing each of its interior cores to a point. The graph $\Gamma(X)$ has one ``$S$-vertex'' obtained by collapsing $\wh S$, and a finite set of ``$L$-vertices'' obtained by collapsing the components of $L$. Also $\Gamma(X)$ has one edge for each $N_i$ and one for each $z_k$. The graph $\Gamma(X)$ is bipartite: each edge has one endpoint on the $\wh S$ vertex and one endpoint amongst the $L$ vertices. By choosing a base point in each vertex space and in the core of each edge space, and then taking fundamental groups, we associate a vertex group to each vertex of $\Gamma(X)$ and an edge group to each edge; in particular, the group of the $S$-vertex is $\pi_1 S$, the group of each $L$-vertex is $\pi_1$ of the associated component of $L$, the group of each $N_i$ edge space is infinite cyclic, and the group of each $\eta_k$ edge space is trivial. By appropriate choice of paths connecting up base points, we associate to each endpoint of each edge a monomorphism from the edge group of that edge to the the vertex group of that endpoint. The fundamental group $\pi_1(\Gamma(X))$ is defined as in \cite{Serre:trees} or in \cite{ScottWall}, as is the isomorphism $\pi_1(\Gamma(X)) \approx \pi_1(\wh X) \approx F_n$, well defined up to inner automorphism.

Let $T(X)$ be the Bass-Serre tree of $\Gamma(X)$, on which $\pi_1(\Gamma(X)) \approx F_n$ acts with quotient graph of groups $\Gamma(X) \approx T(X) / F_n$. A vertex of $T(X)$ is referred to as an $L$-vertex or $S$-vertex according to which its image under the orbit map $T(X) \mapsto \Gamma(X)$ is an $L$-vertex or $S$-vertex.
\end{definition}

\begin{remark} \label{RemarkNotMinimalReFree}
The action of $F_n$ on $T(X)$ need not be minimal. In fact if an $L$-vertex of $T(X)$ projects to an $L$-vertex of $\Gamma(X)$ associated to a ``free boundary circle'' in $Y$, meaning the image in $Y$ of a component $\bdy_i S$ which embeds in $Y$ and does not locally separate $Y$, then that $L$-vertex of $T(X)$ has valence~$1$. This happens in particular when the stratum $H_r$ is the top stratum and $\bdy_i S = \bdy_0 S$ is the top boundary circle.
\end{remark}

\begin{proof}[Proof of Lemma \ref{LemmaLImmersed}] $\pi_1$-injectivity of the maps $L \to G \inject X$ and $S \inject X$ follows directly from the general fact that vertex groups of any graph of groups inject into its fundamental group. 

To prove \pref{ItemComplementMalnormal}, consider two $L$-vertices $V \ne W$ of $T(X)$. We must prove that $\Stab(V) \intersect \Stab(W)$ is trivial. Since $\Gamma(X)$ and hence $T(X)$ is bipartite, the arc $\overline{VW}$ contains some $S$-vertex $U$. Letting $E \ne E'$ be the edges of $\overline{VW}$ incident to $U$ it follows that $\Stab(V) \intersect \Stab(W) \subgroup \Stab(E) \intersect \Stab(E')$, and so it suffices to prove that $\Stab(E) \intersect \Stab(E')$ is trivial. Suppose it is nontrivial. The surface $\wh S$ has a hyperbolic structure which arises as follows: for some properly discontinuous, cocompact action $\pi_1(\wh S) \act \hyp^2$ whose limit set is a $\pi_1(\wh S)$-invariant Cantor set $C \subset \bdy\hyp^2$, the group $\pi_1(\wh S)$ acts on the convex hull $\Hull(C) \subset \hyp^2$, and $\wh S$ is identified isometrically with the quotient $\Hull(C) / \pi_1(\wh S)$. There exist geodesic lines $L \ne L'$ which are components of $\bdy\Hull(C)$ such that under the action of $\pi_1(\wh S)$ on $\Hull(C)$ we have $\Stab(E)=\Stab(L)$ and $\Stab(E')=\Stab(L')$. But $\Stab(L) \intersect \Stab(L')$ is clearly trivial, because the lines $L,L'$ have disjoint endpoint pairs in $C$. Item~\pref{ItemComplementCircuits} is an immediate consequence. Item~\pref{ItemKInL} follows by using the fact that for each component $L_l$ of $L$, the components of $K$ in $L_l$ define a malnormal subgroup system in $\pi_1(L_l)$.

To prove \pref{ItemSeparationOfSAndL}, fix an $S$-vertex $V$ of $T(X)$. The identification of $T(X) / F_n$ with $\Gamma(X)$ induces an isomorphism between $\Stab(V)$ and the image of the injection $\pi_1 S \approx \pi_1(\wh S) \inject \pi_1(X)\approx F_n$. Consider  another vertex $W \ne V$ of $T(X)$ such that $\Stab(V) \intersect \Stab(W)$ is nontrivial. Letting $E$ be the first edge of the segment from $V$ to $W$ we have $\Stab(V) \intersect \Stab(W) \subset \Stab(E)$. As shown above, either $\Stab(E)$ is trivial or $\Stab(E)=\Stab(L)$ for some boundary line $L$ of the universal cover of $\wh S$, and so any element of $\Stab(V) \intersect \Stab(W)$ is peripheral in $\pi_1 S$. If $W$ is also an $S$-vertex then $\Stab(V) \ne \Stab(W)$ which implies that $\pi_1 S$ is its own normalizer in~$F_n$. Letting $c \in \pi_1 S \approx \Stab(V)$ be as in \pref{ItemSeparationOfSAndL}, the desired conclusion follows once the correct $W$ is chosen: if $[c]$ is carried by $[\pi_1 L]$ then take $W$ to be the unique $L$-vertex such that $c \in \Stab(W)$; and if $c$ is conjugate in $F_n$ to $c' \in \pi_1 S$ but $c,c'$ are not conjugate in $\pi_1 S$ then take $W$ to be the $S$-vertex $W=\theta \cdot V$ where $c' = \theta \, c \, \theta^\inv$.
\end{proof}

\subsection{The laminations of a geometric stratum} 
\label{SectionLamsGeomStratum}

The main result of this section is Proposition~\ref{PropGeomLams} which says that the lamination pair of a geometric stratum can be identified with the stable/unstable geodesic lamination pair of the associated pseudo-Anosov mapping class. This result can be bound in \BookOne, embedded in the proof of the geometric case of Proposition~6.0.8. We give a different proof, based on little bit of Nielsen--Thurston theory for surface mapping classes, and we get a little bit more out of the proof by identifying the free factor supports of these laminations with the free factor support of the surface~$S$ in the geometric model.

\subsubsection{Review of Nielsen--Thurston theory.} 
\label{SectionNTReview}
Given a finite type surface $S$, Nielsen described the action on $\bdy\pi_1 S$ of automorphisms of $\pi_1 S$ representing elements of the mapping class group of a surface~$S$. Thurston's theory of geodesic laminations on $S$ and his classification of mapping classes was related to Nielsen's theory in \cite{Miller:Nielsen} and \cite{HandelThurston:nielsen}; another main reference is \cite{CassonBleiler}. 

We review basic facts about geodesic laminations and pseudo-Anosov mapping classes, and in Proposition~\ref{PropNielsenThurstonTheory} we give a distilled version of the Nielsen-Thurston theory which is adequate for our applications. Our concern here is \emph{solely} with non-closed surfaces, and we will state results only in that case.  We shall take the liberty to state definitions and results in a manner that illuminates the connection with geometric strata and laminations in free groups; the reader may easily compare these statements with the standard forms found in the references.

\smallskip
\textbf{Hyperbolic structures.} Let $S$ be a compact surface with nonempty boundary~$\bdy S$. We assume that $\chi(S) \le -1$, and so $\pi_1 S$ is a free group of rank $\ge 2$ with Gromov boundary denoted $\bdy\pi_1 S$. Let $\wt S \union \bdy\pi_1 S$ denote the Gromov compactification of the universal cover $\wt S$, which is homeomorphic to a closed disc whose boundary is equal to $\bdy\wt S \union \bdy\pi_1 S$ in which $\bdy\pi_1 S$ sits as a Cantor subset. The deck transformation action $\pi_1 S \act \wt S$ extends uniquely to a homeomorphic action on $\wt S \union\bdy\pi_1 S$.

Let $\hyp^2$ denote the hyperbolic plane and let $\hyp^2 \union \bdy\hyp^2$ denote its compactification with the circle at infinity, identified with the Gromov compactification.

By a \emph{hyperbolic structure} on $S$ we shall mean a hyperbolic metric on $S$ with totally geodesic boundary. Fix a hyperbolic structure $\mu$ and let $\ti\mu$ be the lifted hyperbolic structure on the universal cover $\wt S$. Associated to $\mu$ is the \emph{developing map} $D^\mu \from \wt S \inject \hyp^2$, an isometric embedding, and the \emph{holonomy} $\rho_\mu \from \pi_1 S \to \Isom(\hyp^2)$, a properly discontinuous action that extends the deck transformation action of $\pi_1 S$ on~$\wt S$; these are uniquely determined by~$\mu$ up to post composition by some isometry of~$\hyp^2$. The \emph{limit set} $\Lim_\mu \subset S^1_\infinity=\bdy\hyp^2$ is the accumulation set of any orbit. The embedded image $\wt S \subset \hyp^2$ is identified with the \emph{convex hull} of the limit set $\Hull(\Lim_\mu)$, which is smallest closed subset of $\hyp^2$ containing each geodesic $(\xi,\eta) \subset \hyp^2$ such that $\xi\ne\eta \in \Lim_\mu$; we also denote $[\xi,\eta] = (\xi,\eta) \union \{\xi,\eta\} \subset \hyp^2 \union S^1_\infinity$. There exists a unique equivariant continuous homeomorphism $D^\mu_\infinity \from \bdy\pi_1 S \to \Lim_\mu$, and this homeomorphism pieces together with $D^\mu$ to produce an equivariant homeomorphism defined on the Gromov compactification 
$$D^\mu \disjunion D^\mu_\infinity \from \wt S \disjunion \bdy\pi_1 S \to \Hull(\Lim_\mu) \disjunion \Lim_\mu
$$
When $\mu$ has been specified we often use this homeomorphism to identify the domain and range.

Each line $\ti\ell \in \wt\B(\pi_1 S)$ is, formally, a 2-point subset $\{\xi,\eta\} \subset \bdy\pi_1 S \subset \bdy\hyp^2$, and its \emph{realization} in $\hyp^2$ is the bi-infinite geodesic $\ti\ell_\mu = (\xi,\eta) \subset  \Hull(\Lim_\mu) \subset \hyp^2$. For each $\ell \in \B(\pi_1 S)$, its realization $\ell_\mu$ is the bi-infinite geodesic in $S$ obtained by choosing any lift $\ti\ell \in \wt\B(\pi_1 S)$ and projecting $\ti\ell_\mu$ to $S$; this is well-defined independent of the choice.

\smallskip\textbf{Geodesic laminations.} A \emph{geodesic lamination} on $S$ is a closed subset $\Lambda \subset \B(\pi_1 S)$ such that for some (any) hyperbolic structure $\mu$ on $S$ the following hold: for each $\ell \in \Lambda$ the corresponding geodesic $\ell_\mu$ on $S$ is either a bi-infinite simple geodesic or it wraps infinite-to-one around a simple closed geodesic on $S$; if $\ell \ne \ell' \in \Lambda$ then $\ell_\mu \intersect \ell'_\mu = \emptyset$; and $\Lambda_\mu = \union_{\ell \in \Lambda} \ell_\mu$ is a compact subset of $\interior(S)$ which we refer to as the \emph{realization} of $\Lambda$ in the hyperbolic structure $\mu$. In the literature, the set $\Lambda_\mu \subset S$ itself is referred to as a geodesic lamination on $S$ with respect to $\mu$, but for present purposes we take the liberty of focussing on $\Lambda \subset \B(\pi_1 S)$ as the primary object of focus. 

A basic principle of geodesic laminations, which we assume throughout, is that their topological properties are independent of the choice of $\mu$, because for any other choice $\mu'$ there is a homeomorphism $S \mapsto S$ isotopic to the identity taking $\Lambda_\mu$ to $\Lambda_{\mu'}$. With our present point of view focussed on free group laminations, this principle is also reflected in the abstract lamination $\Lambda$ being independent of $\mu$. 

Consider a geodesic lamination $\Lambda$ on $S$ with realization $\Lambda_\mu$ and let $\wt \Lambda_\mu \subset \wt S$ be the total lift of $\Lambda_\mu$. An \emph{(upstairs, open) principal region} $\wt R \subset \wt S$ of $\wt\Lambda_\mu$ is a component of $\wt S - \wt \Lambda_\mu$, and a \emph{(downstairs, open) principal region} $R \subset S$ of $\Lambda_\mu$ is the image under the universal covering map $\wt S \mapsto S$ of some upstairs principal region $\wt R$. The map $\wt R \to R$ is a universal covering map whose deck transformation group is the subgroup $\Stab(\wt R) \subgroup \pi_1 S$. The ``closed'' version $\overline R$ of the principal region $R$ may be identified either with the closure of $\wt R$ in $\wt S$ modulo the action of $\Stab(\wt R)$ or with the completion of the geodesic metric on $R$ itself; the map $\overline R \to S$ is locally injective, and the image of $\overline R - R$ is a union of finitely many leaves of $\Lambda_\mu$ called the \emph{boundary leaves} of $R$. The sets of principal regions and of boundary leaves are each finite and nonempty. 

We say that $\Lambda$ \emph{fills} $S$ if for each principal region $R$ of $\Lambda_\mu$, either $R$ is to homeomorphic an open disc, or $R$ is homeomorphic a half-open annulus such that that $\bdy R$ is a component of $\bdy S$. Each of these two cases has a more precise description, expressed in terms of a corresponding upstairs principal region $\wt R$ and its stabilizer $\Stab(\wt R) \subset \pi_1 S$, as follows. $R$~is homeomorphic to an open disc if and only if $\Stab(\wt R)$ is trivial, in which case $\wt R$ is the interior (relative to $\wt S$) of the convex hull of a finite subset of $\bdy\pi_1 S$ of cardinality $k \ge 3$, and the map $\wt R \to R$ is a homeomorphism; in this case we say that $R$ is a \emph{(open) ideal polygon}, or more precisely an ideal $k$-gon. $R$~is homeomorphic to a half-open annulus such that $\bdy R$ is a component of $\bdy S$ if and only if $\bdy\wt R$ is a component of $\bdy \wt S$ and the subgroup $\Stab(\wt R) = \Stab(\bdy\wt R)$ is infinite cyclic, in which case $\wt R$ is the interior (relative to $\wt S$) of the convex hull of the union of the 2-point subset $\bdy \Stab(\wt R) \subset \bdy\pi_1 S$ and a $\Stab(\wt R)$-invariant subset of $\bdy\pi_1 S - \bdy\Stab(\wt R)$ having a finite number $k \ge 1$ of $\Stab(\wt R)$-orbits; in this case we say that $R$ is an \emph{(open) crown} (see \cite{CassonBleiler} Figure~4.2), or more precisely a $k$-pointed crown. In all cases the map of the closed principal region $\overline R \mapsto S$ is injective. Note that if $\Lambda$ fills $S$ then $\Lambda$ is not a single periodic leaf, and $\Lambda$ is minimal, meaning that every leaf is dense. 

\smallskip\textbf{Pseudo-Anosov mapping classes.} The \emph{mapping class group} of $S$ is 
$$\MCG(S) = \Homeo(S) / \Homeo_0(S)
$$
where $\Homeo_0(S)$ is the normal subgroup of the homeomorphism group $\Homeo(S)$ consisting of those homeomorphisms that are isotopic to the identity. By the Dehn-Nielsen-Baer theorem \cite{FarbMargalit:primer}, the kernel of the natural map $\Homeo(S) \to \Out(\pi_1 S)$ is $\Homeo_0(S)$ and its image is the subgroup $\Out(\pi_1 S, \bdy S) \subgroup \Out(\pi_1 S)$ that preserves the set of conjugacy classes of cyclic subgroups associated to the components of $\bdy S$, and so we obtain a natural isomorphism $\MCG(S) \to \Out(\pi_1 S,\bdy S)$.

A mapping class $\phi \in \MCG(S)$ is \emph{pseudo-Anosov} if there exists a pair of geodesic laminations $\Lambda^s,\Lambda^u \subset \B(\pi_1 S)$ called the \emph{stable and unstable lamination}, such that their realizations $\Lambda^s_\mu,\Lambda^u_\mu \subset S$ are filling, and such that there is a number $\lambda>1$ and positive $\pi_1 S$-equivariant Borel measures on the lifts $\wt\Lambda^s,\wt\Lambda^u \subset \wt\B(\pi_1 S)$ which are pushed forward by any lift $\ti\phi$ to their multiples by factors of $\lambda^\inv,\lambda$ respectively. A leaf $\ell$ of $\Lambda^s$ or $\Lambda^u$ is \emph{fixed} if $\phi(\ell)=\ell$ and is \emph{periodic} if $\phi^k(\ell)=\ell$ for some $k \ne 0$. 

By Lemma~6.1 of \cite{CassonBleiler}, there exists a homeomorphism $\Phi \from S \to S$ representing $\phi$ that preserves both $\Lambda^s_\mu$ and $\Lambda^u_\mu$. If in addition $\Phi$ exists preserving each individual principal region of $\Lambda^s_\mu$ and $\Lambda^u_\mu$, its boundary leaves, and their orientations then we say that $\Phi$ is \emph{rotationless}; note that a rotationless power of $\Phi$ exists because there are only finitely many principal regions and boundary leaves. We say that $\phi$ is rotationless if it has a rotationless representative. If $\Phi$ is rotationless and if $\wt\Phi \from \wt S \to \wt S$ is a lift of $\Phi$, we say that $\wt\Phi$ is an \emph{$s$-principal lift} if there exists a principal region $\wt R$ of $\wt\Lambda^s_\mu$ such that $\wt\Phi$ preserves $\wt R$ and preserves some (every) boundary leaf of $\wt R$. A $u$-principal lift is similarly defined. We remark that these notions of ``rotationless'' and ``principal lift'' are in agreement with the corresponding notions for outer automorphisms of $\pi_1 S$ as given in \recognition\ (or see Definition~\ref{DefPrinicipalAndRotationless}), but we do not make use of this correspondence.

The following proposition is our summary of the Nielsen-Thurston theory. Given a homeomorphism of a circle $h \from C \to C$, we say that $h$ has \emph{alternating source-sink dynamics} if for some $k \ge 1$ the fixed point set $\Fix(h)$ consists of $k$ attractors (sinks) and $k$ repellers (sources) alternating around $C$, and each component of $C-\Fix(h)$ is preserved by $h$. Also, if $J \subset C$ is compact subarc invariant under $h$ and with fixed endpoints, we say that $h$ has \emph{alternating source sink dynamics relative to $J$} if the restricted fixed point set $\Fix(h \restrict C-J)$ consists of a countable set of alternating attractors and repellers, accumulating on each endpoint of $J$.

\begin{proposition}\label{PropNielsenThurstonTheory}
If $\phi \in \MCG(S)$ is a rotationless pseudo-Anosov mapping class, if $\Phi \from S \to S$ is a rotationless representative, and if $\wt\Phi \from \wt S \to \wt S$ is a lift of $\Phi$, then  $\wt\Phi$ is $s$-principal if and only if $\wt\Phi$ is $u$-principal, in which case we say that $\wt\Phi$ is principal. Furthermore:
\begin{enumerate}
\item\label{ItemNTRegionsCorrespond}
If $\wt\Phi$ is principal then $\wt\Phi$ preserves a unique principal region $\wt R^s$ of $\wt\Lambda^s$ and a unique principal region $\wt R^u$ of~$\wt \Lambda^u$. Furthemore, letting $R^s,R^u$ be the corresponding principal regions downstairs of $\Lambda^s,\Lambda^u$, the following hold:
\begin{enumerate}
\item $R^s$ is a $k$-pointed ideal polygon if and only if $R^u$ is a $k$-pointed polygon, in which case $\wt\Phi$ acts with alternating source-sink dynamics on the circle $\bdy\wt S \union \bdy\pi_1 S$, with $k$ attractors being the ideal points of $\wt R^u$, and $k$ repellers being the ideal points of $\wt R^s$.
\item $R^s$ is a $k$-pointed crown if and only if $R^u$ is a $k$-pointed crown, in which case $\bdy R^s \intersect \bdy R^u$ is the same component $c$ of $\bdy S$, $\bdy \wt R^s \intersect \bdy \wt R^u$ is the same component $\ti c$ of $\bdy \wt S$, and $h$ acts with alternating source sink dynamics on the circle $\bdy\wt S \union \bdy \pi_1 S$ relative to the compact subarc $\ti c \union \bdy \ti c$, with attractors being the ideal points of $\wt R^u$ and repellers being the ideal points of $\wt R^s$.
\end{enumerate}
\item The property of $\wt\Phi$, $\wt R^s$, and $\wt R^u$ defined by the first sentence of \pref{ItemNTRegionsCorrespond} gives a $\pi_1 S$-equivariant bijection between three sets: principle lifts of $\Phi$; principle regions of $\wt\Lambda^s$; and principle regions of $\wt\Lambda^u$. This descends to a bijection between principle regions of $\Lambda^s$ and of $\Lambda^u$.
\end{enumerate}
\end{proposition}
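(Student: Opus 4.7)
The plan is to analyze the dynamics of $\wt\Phi$ on the boundary circle $\bdy\hyp^2 = \bdy\wt S \union \bdy\pi_1 S$ using the transverse measures on $\wt\Lambda^s_\mu$ and $\wt\Lambda^u_\mu$, which are scaled by factors $\lambda^{-1}$ and $\lambda$ respectively under $\wt\Phi$. Suppose $\wt\Phi$ is $s$-principal, preserving a principal region $\wt R^s$ together with an oriented boundary leaf $L^s$. The endpoints of $L^s$ in $\bdy\pi_1 S$ are then fixed by the extension of $\wt\Phi$ to $\bdy\hyp^2$, and the transverse contraction by $\lambda^{-1}$ along $\wt\Lambda^s_\mu$ forces each ideal vertex of $\wt R^s$ to be a repelling fixed point of $\wt\Phi$ on $\bdy\hyp^2$; symmetrically, every attracting fixed point of $\wt\Phi$ on $\bdy\hyp^2 \setminus \bdy\wt S$ must be an ideal vertex of some principal region of $\wt\Lambda^u_\mu$.

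First I would establish uniqueness of $\wt R^s$: if $\wt\Phi$ preserved two distinct principal regions each with a preserved oriented boundary leaf, then projecting downstairs and using that $\Phi$ is rotationless (so already preserves each downstairs principal region together with each of its oriented boundary leaves) yields a nontrivial $\gamma \in \pi_1 S$ satisfying $\wt\Phi\gamma\wt\Phi^{-1} \in \Stab(\wt R^s)$, which is incompatible with pseudo-Anosov dynamics on $\bdy\hyp^2$. Next, to exhibit the companion $\wt R^u$: picking a point in the interior of $\wt R^s$ and using the local product structure of the transverse pair $(\wt\Lambda^s_\mu, \wt\Lambda^u_\mu)$, one identifies principal regions of $\wt\Lambda^u_\mu$ whose ideal vertices on $\bdy\hyp^2$ must be the attracting fixed points of $\wt\Phi$ interleaving the repellers coming from $\wt R^s$. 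A topological counting argument in the closed disc $\wt S \union \bdy\pi_1 S$ pins down a unique such $\wt R^u$, showing $\wt\Phi$ is $u$-principal; the reverse implication is symmetric, and the bijection claimed in item~2 is assembled by sending $\wt\Phi \leftrightarrow \wt R^s \leftrightarrow \wt R^u$.

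Finally, the split into cases (a) and (b) follows from the classification of principal regions. In case (a) all ideal vertices of $\wt R^s$ (and hence of $\wt R^u$) lie in $\bdy\pi_1 S$, giving the standard alternating source-sink dynamics on the circle. In case (b) the stabilizer $\Stab(\wt R^s)$ is the infinite cyclic group fixing a component $\ti c$ of $\bdy\wt S$; since $\wt\Phi$ normalizes $\Stab(\wt R^s)$ it preserves $\ti c$, and the same holds for $\wt R^u$, so $\bdy\wt R^s \cap \bdy\wt R^u = \ti c$ as asserted, with alternating source-sink dynamics now occurring relative to the arc $\ti c \union \bdy\ti c$. The main obstacle I anticipate is making rigorous the interleaving of attractors and repellers along $\bdy\hyp^2$ and ruling out additional fixed points of $\wt\Phi$ outside $\bdy\wt S$ and the ideal vertices of $\wt R^s, \wt R^u$; this uses the local product structure of the invariant lamination pair, as developed in the Nielsen-Thurston references (\cite{CassonBleiler}, \cite{HandelThurston:nielsen}, \cite{Miller:Nielsen}) cited earlier in the excerpt.
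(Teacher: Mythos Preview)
The paper does not actually prove this proposition: under the heading ``Remarks on the proof'' it simply cites the literature, pointing to \cite{CassonBleiler} Theorem~5.5 for the closed-surface analogue and to Nielsen's original works \cite{Nielsen:InvestigationsI_II_IIIEnglish} together with \cite{Miller:Nielsen} Section~9 for the bounded case. So there is no argument in the paper to compare your sketch against.

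Your outline is the standard line of reasoning one finds in those references: use the $\lambda^{\pm 1}$ scaling of the transverse measures to show that ideal points of $\wt R^s$ are repellers and ideal points of a corresponding $\wt R^u$ are attractors, then read off the alternating source--sink picture on the circle at infinity. Two places where your sketch is thin: your uniqueness argument for $\wt R^s$ via ``$\wt\Phi\gamma\wt\Phi^{-1} \in \Stab(\wt R^s)$ is incompatible with pseudo-Anosov dynamics'' is not quite the right mechanism---uniqueness really comes \emph{after} you have the full fixed-point description on $\bdy\hyp^2$, since a second invariant principal region would contribute extra fixed points not accounted for by the alternating pattern. And the step where you ``identify principal regions of $\wt\Lambda^u_\mu$ whose ideal vertices must be the attracting fixed points'' needs the fact that every point of $\bdy\pi_1 S$ is either an ideal vertex of a principal region or an endpoint of a non-boundary leaf (for each lamination separately), which is where the cited sources do real work. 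These are fillable gaps, and your overall architecture matches what the references contain.
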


\paragraph{Remarks on the proof.} The existing texts on Nielsen-Thurston theory prove the analogue of Proposition~\ref{PropNielsenThurstonTheory} for a closed surface; see for example \cite{CassonBleiler} Theorem 5.5. Nielsen's original works \cite{Nielsen:InvestigationsI_II_IIIEnglish} analyze the bounded case as well, and this case can be related to Thurston's theory of geodesic laminations as outlined in \cite{Miller:Nielsen} Section~9, yielding the above proposition.

\begin{definition}[Proper geodesics and proper equivalence]
\label{DefProperGeodesic}
A \emph{proper geodesic} in $\wt S$ is the convex hull in $\wt S$ of a pair of points in $\bdy \wt S \union \pi_1 S$, excluding a pair of points in the closure of same component of $\bdy\wt S$. Every proper geodesic is one of the following: a \emph{proper geodesic line} in the interior of $\wt S$; a \emph{proper geodesic ray}, having one ideal endpoint and intersecting $\bdy\wt S$ transversely at its finite endpoint; or a \emph{proper geodesic arc} intersecting $\bdy \wt S$ transversely at two finite endpoints. The path downstairs in $S$ obtained by projection of a proper geodesic upstairs is also called a \emph{proper geodesic} in $S$ (proper geodesics downstairs in $S$ may be regarded as parameterized by arc length, with equivalence being defined by reparameterization). Two proper geodesics $\ti\ell,\ti\ell'$ in $\wt S$ are \emph{properly equivalent} if they have the same ideal endpoints and their finite endpoints lie in the same components of $\bdy\wt S$. Two proper geodesic paths downstairs in $S$ are properly equivalent if properly equivalent lifts to $\wt S$ can be chosen. We use the notation $[\ell]$ to denote the proper equivalence class of a proper geodesic $\ell$ in $S$.
\end{definition}

The mapping class group $\MCG(S)$ acts on the set of proper equivalence classes of proper geodesics. Given $\theta \in \MCG(S)$ and a proper geodesic $\ell$, the image $\theta[\ell]$ is (well) defined as follows: choosing a homeomorphism $\Theta \from S \to S$ representing $\theta$, and choosing a lift $\wt\Theta \from \wt S \to \wt S$ of $\Theta$ and a lift $\ti\ell$ of $\ell$, the proper equivalence class $\theta[\ell]$ is represented by the projection to $S$ of the unique proper geodesic in $\wt S$ having the same ideal endpoints and finite endpoints as the quasigeodesic $\wt\Theta(\ti\ell)$.

The following consequence of Nielsen/Thurston theory will be applied in the proof of Proposition~\ref{PropGeomLams}, and in Lemma~\refWA{geometricFullHeightCase} which is a piece of the weak attraction theory of \PartThree.

\begin{proposition}\label{PropNTWA} Let $\phi \in \MCG(S)$ be pseudo-Anosov with stable and unstable laminations $\Lambda^s,\Lambda^u \subset \B(\pi_1 S)$ and with realizations $\Lambda^s_\mu,\Lambda^u_\mu \subset \interior(S)$.  For each proper geodesic $\ell$ in $S$ the following are equivalent: 
\begin{enumerate}
\item\label{NTTrans}
$\ell$ crosses $\Lambda^s_\mu$ transversely. 
\item\label{NTNotLeafNorRegion}
$\ell$ is neither a leaf of $\Lambda^s_\mu$ nor is $\ell$ contained in a principal region of $\Lambda^s_\mu$.
\item\label{NTNotWA}
$\ell$ is weakly attracted to $\Lambda^u_\mu$ under iteration of $\phi$ in the following sense: for each $\epsilon>0$ and $M>0$, there exists $K$ such that for some (any) sequence of proper geodesics $\ell_i$ representing $\theta^i[\ell]$, and for any $i \ge K$, there exists a subpath of $\ell_i$ and a leaf segment of $\Lambda^u_\mu$, each of length $\ge M$, and each lifting to geodesics in $\wt S$ having Hausdorff distance $\le \epsilon$ from each other.
\end{enumerate}
Furthermore, every proper geodesic arc is weakly attracted to $\Lambda^u$ by iteration of $\phi$.
\end{proposition}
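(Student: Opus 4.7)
The equivalence (\ref{NTTrans}) $\Leftrightarrow$ (\ref{NTNotLeafNorRegion}) is purely topological and I would dispatch it first. If $\ell$ is disjoint from $\Lambda^s_\mu$, then $\ell$ lies in some component of $\interior(S) - \Lambda^s_\mu$ or in some neighborhood of $\bdy S$ not meeting $\Lambda^s_\mu$; in either case $\ell$ sits inside some (open) principal region. Conversely, if $\ell$ meets $\Lambda^s_\mu$ at a point $p$ lying on a leaf $L$ of $\Lambda^s_\mu$, then at $p$ either $\ell$ is tangent to $L$, in which case $\ell = L$ because two distinct hyperbolic geodesics cannot be tangent, or $\ell$ crosses $L$ transversely. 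Since no leaf of $\Lambda^s_\mu$ is ever tangent to another leaf, any intersection of $\ell$ with $\Lambda^s_\mu$ that is not of the leaf type is automatically transverse, proving the equivalence.

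The main content is (\ref{NTTrans}) $\Rightarrow$ (\ref{NTNotWA}). I would argue in the universal cover. Choose a lift $\ti\ell \subset \wt S$ crossing transversely some leaf $\wt L^s$ of $\wt\Lambda^s_\mu$ at an interior point. Fix a train track $\tau$ carrying $\Lambda^u$ together with a train-track representative of $\phi$ that carries $\tau$ into itself and expands transverse measure by the dilatation $\lambda > 1$; equivalently use the Markov partition coming from the transverse measured pair $(\Lambda^s,\Lambda^u)$. The transverse measure of a small $\wt \Lambda^s_\mu$-neighborhood of $\ti\ell$ at the crossing point is positive, and it is multiplied by $\lambda^i$ under $\ti\phi^i$. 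Bounded cancellation for geodesic straightening (together with the Markov structure) then forces ever longer subsegments of $\ti\phi^i_\#(\ti\ell)$ to stay within Hausdorff distance $\le \epsilon$ of leaves of $\wt\Lambda^u$, giving (\ref{NTNotWA}). The hard technical step is assembling this shadowing conclusion with the required uniformity in $M$ and $\epsilon$; this is where the Markov partition or invariant train track for $\phi$ is indispensable.

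For $\neg$(\ref{NTNotLeafNorRegion}) $\Rightarrow$ $\neg$(\ref{NTNotWA}), suppose $\ell$ is a leaf of $\Lambda^s_\mu$. Then $\phi^i(\ell)$ is also a leaf of $\Lambda^s_\mu$ for every $i$. By transversality of the filling pair $(\Lambda^s_\mu,\Lambda^u_\mu)$ the angle of intersection between their leaves is bounded below on the compact set $\Lambda^s_\mu \cap \Lambda^u_\mu$, so no arc of a leaf of $\Lambda^s_\mu$ of length larger than some $M_0(\epsilon)$ can lie in an $\epsilon$-neighborhood of any leaf of $\Lambda^u_\mu$. Now suppose instead that $\ell$ is contained in a principal region $R$ of $\Lambda^s_\mu$. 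After passing to a rotationless power of $\phi$ we may assume $\Phi(R) = R$ and $\Phi$ preserves each boundary leaf of $R$; the intersection $\overline R \cap \Lambda^u_\mu$ is either empty (ideal polygon case) or consists of finitely many properly embedded unstable arcs landing on the single boundary component $c \subset \bdy S$ (crown case, by Proposition~\ref{PropNielsenThurstonTheory}). A direct geometric estimate in $\overline R$, using the product structure induced by $(\Lambda^s,\Lambda^u)$ near $c$, shows that distance from $\phi^i(\ell)$ to every leaf of $\Lambda^u$ remains bounded away from $0$ on arbitrarily long subsegments, ruling out~(\ref{NTNotWA}).

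Finally, for the ``Furthermore'' clause, let $\ell$ be a proper geodesic arc. If $\ell$ crosses $\Lambda^s_\mu$ transversely, apply (\ref{NTTrans}) $\Rightarrow$ (\ref{NTNotWA}). Otherwise $\ell$ lies in some principal region $R$ of $\Lambda^s$. Since $\ell$ has both finite endpoints on $\bdy S$, the region $R$ must be a crown with boundary on some component $c \subset \bdy S$, and by Proposition~\ref{PropNielsenThurstonTheory} the same $c$ is the boundary of a paired unstable crown $R^u$. The alternating source--sink dynamics of $\Phi|c$ pushes the endpoints of successive geodesic representatives $\ell_i$ of $\phi^i[\ell]$ toward the attracting fixed points on $c$, which are exactly the ideal points of $R^u$; the hyperbolic geometry of the crown then forces $\ell_i$ to shadow unstable boundary leaves of $R^u$ on arbitrarily long subarcs, yielding (\ref{NTNotWA}) in this remaining case.
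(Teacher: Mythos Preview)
Your treatment of (\ref{NTTrans})$\Leftrightarrow$(\ref{NTNotLeafNorRegion}) is fine; the paper simply declares this obvious.

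For (\ref{NTNotLeafNorRegion})$\Rightarrow$(\ref{NTNotWA}) the paper takes a different and cleaner route that avoids train tracks and Markov partitions entirely, using instead the boundary dynamics already packaged in Proposition~\ref{PropNielsenThurstonTheory}. After passing to a rotationless power, one picks a boundary leaf $\gamma$ of some stable principal region; density of $\gamma$ in $\Lambda^s_\mu$ gives a transverse crossing of $\ell$ with $\gamma$. Lifting to $\ti\gamma$ with endpoints $\eta,\eta'$ and to a principal lift $\wt\Theta$ preserving $\ti\gamma$, the crossing places one endpoint set $X_+$ of $\ti\ell$ (on the side away from the stable principal region) inside the open boundary arc $(\eta,\eta')$. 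Proposition~\ref{PropNielsenThurstonTheory} supplies a unique attractor $\xi_+ \in (\eta,\eta')$ to which the compact sets $\wt\Theta^i(X_+)$ converge uniformly; since $\xi_+$ is an ideal endpoint of a boundary leaf of $\wt\Lambda^u$, the straightened iterates $\ti\ell_i$ are forced to shadow that unstable leaf on arbitrarily long subarcs. Your Markov-partition route could in principle be made to work, but the step you label ``the hard technical step'' is essentially the entire argument, and the paper's approach sidesteps it completely.

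Your argument for $\neg$(\ref{NTNotLeafNorRegion})$\Rightarrow\neg$(\ref{NTNotWA}) contains a genuine error. You assert that for an ideal polygon principal region $R$ of $\Lambda^s_\mu$ the intersection $\overline R \cap \Lambda^u_\mu$ is empty; this is false, since $\Lambda^u$ fills $S$ and hence its leaves cross every stable principal region. (Your crown description as ``finitely many arcs'' is likewise wrong --- there are uncountably many.) The paper handles this direction in one line: having first checked that $\neg$(\ref{NTNotLeafNorRegion}) is preserved under the action of $\theta$ on proper equivalence classes, every $\ell_i$ is either a leaf of $\Lambda^s_\mu$ or lies in a stable principal region; no such geodesic has arbitrarily long segments arbitrarily Hausdorff-close to $\Lambda^u_\mu$, since otherwise a limit argument (using closedness of both laminations) would produce either a common leaf of $\Lambda^s_\mu$ and $\Lambda^u_\mu$ or an unstable leaf equal to a diagonal of a stable polygon, both impossible.

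Finally, the ``otherwise'' case in your treatment of the Furthermore clause is vacuous. By Definition~\ref{DefProperGeodesic}, a proper geodesic arc lifts to an arc with endpoints on two distinct components of $\bdy\wt S$; a crown principal region upstairs meets only one such component and an ideal polygon meets none, so no proper geodesic arc can lie in a stable principal region. Every proper geodesic arc therefore satisfies (\ref{NTNotLeafNorRegion}) automatically, and the clause follows from the equivalence already established.
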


\begin{proof} Choose a homeomorphism $\Theta \from S \to S$ representing $\theta$ such that $\Theta(\Lambda^s_\mu)=\Lambda^s_\mu$ and $\Theta(\Lambda^u_\mu)=\Lambda^u_\mu$; such a choice exists by \cite{CassonBleiler} Lemma~6.1.

Note that \pref{NTNotLeafNorRegion} is an invariant of proper equivalence. Note also that \pref{NTNotLeafNorRegion} is preserved by the action of $\theta$: for any proper geodesic $\ell$, choosing a lift $\wt \Theta \from S \to S$ and a lift $\ti\ell \subset \wt S$, if $\ti\ell$ is a leaf of $\wt\Lambda^s_\mu$ then so is $\wt\Theta(\ti\ell)$ and it is the unique representative of its proper equivalence class; and if $\ti\ell$ is contained in a principle region $R$ of $\wt\Lambda^s_\mu$ then $\wt\Theta(\ti\ell)$ is contained in $\wt\Theta(R)$, the proper geodesic with the same endpoints as $\wt\Theta(\ti\ell)$ is also contained in $\wt\Theta(R)$, and any properly equivalent proper geodesic is also contained in $\wt\Theta(R)$.

Equivalence of \pref{NTTrans} and~\pref{NTNotLeafNorRegion} is obvious, as is equivalence of the two version of \pref{NTNotWA} one using the quantifier ``some'' and the other using ``any''. To prove \pref{NTNotWA}$\implies$\pref{NTNotLeafNorRegion}, if \pref{NTNotLeafNorRegion} fails then it follows by induction that for all $i$ the proper geodesic path $\ell_i$ is either a leaf of $\Lambda^s_\mu$ or is contained in a principle region of $\Lambda^s_\mu$, from which it follows that \pref{NTNotWA} fails.

It remains to prove \pref{NTNotLeafNorRegion}$\implies$\pref{NTNotWA}. Property~\pref{NTNotLeafNorRegion} is invariant under taking positive powers of $\theta$, and we may therefore pass to a power such that $\Theta$ preserves each principal region and each boundary leaf of $\Lambda^u_\mu$ and of $\Lambda^s_\mu$. Choose any principal region $R$ of $\Lambda^s_\mu$ and any boundary leaf $\gamma$ of~$R$, lift $R$ to a principal $\wt R \subset \wt S$, lift $\gamma$ to a boundary leaf $\ti\gamma$ of $\wt R$ and let $\wt\Theta \from \wt S \to \wt S$ be a lift of $\Theta$ that preserves $\wt R$ and $\ti\gamma$.
Since $\gamma$ is dense in $\Lambda^s_\mu$ it follows that $\ell$ crosses $\gamma$ transversely, and so there is a lift $\ti\ell$ of $\ell$ that crosses $\ti\gamma$ transversely. Orient $\ti\ell$ so that it points out of $\wt R$ where it crosses $\ti\gamma$. Let $x_-,x_+ \in \bdy\wt S \union \bdy\pi_1 S$ be the initial and terminal points of~$\ti\ell$. Define $X_+$ to be the following compact subset of $\bdy\wt S \union\bdy\pi_1 S$: $X_+ = x_+$ if $x_+ \in \bdy\pi_1 S$; otherwise $X_+$ is the closure in $\bdy\wt S \union \bdy\pi_1 S$ of the component of $\bdy\wt S$ containing~$x_+$. Define $X_-$ similarly using $x_-$. 
Let $\eta,\eta'$ be the endpoints of $\ti\gamma$, let $(\eta,\eta')$ be the open subinterval of the circle $\bdy\wt S \union \bdy\pi_1 S$ containing $x_+$, and note that since $\ti\ell$ crosses leaves of $\wt\Lambda^s_\mu$ close to $\ti\gamma$ and just outside of $\wt R$ it follows that $X_+ \subset (\eta,\eta')$. 

Applying Proposition~\ref{PropNielsenThurstonTheory} it follows that $(\eta,\eta')$ contains a unique attractor $\xi_+$ and that the action of $\wt\Theta$ on $(\eta,\eta')$ converges uniformly on compact sets to the point $\xi_+$, in particular the sequence of compact sets $\wt\Theta^i(X_+)$ converge uniformly to~$\xi_+$. Let $\ti m$ be a boundary leaf of $\Lambda^u_\mu$ with ideal endpoint~$\xi_+$. Let $\ti\ell_i$ be the proper geodesic with the same endpoints as $\wt\Theta(\ti\ell)$, one in $\wt\Theta^i(X_+)$ and the other in $\wt\Theta^i(X_-)$. Since $\wt\Theta^i(X_-) \subset (\bdy\wt S \union \bdy\pi_1 S)$ is contained in the complement of $(\eta,\eta')$ for all~$i$, and since the sequence of compact sets $\Theta^i(X_+)$ converges uniformly to $\xi_+ \in (\eta,\eta')$, it follows that for any $\epsilon,M>0$ if $i$ is sufficiently large then there are subpaths of $\ti\ell_i$ and of $\ti\mu$ of length $\ge M$ and at Hausdorff distance $<\epsilon$ from each other. It follows that $\ell$ is weakly attracted to $\Lambda^u_\mu$ by iteration of~$\theta$.
\end{proof}

\subsubsection{Comparing free group laminations and surface laminations.} 
\label{SectionFreeVersusSurfaceLams}
Consider a rotationless $\phi \in \Out(F_n)$ represented by a \ct\ $f \from G \to G$ with geometric \eg\ stratum $H_r$. Consider also a geometric model $X$ with weak geometric submodel $Y$ of $f \from G \to G$ relative to $H_r$, with all associated notations from Definitions~\ref{DefWeakGeomModel} and~\ref{DefGeomModel}, in particular the surface $S$ and pseudo-Anosov mapping class $\psi \in \MCG(S)$. 

By $\pi_1$-injectivity of the map $S \xrightarrow{j} Y \inject X$ combined with the fact that finite rank subgroups of $F_n$ are quasiconvex, we obtain a $\pi_1 S$-equivariant embedding $\bdy(\pi_1 S) \inject \bdy F_n$. As explained in Section~\ref{SectionSubgroupLinesAndEnds}, that embedding induces a $\pi_1 S$-equivariant embedding $\wt \B(\pi_1 S) \subset \wt \B$ which induces in turn a continuous function $\B(\pi_1 S) \to \B$. 

\begin{proposition}\label{PropGeomLams} 
With the notation above, letting $\Lambda^+ \in \L(\phi)$ be the lamination associated to $H_r$ and $\Lambda^- \in \L(\phi^\inv)$ its dual lamination, and letting $\Lambda^s,\Lambda^u \subset \B(S)$ be the stable and unstable laminations of the pseudo-Anosov mapping class $\psi \in \MCG(S)$ associated to the geometric model, we have:
\begin{enumerate}
\item \label{ItemStableIsMinus}
The map $\B(\pi_1 S) \to \B$ takes $\Lambda^u$, $\Lambda^s$ homeomorphically to $\Lambda^+$, $\Lambda^-$ respectively.
\item \label{ItemLambdasMinimal}
Every leaf of $\Lambda^+$ is dense in $\Lambda^+$, and similarly for $\Lambda^-$.
\item \label{ItemAllGeneric}
All leaves of $\Lambda^+$ and $\Lambda^-$ are generic. 
\item \label{ItemLamSurfSupport}
$\F_\supp(\Lambda^+) = \F_\supp(\Lambda^-) = \F_\supp[\pi_1 S]$.
\end{enumerate}
\end{proposition}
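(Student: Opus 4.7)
The plan is to reduce the proposition to an application of Lemma~\ref{LemmaPushLam} together with the Nielsen--Thurston theory of Section~\ref{SectionNTReview}, using the surface subgroup system $[\pi_1 S]$ as the receptacle for pushing surface laminations into $\B(F_n)$. First I would choose a representative $\Phi \in \Aut(F_n)$ of $\phi$ that preserves the subgroup $\pi_1 S \subgroup F_n$; this is legitimate because $[\pi_1 S]$ is $\phi$-invariant by Lemma~\ref{LemmaScaffoldFFS}~\pref{ItemInvFFSFromGeometric} and $\pi_1 S$ is its own normalizer by Lemma~\ref{LemmaLImmersed}~\pref{ItemSeparationOfSAndL}, so Fact~\ref{FactMalnormalRestriction} applies and yields a well-defined restriction $\phi \restrict \pi_1 S \in \Out(\pi_1 S)$. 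Then I would show that $\phi \restrict \pi_1 S$ coincides, under the Dehn--Nielsen--Baer isomorphism $\MCG(S) \approx \Out(\pi_1 S, \bdy S)$, with the pseudo-Anosov mapping class $\psi$ built into the geometric model. This is where the compatibility conditions \pref{ItemCTHomComm} and \pref{ItemPsAnHomComm} of Definition~\ref{DefWeakGeomModel} are essential: tracing a loop in $S$ around the diagram
\[
S \xrightarrow{\Psi} S \xrightarrow{j} Y \xrightarrow{d} G_r \subset G
\]
and comparing with $S \xrightarrow{j} Y \xrightarrow{h} Y \xrightarrow{d} G$, which is homotopic to $S \xrightarrow{j} Y \xrightarrow{d} G \xrightarrow{f} G$, shows that the outer automorphism induced by $\Psi$ on $\pi_1 S$ and the restriction of $\Phi$ to $\pi_1 S$ agree up to inner automorphism of~$F_n$, hence up to inner automorphism of the malnormal-on-itself subgroup $\pi_1 S$.

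Second, I would identify the attracting laminations of $\psi \in \MCG(S)$ (viewed as a rotationless outer automorphism of $\pi_1 S$) with the surface laminations $\Lambda^u,\Lambda^s \subset \B(\pi_1 S)$. Since $\psi$ is pseudo-Anosov and the realizations $\Lambda^u_\mu, \Lambda^s_\mu$ fill $S$, each lamination is minimal in $\B(\pi_1 S)$; and the weak-attraction property follows from Proposition~\ref{PropNTWA} applied to any proper geodesic representing a short generic leaf of $\Lambda^u$, which produces the required attracting neighborhood for $\Lambda^u$ under the action of $\psi$. Hence $\L(\psi) = \{\Lambda^u\}$ and $\L(\psi^\inv) = \{\Lambda^s\}$, and this pair is dual since both laminations fill $S$, so $\F_\supp^{\pi_1 S}(\Lambda^u) = \F_\supp^{\pi_1 S}(\Lambda^s) = \{[\pi_1 S]\}$.

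Third, I would invoke Lemma~\ref{LemmaPushLam} applied to the restriction $\Phi \restrict \pi_1 S$, which gives $\beta_{\pi_1 S}(\Lambda^u) \in \L(\phi)$ and $\beta_{\pi_1 S}(\Lambda^s) \in \L(\phi^\inv)$ as a dual pair with $\F_\supp(\beta_{\pi_1 S}(\Lambda^u)) = \F_\supp(\beta_{\pi_1 S}(\Lambda^s)) = \F_\supp[\pi_1 S]$, establishing item~\pref{ItemLamSurfSupport} conditional on the identification in \pref{ItemStableIsMinus}. To make that identification, I would use Fact~\ref{FactLamsAndStrata}: the bijection between $\L(\phi)$ and \eg-strata of $f$ is characterized by $\F_\supp(\Lambda_s) \sqsubset [\pi_1 G_s]$ and $\F_\supp(\Lambda_s) \not\sqsubset [\pi_1 G_{s-1}]$. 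Now $\F_\supp[\pi_1 S] \sqsubset [\pi_1 G_r]$ because $S \xrightarrow{j} Y \xrightarrow{d} G$ lands in $G_r$, while $\F_\supp[\pi_1 S] \not\sqsubset [\pi_1 G_{r-1}]$ by Lemma~\ref{LemmaScaffoldFFS}~\pref{ItemBdyAndSurfSupps} and~\pref{ItemTopIsNotLower}. Therefore $\beta_{\pi_1 S}(\Lambda^u) = \Lambda^+$, and duality gives $\beta_{\pi_1 S}(\Lambda^s) = \Lambda^-$. Continuity and injectivity of $\beta_{\pi_1 S}$ on these Cantor-type sets, together with compactness of $\Lambda^u$ and $\Lambda^s$, promote this to a homeomorphism.

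Finally, items \pref{ItemLambdasMinimal} and \pref{ItemAllGeneric} follow from the homeomorphisms in \pref{ItemStableIsMinus}: since each of $\Lambda^u, \Lambda^s$ is minimal as a pseudo-Anosov lamination (every leaf dense), the same holds for $\Lambda^+, \Lambda^-$; birecurrence of every leaf then follows from density, and each leaf has height equal to $r$ because its realization in $G$, obtained by pushing a leaf of the interior-of-$S$ lamination through $d$, crosses $H_r$ in both directions infinitely often (a consequence of minimality combined with $\F_\supp(\Lambda^+) \not\sqsubset [\pi_1 G_{r-1}]$), so Fact~\ref{FactTwoEndsGeneric} upgrades birecurrence to genericity. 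The main obstacle I expect is the first step: carefully verifying that the restriction $\phi \restrict \pi_1 S$ really equals $\psi$ in $\Out(\pi_1 S)$ using the compatibility diagrams of Definition~\ref{DefWeakGeomModel}; everything else is then a fairly mechanical application of Lemma~\ref{LemmaPushLam}, Fact~\ref{FactLamsAndStrata}, and the standard structure of pseudo-Anosov laminations.
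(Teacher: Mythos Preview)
Your overall architecture matches the paper's: restrict $\phi$ to $\pi_1 S$, identify this restriction with $\psi$, verify $\Lambda^u \in \L(\psi)$ via Nielsen--Thurston theory, push forward via Lemma~\ref{LemmaPushLam}, and then pin down $\beta_{\pi_1 S}(\Lambda^u) = \Lambda^+$ by its free factor support. The deductions \pref{ItemStableIsMinus}$\Rightarrow$\pref{ItemLambdasMinimal}$\Rightarrow$\pref{ItemAllGeneric} are also handled the same way.

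There is, however, a genuine gap in your last move for item~\pref{ItemStableIsMinus}. You write that ``continuity and injectivity of $\beta_{\pi_1 S}$ on these Cantor-type sets, together with compactness of $\Lambda^u$ and $\Lambda^s$, promote this to a homeomorphism.'' Two problems. First, you have not proved injectivity: the map $\beta_{\pi_1 S} \from \B(\pi_1 S) \to \B$ identifies $\pi_1 S$-orbits lying in the same $F_n$-orbit, and since $\pi_1 S$ need not be malnormal in $F_n$ (the paper gives an explicit example after Lemma~\ref{LemmaLImmersed}) this identification can be nontrivial. What saves you is that by Lemma~\ref{LemmaLImmersed}~\pref{ItemSeparationOfSAndL} the intersection $\pi_1 S \cap (\pi_1 S)^\gamma$ for $\gamma \notin \pi_1 S$ is always \emph{peripheral}, so only boundary-type lines can be collapsed, and no leaf of $\Lambda^u$ is peripheral. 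You need to say this. Second, even granting injectivity, the ``compact $\Rightarrow$ homeomorphism'' argument fails because $\B$ with the weak topology is not Hausdorff; a continuous bijection from a compact space need not be closed. The paper addresses exactly this point: it uses Fact~\ref{FactLinesClosed} and the peripheral-intersection property to show that $\wt\B^{\interior}(\pi_1 S)$ and its $F_n$-translates are pairwise disjoint and each is \emph{open} in their union, which is what forces the quotient map $\wt\Lambda^u / \pi_1 S \to \bigl(\bigcup_\gamma \gamma \cdot \wt\Lambda^u\bigr) / F_n$ to be a homeomorphism.

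A smaller gap: your assertion that ``both laminations fill $S$, so $\F_\supp^{\pi_1 S}(\Lambda^u) = \{[\pi_1 S]\}$'' is not immediate. Geometric filling does not automatically translate into having full free factor support in $\pi_1 S$. The paper supplies an argument here: given a free factor $[A]$ of $\pi_1 S$ carrying $\Lambda^u$, one chases ideal endpoints of consecutive boundary leaves of a crown principal region (using malnormality of $A$ and Fact~\ref{FactBoundaries}) to show every $\bdy_i S$ is carried by $[A]$, and then invokes Lemma~\ref{LemmaScaffoldFFS}~\pref{ItemBdyAndSurfSupps} to conclude $[A] = [\pi_1 S]$.
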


\begin{proof} Note immediately that \pref{ItemStableIsMinus}$\implies$\pref{ItemLambdasMinimal}, and that \pref{ItemLambdasMinimal}$\implies$\pref{ItemAllGeneric} using that each leaf of a minimal lamination is birecurrent. It remains to prove \pref{ItemStableIsMinus} and~\pref{ItemLamSurfSupport}.

\smallskip

We first prove \pref{ItemStableIsMinus} and~\pref{ItemLamSurfSupport} in the special case that $X=Y=S$ and so $F_n = \pi_1 S$. To prove~\pref{ItemStableIsMinus}, up to replacing $\phi$ by $\phi^\inv$ it suffices to prove that $\Lambda^u = \Lambda^+$ in $\B=\B(\pi_1 S)$, and since $\L(\phi) = \{\Lambda^+\}$ it suffices to prove that $\Lambda^u \in \L(\phi)$, which we do using the Nielsen-Thurston theory. Fix a hyperbolic structure $\mu$ on~$S$. We may pass to a power of $\phi$ which is rotationless in $\MCG(S)$. Choose a rotationless homeomorphism $\Phi \from S \to S$ preserving $\Lambda^s_\mu,\Lambda^u_\mu$. Choose a principal lift $\wt\Phi \from \wt S \to \wt S$ preserving a principal region $\wt R^u$ of $\wt\Lambda^\mu$. Let $\ti\ell_\mu$ be a boundary leaf of $\wt R_u$, let $\ti\ell \in \wt\B(\pi_1 S)$ be the corresponding abstract line, and let $\ell \in \B(\pi_1 S)$ be its image downstairs. Applying Proposition~\ref{PropNielsenThurstonTheory}, the endpoints $\xi,\eta$ of $\ti\ell_\mu$ are attractors for the action of $\wt\Phi$ on $\bdy\pi_1 S$. Choosing attracting neighborhoods $U_\xi,U_\eta$ of $\xi,\eta$, respectively, one obtains an attracting neighborhood of $\ell$ under the action of $\phi$ on $\B(\pi_1 S)$, namely, the image in $\B(\pi_1 S)$ of all lines in $\wt\B(\pi_1 S)$ having one endpoint in $U_\xi$ and the other endpoint in $\U_\eta$. Since $\Lambda^u$ is minimal, the line $\ell$ is birecurrent and dense in $\Lambda^u$. Since $\Lambda^u$ is filling, the corresponding geodesic $\ell_\mu$ in $S$ is not closed, and so $\ell$ is not supported by a rank~$1$ free factor of $\pi_1 S$. It follows that $\Lambda^u$ satisfies the definition of an attracting lamination of $\phi$ with generic leaf $\ell$ (Definition~\ref{DefAttractingLaminations}), proving~\pref{ItemStableIsMinus} in the special case.

Next, continuing in the special case where $F_n=\pi_1 S$, we show that $\F_\supp(\Lambda^u) = \{[\pi_1 S]\}$; applying this to $\psi^\inv$ it follows that $\F_\supp(\Lambda^s) = \{[\pi_1 S]\}$. The free factor system $\F_\supp(\Lambda^u) = \F_\supp(\Lambda^+)$ has one component. Applying Lemma~\ref{LemmaScaffoldFFS}~\pref{ItemBdyAndSurfSupps} which says that the free factor support of the boundary components of $S$ is the whole of $\{[\pi_1 S]\}$, to prove~\pref{ItemLamSurfSupport} it suffices to prove that for any connected free factor system $\{[A]\}$ of $\pi_1 S$ that supports $\Lambda^u$, and for any component $b$ of $\bdy S$, $[A]$ supports $b$. There is a $k$-pointed principal region $R$ for $\Lambda^u_\mu$ containing $b$, for some $k \ge 1$. Pick a lift $\wt R \subset \wt S$ so that $\ti b = \bdy\wt R$ is a lift of~$b$. Consider the infinite cyclic group $Z_b = \Stab(\ti b) = \Stab(\wt R)$. From the description of crown principal regions and their corresponding covers it follows that the boundary of the closure of $\wt R$ in $\wt S$ is the union of $\ti b$ and a bi-infinite sequence of oriented boundary leaves $\ell_i$ of $\wt\Lambda^u_\mu$, $i \in \Z$, so that the terminal endpoint of $\ell_i$ equals the initial endpoint of $\ell_{i+1}$ in $\Lim_\mu$. Choose a generator $z$ for $Z_b$ such that $z(\ell_i) = \ell_{i+k}$. Choose a free factor $A$ representing the conjugacy class $[A]$ so that $\bdy\ti\ell_0 \subset \bdy A$. By combining malnormality of $A$ with Fact~\ref{FactBoundaries} and an induction argument, it follows that $\bdy\ti\ell_i \subset \bdy A$ for all $i \in \Z$. The collection of points $\union_{i \in \Z} \bdy\ell_i$ is invariant under $Z_b$ they accumulate on the two points of $\bdy\ti b$, and it follows that $\bdy\ti b \subset \bdy A$, implying that $b$ is carried by $[A]$ as desired.

\smallskip

We now turn to the general case of \pref{ItemStableIsMinus} and~\pref{ItemLamSurfSupport}. We use the notation established in setting up Proposition~\ref{PropGeomLams}. By appropriate choices of base points and paths amongst them, the composition $S \xrightarrow{j} Y \inject X$ induces an injection 
$$\pi_1 S \inject \pi_1(Y) \approx \pi_1(G_r) \subset \pi_1(G) \approx F_n \qquad (*)
$$
We identify $\pi_1 S$ with its image in $F_n$ under the injection~$(*)$, and we let $\beta\from \B(\pi_1 S) \to \B(F_n)$ be the induced continuous map. From the dynamic conditions in \GeomModelsDef\ it follows that we may choose a homeomorphism $\Psi \from S \to S$ representing the mapping class $\psi \in \MCG(S)$, and we may choose an automorphism $\Phi \in \Aut(F_n)$ representing $\phi$ which leaves $\pi_1 S$ invariant, so that the automorphism $\Phi \restrict \pi_1 S$ is a representative of the pseudo-Anosov mapping class $\psi$ under the injection $\MCG(S) \approx \Out(\pi_1 S, \bdy S) \inject \Out(\pi_1 S)$. Applying the special case combined with Lemma~\ref{LemmaPushLam}, it follows that $\beta(\Lambda^u)$, $\beta(\Lambda^s)$ are a dual lamination pair in $\L^\pm(\phi)$ whose free factor support in $F_n$ equals $\F_\supp[\pi_1 S]$. By Lemma~\ref{LemmaScaffoldFFS}~\pref{ItemTopIsNotLower} we have $\F_\supp[\pi_1 S] \not\sqsubset [G_{r-1}]$, but from $(*)$ we have $\F_\supp[\pi_1 S] \sqsubset [G_r]$. It follows that $\F_\supp(\beta(\Lambda^u)) \not\subset [G_{r-1}]$ and $\F_\supp(\beta(\Lambda^u)) \sqsubset [G_r]$. But the only element of $\L(\phi)$ with this property is $\Lambda^+$, and so $\beta(\Lambda^u) = \Lambda^+$. By duality it follows that $\beta(\Lambda^s) = \Lambda^-$. This proves~\pref{ItemLamSurfSupport} and also proves most of~\pref{ItemStableIsMinus}. 

What is left is to prove that the maps  $\Lambda^u \to \Lambda^+$ and $\Lambda^s \to \Lambda^-$ obtained by restriction $\beta \from \B[\pi_1 S] \to \B$ are homeomorphisms. We prove this for $\Lambda^u$, the same following for $\Lambda^s$ by replacing $\phi$ with~$\phi^\inv$. The idea of the proof is that although $\pi_1 S \subgroup F_n$ need not be a free factor, nor even malnormal, by Bass-Serre theory it just misses being malnormal, in that the intersections of $\pi_1 S$ with any of its conjugates by elements of $F_n - \pi_1 S$ are peripheral in $\pi_1 S$. Since no leaf of $\Lambda^u$ is peripheral, the map $\beta$ restricts to a bijection  between $\Lambda^u$ and $\Lambda^+$. In the Hausdorff setting this would be enough to verify that this bijection is a homeomorphism, but we must work a little harder in the non-Hausdorff setting.

We may assume henceforth that $G=G_r$ and that $X=Y$ and so we may identify $\pi_1(G_r) \approx \pi_1(Y) \approx F_n$; this assumption is justified by restricting $f \from G \to G$ to the component $G^0_r$ of $G_r$ that contains $H_r$, and by restricting the geometric model $Y$ for $H_r$ to the component $Y^0_r$ of $Y$ that contains $H_r$.  

It follows by Lemma~\ref{LemmaLImmersed}~\pref{ItemSeparationOfSAndL} that if $\gamma \in F_n-\pi_1 S$ then $\pi_1 S \intersect \pi_1 S^\gamma$ is either the trivial subgroup or a peripheral subgroup in $\pi_1 S$ meaning an infinite cyclic subgroup conjugate into the fundamental group of some component of $\bdy S$. Applying this together with Fact~\ref{FactBoundaries} it follows that for any $\gamma \in F_n - \pi_1 S$ the intersection $\bdy\pi_1 S \intersect \gamma \cdot \bdy\pi_1 S$ is either empty or is the pair of points fixed by the stabilizer of some component of $\bdy\wt S$. This implies in turn that $\wt\B(\pi_1 S) \intersect \gamma \cdot \wt\B(\pi_1 S)$ is either empty or a single line corresponding to a component of $\bdy\wt S$. Let $\Gamma$ be a set of left coset representatives of $\pi_1 S$ in $F_n$, and let $\gamma_0 \in \pi_1 S \intersect \Gamma$ be the representative of the coset $\pi_1 S$. Applying Fact~\ref{FactLinesClosed} it follows that the subset $\union \{\gamma \cdot \wt\B(\pi_1 S) \suchthat \gamma \in \Gamma - \gamma_0\}$ is closed in $\wt\B$. Also, the finite subset $\B^\bdy(\pi_1 S) \subset \B(\pi_1 S)$ represented by components of $\bdy S$ is closed in $\B(\pi_1 S)$, and so its total lift $\wt\B^\bdy(\pi_1 S)$ consisting of all lines corresponding to components of $\bdy\wt S$ is closed in $\wt\B(\pi_1 S)$ and so is also closed in~$\wt\B$. The set
$$\B - \biggl(\bigl( \bigcup_{\gamma \in \Gamma-\gamma_0} \gamma  \cdot \wt\B(\pi_1 S) \bigr) \union \wt\B^\bdy(\pi_1 S)  \biggr)
$$
is therefore open in $\B$. The intersection of this set with $\wt\B[\pi_1 S] = \union\{\gamma \cdot \wt\B(\pi_1 S) \suchthat \gamma \in \Gamma\}$ is equal to $\wt\B^\int(\pi_1 S) = \wt\B(\pi_1 S) - \wt\B^\bdy(\pi_1 S)$ and so the latter is open in $\wt\B[\pi_1 S]$. Furthermore, $\wt\B^\int(\pi_1 S)$ is disjoint from each of its translates $\gamma \cdot \wt\B^\int(\pi_1 S)$ for $\gamma \in \Gamma - \gamma_0$. The collection of sets $\{\gamma \cdot \wt\B^\int(\pi_1 S) \suchthat \gamma \in \Gamma\}$ is therefore pairwise disjoint, and each set in this collection is an open subset of the union of the collection.

Consider now the lamination $\Lambda^u \subset \B(\pi_1 S)$ and its image $\Lambda^+$ under the continuous map $\beta \from \B(\pi_1 S) \to \B$. By definition of unstable laminations we have $\Lambda^u \subset \B^\int(\pi_1 S)$. Let $\wt\Lambda^u \subset \wt\B(\pi_1 S)$ be its total lift with respect to $\pi_1 S$ and so $\wt\Lambda^u \subset \wt\B^\int(\pi_1 S)$. The total lift of $\wt\Lambda^u$ with respect to $\pi_1 S$ is equal to the set $\union_{\gamma \in \Gamma} \left(\gamma \cdot \wt\Lambda^u\right)$, and from what is proved in the previous paragraph this is a disjoint union whose terms are each open in the union. It follows that the inclusion of $\wt\Lambda^u$ into the union $\union_{\gamma \in \Gamma} \left(\gamma \cdot \wt\Lambda^u\right)$ descends to a homeomorphism from $\Lambda^u = \wt\Lambda^u / \pi_1 S$ to $\Lambda^+ = \left( \union_{\gamma \in \Gamma} \gamma \cdot \wt\Lambda^u\right) / F_n$, but this homeomorphism is precisely the restriction to $\Lambda^u$ of the map $\beta \from \B(\pi_1 S) \to \B$.

This completes the proof of Proposition~\ref{PropGeomLams}.
\end{proof}

\subsubsection{Application: The inclusion lattice of attracting laminations.} 
The inclusion partial order on subsets of $\B$ restricts to an inclusion partial order on~$\L(\phi)$. An attracting lamination is said to be \emph{topmost} if it is maximal with respect to inclusion on $\L(\phi)$. It was proved in \BookOne\ Corollary~6.0.1 that a lamination in $\L(\phi)$ is topmost if and only if its dual lamination in $\L(\phi^\inv)$ is topmost. We extend this result to show that the duality relation respects inclusion; the proof uses Proposition~\ref{PropGeomLams}.

\begin{lemma}\label{containmentSymmetry} 
If $\Lambda^\pm_i $ and $\Lambda^\pm_j$ are dual lamination pairs for $\phi \in \Out(F_n)$ then $ \Lambda^+_i \subset \Lambda_j^+$ if and only $\Lambda^-_i \subset \Lambda^-_j$. 
\end{lemma}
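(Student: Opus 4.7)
By symmetry, exchanging $\phi$ with $\phi^{-1}$ and $\Lambda^+$ with $\Lambda^-$, it suffices to prove the forward implication $\Lambda^+_i \subset \Lambda^+_j \implies \Lambda^-_i \subset \Lambda^-_j$. I would also pass to a rotationless power of $\phi$ using Fact~\ref{FactRotationlessPower}, since $\L(\phi) \subset \L(\phi^K)$ for any $K \ge 1$ with the duality correspondence respecting powers, and the containment relation is intrinsic to subsets of $\B$.

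The plan is to reduce the lemma to the following \emph{key characterization}: for rotationless $\phi$ and any $\Lambda, \Lambda' \in \L(\phi)$, $\Lambda \subset \Lambda'$ if and only if $\F_\supp(\Lambda) \sqsubset \F_\supp(\Lambda')$. Given this characterization, the lemma follows immediately by applying it to both $\phi$ and $\phi^{-1}$ and combining with the defining equation $\F_\supp(\Lambda^+) = \F_\supp(\Lambda^-)$ for dual pairs (Fact~\ref{FactLamsBasicProps}~(4)). The forward direction of the characterization is immediate from the monotonicity of $\F_\supp$ under inclusion (Fact~\ref{FactFFSPolyglot}).

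For the reverse direction of the characterization, I would apply Theorem~\ref{TheoremCTExistence} to choose a CT $f \colon G \to G$ for $\phi$ realizing both $\F_\supp(\Lambda)$ and $\F_\supp(\Lambda')$ as core filtration elements. Since each of these free factor supports has one component (Fact~\ref{FactLamsBasicProps}~(2)), this yields noncontractible core subgraphs $K \subset K'$ in $G$ with $[\pi_1 K] = \F_\supp(\Lambda)$ and $[\pi_1 K'] = \F_\supp(\Lambda')$. Let $H_r \subset K$ and $H_{r'} \subset K'$ be the EG strata associated to $\Lambda$ and $\Lambda'$ via Fact~\ref{FactLamsAndStrata}. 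The crucial step is to show that the subgraph $Y \subset K'$ consisting of all edges appearing in any iterate $f^k_\#(E')$ (for $E'$ an edge of $H_{r'}$ and $k \ge 0$) actually equals $K'$: by construction $Y$ is $\phi$-invariant and carries the generic leaves of $\Lambda'$ (by Fact~\ref{FactTiles}~\pref{ItemTileDecomp}), forcing $\F_\supp(\Lambda') \sqsubset [\pi_1 Y_{\text{core}}] \sqsubset [\pi_1 K'] = \F_\supp(\Lambda')$ and hence $[\pi_1 Y_{\text{core}}] = [\pi_1 K']$; combined with an Euler characteristic computation showing that removing any edge from the core subgraph $K'$ strictly decreases its free factor class (whether the edge is separating or not), this forces $Y = K'$. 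In particular $H_r \subset K \subset K' = Y$, so some iterate $f^k_\#(E')$ contains some edge $E \subset H_r$ as a subpath.

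Once this crossing is established, standard weak-convergence arguments finish the proof. By Fact~\ref{FactTiles}~\pref{ItemTileDecomp}, the tile $f^k_\#(E')$ appears as a subpath of some generic leaf $\ell' \in \Lambda'$, so $E \subset \ell'$. Applying $f^m_\#$ and using Lemma~\ref{LemmaDoubleSharpFacts}, the leaves $f^m_\#(\ell') \in \Lambda'$ contain $f^m_{\#\#}(E)$, which approximates the tile $f^m_\#(E)$ up to truncation bounded by $\BCC(f^m)$. Since $f^m_\#(E)$ weakly converges to a generic leaf $\ell$ of $\Lambda$ (Fact~\ref{FactLeafAsLimit} together with Fact~\ref{FactTiles}~\pref{ItemTileExhausted}), so do the leaves $f^m_\#(\ell') \in \Lambda'$; by weak closedness of $\Lambda'$ the leaf $\ell$ lies in $\Lambda'$, and hence $\Lambda$, being the weak closure of $\ell$, is contained in $\Lambda'$. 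The main obstacle will be making precise the Euler characteristic/edge-removal step establishing $Y = K'$, which must uniformly handle separating and non-separating edges of the core subgraph $K'$.
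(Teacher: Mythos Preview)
Your ``key characterization'' --- that $\Lambda \subset \Lambda'$ if and only if $\F_\supp(\Lambda) \sqsubset \F_\supp(\Lambda')$ --- is false, and the error lies in the final weak-convergence step. You write that $f^m_{\#\#}(E)$ ``approximates the tile $f^m_\#(E)$ up to truncation bounded by $\BCC(f^m)$,'' but $\BCC(f^m)$ grows exponentially in $m$ and can swallow essentially all of $f^m_\#(E)$. Concretely, suppose the stratum $H_{r'}$ corresponding to $\Lambda'$ is geometric. By Fact~\ref{FactNielsenBottommost}, every maximal $G_{r'-1}$-subpath of a tile $f^k_\#(E')$ (and hence of any leaf of $\Lambda'$) is one of finitely many Nielsen paths, so has length bounded by some constant $L$. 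Now your edge $E \in H_r$ lies in $G_r \subset G_{r'-1}$, hence so does $f^m_\#(E)$; therefore the subpath $f^m_{\#\#}(E)$ of the leaf $f^m_\#(\ell')$ is contained in some $G_{r'-1}$-subpath of that leaf and thus has length $\le L$ for \emph{every} $m$. The leaves $f^m_\#(\ell')$ never contain long subpaths in $G_{r'-1}$, so they cannot weakly accumulate on any leaf of $\Lambda$. In fact, by Proposition~\ref{PropGeomLams}~\pref{ItemAllGeneric} every leaf of a geometric $\Lambda'$ is generic, so no distinct $\Lambda \in \L(\phi)$ can be contained in $\Lambda'$ at all, even when $\F_\supp(\Lambda) \sqsubset \F_\supp(\Lambda')$.

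This is exactly why the paper's argument is organized the way it is. After reducing to $\F_\supp(\Lambda^+_j)=\{[F_n]\}$, the paper separates off the case where $\Lambda^+_i$ is topmost (both non-containments hold, settled by \BookOne\ Corollary~6.0.11) and then, in the remaining case, argues by contradiction via the Weak Attraction Theorem of \BookOne. The crucial input there is precisely Proposition~\ref{PropGeomLams}~\pref{ItemAllGeneric}: once $\Lambda^+_i \subset \Lambda^+_k$ for a topmost $\Lambda^+_k$, the existence of nongeneric leaves forces $\Lambda^+_k$ to be \emph{non}-geometric, and only then does the Weak Attraction Theorem yield the needed proper free factor carrying $\Lambda^+_j$. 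Your approach tries to bypass this geometric/non-geometric dichotomy with a purely combinatorial tile argument, and the geometric case is exactly where it breaks.
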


\begin{proof} Passing to a power we may assume $\phi$ is rotationless. Let $F^i$, $F^j$ denote free factors with associated one component free factor systems: 
\begin{align*}
\{[F^i]\} &= \F_\supp(\Lambda^+_i) = \F_\supp(\Lambda^-_i) \\
\{[F^j]\} &= \F_\supp(\Lambda^+_j) = \F_\supp(\Lambda^-_j)
\end{align*} 
If $[F^j]$ does not carry $[F^i]$ then $\Lambda^+_j \not \supset \Lambda_i^+$ and $\Lambda^-_j \not \supset \Lambda_i^-$. We may therefore assume that $[F^j]$ carries $[F^i]$. Restricting $\phi$ to $F^j$ we may assume that $F^j = F_n$, which implies that $\Lambda^+_j$ is topmost (in $\L(\phi)$) and $\Lambda^-_j$ is topmost (in $\L(\phi^\inv)$). By Corollary~6.0.11 of \BookOne\ $\Lambda^+_i$ is topmost if and only if $\Lambda^-_i$ is topmost. In this topmost case, $\Lambda^+_j \not \supset \Lambda_i^+$ and $\Lambda^-_j \not \supset \Lambda_i^-$. We may therefore assume that neither $\Lambda^+_i$ nor $\Lambda^-_i$ is topmost, in which case it suffices to show that $\Lambda^+_i \subset \Lambda_j^+$ and that $\Lambda^-_i \subset \Lambda_j^-$; the two cases are similar, so it suffices to assume that $\Lambda^+_j \not \supset \Lambda_i^+$ and argue to a contradiction. 

Since $\Lambda^+_i$ is not topmost and is not contained in~$\Lambda_j^+$, there is a topmost lamination $\Lambda^+_k \in \L(\phi)$ such that (A)~$\Lambda^+_i \subset \Lambda_k^+$ and (B)~$\Lambda_k^+\not \subset \Lambda_j^+$. Choose a \ct\ $f \from G \to G$ representing $\phi$ and let $H_r$ and $H_s$ be the \eg\ strata corresponding to $\Lambda^+_i$ and $\Lambda^+_k$ respectively.   By (A),  $\Lambda^+_k$  has nongeneric lines. By Proposition~\ref{PropGeomLams}~\pref{ItemAllGeneric}, the stratum of $G$ corresponding to the lamination $\Lambda_k^+$ is non-geometric. By (B), $\Lambda_j^+$ is not weakly attracted to $\Lambda_k^+$, that is, no generic leaf of $\Lambda_j^+$ is weakly attracted to a generic leaf of $\Lambda_k^+$. Applying the Weak Attraction Theorem 6.0.1 and Remark 6.0.2 of \BookOne\ to the topmost lamination $\Lambda_k^+$ it follows that $\Lambda_j^+$ is carried by a proper free factor of $F_n$, which contradicts our assumption that $F^j = F_n$.
\end{proof}

\subsubsection{Application: The span argument for a geometric stratum}
\label{SectionSpanArgument}

We state and prove Fact~\ref{FactSpanArgument} based on the ``span argument'' from \BookOne\ Section~7. This fact is a generalization of \BookOne\ Corollary 7.0.8, which is restricted to the setting of geometric strata whose associated lamination is topmost. The proof is really no different, we simply notice that it applies in a broader setting. 

The fact we need says that the duality relation amongst geometric laminations, which ordinarily is determined by free factor systems, is also related to a broader class of subgroup systems, namely conjugacy classes of malnormal subgroups. The span argument is applied to ping-pong in \BookOne, and we shall need Fact~\ref{FactSpanArgument} for similar purposes in \PartFour.

\begin{fact} 
\label{FactSpanArgument} 
Consider a rotationless $\phi \in \Out(F_n)$ represented by a \ct\ $f \from G \to G$, and an \eg\ geometric stratum $H_r \subset G$ with corresponding lamination $\Lambda^+ \in \L(\phi)$ and dual lamination $\Lambda^- \in \L(\phi^\inv)$. Let $[\pi_1 S]$ be the surface subgroup system in $F_n$ associated to a geometric model for $f$ with respect to $H_r$ (as in Definition~\ref{DefGeometricStratum}), and let $\B[\pi_1 S] \subset \B$ be the subset of lines carried by $[\pi_1 S]$. For any malnormal finite rank subgroup $A \subgroup F_n$, if its conjugacy class $[A]$ carries $\Lambda^+$ then $[A]$ also carries $\B[\pi_1 S]$, and in particular $[A]$ carries $\Lambda^-$. 
\end{fact}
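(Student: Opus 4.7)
The plan is to adapt the ``span argument'' from \BookOne~Section~7, using Proposition~\ref{PropGeomLams} to identify leaves of $\Lambda^+$ with leaves of the pseudo-Anosov unstable lamination $\Lambda^u$ on $S$, so that the natural arena for the argument becomes the Cantor set $\bdy\pi_1 S$ viewed as a subset of $\bdy F_n$ via the geometric model. I will build a locally constant $\pi_1 S$-equivariant ``witness function'' $\kappa \from \bdy\pi_1 S \to F_n/A$, then combine compactness with minimality of the $\pi_1 S$-action on $\bdy\pi_1 S$ to produce a single conjugate $A^{h_0}$ with $\bdy\pi_1 S \subset \bdy A^{h_0}$, from which the conclusion falls out.

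Define $\kappa(\xi) = hA$ whenever $\xi \in \bdy A^h$. Malnormality of $[A]$ together with Fact~\ref{FactBoundaries} makes this well-defined: if $\xi \in \bdy A^{h_1} \intersect \bdy A^{h_2}$ then $A^{h_1} \intersect A^{h_2}$ is nontrivial, so malnormality gives $A^{h_1} = A^{h_2}$ and hence $h_1 A = h_2 A$. To see $\kappa$ is defined on all of $\bdy\pi_1 S$, observe that via Proposition~\ref{PropGeomLams}~\pref{ItemStableIsMinus} each lift of a leaf of $\Lambda^u$ in $\wt\B(\pi_1 S) \subset \wt\B$ has endpoints in some $\bdy A^h$ (the downstairs line is a leaf of $\Lambda^+$ carried by $[A]$, and any other lift is an $F_n$-translate of a carrier lift). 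Fixing one leaf endpoint $\xi_0 \in \bdy A^{h_0}$, the $\pi_1 S$-orbit of $\xi_0$ consists of leaf endpoints and is dense in $\bdy\pi_1 S$ because the nonabelian free group $\pi_1 S$ acts minimally on its own Gromov boundary. Combined with closedness of $\bigcup_h \bdy A^h$ in $\bdy F_n$ from Fact~\ref{FactLinesClosed}~\pref{ItemLinesClosedManyGps}, this yields $\bdy\pi_1 S \subset \bigcup_h \bdy A^h$, so $\kappa$ is defined globally. The local finiteness of Fact~\ref{FactLinesClosed}~\pref{ItemLinesLocFin} together with pairwise disjointness of distinct $\bdy A^h$'s then makes $\kappa$ locally constant, and $\pi_1 S$-equivariance is immediate from $\gamma\cdot \bdy A^h = \bdy A^{\gamma h}$.

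Since $\kappa$ is continuous from the compact Cantor set $\bdy\pi_1 S$ to the discrete space $F_n/A$, its image is finite. Hence the $\pi_1 S$-orbit of $h_0 A$ is finite, and by orbit-stabilizer its stabilizer $\pi_1 S \intersect A^{h_0}$ has finite index in $\pi_1 S$. Finite-index subgroups of finite-rank subgroups share the same Gromov boundary, so $\bdy\pi_1 S = \bdy(\pi_1 S \intersect A^{h_0}) \subset \bdy A^{h_0}$. Any line $\ell$ carried by $[\pi_1 S]$ admits a lift with endpoints in $h\cdot\bdy\pi_1 S \subset h\cdot\bdy A^{h_0} = \bdy A^{h h_0}$ for some $h \in F_n$, so $\ell$ is carried by $[A]$, proving $\B[\pi_1 S] \subset \B[A]$; since Proposition~\ref{PropGeomLams}~\pref{ItemStableIsMinus} gives $\Lambda^- \subset \B[\pi_1 S]$, we conclude that $[A]$ carries $\Lambda^-$. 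The main obstacle is the simultaneous deployment of malnormality, Fact~\ref{FactBoundaries}, and the two parts of Fact~\ref{FactLinesClosed} needed to make $\kappa$ well-defined, globally defined, and locally constant; once these are in place, everything afterwards is a formal consequence of minimality of the free-group boundary action on a Cantor set mapped continuously to a discrete target.
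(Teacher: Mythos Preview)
Your argument has a genuine gap at the step where you claim $\bigcup_h \bdy A^h$ is closed in $\bdy F_n$. Fact~\ref{FactLinesClosed}~\pref{ItemLinesClosedManyGps} is a statement about $\wt\B$ (pairs of boundary points), not about $\bdy F_n$ (single boundary points), and the corresponding closedness in $\bdy F_n$ simply fails. For a concrete counterexample take $F_3=\langle a,b,c\rangle$ and $A=\langle a,b\rangle$, which is malnormal as a free factor: the points $c^n a^{\infty}\in c^n\cdot\bdy A=\bdy A^{c^n}$ converge to $c^{\infty}$, yet $c^{\infty}$ lies in no $\bdy A^h$ since $c$ is not conjugate into~$A$. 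So density of leaf endpoints in $\bdy\pi_1 S$ together with your citation does not yield $\bdy\pi_1 S\subset\bigcup_h\bdy A^h$, and $\kappa$ need not be globally defined. The same failure undermines the local constancy claim: even at a point $\xi\in\bdy A^{h_0}$ one can have $\xi_n\to\xi$ with $\xi_n\in\bdy A^{h_n}$ and $h_nA\ne h_0A$ (e.g.\ $\xi=a^{\infty}$, $\xi_n=a^n c\, a^{\infty}\in\bdy A^{a^n c}$), so nothing in Fact~\ref{FactLinesClosed} forces $\kappa$ to be locally constant on $\bdy\pi_1 S$.

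The paper's proof avoids boundary points entirely. It first reduces, via density of periodic lines in $\B[\pi_1 S]$ and closedness of $\B[A]$ (Fact~\ref{FactLinesClosed}~\pref{ItemLinesClosedOneGp}), to showing each conjugacy class carried by $[\pi_1 S]$ is carried by $[A]$. It then invokes the span argument of \BookOne\ Lemma~7.0.7: any such circuit $\alpha$ lies in the span of a generic leaf of $\Lambda^+$, meaning $\alpha$ is freely homotopic to a concatenation of long overlapping leaf segments. Malnormality of $A$ is used combinatorially, to guarantee that sufficiently long paths in $G$ have at most one lift to the Stallings graph of $A$; this unique-lifting property then forces the leaf-segment lifts to glue up into a lift of $\alpha$. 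If you want to salvage a boundary-dynamical approach, you should work with the map on $\wt\Lambda^u\subset\wt\B$ rather than on $\bdy\pi_1 S$, where Fact~\ref{FactLinesClosed} genuinely applies, and then exploit the shared-endpoint chaining around principal regions (as in the proof of Proposition~\ref{PropGeomLams}~\pref{ItemLamSurfSupport}) to pass from leaves to all of $\bdy\pi_1 S$; but that is substantially more work than what you have written.
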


\begin{proof} Note that $\B[\pi_1 S] \subset \B$ is the image of the map $\B(\pi_1 S) \to S$ induced by the $\pi_1$-injective map $S \xrightarrow{j} Y \subset X$ as described in the paragraph preceding the statement of Proposition~\ref{PropGeomLams}. By applying Proposition~\ref{PropGeomLams}~\pref{ItemStableIsMinus} it follows that $\Lambda^- \subset \B[\pi_1 S]$. Once we have proved that $\B[\pi_1 S]$ is carried by $[A]$ it therefore follows that $\Lambda^-$ is also carried by $[A]$.

Since periodic lines in $\B[\pi_1 S]$ are dense in $\B[\pi_1 S]$, and since the set of lines carried by $[A]$ is closed (Fact~\ref{FactLinesClosed}~\pref{ItemLinesClosedOneGp}), it suffices to prove that each conjugacy class of $F_n$ that is carried by $[\pi_1 S]$ is also carried by $[A]$. The analogous fact in \BookOne\ is Corollary 7.0.8, the proof of which has two steps that we outline here: Lemma~7.0.7, the ``span argument'' which we may apply as stated; and Lemma~7.0.6, the ``lifting argument'', which we shall tailor to our present situation. 

Let $G_r \subset G$ be the filtration element associated to the stratum $H_r$. Consider a conjugacy class $c$ of $F_n$ which we assume to be carried by $[\pi_1 S]$, and so the circuit $\alpha$ in $G$ realizing $c$ is contained in the filtration element $G_r$. The hypothesis of Lemma~7.0.7 is that $\alpha$ is an \emph{$H_r$-geometric circuit in $G_r$}, which---as defined just before Lemma~7.0.7---means in our present terminology that $c$ is carried by~$[\pi_1 S]$, exactly matching our present assumption. The conclusion of Lemma~7.0.7 is that $\alpha$ is in the \emph{span} of a generic leaf $\lambda$ of $\Lambda^+$ which---as defined just before Lemma~7.0.6---means: for each $\ell>0$ there is a double sequence of finite subpaths of $\lambda$ of the form $\nu_1, \mu_1, \nu_2, \mu_2, \ldots, \nu_m, \mu_m$, each of length $\ge \ell$, such that for each $i \in \Z / m\Z$ the path $\nu_i$ is a subpath of $\mu_{i-1}$ and a subpath of $\mu_i$ or $\bar\mu_i$, and such that if we take a point $p_i$ in $\nu_i$, and if we let $[p_i,p_{i+1}]$ denote the corresponding subpath of $\mu_i$ (or $\bar\mu_i$) between the copy of $p_i$ in the $\nu_i$ subpath of $\mu_i$ and the copy of $p_{i+1}$ in the $\nu_{i+1}$ subpath of $\mu_i$, then $\alpha$ is freely homotopic in $G$ to the closed concatenation $[p_m=p_0,p_1]*\cdots*[p_{m-1},p_m]$. 

Consider now the malnormal subgroup $A \subgroup F_n$. Let $K \mapsto G$ be a Stallings graph for $A$, meaning an immersion of a finite core graph such that the image of the induced monomorphism $\pi_1 K \inject \pi_1 G$ is conjugate to $A$; the graph $K$ is just the core of the covering space of $G$ corresponding to $A$. Since $A$ is malnormal it follows that there exists $\ell>0$ such that any path in $G$ of length $\ge \ell$ has at most one lift to $K$: if no such $\ell$ existed then by taking a weak limit of longer and longer paths each with two distinct lifts to $K$ we would obtain a line in $G$ with two distinct lifts to $K$, which by Fact~\ref{FactBoundaries} contradicts malnormality of $A$. Using this $\ell$ and applying the fact that $\alpha$ is in the span of any leaf of $\Lambda^+$ (any leaf is generic, by Proposition~\ref{PropGeomLams}), let $\mu_i, \nu_i, p_i$ be as described above. Since $\mu_i$ is a subpath of a leaf of $\Lambda^+$, by hypothesis $\mu_i$ lifts to a path $\ti\mu_i$ in $K$. Since $\nu_i$ has length $\ge \ell$, the lifts of $\nu_i$ to $\ti\mu_{i-1}$ and $\ti\mu_i$ are identical; denote this lift~$\ti\nu_i$. Let $\ti p_i$ be the point in the path $\ti\nu_i$ corresponding to $p_i$ under this lift. We obtain a subpath $[\ti p_{i},\ti p_{i+1}]$ of $\ti\mu_i$ which is a lift of $[p_{i},p_{i+1}]$. The closed concatenation $[\ti p_0, \ti p_1]*\cdots*[\ti p_{m-1}, \ti p_m]$ is therefore a lift of $[p_0,p_1]*\cdots*[p_{m-1},p_m]$, and so it is freely homotopic to a circuit $\ti\alpha$ in $K$ whose projection to $G$ is freely homotopic to $\alpha$, proving that $\alpha$ is carried by~$[A]$ (in fact $\ti\alpha$ is a lift of $\alpha$, since $K \mapsto G$ is an immersion).

%
%
\end{proof}

\textbf{Remarks.} Corollary 7.0.7, from which we have adapted the lifting argument, is stated only in the narrower situation of an immersed graph $K$ representing the set of lines that are not weakly attracted to a certain topmost attracting lamination corresponding to some geometric \eg\ stratum of some relative train track map, a set of lines that is denoted $\<Z,\rho\>$ in Section~6 of \BookOne. What we have added to Corollary 7.0.7 is the observation that the lifting argument uses only that the subgroup system defined by the immersion of $K$ is malnormal. We will not prove this malnormality formally until Proposition~\refWA{PropVerySmallTree}, but it be can verified directly by combining Lemma~\ref{LemmaLImmersed}~\pref{ItemKInL} with the fact that the set of lines $\<Z,\rho\>$ is equal to the set of lines that lift to a certain subgraph $K$ of the complementary subgraph of an appropriate geometric model.

The proof of Corollary 7.0.8 of \BookOne\ contains an extra invariance argument, which invokes the bounded cancellation lemma to show the span of a line is well-defined independent of the marked graph in which that line is realized. Our proof avoids that step by applying Fact~\ref{FactBoundaries}.

\subsection{Geometricity is an invariant of a dual lamination pair}
\label{SectionLaminationGeometricity}

Given $\phi \in \Out(F_n)$, the following proposition shows that geometricity is an invariant of a lamination $\Lambda \in \L(\phi)$, not just an invariant of a particular \ct\ representing $\phi$ and \eg\ stratum corresponding to $\Lambda$. This is accomplished by exhibiting a property of $\Lambda$ that is evidently independent of the choice of a \ct, namely condition~\pref{ItemBoundaryExists}. Furthermore this property shows that geometricity is an invariant of dual lamination pairs $\Lambda^\pm \in \L^\pm(\phi)$, not just of the two individual laminations. 

After the statement of the proposition, we apply this invariance to extend the terminology ``geometric'' beyond the setting of \eg\ strata to apply to appropriate elements of $\L(\phi)$ and of $\L^\pm(\phi)$.

 
\begin{proposition}\label{PropGeomEquiv}
For any $\phi \in \Out(F_n)$, any $\Lambdapmp \in \L^\pm(\phi)$, and any \ct\ $f \from G \to G$ representing a rotationless iterate of $\phi$, if $H_r \subset G$ is the \eg\ stratum associated to $\Lambdapp$ then the following are equivalent:
\begin{enumerate}
\item \label{ItemStratumIsGeom}
$H_r$ is a geometric stratum.
\item \label{ItemBoundaryExists}
There exists a finite $\phi$-invariant set $\C = \{[c_0],[c_1],\ldots,[c_m]\}$ of  conjugacy classes  such that $\F_\supp(\C) = \F_\supp(\Lambda^\pm_\phi)$. 
\end{enumerate}
Furthermore, if \pref{ItemStratumIsGeom}, \pref{ItemBoundaryExists} hold then for any \ct\ $f' \from G' \to G'$ representing a positive or negative rotationless power of~$\phi$ the following holds:
\begin{enumeratecontinue}
\item \label{ItemGeomInv}
If $H'_s \subset G'$ is the \eg\ stratum associated to $\Lambda^+_\phi$ or $\Lambda^-_\phi$ then $H'_s$ is geometric. 
\item \label{ItemSameNielsen}
If $[G_r] = [G'_s]$ and $[G_{r-1}] = [G'_{s-1}]$, and if $\rho_r$, $\rho'_s$ are the closed indivisible Nielsen paths in $G$, $G'$ of height $r,s$, respectively, then the conjugacy classes $[\rho_r]$, $[\rho'_s]$ are the same up to inverse.
\item \label{ItemBoundarySupportCarriesLambda} Adopting the notation of the geometric model for $H_r$, we have 
$$\F_\supp[\bdy S] = \F_\supp[\pi_1 S] = \F_\supp(\Lambda^\pm_\phi)
$$
and we may choose $\C$ to be the peripheral conjugacy classes, that is,
$$\C = \{[\bdy_0 S],\ldots,[\bdy_m S]\}
$$
\end{enumeratecontinue}
\end{proposition}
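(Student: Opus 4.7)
The plan is to prove the equivalence (\ref{ItemStratumIsGeom}) $\Leftrightarrow$ (\ref{ItemBoundaryExists}) first, then derive the invariance claims (\ref{ItemGeomInv})--(\ref{ItemBoundarySupportCarriesLambda}) as consequences, using that condition (\ref{ItemBoundaryExists}) is intrinsic to $\phi$ and $\F_\supp(\Lambdapmp)$ with no reference to a particular \ct.

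For (\ref{ItemStratumIsGeom}) $\Rightarrow$ (\ref{ItemBoundaryExists}), I would build a geometric model $X$ for $H_r$ as in Definition~\ref{DefGeomModel} and take $\C$ to be the set of peripheral conjugacy classes $\{[\bdy_0 S],\ldots,[\bdy_m S]\}$. This set is invariant under the rotationless iterate represented by $f$ because the pseudo-Anosov homeomorphism $\Psi$ preserves $\bdy_0 S$ and permutes the lower boundary circles among themselves; passing to the finite $\phi$-orbit of $\C$ if necessary upgrades this to $\phi$-invariance. The equality $\F_\supp(\C) = \F_\supp(\Lambdapmp)$ then follows by combining Lemma~\ref{LemmaScaffoldFFS}~\pref{ItemBdyAndSurfSupps}, which gives $\F_\supp[\bdy S] = \F_\supp[\pi_1 S]$, with Proposition~\ref{PropGeomLams}~\pref{ItemLamSurfSupport}, which gives $\F_\supp[\pi_1 S] = \F_\supp(\Lambdapmp)$. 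This simultaneously establishes the furthermore clause (\ref{ItemBoundarySupportCarriesLambda}).

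For (\ref{ItemBoundaryExists}) $\Rightarrow$ (\ref{ItemStratumIsGeom}), the heart of the argument is to extract a closed height-$r$ indivisible Nielsen path from the hypothesis. Since $\F_\supp(\C) = \F_\supp(\Lambdapmp)$ is carried by $[\pi_1 G_r]$ but not by $[\pi_1 G_{r-1}]$ (by Fact~\ref{FactLamsAndStrata} applied to the correspondence $\Lambda^+_\phi \leftrightarrow H_r$), some $[c_i] \in \C$ is carried by $[\pi_1 G_r]$ but not by $[\pi_1 G_{r-1}]$; its realization as a circuit $c_i$ in $G$ therefore has height exactly $r$. Because $\C$ is $\phi$-invariant and $f$ represents a rotationless iterate of $\phi$, the class $[c_i]$ is periodic under $f_\#$ and hence fixed by Fact~\ref{FactPeriodicIsFixed}~\pref{ItemPeriodicClassFixed}. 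By Fact~\ref{FactNielsenCircuit}, $c_i$ completely splits into fixed edges and indivisible Nielsen paths, and its height-$r$ term cannot be a fixed edge since the \eg\ stratum $H_r$ is aperiodic (Fact~\ref{FactEGAperiodic}); hence this term is an indivisible Nielsen path $\rho_r$ of height $r$, unique up to reversal by Fact~\ref{FactEGNPUniqueness}. If $\rho_r$ were not closed, Fact~\ref{FactEGNielsenCrossings}~\pref{ItemNoClosed} would forbid the existence of any height-$r$ fixed conjugacy class, contradicting the existence of $[c_i]$. Therefore $\rho_r$ is closed, and Fact~\ref{FactGeometricCharacterization} yields geometricity of $H_r$.

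With the equivalence in hand, the furthermore clauses follow. For (\ref{ItemGeomInv}), condition (\ref{ItemBoundaryExists}) is manifestly independent of the choice of \ct\ and signed rotationless iterate: given $f' \from G' \to G'$ representing a positive or negative rotationless power of $\phi$, the same set $\C$ witnesses (\ref{ItemBoundaryExists}) (using $\F_\supp(\Lambda^+_\phi) = \F_\supp(\Lambda^-_\phi)$ when the power is negative), and applying the implication (\ref{ItemBoundaryExists}) $\Rightarrow$ (\ref{ItemStratumIsGeom}) to $f'$ yields geometricity of $H'_s$. For (\ref{ItemSameNielsen}), running the extraction argument inside both $f$ and $f'$ produces some $[c_i] \in \C$ that is simultaneously a power of $[\rho_r]^{\pm 1}$ and a power of $[\rho'_s]^{\pm 1}$, by Fact~\ref{FactEGNielsenCrossings}~\pref{ItemFewClosed}; root-freeness of closed indivisible Nielsen paths in the free group $F_n$ then forces $[\rho_r] = [\rho'_s]^{\pm 1}$. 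The principal obstacle is the (\ref{ItemBoundaryExists}) $\Rightarrow$ (\ref{ItemStratumIsGeom}) step that upgrades an abstract $\phi$-invariant set of conjugacy classes with the correct free factor support into the existence of a \emph{closed} indivisible Nielsen path $\rho_r$; the crucial leverage is the sharp dichotomy in Fact~\ref{FactEGNielsenCrossings} between closed and non-closed indivisible Nielsen paths of \eg\ height.
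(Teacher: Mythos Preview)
Your proof is correct and follows essentially the same approach as the paper: the implication \pref{ItemStratumIsGeom}$\Rightarrow$\pref{ItemBoundaryExists} goes through \pref{ItemBoundarySupportCarriesLambda} via Lemma~\ref{LemmaScaffoldFFS}~\pref{ItemBdyAndSurfSupps} and Proposition~\ref{PropGeomLams}~\pref{ItemLamSurfSupport}, while \pref{ItemBoundaryExists}$\Rightarrow$\pref{ItemStratumIsGeom} rests on the dichotomy of Fact~\ref{FactEGNielsenCrossings} (the paper phrases it as a contrapositive, you constructively extract $\rho_r$ from the splitting of a height-$r$ fixed circuit---same content). Two cosmetic remarks: your appeal to Fact~\ref{FactEGAperiodic} to exclude fixed edges in $H_r$ is not quite the right citation (aperiodicity concerns the transition matrix; the real reason is simply that fixed edges live in \neg\ strata, not \eg\ strata); and for \pref{ItemSameNielsen} the paper gives a slightly cleaner one-line characterization of $[\rho_r]$ as the unique root-free fixed conjugacy class of the given height, whereas you route through a common $[c_i]$---both arguments are valid.
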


Before proving the proposition, we apply it to formulate some important definitions:

\begin{definition}
\label{DefGeometricLamination}
Given $\phi \in \Out(F_n)$ and $\Lambda^\pm_\phi \in \L^\pm(\phi)$, we say that $\Lambda^+$ is a \emph{geometric lamination}, or that $\Lambda^\pm$ is a \emph{geometric lamination pair}, if Proposition~\ref{PropGeomEquiv}~\pref{ItemBoundaryExists} holds.  
\end{definition}

Proposition \ref{PropGeomEquiv} can therefore be reworded to say that geometricity of the lamination $\Lambda^+_\phi$ and of the pair $\Lambda^\pm_\phi$, are equivalent to geometricity of some (any) \eg\ stratum corresponding to $\Lambda^+_\phi$ or $\Lambda^-_\phi$, for some (any) \ct\ representing some (any) rotationless power of $\phi$ or $\phi^\inv$.

\begin{proof}[Proof of Proposition \ref{PropGeomEquiv}] Item \pref{ItemGeomInv} follows from the equivalence of  \pref{ItemStratumIsGeom} and \pref{ItemBoundaryExists}, the fact that $\F_\supp(\Lambda^+_\phi) = \F_\supp(\Lambda^-_\phi)$, and the fact that \pref{ItemBoundaryExists} is independent of the choice of positive or negative power of~$\phi$ and choice of \ct\ $f \from G \to G$ representing that power. To prove~\pref{ItemSameNielsen}, by Fact~\ref{FactEGNielsenCrossings} the conjugacy classes $[\rho_r]$, $[\rho'_s]$ have the same characterization and are therefore equal (up to inverse): it is the unique (up to inverse) root-free conjugacy class that is carried by $[G_r]=[G'_s]$ but not by $[G_{r-1}]=[G'_{s-1}]$ and is fixed by~$\phi$. 

To prove \pref{ItemBoundaryExists}$\implies$\pref{ItemStratumIsGeom}, suppose $H_r$ is not geometric. By Fact~\ref{FactEGNielsenCrossings}, no conjugacy class of height~$r$ is fixed by $\phi$, and so every fixed conjugacy class of height $\le r$ is supported by $[G_{r-1}]$; however, $\Lambda^+_\phi$ itself is not supported by $[G_{r-1}]$.

Finally, note that \pref{ItemStratumIsGeom}$\implies$\pref{ItemBoundarySupportCarriesLambda} follows from Lemma~\ref{LemmaScaffoldFFS} and Proposition~\ref{PropGeomLams}~\pref{ItemLamSurfSupport}, and \pref{ItemBoundarySupportCarriesLambda}$\implies$\pref{ItemBoundaryExists} using~$\C = [\bdy S]$.
\end{proof}

\subsection{Stabilizing its complement also stabilizes the surface}
\label{SectionPreservingSurface}

We continue with the fixed notation of Section~\ref{SectionGeomModelComplement}: a rotationless $\phi \in \Out(F_n)$, a representative \ct\ $f \from G \to G$, a geometric stratum~$H_r$, and a geometric model $X$ for $H_r$, and all the notations of Definition~\ref{DefGeomModel} associated to~$X$. We also adopt the notation of the complementary subgraph $L \subset X$ from Definition~\ref{DefComplSubgraph}, and the notation of the peripheral graph of spaces $\wh X$ from Definition~\ref{DefPeriphGraphOfSpaces}.

\smallskip

Consider a closed oriented surface $S$, and a curve system $C \subset S$ separating $S$ into two components whose closures are $V,W$, neither of which is a disc. Any homeomorphism of $S$ that stabilizes $V$ up to isotopy must also stabilize $W$ up to isotopy, and vice versa. It follows that subgroups $\Stab[V],\Stab[W] \subgroup \MCG(S) = \Out(\pi_1 S)$ that stabilize the isotopy classes of $V,W$, respectively, are equal. This kind of thing fails in $\Out(F_n)$ when one tries to use a free factorization $F_n = A * B$ in place of the decomposition $S = V \union W$: an outer automorphism that preserves the conjugacy class $[A]$ need not preserve the conjugacy class $B$, and vice versa, so neither of the subgroups $\Stab[A],\Stab[B]$ is contained in the other.
 
There is however an $\Out(F_n)$ context involving geometric models in which we do obtain complementary behavior of this sort. Recalling from Section~\ref{SectionSSAndFFS} the stabilizer of a subgroup system, the following result says that the subgroup $\Stab[\pi_1 L] \subgroup \Out(F_n)$ is contained in the subgroup $\Stab[\pi_1 S] \subgroup \Out(F_n)$, in fact it is contained in the $\MCG(S)$ subgroup of $\Stab[\pi_1 S]$, and that we obtain a well-defined induced homomorphism $\Stab[\pi_1 L] \mapsto \MCG(S)$. We will need this statement in greater generality, replacing $L$ by certain subgraphs~$K \subset L$.

For the statement of the theorem, recall that by the Dehn-Nielsen-Baer Theorem \cite{FarbMargalit:primer}, the subgroup of $\Out(\pi_1 S)$ consisting of all $\theta \in \Out(\pi_1 S)$ that preserve the set of conjugacy classes associated to all oriented components of $\bdy S$ is naturally isomorphic to $\MCG(S)$, and we identify $\MCG(S)$ with this subgroup.

\begin{proposition} 
\label{PropVertToFree} 
For any subcomplex $K \subset L$ such that $j(\bdy S) \subset K$, and for any $\theta \in \Out(F_n)$ such that $\theta[\pi_1 K]=[\pi_1 K]$, we have:
\begin{enumerate}
\item\label{ItemSubgroupFixed} 
$\theta$ fixes $[\pi_1 S]$.  
\item\label{ItemPeripheralPermuted}
The restriction $\theta \restrict \pi_1 S \in \Out(\pi_1 S)$ is well-defined, it lives in $\MCG(S)$, and the induced function $\Stab[\pi_1 K] \mapsto \MCG(S)$ defined by $\theta \mapsto \theta \restrict \pi_1 S$ is a homomorphism.
\end{enumerate}
\end{proposition}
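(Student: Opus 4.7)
My plan is to reduce~(1) to the topological content of Lemma~\ref{LemmaGeomModelHE}---described in the introduction as the topological counterpart to this proposition---and to derive~(2) routinely from~(1) using Fact~\ref{FactMalnormalRestriction} together with Lemma~\ref{LemmaLImmersed}\pref{ItemSeparationOfSAndL}. The heart of the argument lies in~(1), and I expect the main obstacle to be precisely the verification encoded in Lemma~\ref{LemmaGeomModelHE}.

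For~(1): Given $\theta \in \Stab[\pi_1 K]$, I would represent $\theta$ by a self homotopy equivalence $h \from X \to X$ (available because $X$ is a $K(F_n,1)$), and use the hypothesis $\theta[\pi_1 K]=[\pi_1 K]$ to arrange, up to homotopy, that $h(K) \subset K$ with $h \restrict K$ a homotopy equivalence of $K$ (permuting its components as dictated by the action of $\theta$ on $[\pi_1 K]$). I would then invoke Lemma~\ref{LemmaGeomModelHE} to homotope $h$ further so that $h(S) \subset S$ with $h \restrict S \from S \to S$ a homotopy equivalence. This yields $\theta[\pi_1 S]=[\pi_1 S]$.

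For~(2): By Lemma~\ref{LemmaLImmersed}\pref{ItemSeparationOfSAndL} the subgroup $\pi_1 S \subgroup F_n$ is its own normalizer, so Fact~\ref{FactMalnormalRestriction} yields both well-definedness of $\theta \restrict \pi_1 S \in \Out(\pi_1 S)$ and the homomorphism property for the restriction map $\Stab[\pi_1 S] \to \Out(\pi_1 S)$; pre-composing with the inclusion $\Stab[\pi_1 K] \subseteq \Stab[\pi_1 S]$ furnished by~(1) gives the desired homomorphism $\Stab[\pi_1 K] \to \Out(\pi_1 S)$. To see that its image lies in $\MCG(S)$, I would verify that $\theta \restrict \pi_1 S$ preserves the set of $\pi_1 S$-conjugacy classes associated to components of $\bdy S$. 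The cleanest route uses the homotopy equivalence $h \restrict S \from S \to S$ from the previous paragraph: by the Dehn-Nielsen-Baer theorem \cite{FarbMargalit:primer} it represents an element of $\MCG(S)$, which automatically preserves boundary $\pi_1 S$-conjugacy classes. One can also argue purely algebraically: the set of root-free $F_n$-conjugacy classes whose representatives can be taken in $\pi_1 S$ and which are carried by $[\pi_1 K]$ coincides with $\{[\bdy_i S]\}^\pm$, by Lemma~\ref{LemmaLImmersed}\pref{ItemSeparationOfSAndL} combined with the hypothesis $j(\bdy S) \subset K$; this set is $\theta$-invariant, and a further application of the second clause of Lemma~\ref{LemmaLImmersed}\pref{ItemSeparationOfSAndL} promotes $\theta$-invariance of these $F_n$-conjugacy classes to $\theta \restrict \pi_1 S$-invariance of the corresponding $\pi_1 S$-conjugacy classes.

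The main obstacle is the topological step inside~(1), namely the homotopy from a map respecting $K$ to one also respecting $S$. I expect the proof of Lemma~\ref{LemmaGeomModelHE} to proceed via the peripheral graph-of-spaces structure on $\wh X$ (Definitions~\ref{DefBlowupAttaching}--\ref{DefPeripheralSplitting}): the hypothesis $j(\bdy S) \subset K$ ensures every peripheral edge group $\<[\bdy_i S]\>$ is carried by $[\pi_1 K]$, so $h$ acts on the edge groups of the peripheral splitting in a controlled way; combined with the malnormality and separation properties from Lemma~\ref{LemmaLImmersed}, this should constrain $h$ to send the $S$-vertex space to itself up to homotopy.
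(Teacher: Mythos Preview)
Your reduction of the proposition to Lemma~\ref{LemmaGeomModelHE} together with Fact~\ref{FactMalnormalRestriction} and Lemma~\ref{LemmaLImmersed}\pref{ItemSeparationOfSAndL} is exactly the paper's approach. One clarification: the output of Lemma~\ref{LemmaGeomModelHE} is a \emph{homeomorphism} $\Theta_S \from S \to S$ with $j \circ \Theta_S \simeq \Theta_X \circ j$ (not literally ``$h(S)\subset S$'', since $S$ is not a subspace of $X$ and $j$ need not embed $\bdy S$); this makes the $\MCG(S)$ conclusion immediate, so your primary route to~(2) is the paper's route as well.

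Where you diverge from the paper is in your expectation for the proof of Lemma~\ref{LemmaGeomModelHE}. The paper does not argue via the peripheral splitting and Bass--Serre theory. Instead it proceeds topologically: starting from $g_1 = \Theta_X \circ j \from S \to X$, it uses Stallings' transversality method to homotope $g_1$ off the naked edges of $L \setminus (j(S)\cup K)$; after simplicial approximation, it invokes Lemma~\ref{LemmaLImmersed}\pref{ItemSeparationOfSAndL} to show that every nontrivial closed curve in $g_2^{-1}(L)$ is peripheral in $S$, so a regular neighborhood of that preimage can be engulfed in a collar of $\bdy S$; this pushes the image into $j(\interior(S))$, where the map lifts through $j$ to a self-map of $S$. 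A theorem of Peter Scott (an automorphism of $F_n$ restricted to an invariant finite-rank subgroup is again an automorphism) shows the lift is a $\pi_1$-isomorphism, and Dehn--Nielsen--Baer upgrades it to the homeomorphism $\Theta_S$. The peripheral splitting enters only indirectly, as the tool behind Lemma~\ref{LemmaLImmersed}. A minor point on your algebraic alternative for~(2): the classes $[\bdy_i S]$ need not be root-free in $F_n$ (the paper notes this explicitly), so that characterization needs adjustment; but since your primary argument works, this is inconsequential.
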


The proof of Proposition~\ref{PropVertToFree}, most of which is in the following lemma, takes up the rest of the section. Once item~\pref{ItemSubgroupFixed} is proved, the part of item~\pref{ItemPeripheralPermuted} saying that $\theta \restrict \pi_1 S \in \Out(\pi_1 S)$ is well-defined follows from Lemma~\ref{LemmaLImmersed}~\pref{ItemSeparationOfSAndL} and Fact~\ref{FactMalnormalRestriction}; then, once it is proved that $\theta \restrict \pi_1 S$ lives in $\MCG(S)$, the rest of item~\pref{ItemPeripheralPermuted} follows from Fact~\ref{FactMalnormalRestriction}. Item~\pref{ItemSubgroupFixed} itself, and the fact that $\theta \restrict \pi_1 S$ lives in $\MCG(S)$, are each an immediate consequence of the following:  

\begin{lemma}\label{LemmaGeomModelHE}
Under the same hypothesis as Proposition~\ref{PropVertToFree}, and letting $K'$ be the union of the noncontractible components of $K$, for any $\theta \in \Out(F_n)$ such that $\theta[\pi_1 K]=[\pi_1 K]$ there exists a homotopy equivalence of pairs $\Theta_X \from (X,K') \to (X,K')$ and a homeomorphism $\Theta_S \from S \to S$ such that:
\begin{enumerate}
\item\label{ItemThetaXRepresents}
$\Theta_X$ represents $\theta$ with respect to the isomorphism $\pi_1 X \xrightarrow{d_*} \pi_1(G) \approx F_n$.
\item\label{ItemThetaSHomotopy}
The maps $j \composed \Theta_S$ and $\Theta_X \composed j \from S \to X$ are homotopic.
\end{enumerate} 
\end{lemma}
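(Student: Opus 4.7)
The plan is to build $\Theta_X$ first and then extract $\Theta_S$ from its restriction to the surface, using the hypothesis $\theta[\pi_1 K] = [\pi_1 K]$ to enforce compatibility along $\bdy S$.

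For $\Theta_X$: the hypothesis means $\theta$ permutes the conjugacy classes $\{[d_*\pi_1 K'_j]\}$ corresponding to noncontractible components of $K$. After choosing basepoints and connecting paths in $X$, I would fix a specific isomorphism between each $\pi_1 K'_j$ and the permuted $\pi_1 K'_{\sigma(j)}$ induced by $\theta$, and then realize these by a self-homotopy equivalence $\Theta_{K'} \from K' \to K'$, which is possible because each $K'_j$ is a noncontractible finite graph. I would then extend $\Theta_{K'}$ to $\Theta_X \from X \to X$ representing $\theta$: since the deformation retraction $d \from X \to G$ identifies $X$ as an aspherical $K(F_n,1)$, any partial map on a subcomplex whose induced $\pi_1$-action is consistent with a given global automorphism of $F_n$ extends up to homotopy to a global self-map of $X$ realizing that automorphism. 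A cellular approximation argument relative to the pair $(X, K')$ then promotes this to a homotopy equivalence of pairs.

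For $\Theta_S$: since $j(\bdy S) \subset K$ consists of essential loops in $X$, we have $j(\bdy S) \subset K'$, hence $\Theta_X \composed j \from S \to X$ carries $\bdy S$ into $K'$. Once one knows that $\theta[\pi_1 S] = [\pi_1 S]$, then by $\pi_1$-injectivity of $j$ (Lemma~\ref{LemmaLImmersed}) and asphericity of $X$ the map $\Theta_X \composed j$ is homotopic in $X$ to $j \composed \Theta'_S$ for some self-homotopy equivalence $\Theta'_S$ of $S$. Moreover $\Theta'_S$ permutes the set of peripheral conjugacy classes, because they are all carried by $[\pi_1 K]$, so by the Dehn-Nielsen-Baer theorem it is homotopic to an actual homeomorphism $\Theta_S$ of $S$.

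The main obstacle is establishing $\theta[\pi_1 S] = [\pi_1 S]$, since the hypothesis controls only $[\pi_1 K]$ and $K$ may be strictly smaller than $L$. My plan here is to exploit the peripheral graph of spaces $\wh X$ from Definition~\ref{DefPeriphGraphOfSpaces} together with the malnormality statements in Lemma~\ref{LemmaLImmersed}~\pref{ItemComplementMalnormal} and~\pref{ItemSeparationOfSAndL}. The peripheral edge groups of $\wh X$ are the infinite cyclic subgroups generated by $[\bdy_i S]$, all of which lie in the subgroup system $[\pi_1 K]$. Working in the Bass-Serre tree $T(X)$, an automorphism of $F_n$ preserving $[\pi_1 K]$ and the incidences between edge groups and $L$-vertex groups recorded by $K$ should be forced to preserve the orbit of the $S$-vertex stabilizer as well, which will yield $\theta[\pi_1 S] = [\pi_1 S]$.
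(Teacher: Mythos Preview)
Your construction of $\Theta_X$ is essentially the paper's: start with any $\Theta$ representing $\theta$, use the hypothesis to get a permutation of the components of $K'$, homotope the restrictions to honest homotopy equivalences $K_i \to K_{\sigma(i)}$, and extend by the homotopy extension property. That part is fine.

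The gap is in your construction of $\Theta_S$. You propose to first prove $\theta[\pi_1 S]=[\pi_1 S]$ by a Bass--Serre argument in $T(X)$, and then lift. But the Bass--Serre tree $T(X)$ has $L$-vertices, not $K$-vertices, so when $K\subsetneq L$ the system $[\pi_1 K]$ is not the system of $L$-vertex stabilizers, and there is no a priori action of $\theta$ on $T(X)$ to exploit; the sentence ``an automorphism preserving $[\pi_1 K]$ \ldots\ should be forced to preserve the orbit of the $S$-vertex stabilizer'' is exactly the content of the lemma and is not justified by anything you have set up. Even granting $\theta[\pi_1 S]=[\pi_1 S]$, your claim that $\Theta_X\circ j$ is homotopic to $j\circ\Theta'_S$ still requires pushing the image of $S$ off the complementary graph and into $j(\interior(S))$, since $j$ is not an embedding along the lower boundaries; asphericity alone does not do this.

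The paper avoids both problems by a direct topological argument and never proves $\theta[\pi_1 S]=[\pi_1 S]$ in advance. Starting from $g_1=\Theta_X\circ j$, it uses Stallings' transversality trick (midpoints of the naked edges of $X\setminus(j(S)\cup K)$ and innermost disks) to homotope rel $\bdy S$ to a map $g_2$ with image in $j(S)\cup K$, then makes $g_2$ simplicial and sets $R=g_2^{-1}(K)$. The key step is the claim that every nontrivial curve in $R$ is peripheral in $S$; this is where Lemma~\ref{LemmaLImmersed}\pref{ItemSeparationOfSAndL} is actually used, applied to $\gamma$ and its $\Theta_X$-preimage in $K$. From that claim one shows a regular neighborhood of $R$ can be engulfed in a collar of $\bdy S$, so a further homotopy produces $g_3$ with image in $j(\interior(S))$, which then lifts through $j$ to a self-map $\tilde g_3$ of $S$. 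Only at this final stage does one invoke Scott's theorem to see $(\tilde g_3)_*$ is an automorphism of $\pi_1 S$, check peripheral classes are preserved (again via Lemma~\ref{LemmaLImmersed}\pref{ItemSeparationOfSAndL}), and apply Dehn--Nielsen--Baer. In short: the paper deduces $\theta[\pi_1 S]=[\pi_1 S]$ as a \emph{consequence} of the topological construction of $\Theta_S$, not as a prerequisite for it.
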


\begin{proof} Since the components of $\bdy S$ are $\pi_1$-injective in~$X$, the $j$-image of each is contained in a component of $K'$. 

List the components of $K'$ as $K_1 \union\ldots\union K_\ell$. From Lemma~\ref{LemmaLImmersed}, the subgroup system $[\pi_1 K]$ has exactly $\ell$ distinct components $[\pi_1 K_1],\ldots,[\pi_1 K_\ell]$, and if $i \ne j \in \{1,\ldots,\ell\}$ then no nontrivial conjugacy class of $F_n$ is carried by both $[\pi_1 K_i]$ and $[\pi_1 K_j]$.

First we construct $\Theta_X \from (X,K') \to (X,K')$ satisfying~\pref{ItemThetaXRepresents}. Start with any homotopy equivalence $\Theta \from X \to X$ that represents~$\theta$. For each component $K_i$ of $K'$ there exists another component $K_{\sigma(i)}$ such that $\Theta_*[\pi_1 K_i] = [\pi_1 K_{\sigma(i)}]$. The function $\sigma \from \{1,\ldots,\ell\} \to \{1,\ldots,\ell\}$ must be a permutation, because otherwise circuits in distinct conjugacy classes will have images in the same conjugacy class. Since $X$ and each $K_i$ are Eilenberg-Maclane spaces, we can homotope each of the restrictions $\Theta \restrict K_i$ to a map $\Theta_i \from K_i \to K_{\sigma(i)}$ which is a $\pi_1$-isomorphism, and each such $\Theta_i$ is a homotopy equivalence. The union of functions $\Theta_1 \union\cdots\union \Theta_\ell \from K' \to K'$ is therefore also a homotopy equivalence. By the homotopy extension property, the homotopy from $\Theta \restrict K'$ to $\Theta_1 \union\cdots\union \Theta_\ell $ extends to a homotopy of all of~$\Theta$.

Next we construct the homeomorphism $\Theta_S \from S \to S$ satisfying~\pref{ItemThetaSHomotopy}. For this purpose we arrange to improve the map $g_1 = \Theta_X \composed j \from S \to X$ by a sequence of homotopies. To start, we already have $g_1(\bdy S) = \Theta_X(j(\bdy S)) \subset \Theta(K) \subset K$.

By homotoping $g_1$ rel $\bdy S$, we shall produce a map $g_2 \from S \to X$ such that $g_2(S) \subset j(S) \union K$, still retaining the property that $g_2(\bdy S) \subset K$. We again use Stallings method from \cite{Stallings:transversality} as in the proof of Lemma~\ref{LemmaScaffoldFFS}. Consider $X \setminus (j(S) \union K)$ which is a union $E_1 \union\cdots\union E_\nu$ of ``naked'' edges of~$X$, meaning edges of $L$ whose interiors are open subsets of~$X$. Letting $M$ denote the set of midpoints of the edges $E_1,\ldots,E_\nu$, we may perturb $g_1$ to be transverse to~$M$, preserving that $g_1(\bdy S) \subset K$. The set $g_1^\inv(M)$ is disjoint union of circles each of which, by $\pi_1$-injectivity, bounds a disc in the interior of $S$. Starting with an innermost disc $D$, by a homotopy of $g_1$ supported on a slightly larger disc we may remove $\bdy D$ from $g^\inv(M)$, and after finitely many of these homotopies we have arranged that $g_1(S) \intersect M = \emptyset$. We may then post-compose $g_1$ with a deformation retraction $(X \setminus (j(S) \union K)) - M \mapsto j(S) \union K$, to produce the desired map $g_2 \from S \to X$.

By the Simplicial Approximation Theorem \cite{Spanier} we may subdivide $S$ and $X$ as simplicial complexes, and perturb $g_2$ by a homotopy, preserving the property that $g_2(\bdy S) \subset K$, so that $g_2$ is a simplicial map. Having done that, consider the subcomplex $R \subset S$ defined by $R = g_2^\inv(L) = g_2^\inv(K)$, where this equation follows from the fact that $g_2(S) \subset j(S) \union K$ and the fact that $j(S) \intersect L \subset K$. Note in particular that $\bdy S \subset R$. 

We claim that any (homotopically) nontrivial closed curve $\gamma$ in $S$ that is contained in $R$ is peripheral in $S$, meaning that $\gamma$ is homotopic to a closed curve in~$\bdy S$. To see why, note that $\Theta(j(\gamma))$ is a nontrivial closed curve in~$X$ homotopic to the closed curve $g_2(\gamma)$ in~$K$. Since $\Theta$ restricts to a homotopy equivalence of $K$, there exists a nontrivial closed curve $\gamma'$ in $K$ such that $\Theta(\gamma')$ is homotopic to $\Theta(j(\gamma))$. Since $\Theta$ is a homotopy equivalence of $X$, the closed curves $\gamma'$ and $j(\gamma)$ are homotopic. Applying 
Lemma~\ref{LemmaLImmersed}~\pref{ItemSeparationOfSAndL} it follows that $\gamma$ is peripheral in~$S$, proving the claim.

Next, by further homotopy of $g_2$, we produce a map $g_3 \from S \to X$ such that $g_3(S) \subset j(\interior(S)) = X-L$. Let $N(R)$ be a regular neighborhood of $R$ in the surface $S$, and so $N(R)$ is a compact subsurface of $S$ containing $\bdy S$. We shall prove that $N(R)$ can be engulfed in a regular neighborhood of $\bdy S$. From the claim of the previous paragraph it follows that any closed curve contained in $N(R)$, in particular any component of $\bdy N(R)$, is either homotopically trivial or peripheral in~$S$. Let $N^+(R)$ be the union of $N(R)$ with every disc bounded by a homotopically trivial component of $\bdy N(R)$ and every annulus bounded by two peripheral components of $\bdy N(R)$. Again $N^+(R)$ is a compact subsurface of $S$ containing $R$ and so containing $\bdy S$, but now each component of $N^+(R)$ is either a disc or an annulus neighborhood of $\bdy S$. Let $D_1,\ldots,D_J$ be the disc components of $N^+(R)$. Let $\alpha_1,\ldots,\alpha_J$ be a pairwise disjoint collection of arcs properly embedded in the subsurface $\overline{S - N^+(R)}$ such that $\alpha_j$ has one endpoint on $\bdy D_j$ and another endpoint on an annulus component of $N^+(R)$. Let $N^{++}(R)$ be a regular neighborhood of $N^+(R) \union (\alpha_1 \union\cdots\union \alpha_J)$. It follows that $N^{++}(R)$ is a disjunion union of annulus neighborhoods of the components of $\bdy S$, that is, a regular neighborhood of $\bdy S$. By the Regular Neighborhood Theorem, $N^{++}(R)$ is ambiently isotopic in $S$ to any regular neighborhood, and so the surface $S$ is therefore isotopic in itself to its subsurface $\closure(S - N^{++}(R))$. Postcomposing this isotopy with the map $g_2$, the resulting composed map $g_3 \from S \to X$ given by $S \mapsto \closure(S - N^{++}(R)) \subset S \xrightarrow{g_2} X$ has image contained in $X - L = j(\interior(S))$, as desired.

Since $g_3(S) \subset j(\interior(S))$ and since $j$ restricts to an embedding on $\interior(S)$, we may lift $g_3$ to a self map $\ti g_3 \from S \to S$. By construction it follows that the maps $j \composed \ti g_3$ and $\Theta_X \composed j \from S \to X$ are homotopic, and so it suffices to prove that $\ti g_3$ is homotopic to a homeomorphism of $S$. 

The induced homomorphism of fundamental groups $(\ti g_3)_* \from \pi_1 S \to \pi_1 S$ is an automorphism, because with respect to the embedding $\pi_1 S \to \pi_1(X)$ induced by $j$ the map $(\ti g_3)_*$ is the restriction of an automorphism of $F_n$ representing $\theta \in \Out(F_n)$, and by a theorem of Peter Scott found in Lemma~6.0.6 of \BookOne\ the restriction of an automorphism of $F_n$ to any invariant finite rank subgroup is an automorphism of that subgroup. The automorphism $(\ti g_3)_*$ preserves the conjugacy classes of the boundary components of $S$, because by applying Lemma~\ref{LemmaLImmersed}~\pref{ItemSeparationOfSAndL}, for any conjugacy class $C$ in $\pi_1 S$ the following are equivalent: $C$ is represented by a closed curve in $\bdy S$; the image conjugacy class $[j(C)]=[g_3(C)]$ in $F_n$ is carried by both $[\pi_1 K]$ and $[\pi_1 S]$.
By the Dehn-Nielsen-Baer theorem \cite{FarbMargalit:primer}, $\ti g_3 \from S \to S$ is homotopic to a homeomorphism $\Theta_S$. 
\end{proof}

\subsection{Preserving the free boundary circles.} 
\label{SectionFreeBoundaryInvariant}
We continue with the fixed notation of the previous section. 

For application in \PartTwo\ we need Lemma~\ref{LemmaFreeDefRetr} which gives a stronger conclusion than Lemma~\ref{LemmaGeomModelHE} regarding invariance of \emph{free boundary circles} of~$S$---by definition, these are the components $\bdy_i S$ of $\bdy S$ for which there exists a half-open annulus neighborhood  $A \subset S$ such that the map $S \xrightarrow{j} X$ restricts to a homeomorphism from $A$ onto an open subset of~$X$. 

For the statement of Lemma~\ref{LemmaFreeDefRetr} we will also use a stronger hypothesis than Lemma~\ref{LemmaGeomModelHE}, requiring that the free factor support of the complementary subgroup system $[\pi_1 L]$ is all of~$F_n$. From  that hypothesis several consequences follow:

\begin{fact}
\label{FactNoAttachingPoints}
If $\F_\supp[\pi_1 L] = \{[F_n]\}$ then $X$ has no attaching points and every component of $L$ is noncontractible. Furthermore, every edge group of $\Gamma(X)$ is infinite cyclic as is every edge stabilizer of $T(X)$.
\end{fact}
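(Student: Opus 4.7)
My plan is to analyze the peripheral graph of groups $\Gamma(X)$ from Definition~\ref{DefPeripheralSplitting} and its Bass-Serre tree $T(X)$, using that attaching points correspond exactly to the arc edges $\eta_k$ in $\Gamma(X)$ with trivial edge group, while contractible components of $L$ correspond to $L$-vertices with trivial vertex group. The strategy is to show that each such triviality in $\Gamma(X)$ forces $F_n = \pi_1 \Gamma(X)$ to split as a proper nontrivial free product in which every component of $[\pi_1 L]$ lies inside a single factor, yielding a proper free factor system that carries $[\pi_1 L]$ and contradicting the hypothesis. The edge-group statement then follows immediately: once no $\eta_k$ arcs remain, all edge groups of $\Gamma(X)$ are the infinite cyclic groups of the annulus edges $N_i$, and edge stabilizers of $T(X)$ are $F_n$-conjugates of these, hence also infinite cyclic.

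I would first treat contractible components. Suppose $L^0$ is a contractible component of $L$; the component containing $\bdy_0 S$ is always noncontractible (it contains a nontrivial loop), so $L^0$ is different. If some lower boundary circle $\bdy_i S$ (with $i \ge 1$) attaches to $L^0$, then $j(\bdy_i S) \subset L^0 \subset X$ lies in the tree $L^0$ and so is null-homotopic in $X$, contradicting $\pi_1$-injectivity of $j$ (Lemma~\ref{LemmaLImmersed}) together with nontriviality of $\bdy_i S$ in $\pi_1 S$ (nontrivial since $\chi(S) < 0$). Hence no annulus edge $N_i$ is incident to $V_{L^0}$ in $\Gamma(X)$, and $V_{L^0}$ is connected to $\wh S$ only through arc edges $\eta_{k_1},\dots,\eta_{k_{\nu^0}}$, with $\nu^0 \ge 1$ by connectedness of $\wh X$. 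A direct graph-of-groups computation---choosing a spanning tree containing one such arc and using that $\pi_1 L^0 = 1$ and all $\nu^0$ edge groups are trivial---yields $F_n = \pi_1(\Gamma(X) - V_{L^0}) \ast F_{\nu^0 - 1}$. When $\nu^0 \ge 2$ this is a proper nontrivial free product, and since $V_{L^0}$ contributes nothing to $[\pi_1 L]$, every component of $[\pi_1 L]$ lies inside the first factor, yielding the contradiction.

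The delicate subcase is $\nu^0 = 1$: a single contractible component attached as a ``dangling tree'' at a single attaching point. I expect this to be the main obstacle of the proof. To handle it I would analyze $L^0$ as a subgraph of $G$, using the marked graph axiom (no valence-one vertices in $G$) to show that each leaf of the tree $L^0$ must acquire additional valence from $H_r$-edges; with $\nu^0 = 1$ only one such leaf is an attaching point, and the others must lie in $H_r \cap G_{r-1}$. Tracking how the components of $G_{r-1}$ sit inside $L^0$, and invoking Fact~\ref{FactContrComp} (identifying contractible components of $G_{r-1}$ with zero strata) together with the fact from Fact~\ref{FactNielsenBottommost}\pref{ItemNoZeroStrata} that the geometric \eg\ stratum $H_r$ envelops no zero stratum, I expect to force either some $\alpha_i$ to attach into $L^0$ (already handled above) or additional attaching points on $L^0$ (contradicting $\nu^0 = 1$).

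Once every component of $L$ is noncontractible, ruling out attaching points is clean. Suppose $z_k$ is an attaching point with corresponding arc $\eta_k$ in $\Gamma(X)$ of trivial edge group. Either $\eta_k$ separates $\Gamma(X)$ or it does not. In the separating case $F_n = \pi_1 \Gamma_1 \ast \pi_1 \Gamma_2$ where $\pi_1 \Gamma_1 \supseteq \pi_1 S$ is nontrivial, and $\pi_1 \Gamma_2$ contains the now-nontrivial $L$-vertex group on the other side of $\eta_k$, so both factors are nontrivial. In the nonseparating case $F_n = \pi_1(\Gamma(X) \setminus \eta_k) \ast \Z$ is proper as well. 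In both cases $[\pi_1 L]$ is carried by a proper free factor system, contradicting $\F_\supp[\pi_1 L] = \{[F_n]\}$, and the edge-group statement follows as noted above.
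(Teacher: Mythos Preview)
Your overall architecture is sound and does eventually work, but it reverses the paper's order and, as a result, you manufacture a ``delicate subcase'' that the paper avoids entirely. The paper first rules out attaching points in one stroke: the copy $\wh L$ of $L$ inside $\wh X$ is disjoint from the interior of each arc $\eta_k$, so if any $\eta_k$ exists then $\wh L$ sits inside $\wh X \setminus \interior(\eta_k)$, whose $\pi_1$ yields a proper free factor system of $F_n$ carrying $[\pi_1 L]$. Only after that does the paper show noncontractibility of the components of $L$, and with attaching points already gone the argument is a clean valence--$1$ analysis: any leaf $v$ of $L$ must lie in $H_r$ (since $G$ is a core graph), must avoid $\bdy_0 S$, cannot be an attaching point (none exist), and hence lies in $G_{r-1}$; the component of $G_{r-1}$ containing $v$ is noncontractible by the ``furthermore'' clause of Fact~\ref{FactGeometricCharacterization} and sits inside the component of $L$ containing $v$.

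Your $\nu^0 = 1$ subcase is where your proposal has a real gap: the conclusion you aim for (``force some $\alpha_i$ into $L^0$ or produce extra attaching points'') is the wrong target and does not obviously follow. The correct argument is much shorter and is essentially the paper's valence--$1$ step run inside $L^0$. Your tree $L^0$ has at least one edge (an isolated vertex of $L$ is impossible) and hence at least two leaves. Each leaf has valence $1$ in $L$, so by exactly the case analysis above it is either an attaching point or lies in $G_{r-1}$. With $\nu^0 = 1$ at most one leaf is an attaching point, so some leaf $v$ lies in $G_{r-1}$. But then the entire component of $G_{r-1}$ containing $v$ --- which is noncontractible by Fact~\ref{FactGeometricCharacterization} (equivalently your cited Fact~\ref{FactContrComp} plus Fact~\ref{FactNielsenBottommost}\pref{ItemNoZeroStrata}) --- is a subgraph of the tree $L^0$, an immediate contradiction. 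In fact this shows the $\nu^0 = 1$ case is vacuous: a contractible $L^0$ would have to be disjoint from $G_{r-1}$, forcing \emph{every} leaf to be an attaching point and hence $\nu^0 \ge 2$.

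So your route works once this is repaired, but the paper's ordering --- attaching points first via the one--line containment $\wh L \subset \wh X \setminus \interior(\eta_k)$, then the valence--$1$ argument --- is more economical and avoids the case split altogether.
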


\begin{proof} There is a subgraph $\wh L \subset \wh X$ which is mapped homeomorphically to $L$ under the homotopy equivalence quotient map $\wh X \mapsto X$ that collapses each of the edges $\eta_k$ to the corresponding attaching point $v_k$. The graph $\wh L$ is disjoint from the interior of each $\eta_k$. So if any $v_k$ and its corresponding $\eta_k$ do exist, then $\wh L$ is contained in a subcomplex of $\wh X$ which represents a proper free factor of $X$, contradicting that $\F_\supp[\pi_1 L] = \{[F_n]\}$ and therefore proving that $X$ has no attaching points.  

To prove that $L$ has no contractible component it suffices to prove that each valence~$1$ vertex $v \in L$ is contained in a noncontractible subgraph of~$L$. First, $v \in H_r$ because if $v \not\in H_r$ then, since $G$ is a core graph, $v$ would have valence~$\ge 2$ in $G \setminus H_r \subset L$. Also, $v \not\in \bdy_0 S$ because then $v$ would have valence~$\ge 2$ in $\bdy_0 S \subset L$; in particular $v \ne p_r$. Also, $v \not\in \int(H_r;G_{r-1})$, the interior of $H_r$ relative to $G_{r-1}$, because, since $v \ne p_r$, it would follow that $v$ is an attaching point. The only remaining possibility is that $v \in H_r - \interior(H_r;G_{r-1}) \subset G_{r-1}$. But each component of $G_{r-1}$ is noncontractible (Fact~\ref{FactGeometricCharacterization}).

By Definition~\ref{DefPeripheralSplitting}, all edge groups are infinite cyclic or trivial, and trivial edge groups are associated only to the edges obtained by blowing up attaching points, of which there are none as we have just shown.
\end{proof}

The following lemma characterizes free boundary circles in terms of the action of $F_n$ on the Bass-Serre $T(X)$. The final characterization \pref{ItemLInS} is purely algebraic and will play an important role in proving invariance of free boundary circles.

\begin{lemma}
\label{LemmaFreeBdyVertex}
For each boundary circle $\bdy_i S$, letting $L_{l(i)}$ be the component of $L$ containing $j(\bdy_i S)$, letting $v \in \Gamma(X)$ be the vertex corresponding to $L_{l(i)}$, and letting $V \in T(X)$ be any vertex projecting to $v$, the following are equivalent:
\begin{enumerate}
\item\label{ItemIsFreeBdy}
$\bdy_i S$ is a free boundary circle.
\item\label{ItemIsDownstairsThorn}
$v$ has valence~$1$ in $\Gamma(X)$, and letting $e \subset \Gamma(X)$ be the edge incident to $v$, the injection from the edge group of $e$ to the vertex group of $v$ is an isomorphism.
\item\label{ItemIsUpstairsThorn}
$V$ has valence~$1$ in $T(X)$.
\end{enumerate}
If further $\F_\supp[\pi_1 L] = \{[F_n]\}$ then the following is also equivalent:
\begin{enumeratecontinue}
\item\label{ItemLInS}
For each subgroup $A$ in the conjugacy class $[\pi_1 L_{l(i)}]$ there exists a unique subgroup $B$ in the conjugacy class $[\pi_1 S]$ such that some finite index subgroup of $A$ is contained in~$B$.
\end{enumeratecontinue}
\end{lemma}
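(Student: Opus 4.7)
The plan is to first establish the equivalences among the topological/combinatorial conditions (1), (2), (3) using the peripheral graph of spaces structure of Definition~\ref{DefPeriphGraphOfSpaces} together with standard Bass-Serre theory, and only then to bring in the algebraic condition~(4), which relies on the additional hypothesis via Fact~\ref{FactNoAttachingPoints}.

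The equivalence (2) $\Leftrightarrow$ (3) follows from the standard Bass-Serre covering formula: for any vertex $V \in T(X)$ over $v \in \Gamma(X)$ with incident edges $e_1, \ldots, e_k$ and corresponding edge-group injections $\iota_j \from C_j \inject \Stab(V)$, the valence of $V$ in $T(X)$ is $\sum_j [\Stab(V) : \iota_j(C_j)]$. Hence valence $1$ at $V$ forces $k=1$ together with $\iota_1$ being an isomorphism, which is exactly~(2). For (1) $\Leftrightarrow$ (2), I would unpack the definition of a free boundary circle in the peripheral graph of spaces: a half-open annular neighborhood of $\bdy_i S$ embeds into $X$ exactly when (a)~the $L$-side attachment of the annulus edge space $N_i$ is a homeomorphism of $\bdy_i S$ onto $L_{l(i)}$, forcing $L_{l(i)}$ to be a topological circle and the edge group of $e_i$ to map isomorphically onto $\pi_1 L_{l(i)}$, and (b)~no other edge space $N_j$ or $\eta_k$ attaches to $L_{l(i)}$, so that $v$ has no other incident edges in $\Gamma(X)$. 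Conditions (a) and (b) together are exactly~(2).

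For (3) $\Rightarrow$ (4) under the additional hypothesis, Fact~\ref{FactNoAttachingPoints} provides that $\Gamma(X)$ has no $\eta_k$-edges and that every edge stabilizer in $T(X)$ is infinite cyclic. Given $A = \Stab(V)$ with $V$ of valence~$1$ and unique incident edge $E$, let $W$ be the other endpoint of $E$, which is an $S$-vertex by bipartiteness, and set $B = \Stab(W) \in [\pi_1 S]$; by~(2) we have $\Stab(E) = A$, hence $A \subseteq B$, giving existence. For uniqueness, if $A' \subseteq A$ has finite index and is contained in some $B' \in [\pi_1 S]$ with associated $S$-vertex $W'$, then $A'$ fixes both $V$ and $W'$, hence fixes the entire segment $[V, W']$ and in particular its first edge at $V$; but this edge must be the unique edge $E$, so $W'=W$ and $B' = B$.

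The main obstacle, which I expect to handle by a rank dichotomy on $A$, is the converse (4) $\Rightarrow$ (3). I would argue by contrapositive, assuming $V$ has valence $\ge 2$. If $A$ is cyclic, each edge stabilizer at $V$ is a nontrivial cyclic subgroup of $A$, hence of finite index in~$A$. Since $T(X)$ is a tree, two distinct edges at $V$ lead to distinct $S$-vertices $W_1 \ne W_2$ with distinct stabilizers $B_1 \ne B_2 \in [\pi_1 S]$, each containing a finite-index subgroup of $A$, contradicting uniqueness in~(4). If instead $A$ has rank $\ge 2$, then every finite-index subgroup $A' \subseteq A$ has rank $\ge 2$; but if $A' \subseteq B$ for some $B \in [\pi_1 S]$, then $A'$ fixes $V$ and the $S$-vertex stabilized by $B$, hence lies in the infinite cyclic stabilizer of the first edge along that segment, contradicting rank $\ge 2$. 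Thus no such $B$ exists, violating existence in~(4).
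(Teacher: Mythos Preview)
Your overall architecture matches the paper's: the equivalences \pref{ItemIsFreeBdy}$\Leftrightarrow$\pref{ItemIsDownstairsThorn} and \pref{ItemIsDownstairsThorn}$\Leftrightarrow$\pref{ItemIsUpstairsThorn} are handled exactly as the paper does, and your contrapositive argument for \pref{ItemLInS}$\Rightarrow$\pref{ItemIsUpstairsThorn} via a rank dichotomy is logically equivalent to the paper's linear version (the paper first deduces that $A$ is cyclic from the existence clause of~\pref{ItemLInS}, then contradicts uniqueness using a second edge; your two cases are just these two steps separated).

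There is, however, a genuine gap in your uniqueness argument for \pref{ItemIsUpstairsThorn}$\Rightarrow$\pref{ItemLInS}. You correctly observe that $A'$ fixes the segment $[V,W']$ and hence its first edge at $V$, which must be $E$. But this only places $W$ on the segment $[V,W']$; it does not force $W'=W$. If $W' \ne W$ the segment continues past $W$ through a second edge $E_2$ incident to the $S$-vertex $W$, and $A'$ fixes $E_2$ as well. The missing step (which is exactly what the paper supplies) is that $\Stab(E) \cap \Stab(E_2)$ is trivial, because $E$ and $E_2$ correspond to distinct boundary lines of the universal cover of $\wh S$, whose stabilizers intersect trivially; this contradicts nontriviality of $A'$. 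Without this observation your inference ``so $W'=W$'' is a non sequitur.

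A smaller point in your cyclic case of \pref{ItemLInS}$\Rightarrow$\pref{ItemIsUpstairsThorn}: you assert that distinct $S$-vertices $W_1 \ne W_2$ have distinct stabilizers $B_1 \ne B_2$. This is true but deserves a word: if $B_1 = B_2$ then this group fixes the segment $[W_1,W_2]$ and hence lies in an infinite cyclic edge stabilizer, impossible since $\pi_1 S$ has rank $\ge 2$.
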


\begin{proof} The equivalence \pref{ItemIsFreeBdy}$\iff$\pref{ItemIsDownstairsThorn} is an immediate consequence of the construction of $\Gamma(X)$, and the equivalence \pref{ItemIsDownstairsThorn}$\iff$\pref{ItemIsUpstairsThorn} is a consequence of Bass-Serre theory.

For proving the equivalence  \pref{ItemIsUpstairsThorn}$\iff$\pref{ItemLInS}, we may rechoose $V$ within its orbit so that $A = \Stab(V)$. The implication \pref{ItemIsUpstairsThorn}$\implies$\pref{ItemLInS} holds without the ``further'' hypothesis, as follows. Let $V \in T(X)$ have valence~$1$. Letting $E_1$ be the unique edge of $T(X)$ incident to $V$ and letting $W_1$ be the opposite vertex of $E_1$, clearly $W_1$ projects to the $\wh S$ vertex. Setting $B = \Stab(W_1)$, we have $A = \Stab(V)=\Stab(E_1) \subgroup \Stab(W_1) = B$, and these subgroups are all nontrivial by Fact~\ref{FactNoAttachingPoints}. Arguing by contradiction, if \pref{ItemLInS} does not hold then there exists a vertex $W_2 \ne W_1 \in T(X)$ also projecting to the $S$ vertex of $\Gamma(X)$ such that the subgroup $\Stab(\overline{VW_2}) = \Stab(V) \intersect \Stab(W_2)$ is nontrivial. The arc $\overline{VW_2}$ contains the edge $E_1$ incident to $V$ and at least one other edge; let $E_2$ be the next edge after $E_1$. The subgroup $\Stab(\overline{VW_2})$ is contained in the stabilizers of each of $E_1,E_2$, and so $\Stab(E_1) \intersect \Stab(E_2)$ is nontrivial. But $E_1,E_2$ are distinct edges incident to $W_1$, and so $\Stab(E_1)$ and $\Stab(E_2)$ are stabilizers of distinct components of the boundary of the universal cover of $\wh S$, which implies that $\Stab(E_1) \intersect \Stab(E_2)$ is trivial, a contradiction.

Assume now that \pref{ItemLInS} holds. Applying \pref{ItemLInS}, there is a unique vertex $W_1 \in T(X)$ projecting to the $S$-vertex such that the subgroup $A_1 = A \intersect \Stab(W_1)$ has finite index in $A$. Letting $E_1$ be the first edge on the arc $\overline{VW_1}$, it follows that $A' = A \intersect B = \Stab(V) \intersect \Stab(W_1) \subgroup \Stab(E_1)$. Since $L_j$ is noncontractible (Fact~\ref{FactNoAttachingPoints}), the subgroups $A$ and $A'$ are nontrivial. By Fact~\ref{FactNoAttachingPoints}, all edges in $T(X)$ have infinite cyclic stabilizer. Since $\Stab(E_1)$ is infinite cyclic it follows that $A'$ is infinite cyclic, and hence so is $A$. Assuming that $V$ does not have valence~$1$, the tree $T(X)$ has another edge $E_2$ incident to $V$; let $W_2$ be the vertex of $E_2$ opposite $V$. Since $\Stab(E_2) \subset \Stab(V)= A$, since $\Stab(E_2)$ is nontrivial, and since $A$ is infinite cyclic, it follows that $\Stab(E_2)$ is infinite cyclic and has finite index in $A$. But since $\Stab(E_2) \subset \Stab(W_2)$ we have contradicted uniqueness of $W_1$, proving that $V$ has valence~$1$.
\end{proof}

Let $\bdy^\free S$ be the union of free boundary circles of $S$. For example, $\bdy_0 S \subset \bdy^\free S$ if and only if $p_r \not\in G \setminus G_r$, equivalently $p_r$ is an interior point of $H_r$; if $H_r$ is the top stratum then this holds automatically. The map $j$ embeds $\bdy^\free S$ into $X$; we identify $\bdy^\free S$ with its image in~$X$. Note that $\bdy^\free S$ is a union of components of $L$; the remaining components are denoted $L^\nonfree = L - \bdy^\free S$. Let $\bdy^\nonfree S = \bdy S - \bdy^\free S$, and notice that $j(\bdy^\nonfree S) \subset L^\nonfree$. Recall the notation $N_i$ for closed regular neighborhoods of the boundary circles $\bdy_i S$, and let $A_i$ be the interior of $N_i$ which is identified by $j$ with an open subset of $X$. Define the \emph{free subsurface} $S^\free$ of $S$ to be the closure of $S - \union\{N_i \suchthat \bdy_i S \subset \bdy^\nonfree S\}$, a compact subsurface to which $S$ deformation retracts. The map $j \from S \to X$ embeds $S^\free$ into $X$ and we identify $S^\free$ with its image. It follows that $L^\nonfree$ and $S^\free$ are disjoint closed sucomplexes of $X$ and that $X - (L^\nonfree \union S^\free)$ is the disjoint union of open annuli $A_i = \interior(N_i)$ over all $i$ such that $\bdy_i S \subset \bdy^\nonfree S$. Under the deformation retraction $S \mapsto S^\free$ let $\bdy_i S \leftrightarrow \bdy_i S^\free$ be the induced bijection of boundary circles: if $\bdy_i S \in \bdy^\free S$ then $\bdy_i S = \bdy_i S^\free$; whereas if $\bdy_i S \in \bdy^\nonfree S$ then $\bdy_i S \intersect \bdy_i S^\free = \emptyset$ and $\bdy_i S \union \bdy_i S^\free = \bdy N_i$. The map $j \restrict N_i$ may be regarded as a homotopy in $X$ between between the embedded circle $j(\bdy_i S^\free) \subset X$ and the closed edge path $j(\bdy_i S)$ in~$L^\nonfree$.

\begin{lemma}[Addendum to Lemma \ref{LemmaGeomModelHE}]
\label{LemmaFreeDefRetr} For any $\theta \in \Out(F_n)$ such that $\theta[\pi_1 L]=[\pi_1 L]$, and assuming that $\F_\supp[\pi_1 L] = \{[F_n]\}$, there exists a homotopy equivalence $\Theta_X \from (X,L) \to (X,L)$ representing $\theta$ and satisfying the conclusions of Lemma~\ref{LemmaGeomModelHE} so that the following also hold:
\begin{enumerate}
\item In addition to preserving $L$, also $\Theta_X$ preserves the sets $L^\nonfree$, $\bdy^\free S$, $S^\free$, and $X \setminus S^\free$.
\item The restriction $\Theta_X \restrict S^\free$ is a homeomorphism of $S^\free$.
\end{enumerate}
\end{lemma}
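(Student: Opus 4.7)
The plan is to apply Lemma~\ref{LemmaGeomModelHE} with $K = L$ to produce an initial homotopy equivalence $\Theta_X^0$ and a matching homeomorphism $\Theta_S^0$, and then to refine both so as to secure the additional conclusions. The application of Lemma~\ref{LemmaGeomModelHE} is legitimate because $j(\bdy S) \subset L$, and because Fact~\ref{FactNoAttachingPoints}, which uses the present hypothesis $\F_\supp[\pi_1 L] = \{[F_n]\}$, implies that every component of $L$ is noncontractible, so $K' = L$.

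Before refining, I would set the algebraic framework. By Proposition~\ref{PropVertToFree}~\pref{ItemSubgroupFixed} we have $\theta[\pi_1 S] = [\pi_1 S]$, so the characterization of free-boundary components of $L$ in Lemma~\ref{LemmaFreeBdyVertex}~\pref{ItemLInS}---being phrased solely in terms of the subgroup systems $[\pi_1 L]$ and $[\pi_1 S]$---is preserved by $\theta$. Consequently $\theta$ permutes the free-boundary components of $L$ among themselves and the non-free components among themselves, and the induced mapping class $\theta\restrict \pi_1 S \in \MCG(S)$ preserves the partition $\bdy S = \bdy^\free S \sqcup \bdy^\nonfree S$. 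Using this I would isotope $\Theta_S^0$, first so that it sends each $\bdy_i S$ onto $\bdy_{\sigma(i)}S$ for a permutation $\sigma$ preserving free-versus-non-free, and then, by the regular neighborhood theorem, so that it sends each non-free annulus $N_i$ onto $N_{\sigma(i)}$. Call the resulting map $\Theta_S$; it still represents $\theta\restrict \pi_1 S$, and it satisfies $\Theta_S(S^\free) = S^\free$ and $\Theta_S(A_i) = A_{\sigma(i)}$ for every non-free index~$i$.

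The main step is to upgrade $\Theta_X^0$ so that it equals $j \circ \Theta_S \circ (j\restrict S^\free)^{-1}$ on the embedded copy $j(S^\free) \subset X$. This proceeds in three stages, designed to be sequentially independent because the three loci $\bdy^\free S$, $L^\nonfree$, and $j(\interior S)$ are pairwise disjoint in $X$. In stage (a) I isotope $\Theta_X^0$ within each circle component making up $\bdy^\free S \subset L$ so that it agrees pointwise with $j \composed \Theta_S$ there, both restrictions being degree-$\pm 1$ self-homeomorphisms of circles realizing the same $\pi_1$-class. In stage (b) I invoke Lemma~\ref{LemmaLImmersed}~\pref{ItemKInL} ($\pi_1$-injectivity of $L^\nonfree$ in $X$) to convert the free $X$-homotopy between $\Theta_X^0(j(\bdy_i S))$ and $j(\Theta_S(\bdy_i S))$ supplied by Lemma~\ref{LemmaGeomModelHE} into a free homotopy inside $L^\nonfree$, and I use it to homotope $\Theta_X^0$ within $L^\nonfree$ so that the two circle maps agree along $j(\bdy_i S)$. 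In stage (c), with $\Theta_X^0$ now equal to $j \composed \Theta_S$ on all of $j(\bdy S)$, I apply the homotopy extension property to the cofibration $L \cup j(S^\free) \inject X$ to redefine $\Theta_X^0$ on the open complement---namely the interiors of the non-free annuli $A_i$---so as to coincide with $j \composed \Theta_S \composed (j\restrict \interior S)^{-1}$ there.

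Calling the outcome $\Theta_X$, the verification is immediate: $\Theta_X \restrict j(S^\free) = j \circ \Theta_S \circ j^{-1}$ is a homeomorphism onto $\Theta_S(S^\free) = S^\free$; the sets $\bdy^\free S$ and $L^\nonfree$ are each preserved thanks to the algebraic paragraph; and the annulus collection $\bigsqcup A_i$ is carried onto itself by the formula $j \circ \Theta_S \circ j^{-1}$, whence $\Theta_X(X \setminus S^\free) = X \setminus S^\free$. The hard part will be stage~(c): I must ensure that the formula $j \circ \Theta_S \circ (j\restrict \interior S)^{-1}$ on $A_i$, which makes sense because $j\restrict \interior N_i$ is an embedding, extends continuously to the boundary $j(\bdy_i S) \subset L^\nonfree$ and agrees there with the already-prescribed $\Theta_X \restrict L^\nonfree$. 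The extension is forced by continuity as one approaches $\bdy_i S$ from $\interior N_i$, and the required agreement along $j(\bdy_i S)$ is arranged by a final small homotopy of $\Theta_X \restrict L^\nonfree$ inside $L^\nonfree$, again invoking $\pi_1$-injectivity from Lemma~\ref{LemmaLImmersed}~\pref{ItemKInL} to realize the homotopy entirely within~$L^\nonfree$ and hence without disturbing the invariance properties already secured.
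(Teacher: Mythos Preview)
Your overall strategy is sound and your use of Lemma~\ref{LemmaFreeBdyVertex}~\pref{ItemLInS} to show that $\theta$ preserves the free/nonfree partition of $\bdy S$ matches the paper exactly. The gap is in your treatment of the nonfree annuli.

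The formula $j\circ\Theta_S\circ(j\restrict\interior N_i)^{-1}$ on $A_i$ need not extend continuously to a map on $X$ at points of $j(\bdy_i S)\subset L^\nonfree$. The attaching map $\alpha_i=j\restrict\bdy_i S$ is only assumed to be a homotopically nontrivial closed edge path, not an embedding (Definition~\ref{DefWeakGeomModel}~\pref{ItemWeakGMAttach}). If $\alpha_i(x)=\alpha_i(x')=y$ with $x\ne x'$, then approaching $y$ from inside $A_i$ along the two branches gives limits $\alpha_{\sigma(i)}(\Theta_S(x))$ and $\alpha_{\sigma(i)}(\Theta_S(x'))$, and there is no reason these coincide. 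Concretely, $\alpha_i$ might have backtracking while $\alpha_{\sigma(i)}$ is an immersion, in which case $\alpha_{\sigma(i)}\circ\Theta_S$ is injective and cannot factor through $\alpha_i$. Thus your stage~(b) cannot in general be completed: no homotopy of $\Theta_X^0\restrict L^\nonfree$ can force $\Theta_X^0\circ\alpha_i$ to equal $\alpha_{\sigma(i)}\circ\Theta_S$ pointwise, because the latter is not a function of $\alpha_i(x)$. Your appeal to $\pi_1$-injectivity of $L^\nonfree$ only yields a free homotopy of circle maps, not the pointwise factorization you need.

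The paper sidesteps this entirely. After arranging that $\Theta_X\restrict S^\free$ is a homeomorphism (by a homotopy relative to $L^\nonfree$), it does \emph{not} attempt to impose any specific formula on the annuli. Instead it takes the existing annulus map $\beta=\Theta_X\circ j:N_i\to X$, constructs by hand a second annulus map $\gamma$ with the same boundary values and with $\gamma(A_i)\subset X\setminus S^\free$, and then shows via a torus argument (doubling $N_i$ and using that any map from a torus to $X$ extends over a solid torus) that $\beta$ is homotopic rel $\bdy N_i$ to a map with the same image as $\gamma$. This never requires anything to descend through the noninjective attaching maps.
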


\begin{proof} By Fact~\ref{FactNoAttachingPoints} the set of attaching points is empty and it follows that $L \intersect j(S) = j(\bdy S)$. Consider now the map $\Theta_X \from (X,L) \to (X,L)$ that is produced by Lemma~\ref{LemmaGeomModelHE}.  

We first prove that $\Theta_X(\bdy^\free S)=\bdy^\free S$ which, by Lemma~\ref{LemmaFreeBdyVertex}, is a union of components of~$L$. Since each component of $L$ is noncontractible (Fact~\ref{FactNoAttachingPoints}), and since the subgroup system $[\pi_1 L]$ is malnormal with one component for each component of $L$ (Lemma~\ref{LemmaLImmersed}~\pref{ItemComplementMalnormal}), it follows that the restriction $\Theta_X \from L \to L$ permutes the components of $L$, that $\theta$ permutes the components of the subgroup system $[\pi_1 L]$, and that these permutations are compatible under the bijection between components of $L$ and of $[\pi_1 L]$. Let $\alpha \in \Aut(F_n)$ be any representative of $\theta$. Since $\theta$ preserves the subgroup systems $[\pi_1 L]$ and $[\pi_1 S]$, and since their disjoint union $[\pi_1 L] \union [\pi_1 S]$ is precisely the set of conjugacy classes of nontrivial vertex stabilizers for the action of $F_n$ on $T(X)$, it follows that $\alpha$ induces a unique permutation of the  vertices of $T(X)$ with nontrivial stabilizers so that $\Stab(\alpha \cdot V) = \alpha(\Stab(V))$, and furthermore this action preserves the $L$-vertices and the set of $S$-vertices. Given any component $\bdy_i S$ of $\bdy^\free S$, we must find a component $\bdy_j S$ of $\bdy^\free S$ such that $\Theta_X(\bdy_i S) = \bdy_j S$. Let $w \in \Gamma(X)$ be the vertex corresponding to $\bdy_i S$, and let $W \in T(X)$ be any vertex projecting to $w$. Using that \pref{ItemIsFreeBdy}$\implies$\pref{ItemLInS} in Lemma~\ref{LemmaFreeBdyVertex}, there exists a unique $S$-vertex $V \in T(X)$ such that $\Stab(W) \intersect \Stab(V)$ has finite index in $\Stab(W)$. Since $\alpha$ is an automorphism of $F_n$, it follows that $\alpha \cdot V$ is the unique $S$-vertex such that $\Stab(\alpha \cdot W) \intersect \Stab(\alpha \cdot V)$ has finite index in $\Stab(\alpha \cdot W)$. Using that \pref{ItemLInS}$\implies$\pref{ItemIsFreeBdy} in Lemma~\ref{LemmaFreeBdyVertex} it follows that $\alpha \cdot W$ projects to an $L$-vertex of $\Gamma(X)$ corresponding to some component $\bdy_j S$ of $\bdy^\free S$, and so $\Theta_X(\bdy_i S) = \bdy_j S$.

We know from Lemma~\ref{LemmaGeomModelHE} that $\Theta_X \restrict S^\free$ is homotopic to a self-homeomorphism, and by the homotopy extension lemma that we may extend this to a homotopy of the entire map $\Theta_X$ relative to the disjoint subcomplex~$L^\nonfree$. Note that if $\bdy_i S = \bdy_i S^\free$, $\bdy_j S = \bdy_j S^\free$ are free boundary circles, and if the equation $\Theta_X(\bdy_i S^\free) = \bdy_j S^\free$ holds before the homotopy, then it holds as well after the homotopy. To see why, after the isotopy the right hand side of the equation is some component $\bdy_{k} S^\free$ of $\bdy S^\free$ (not neccessarily a free boundary circle), and it would follow that $\bdy_j S^\free$ and $\bdy_k S^\free$ are homotopic in~$X$. But then by applying Lemma~\ref{LemmaLImmersed}~\pref{ItemComplementMalnormal} which says that $[\pi_1 L]$ is a malnormal subgroup system, and using the fact that any nonfree boundary component of $S^\free$ is homotopic into $L$, it follows that $\bdy_j S^\free = \bdy_k S^\free$.

\medskip

Our remaining task is to homotope the map $\Theta_X$ relative to $S^\free \union L^\nonfree$ so as the guarantee that $\Theta_X$ preserves $X - S^\free$. Note that the components of $X - (S^\free \union L^\nonfree)$ are identified under the map $j \restrict \interior(S)$ with the \emph{nonfree open annuli $A_i = \interior(N_i)$}, where the \emph{nonfree closed annuli $N_i \subset S$} are characterized by $\bdy N_i = \bdy_i S^\free \union \bdy_i S$ and $\bdy_i S \subset \bdy^\nonfree S$. It therefore suffices to consider each nonfree closed annulus $N_i$ and the annulus map $\beta = \Theta_X \composed j \from N_i \to X$, and to homotope $\beta$ relative to $\bdy N_i$ so that $\beta(A_i) \subset X-S^\free$.

We claim that there does exist a map $\gamma \from N_i \to X$ such that $\gamma \restrict \bdy N_i = \beta \restrict \bdy N_i$, and $\gamma(A_i) \subset X-S^\free$. Since $N_i$ is just an annulus, existence of $\gamma$ is equivalent to existence of a free homotopy in $X$ between the two circle maps $\beta \restrict \bdy_i S^\free$ and $\beta \restrict \bdy_i S$, such that the interior of this free homotopy is disjoint from $S^\free$. To do this it suffices to construct appropriate free homotopies between each of these two circle maps and the circle map $\zeta$ defined to be the composition 
$$\zeta \from \bdy_i S^\free \xrightarrow{\Theta_X} \bdy_k S^\free  \mapsto \bdy_k S \xrightarrow{j} X
$$
where the map $\bdy_k S^\free \mapsto \bdy_k S$ is defined by choosing a product structure $N_k \approx S^1 \cross [0,1]$. Clearly the annulus map $N_k \xrightarrow{j} X$ itself provides a free homotopy between the circle map $\beta \restrict \bdy_i S^\free$ and the map $\zeta$, and the image of the interior of this free homotopy is the open annulus $A_k$ which is disjoint from $S^\free$. Also, the two circle maps $\zeta$ and $\beta \restrict \bdy_i S$ each have image in $L$, and they are freely homotopic in $X$, so by Lemma~\ref{LemmaLImmersed}~\pref{ItemComplementCircuits} these two circle maps are freely homotopic in $L$, disjoint from $S^\free$. This proves the claim.

It remains to prove that for any two annulus maps $\beta,\gamma \from N_i \to X$ which agree on $\bdy N_i$, the map $\beta$ is homotopic rel $\bdy N_i$ to a map with the same image as $\gamma$ (but $\beta$ need not be homotopic rel $\bdy N_i$ to $\gamma$ itself). This is proved by doubling the annulus $N_i$ across its boundary to form a torus $T$ and defining a map $\beta \union \gamma \from T \to X$ using $\beta$ on one copy of $N_i$ and $\gamma$ on the other. The image of the induced homomorphism $\Z^2 = \pi_1(T) \xrightarrow{(\beta \union \gamma)_*} \pi_1(X) \approx F_n$ is infinite cyclic. Since $X$ is an Eilenberg-Maclane space, it follows that the map $T \mapsto X$ extends continuously to a map $M \mapsto X$ defined on some solid torus $M$ with boundary~$T$. The two copies of $N_i$ comprising $T=\bdy M$ are homotopic rel boundary through $M$, and composing this homotopy with the map $M \mapsto X$ we obtain the desired homotopy rel boundary from $\beta$ to a map whose image equals that of $\gamma$.
\end{proof}

\section{Vertex groups and vertex group systems}
\label{SectionVertexGroups}

A free factor system in $F_n$ consists of the conjugacy classes of nontrivial vertex stabilizers for some minimal action of $F_n$ on a simplicial tree $T$ such that all edge stabilizers are trivial. We generalize this by introducing the concept of a \emph{vertex group system}, which consists of the conjugacy classes of nontrivial point stabilizers for a minimal action of $F_n$ on some $\reals$-tree $T$ with trivial arc stabilizers.\footnote{In the earlier version \cite{HandelMosher:SubgroupOutF_n} ``vertex group systems'' were defined more broadly in terms of very small $F_n$-trees.} 

There are two main results in this section. Proposition~\ref{PropVDCC} is a chain condition, the proof of which was suggested to us by Mark Feighn, which bounds the length of each linear chain of vertex groups in $F_n$ ordered by inclusion. Proposition~\ref{PropGeomVertGrSys} says that certain subgroup systems arising naturally from consideration of geometric models are vertex group systems. 

\subsection{Vertex group systems} 
\label{SectionVertexGroupSystems}

First we recall some elements of the theory of $F_n$-actions on $\reals$-trees. An \emph{$F_n$ tree} is a minimal, isometric action $F_n \act T$ of the group $F_n$ on an $\reals$-tree $T$ for which no point or end of $T$ is fixed by the whole action; we often just write $T$ for an $F_n$-tree, when the action is understood. The $F_n$-tree is \emph{small} is the subgroup of $F_n$ that stabilizes any nondegenerate arc in $T$ is either trivial or cyclic, in which case the stabilizer of each point has rank~$\le n$ \cite{GaboriauLevitt:rank,GJLL:index}. 

A proper, nontrivial subgroup $A \subgroup F_n$ is a \emph{vertex group} if there exists an $F_n$ tree $T$ with trivial arc stabilizers and an $x \in T$ such that $A = \Stab(x)$, in which case we say that the vertex group $A$ is \emph{realized at $x$ in $T$}. Each free factor is a vertex group, and is realized at some vertex of some simplicial $F_n$-tree with trivial edge stabilizers. 
 
Given an $F_n$-tree $T$ with trivial arc stabilizers, the \emph{vertex group system of $T$}, denoted~$\A_T$, is the subgroup system consisting of the conjugacy classes of all vertex groups that are realized in~$T$.  By definition of vertex group, each $A \in \A_T$ is proper and nontrivial. We note that in addition to the bound $n$ on the ranks of the vertex stabilizers in $T$, the cardinality of $\A_T$ is bounded above by a finite constant depending only on the rank~$n$; see \cite{GaboriauLevitt:rank,GJLL:index}. The vertex group system $\A_T$ is therefore an example of a subgroup system in~$F_n$.

A subgroup system $\A$ is said to be a \emph{vertex group system} if there exists an $F_n$-tree $T$ with trivial arc stabilizers such that $\A = \A_T$; we say in this situation that \emph{$\A$ is realized in~$T$}. Note that since arc stabilizers are trivial, the point stabilized by a subgroup $A$ in the system $\A_T$ is uniquely determined by $A$. Each free factor system is a vertex group system, and is realized in some simplicial $F_n$-tree with trivial edge stabilizers.

\begin{lemma}
\label{LemmaVSElliptics}
Every vertex group system $\A$ is malnormal. Furthermore, $\A$ is uniquely determined by the set $\C$ of conjugacy classes of nontrivial elements of $F_n$ that are carried by $\A$, in the sense that for any subgroup $B \subgroup F_n$ the following are equivalent:
\begin{enumerate}
\item The $F_n$-conjugacy class of each nontrivial element of $B$ is an element of the set $\C$.
\item There exists a subgroup $A \subgroup F_n$ such that $B \subgroup A$ and $[A] \in \A$.
\end{enumerate}
It follows that the subgroups $A \subgroup F_n$ for which $[A] \in \A$ are precisely the maximal subgroups satisfying item (1).
\end{lemma}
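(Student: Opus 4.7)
The approach is to realize $\A = \A_T$ for some $F_n$-tree $T$ with trivial arc stabilizers and to exploit a strong consequence of the trivial arc stabilizer hypothesis: for any nontrivial $b \in F_n$, the fixed-point set $\Fix(b) \subset T$ is either empty or a single point. Indeed, $\Fix(b)$ is a closed convex subtree of $T$; were it to contain two distinct points, it would contain the nondegenerate arc between them, whose pointwise stabilizer would contain $b$ and therefore be nontrivial, contradicting triviality of arc stabilizers.

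With this observation in hand, malnormality is quick. If $A \in \A$ is represented by $A = \Stab(x)$ and $g \in F_n - A$ then $gx \ne x$, so any element of $A \cap gAg^{\inv} = \Stab(x) \cap \Stab(gx)$ fixes the nondegenerate arc $[x,gx]$ and is therefore trivial; and if $A = \Stab(x)$ and $A' = \Stab(y)$ both represent components of $\A$ with $A \cap A'$ nontrivial, then a nontrivial element common to both fixes both $x$ and $y$, forcing $x = y$ and $A = A'$. The implication (2) $\Rightarrow$ (1) is immediate from the definition of carrying. For (1) $\Rightarrow$ (2) with $B$ nontrivial (the case $B$ trivial and $\A$ nonempty being trivial), each nontrivial $b \in B$ satisfies $[b] \in \C$, so $b$ lies in a conjugate of some representative of a component of $\A$, and hence $b$ fixes a point of $T$; by the single-point observation, $\Fix(b) = \{x_b\}$ is a singleton. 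For any two nontrivial $b_1, b_2 \in B$, either $b_2 = b_1^{\inv}$, in which case $\Fix(b_1) = \Fix(b_2)$, or $b_1 b_2$ is a nontrivial element of $B$ and is therefore elliptic; in the latter case the standard dichotomy for a product of two elliptic isometries of an $\reals$-tree (whose product is hyperbolic with translation length twice $d(\Fix(b_1),\Fix(b_2))$ when those fixed sets are disjoint) forces $\Fix(b_1) \cap \Fix(b_2) \ne \emptyset$, so $x_{b_1} = x_{b_2}$. Hence all $x_b$ coincide at a single point $x \in T$ fixed by all of $B$, so $B \subgroup \Stab(x)$; since no point of $T$ is fixed by all of $F_n$, the subgroup $\Stab(x)$ is proper, and it is nontrivial as it contains $B$, so $[\Stab(x)] \in \A_T = \A$.

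The final clause about maximality follows formally from the equivalence (1) $\iff$ (2) together with malnormality. Any $A$ with $[A] \in \A$ satisfies (1) by (2) $\Rightarrow$ (1), and any $B \supsetneq A$ that also satisfied (1) would, by (1) $\Rightarrow$ (2), lie in some $A'$ with $[A'] \in \A$; but then $A \cap A' \supseteq A$ is nontrivial, so malnormality of $\A$ forces $A = A'$, hence $B = A$, a contradiction. Conversely, if $B$ is maximal satisfying (1) then by (1) $\Rightarrow$ (2) we have $B \subgroup A$ with $[A] \in \A$, and since $A$ itself satisfies (1), maximality gives $B = A$. The main obstacle I expect is ensuring that the product-of-elliptics dichotomy is invoked correctly in the generality of arbitrary $\reals$-trees rather than simplicial trees; this is a classical fact but should be cited with care. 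Once that is in place, the rest is a clean point-set manipulation enabled by the observation that $\Fix(b)$ is a singleton.
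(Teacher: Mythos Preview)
Your proof is correct and follows essentially the same approach as the paper's. The paper's proof is terser: it simply cites as ``elementary facts about group actions on trees'' that (i) stabilizers of distinct points have trivial intersection, (ii) $\C$ coincides with the elliptic conjugacy classes, and (iii) any subgroup consisting entirely of elliptic elements fixes a point of $T$. You have effectively unpacked (i) via the singleton-$\Fix(b)$ observation and unpacked (iii) via the product-of-elliptics dichotomy, which is exactly the standard Serre/Culler--Morgan argument underlying that fact; so your route and the paper's are the same, only at different levels of detail.
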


\begin{proof} Choose an $F_n$-tree $T$ with trivial arc stabilizers in which $\A$ is realized. The lemma is an immediate consequence of three elementary facts about group actions on trees: the stabilizers of any two distinct vertices of $T$ have trivial intersection; $\C$ consists precisely of the conjugacy classes of nontrivial elements of $F_n$ that are elliptic on~$T$; and for each subgroup $B \subgroup F_n$ whose elements are all elliptic on~$T$, there exists $p \in T$ such that $B \subgroup \Stab(p)$.
\end{proof}

\paragraph{The chain condition on vertex groups.} Grushko's Theorem implies that the collection of free factors satisfies a chain condition: every free factor $A \subset F_n$ satisfies $\rank(A) \le n$, and for any free factors $A < A'$ we have $\rank(A) \le \rank(A')$ with equality if and only if $A=A'$. We need the analogous result about vertex groups, the proof of which was suggested to us by Mark Feighn.


\begin{proposition}\label{PropVDCC} 
Every vertex group $A \subgroup F_n$ satisfies $\rank(A) \le n$. For any vertex groups $A < A'$ we have $\rank(A) \le \rank(A')$ with equality if and only if $A=A'$. It follows that there is an upper bound to the length $L$ of any properly nested sequence of vertex group systems $\A_1 \sqsubset \cdots \sqsubset \A_L$. 
\end{proposition}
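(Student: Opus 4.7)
The plan is to establish the three assertions of Proposition~\ref{PropVDCC} in sequence: the rank bound, the strict rank inequality under proper inclusion, and the uniform bound on chain length.

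The rank bound $\rank(A) \le n$ for every vertex group $A \subgroup F_n$ is immediate from the results of \cite{GaboriauLevitt:rank,GJLL:index} cited at the outset of Section~\ref{SectionVertexGroupSystems}.

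For the strict rank inequality, suppose $A \subsetneq A'$ are both vertex groups, and realize $A = \Stab(x)$ in an $F_n$-tree $T$ with trivial arc stabilizers. Restricting the action to $A'$, the hypothesis $A' \not\subseteq A$ yields some element of $A'$ moving $x$, so the $A'$-action on $T$ is nontrivial and $\Stab_{A'}(x) = A$. Let $T' \subseteq T$ be the minimal $A'$-invariant subtree. First I would verify $x \in T'$ by a nearest-point argument: were $y \in T'$ the unique nearest point of $T'$ to $x$, then for every $a \in A$ one would have $d(x, ay) = d(ax, ay) = d(x, y)$, forcing $ay = y$ by uniqueness of nearest points in a tree, so $A$ would fix the nondegenerate arc $[x, y]$, contradicting triviality of arc stabilizers. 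Hence $x \in T'$, and the free group $A'$ acts minimally and nontrivially on $T'$ with trivial arc stabilizers and nontrivial point stabilizer $A$. Applying the Grushko-type decomposition for free-group actions on $\reals$-trees with trivial arc stabilizers (due to Bestvina--Feighn and Guirardel, in agreement with the rank inequality of \cite{GaboriauLevitt:rank}), one obtains $A' = A * H_1 * \cdots * H_k * F_\ell$ where the $H_i$ represent the remaining nontrivial point-stabilizer conjugacy classes; nontriviality of the action forces $k \ge 1$ or $\ell \ge 1$, whence $\rank(A') > \rank(A)$.

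For the chain bound, I would introduce the complexity $c(\A) = (\omega(\A),\, -|\A|) \in \Z \cross \Z$ under lexicographic order, where $\omega(\A) = \sum_{[A] \in \A} \rank(A)$, and prove $c$ strictly increases along any proper chain. Given $\A \sqsubset \A'$ with $\A \ne \A'$, let $\phi \from \A \to \A'$ send each $[A]$ to the unique $[B] \in \A'$ with $A$ conjugate into $B$ (uniqueness by malnormality of $\A'$, Lemma~\ref{LemmaVSElliptics}). For each $[B] \in \phi(\A)$ with preimage $\{[A_1], \ldots, [A_m]\}$, choose representatives $A_i \subseteq B$; realizing $\A$ in a tree $T_\A$ with trivial arc stabilizers and passing to the minimal $B$-invariant subtree $T_\A^B$, the same nearest-point argument shows that each $A_i$ fixes a point of $T_\A^B$, and malnormality of $\A$ forces these fixed points to be distinct. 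Hence $A_1, \ldots, A_m$ appear as distinct vertex stabilizers of the $B$-action on $T_\A^B$, and a second application of the Grushko-type decomposition yields $\sum_i \rank(A_i) \le \rank(B)$; summing over $[B]$ gives $\omega(\A) \le \omega(\A')$. If equality holds then $\phi$ is surjective; and if moreover $|\A| = |\A'|$ then $\phi$ is a bijection with each preimage a singleton $\{[A]\}$ and $\rank(A) = \rank(\phi([A]))$, forcing $A = \phi([A])$ by the strict rank inequality already proved and hence $\A = \A'$, a contradiction. Thus either $\omega(\A) < \omega(\A')$ or $\omega(\A) = \omega(\A')$ with $|\A| > |\A'|$, so $c$ strictly increases in lex order. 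Since $\omega(\A) \le n$ (from the Grushko identity $n = \omega(\A) + \ell$) and $|\A|$ is uniformly bounded in terms of $n$ by the finiteness of orbits of nontrivial point stabilizers from \cite{GaboriauLevitt:rank}, the complexity $c$ takes only finitely many values, producing the uniform upper bound on~$L$.

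The main obstacle will be the Grushko-type decomposition for free-group actions on $\reals$-trees with trivial arc stabilizers, which underpins both the strict rank inequality and the bound $\sum_i \rank(A_i) \le \rank(B)$ in the chain argument. In the simplicial case this is classical Bass--Serre theory; the $\reals$-tree generalization requires either the Rips machine of Bestvina--Feighn or the direct rank inequality of Gaboriau--Levitt, together with the nearest-point lemma ensuring that the relevant restricted actions are nontrivial so that the rank estimate is strict.
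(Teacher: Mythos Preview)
Your overall strategy for the first two assertions matches the paper's: realize $A$ at a point $x$ of an $F_n$-tree $T$ with trivial arc stabilizers, pass to the minimal $A'$-subtree $T'$, and argue that $x \in T'$ via the nearest-point observation. The gap is in the next step. The free-product decomposition $A' = A * H_1 * \cdots * H_k * F_\ell$ that you invoke does \emph{not} hold for free groups acting on $\reals$-trees with trivial arc stabilizers. The counterexamples are precisely the Morgan--Shalen dual trees of filling measured laminations on compact surfaces with nonempty boundary --- indeed, these are the very trees built in Proposition~\ref{PropGeomVertGrSys}. In that situation $\pi_1 S$ is free, arc stabilizers are trivial, and the nontrivial point stabilizers are the peripheral cyclic groups $\pi_1(\bdy_i S)$; but these do not form a free factor system of $\pi_1 S$. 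Concretely, with $S$ the once-punctured torus one has $A' = \pi_1 S \cong F_2$, $A = \langle [a,b] \rangle$, and there is no splitting $F_2 = \langle [a,b] \rangle * H$ since $[a,b]$ is not primitive. The same phenomenon breaks the inequality $\sum_i \rank(A_i) \le \rank(B)$ in your chain-bound argument: for $S$ a four-holed sphere one has $\rank(\pi_1 S) = 3$ while there are four peripheral cyclic point-stabilizer classes, so $\sum_i \rank(A_i) = 4 > 3$.

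The paper replaces the Grushko step with two tools that \emph{are} valid for $\reals$-trees. The Gaboriau--Levitt index inequality of \cite{GaboriauLevitt:rank} gives $\rank(A) \le \rank(A')$ directly from the fact that $T'$ is a small $A'$-tree in which $A$ occurs as a point stabilizer. To rule out equality the paper invokes a result of Levitt, recorded as Theorem~5.2 in \cite{Paulin:AutExt}: if a free group of rank $k$ acts minimally and nontrivially on an $\reals$-tree with a point stabilizer also of rank~$k$, then the tree is simplicial and the quotient graph of groups is a rose with infinite-cyclic edge groups, contradicting trivial arc stabilizers. Your nearest-point argument and the overall architecture are fine; it is the Grushko step that needs to be replaced by these two ingredients.
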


\begin{proof} The last sentence follows from the previous ones using the fact from \cite{GaboriauLevitt:rank} that gives an upper bound to the cardinality 

The fact that $\rank(A) \le n$ follows from the index inequality which is the main result of \cite{GaboriauLevitt:rank}. It remains to prove that for any proper inclusion of vertex groups $A < A'$ we have $\rank(A) < \rank(A')$. Choose an $F_n$-tree $T$ in which $A$ is realized at some point $p \in T$. Let $S \subset T$ be a minimal $A'$-subtree of $T$, and so $S$ is either a point or a nondegenerate subtree of $T$. If $p \not\in S$ then $A$ stabilizes the shortest arc of $T$ from $p$ to a point of $S$, contradicting that arc stabilizers in $T$ are trivial. If $\{p\}=S$ then $A' \subgroup \Stab(S) = \Stab(p) = A$ and so $A = A'$, contradicting properness. If $p \in S$ and $\{p\} \ne S$ then $S$ is a small $A'$-tree and so $\rank(A) \le \rank(A')$ by the index inequality of \cite{GaboriauLevitt:rank}. If $\rank(A) = \rank(A')$ then by Theorem~5.2 of \cite{Paulin:AutExt}, which is attributed to Levitt, it follows that $S$ is a simplicial tree and the quotient graph of groups $S / A'$ is a rose each edge of which is labelled by an infinite cyclic group, and so each edge of $S$ has nontrivial stabilizer, contradicting that arc stabilizers in $T$ are trivial.
\end{proof}

\subparagraph{Remark.} In the earlier version of this paper \cite{HandelMosher:SubgroupOutF_n} the reader will find Proposition~\ref{PropVDCC} stated in the broader context of point stabilizers of very small $F_n$-trees. The proof, again suggested to us by Mark Feighn, is more complicated, but depends on the same tools, namely the index inequality of \cite{GaboriauLevitt:rank} and Theorem~5.2 of \cite{Paulin:AutExt}. The additional work needed is to consider the case that $\rank(A_1)=\ldots=\rank(A_L)$ and to show, by a Stallings fold argument, that the first Betti number of the quotient of $A_1$ modulo the normal closure of $A_i$ is a bounded, strictly increasing function of $i=1,\ldots,L$.

\subsection{Geometric models and vertex group systems}

Here is our main construction of vertex group systems.

\begin{proposition} 
\label{PropGeomVertGrSys}
Given a rotationless $\phi \in \Out(F_n)$ represented by a \ct\ $f \from G \to G$, and a geometric stratum $H_r \subset G$, let $X$ be a geometric model for $H_r$ with accompanying notation as in Definition~\ref{DefGeomModel}, in particular the surface $S$ and the map $S \xrightarrow{j} Y \subset X$. Let $L \subset X$ be the complementary subgraph as in Definition~\ref{DefComplSubgraph}. For any subcomplex $K \subset L$ such that $j(\bdy S) \subset K$, the subgroup system $[\pi_1 K]$ is a vertex group system.
\end{proposition}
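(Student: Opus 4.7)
The plan is to realize $[\pi_1 K]$ as the vertex group system of an explicitly constructed $F_n$-tree $T$ with trivial arc stabilizers. First, I will refine the peripheral graph of spaces $\wh X$ of Definition~\ref{DefPeriphGraphOfSpaces} to a graph of spaces $\wh X_K$ as follows: inside each component $L_l$ of $L$, split $L_l$ along the frontier of $K \intersect L_l$, taking the noncontractible components of $K \intersect L_l$ as vertex spaces (whose fundamental groups give the nontrivial components of $[\pi_1 K]$), and regarding each contractible component of $K \intersect L_l$ together with each component of $\closure(L_l - K)$ as a vertex space with trivial vertex group, joined by interfaces that become edge spaces with trivial edge group. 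Keep $\wh S$ as an $S$-vertex space and keep each annular neighborhood of $j(\bdy_i S)$ as an edge space with cyclic edge group $\<\bdy_i S\>$, now connecting $\wh S$ to the unique $K$-vertex that contains $j(\bdy_i S)$. The Bass--Serre tree $T_K$ of $\wh X_K$ is then an $F_n$-tree whose nontrivial vertex stabilizers are conjugates of $\pi_1 S$ and of the subgroups $\pi_1 K_i$ for noncontractible components $K_i \subset K$, and whose edge stabilizers are cyclic at boundary-annulus edges and trivial elsewhere.

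Next, I will use the pseudo-Anosov structure on $S$ to replace the $S$-vertices by an $\reals$-tree and thereby eliminate the cyclic edge stabilizers. Fix a hyperbolic structure $\mu$ on $S$ and consider the unstable measured geodesic lamination $\Lambda^u_\mu$ of the pseudo-Anosov $\Psi$; by Propositions~\ref{PropGeomLams} and~\ref{PropNielsenThurstonTheory}, $\Lambda^u_\mu$ fills $S$ and has, for each boundary circle $\bdy_i S$, a unique crown principal region with stabilizer $\<\bdy_i S\>$, with the remaining principal regions being ideal polygons of trivial stabilizer. The dual $\reals$-tree $T_S$ to $\wt\Lambda^u_\mu$ is then a minimal $\pi_1 S$-tree with trivial arc stabilizers whose nontrivial vertex stabilizers are precisely the cyclic subgroups conjugate to $\<\bdy_i S\>$, one conjugacy class per boundary circle. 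Now build $T$ by equivariantly grafting $T_S$ onto $T_K$: replace each lift of the $S$-vertex in $T_K$ by a translate of $T_S$ (using its stabilizer to equivariate), attach each incident cyclic edge of $T_K$ at the unique $T_S$-vertex whose stabilizer equals the cyclic edge group, and then collapse each such cyclic edge to identify its two endpoints. The identification is consistent because the hypothesis $j(\bdy S) \subset K$ forces each $\<\bdy_i S\>$ to lie inside the stabilizer $\pi_1 K_j$ of the $K$-vertex endpoint.

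After collapsing, every $T_S$-vertex of cyclic stabilizer has been absorbed into a $K$-vertex whose stabilizer already contained it, so the nontrivial vertex stabilizers of $T$ are exactly conjugates of $\pi_1 K_i$ for noncontractible components $K_i$ of $K$; this gives $\A_T = [\pi_1 K]$. Arc stabilizers of $T$ are trivial --- inside each $T_S$-piece by definition of the dual tree, across surviving edges of $T_K$ by triviality of those edge groups, and at the junctions of collapsed edges because only endpoints were identified. Minimality of the action and the non-existence of an $F_n$-fixed end follow from minimality of each $T_S$-piece under its $\pi_1 S$-stabilizer together with the fact that elements of $F_n$ not lying in any single vertex group translate across edges of $T_K$. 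The main obstacle will be confirming that the grafting-and-collapsing yields an honest $\reals$-tree action rather than a degenerate quotient: this is ensured by the bipartite structure of $T_K$ (cyclic edges connect only $S$-vertices to $K$-vertices, so distinct cyclic edges at one $S$-vertex correspond to distinct $T_S$-vertices and distinct $K$-neighbors), by malnormality of $[\pi_1 K]$ from Lemma~\ref{LemmaLImmersed}~\pref{ItemKInL} preventing accidental identifications across collapsed edges, and by performing the construction equivariantly throughout.
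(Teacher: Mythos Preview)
Your approach is the paper's: both replace the surface piece by the Morgan--Shalen dual tree to a filling measured lamination on $S$, replace the graph pieces by the simplicial trees obtained by collapsing $K$-components, and glue. You organize this as a refinement of a Bass--Serre tree (a ``graph of actions''), whereas the paper works more concretely with a pushout $\P$ over $\wt X$ and an intermediate forest $\F$; the content is the same.

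There is one genuine omission. When you graft $T_S$ onto $T_K$ you specify where to attach each \emph{cyclic} edge incident to an $S$-vertex (at the unique $T_S$-point with matching stabilizer), but you never say where to attach the \emph{trivial} edges incident to the $S$-vertex --- namely the edges $\eta_k$ arising from attaching points of $X$ (Definition~\ref{DefBlowupAttaching}). These are present whenever some higher-stratum edge meets $\interior(H_r)$, and since your $\wh X_K$ refines the peripheral graph of spaces $\wh X$ they survive into $T_K$. A graph-of-actions construction requires, for every edge incident to a blown-up vertex, a choice of attaching point in the replacing tree. The paper handles exactly this by isotoping the hyperbolic structure so that the attaching points lie on pairwise distinct nonboundary leaves or ideal-polygon regions of $\Lambda$ (Property~$(*)$ in the proof), which guarantees that the identifications made in passing from $\F$ to $T$ never merge two $L$-points. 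You need an analogous step: attach each $\eta_k$ at the $T_S$-point corresponding to the leaf or region containing $z^S_k$, after arranging that no two land on the same point and none lands on a crown-region point (the latter would otherwise get collapsed into a $K$-vertex and could create unexpected identifications). Without this the blown-up object may fail to be a tree or may acquire extra point stabilizers; with it, your argument goes through and coincides with the paper's.
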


\begin{proof} We must construct an $F_n$-tree $T$ with trivial arc stabilizers in which $[\pi_1 K]$ is realized. The idea is to use the Morgan-Shalen construction of dual trees to measured geodesic laminations \cite{MorganShalen:FreeActions}, applied to a hyperbolic structure on the surface~$S$. The strategy follows the diagram below: we must first pry $S$ and $L$ apart (the left square of the diagram), apply the Morgan-Shalen construction (the middle square), and then put the pieces back together (the right square).
$$\xymatrix{
\wt X \ar[d]   & \P \ar[d] \ar[l]_{\ti q} \ar[r]^r              & \F \ar[d] \ar[r]     & T \ar[d] \\
X                  & L \disjunion S \ar[l]_q \ar[r] & \biggl( \F / F_n   \ar[r] \biggr)  & \biggl( T / F_n \biggr)     
}$$
The left column --- the universal covering of $X$ --- is known. On the top row all objects will be $F_n$-actions and all arrows will be $F_n$-equivariant maps. All vertical arrows will be quotients modulo~$F_n$. The $\reals$-tree $T$, and the $\reals$-forest $\F$ which precedes it in the order of construction, will be acted on indiscretely by $F_n$, and their quotients are shown below them, with the parentheses to emphasize their non-Hausdorff nature. It remains to describe, in order, $q$, $\P$, $\F$, and $T$ and various arrows.

The ``prying apart'' step is accomplished by the quotient map $q \from L \disjunion S \to X$ that is defined as follows: $q \restrict L$ is just the inclusion $L \subset X$; and $q \restrict S = j \restrict S$. The identifications made by $q$ are: the circles $\bdy_0 S \subset S$ and $j(\bdy_0 S) \subset L$ are identified; for $1 \le i \le m$ each point $x \in \bdy_i S$ is identified with $\gamma_i(x) \in G_{r-1} \subset L$; and for each attaching point $x \in j(\interior(S)) \intersect L$, the copies of $x$ in $L$ and in $\interior(S) \subset S$ are identified.

The space $\P$ is simply a pushout, namely, the set of ordered pairs in the Cartesian product $\wt X \cross (L \disjunion S)$ whose two coordinates have the same image in $X$. Projections to the two Cartesian factors define the leftward and downward arrows out of $\P$. The action $F_n \act \wt X$ induces an action $F_n \act \P$ and the arrow $\ti q \from \P \to \wt X$ is $F_n$-equivariant. Letting $L_1,\ldots,L_A$ be the components of $L$, the arrow $\P \mapsto L \disjunion S$ restricts on each component of $\P$ to a universal covering map over a certain component $L_1,\ldots,L_A,S$ of $L \disjunion S$. To be precise, fix an appropriate choice of base points and paths in $X$ in order to identify $\pi_1 S, \pi_1(L_1), \ldots, \pi_1(L_A)$ with subgroups of $\pi_1(X) \approx F_n$. Letting $\L(S)$ be a choice of left coset representatives of the subgroup $\pi_1 S \subgroup F_n$, the components of $\P$ over $S$ are indexed as $\wt S_c$ for $c \in \L(S)$, such that $\Stab(\wt S_c) = \pi_1 S^c = c^\inv \, \pi_1 S \, c \subgroup F_n$. Similarly, the components of $\P$ over each $L_a$ are indexed as $\wt L_{a,c}$, for $c$ in a set $\L(L_a)$ of left coset representatives of $\pi_1(L_a) \subgroup F_n$, so that $\Stab(\wt L_{a,c}) = \pi_1(L_a)^{c}$ (some components $L_a$ may be contractible in which case $\L(L_a) = F_n$).

In what follows we shall abuse notation by identifying the components $\wt S_c$, $\wt L_{a,c}$ of $\P$ with their embedded images $\ti q(\wt S_c)$, $\ti q ( \wt L_{a,c} )$ in $\wt X$.

The space $\F$ will be an $F_n$-forest obtained as a quotient of $F_n \act \P$ by collapsing each component $\wt S_c \subset \P$ to an $\reals$-tree, using the dual $\reals$-tree construction \cite{MorganShalen:FreeActions}, and by collapsing each component $\wt L_{a,c} \subset \P$ to a simplicial $\reals$-tree with appropriate vertex stabilizers.

Fix a hyperbolic structure with totally geodesic boundary on $S$ (below we shall alter this structure by isotopy in order to establish the important property $(*)$). Fix a geodesic lamination $\Lambda \subset S$ which fills $S$ and a transverse measure $\mu$ with support $\Lambda$. For example, one could take $\Lambda,\mu$ to be the unstable measured geodesic lamination of the pseudo-Anosov mapping class on $S$ that is part of the definition of the geodesic model. The outcome of the construction in \cite{MorganShalen:FreeActions} is a $\pi_1 S$-tree $T(\Lambda)$ dual to~$\Lambda$, as follows. Lift $\Lambda$ to a geodesic lamination $\wt\Lambda \subset \wt{S}$ on the universal covering space $\wt{S}$ of $S$. Lift $\mu$ to a transverse measure $\ti\mu$ with support~$\wt\Lambda$. There is a continuous pseudo-metric $\ti d(x,y) = \int_{[x,y]} \mu$ where $[x,y]$ is the geodesic segment connecting $x,y \in \wt{S}$ (to construct $\ti d$ we use, in part, that $\Lambda$ fills, for if $\Lambda$ had a compact leaf that that leaf would be an atom of $\mu$ and this definition of $\ti d$ would fail to produce a continuous pseudo-metric). Recalling that a principal region of $\wt\Lambda$ in $\wt{S}$ is the closure of a component of $\wt{S} - \wt \Lambda$, the value of $\ti d(x,y)$ depends only on the principal regions or nonboundary leaves of $\wt \Lambda$ containing $x,y$ respectively, and furthermore $\ti d(x,y) = 0$ if and only if $x,y$ are contained in the same principal region or nonboundary leaf. Let $\wt{S} \mapsto T(\Lambda)$ be the metric quotient of the pseudo-metric $\ti d$, collapsing to a point each principal region and each nonboundary leaf of $\wt\Lambda$, inducing a metric $d$ on $T(\Lambda)$ such that the quotient map $\wt{S} \mapsto T(\Lambda)$ is an isometry from $\ti d$ to $d$. These objects in $\wt{S}$ --- namely, $\wt\Lambda$, $\ti\mu$, $\ti d$ --- are all equivariant with respect to the deck transformation action $\pi_1 S \act \wt{S}$, and therefore there is an induced isometric action $\pi_1 S \act T(\Lambda)$. As proved in \cite{MorganShalen:FreeActions}, the space $T(\Lambda)$  with the metric $d$ is an $\reals$ tree, the action $\pi_1 S \act T(\Lambda)$ is minimal and has trivial arc stabilizers. Furthermore, the nontrivial point stabilizers can be described in any of the following ways: they are the subgroups of $\pi_1 S$ conjugate to the fundamental groups of crown principal regions of $\Lambda$ in $S$; they are the infinite cyclic groups conjugate to the fundamental groups of the boundary circles $\bdy_0 S,\ldots,\bdy_m S$; and they are the stabilizers of the components of $\bdy\wt{S}$.

Define a continuous pseudometric to each component of $\P$ as follows. On each component $\wt S_c$ over $S$ we use the pseudo-metric described above, arising from the lift of the measured folation $(\Lambda,\mu)$ on $S$. Also, for each component $L_a$, $a=1,\ldots,A$, we choose a path pseudo-metric which assigns length~$0$ to each edge of $L_a \intersect K$ and length~$1$ to each edge of $L_a \setminus K$, and then on each component $\wt L_{a,c}$ of $\P$ over $L_{a,c}$ we lift the path pseudometric of $L_a$. These pseudo-metrics are equivariant with respect to the deck transformation action $F_n \act \P$, in other words the action restricts to pseudometric isometries amongst the components of~$\P$.

Define $r \from \P \to \F$ to be the $F_n$-equivariant quotient which on each component $\wt{S}_c$ or $\wt L_{a,c}$ of $\P$ restricts to the metric quotient of the pseudo-metric on that component, producing a component of $\F$ which is an $\reals$-tree denoted $r(\wt{S}_c)$ or $r(\wt L_{a,c})$, respectively. The action $F_n \act \P$ descends via $r$ to an action $F_n \act \F$. The metrics on the components of $\F$ are equivariant with respect to this action, i.e.\ the action restricts to metric isometries amongst the components of~$F$. By the Morgan-Shalen theorem, each action $\pi_1(S)^c \act r(\wt{S}_c)$ is a minimal $\reals$-tree action whose vertex stabilizers are the conjugates in $F_n$ of the subgroups $\pi_1(\bdy_0 S),\ldots,\pi_1(\bdy_m S)$ that are contained in $\pi_1(S)^c$. Also, each action $\pi_1(L_a)^c \act r(\wt L_{a,c})$ is a Bass-Serre tree for the graph of groups structure on the quotient of $L_a$ obtained by collapsing to a point each component of $L_a \intersect K$, and the vertex stabilizers of this action are the subgroups of $F_n$ contained in $\pi_1(L_a)^c$ that are conjugate to $\pi_1(K_j)$, for each each component $K_j$ of $K$ that is contained in $L_a$. Putting this altogether, it follows that the nontrivial point stabilizers of the $\reals$-forest action $F_n \act \F$ are precisely the subgroups conjugate to the infinite cyclic groups $\pi_1(\bdy_0 S),\ldots,\pi_1(\bdy_m S)$ and the subgroups in the subgroup system $[\pi_1 K]$.

We shall refer to the components of $\F$ of the form $r(\wt S_c)$ as the \emph{$S$-components}, and to their points as \emph{$S$-points} of $\F$. Also, we refer to components of the form $r(\wt L_{a,c})$ as \emph{$L$-components} and their points as \emph{$L$-points} of $\F$. In this language, under the action $\F_n \act \F$, the subgroup system of stabilizers of all $L$-points of $\F$ is exactly $[\pi_1 K]$, and the subgroup system of stabilizers of all $S$ points of $\F$ is exactly $\{[\pi_1 \bdy_0 S],\ldots,[\pi_1 \bdy_m S]\}$. 

The $F_n$-tree $T$ will be constructed from $\F$ by identifying certain pairs of points in an $F_n$-equivariant manner. But we must not identify two $L$-points of $\F$ with nontrivial stabilizers for that will create a point of $T$ whose stabilizer is larger than any subgroup in the system $[\pi_1 K]$. We shall avoid identifying $L$-points altogether by imposing an additional constraint on our construction.

We may assume, after possibly isotoping the hyperbolic structure on $S$ by an isotopy supported on a compact subset of $\interior(S)$, that the following holds:
\begin{description}
\item[Property $(*)$] No attaching point lies on the $j$-image of a crown principal region of $\Lambda$ in~$S$. No two attaching points lie on the $j$-image of same nonboundary leaf of $\Lambda$ or the same ideal polygon principal region in $S$. 
\end{description} 
This is possible because $\Lambda$ has infinitely many leaves but only finitely many boundary leaves in $S$. The finite subset of attaching points may then be put into bijective correspondence with some finite subset of $j(\interior(S))$ that does satisfy $(*)$, and there is an isotopy of $S$ as desired taking one of these subsets to the other.

The tree $T$ is the quotient of $\F$ modulo the $F_n$-invariant partition generated by the following $F_n$-invariant relation: given $\sigma,\lambda \in \F$, an $S$-point and an $L$-point respectively, we say that $\sigma,\lambda$ are \emph{incident} if the subset $q(r^\inv(\sigma)) \intersect q(r^\inv(\lambda)) \subset \wt X$ is nonempty. Observe that as a consequence of Property~$(*)$, every $S$-point is incident to at most one $L$-point; the quotient map $\F \mapsto T$ is therefore injective on the set of $L$-points. Furthermore, from Property~$(*)$ it follows that if $\sigma,\lambda$ as above are incident then one of the following two cases holds. In the first case, $q(r^\inv(\sigma)) \intersect q(r^\inv(\lambda)) \subset \wt X$ is a single point covering an attaching point of $X$, which occurs only if there exist $c \in \L(S)$ and $c' \in \L(L_a)$, for some $a=1,\ldots,A$, such that $r^\inv(\sigma) \subset \wt S_c$ is either an ideal polygon principal region or nonboundary leaf, and $r^\inv(\lambda) \subset \wt L_{a,c'}$ is either a universal covering of a component of $L_a \intersect K$ or a single point lying over a point of $L_a - (L_a \intersect K)$; furthermore, in this first case the stabilizer of $\sigma$ is trivial. In the second case, $q(r^\inv(\sigma)) \intersect q(r^\inv(\lambda))$ is a component of $\bdy \wt S_c$ for some $c \in \L(S)$, which occurs only if $r^\inv(\sigma)$ is a universal covering over some crown principal region of $S_c$ and $r^\inv(\lambda) \subset \wt L_{a,c'}$ is a universal covering of some component of $L_a \intersect K$; furthermore, in this second case the stabilizer of $\sigma$ is in the subgroup system $\{[\pi_1 \bdy_0 S],\ldots,[\pi_1 \bdy_m S]\}$, and it is a subgroup of the stabilizer of $\lambda$ which is in the subgroup system $[\pi_1 K]$. Note also that because of the inclusion $j(\bdy S) \subset K$, every $S$-point $\sigma$ with nontrivial stabilizer is incident to some $L$-point $\lambda$ and the second case holds. Putting together this description of incidence and the stabilizers involved, it follows that the stabilizers of points of $T$ are exactly the subgroups in the subgroup system $[\pi_1 K]$, as required.

Note that it is indeed possible for an $L$-point $\lambda \in \F$ to be incident to more than one $S$-point: this happens when $r^\inv(\lambda) \subset \wt L_{a,c}$ is a universal covering of some component of $L_a \intersect K$ which contains at least two attaching points, or the $j$-images of at least two components of $\bdy_i S$, or at least one of each; this, however, is inconsequential to our present purposes. 

So far, $T$ has been described as a set on which $F_n$ acts, with point stabilizers being the required subgroup system $[\pi_1 K]$. Furthermore, the point stabilized by a particular subgroup in the system $[\pi_1 K]$ is unique. It follows that \emph{any} $\reals$-tree action $F_n \act T$ has trivial edge stabilizers. It remains to define an $\reals$-tree metric on $T$ for which the action $F_n \act T$ is isometric and minimal.

The metric on $T$ is defined as follows. There is a partially defined metric $d'$ on $T$ which is the union of the pushforwards via the map $\F \mapsto T$ of the metrics on the components of~$\F$. Define $d(x,y) = \inf \left( \sum_{i=0}^I d'(x_{i-1},x_i) \right)$ where the infimum is taken over all \emph{connecting sequences} from $x$ to $y$, meaning a sequence of the form $x=x_0,x_1,\ldots,x_I=y$ such that for each $i=1,\ldots,I$ the pair $x_{i-1},x_i$ is contained in the image of some component of $\F$. The definition of $d$ is evidently $F_n$-equivariant and so $F_n$ acts by isometries of~$d$. It is evident, at least, that $d$ is a pseudo-metric.

To check that $T$ is an $\reals$-tree we need to see that the components of $\F$ are connected up in a tree-like pattern under the map $\F \mapsto T$, in the following sense. Recalling $H_r \subset G \subset X$ in the data of the geometric model $X$, let $F_n \act U$ be the Bass-Serre tree of the graph of groups obtained from $G$ by collapsing to a point each component of $K \union H_r$, and note that this graph of groups may also be obtained from $X$ by collapsing to a point each component of $K \union j(S)$. It follows that $U$ is obtained from $\wt X$ by collapsing to a point each component of the union of the total lifts of $K$ and $S$. There is an induced map $s \from T \mapsto U$ whose restriction to each $\reals$-tree $r(\wt S_c)$ is a constant map onto the point $s(r(\wt S_c))$ which is the image under $\wt X \mapsto U$ of $\wt S_c \subset \wt X$, and whose restriction to each subtree $r(\wt L_{a,c})$ is a $\pi_1(L_a)^c$-equivariant embedding onto the subtree $s(r(\wt L_{a,c}))$ of $U$ which equals the image under the map $\wt X \to U$ of $\wt L_{a,c} \subset \wt X$. 
The map $s \from T \mapsto U$ is, set theoretically, simply the quotient that collapses each subtree of the form $r(\wt S_c)$ to a point $s(r(\wt S_c))$. The map $s$ therefore has the following effect on the subtrees of $T$ of the form $r(\wt S_c)$ and $r(\wt L_{a,c})$. First, two subtrees $r(\wt S_c), r(\wt L_{a,c'}) \subset T$ have nontrivial intersection in $T$ \emph{if and only} if their $s$-images in $U$ have nontrivial intersection \emph{if and only if} $s(r(\wt S_c))$ is a point of the subtree $s(r(\wt L_{a,c'}))$. Second, two subtrees $r(\wt L_{a,c}), r(\wt L_{a',c'})$ have $s$-images with nontrivial intersection in $U$ \emph{if and only if} they intersect a common subtree $r(\wt S_{c''}) \subset T$ \emph{if and only if} $s(r(\wt S_{c''})) \in s(r(\wt L_{a,c})) \intersect s(r(\wt L_{a',c'}))$, in which case the latter intersection consists solely of the point $s(r(\wt S_{c''}))$.

We now define the arc $[x,y] \subset T$ with any endpoints $x,y \in T$ and verify that $T$ is an $\reals$-tree. Assuming $sx \ne sy$, the arc $[sx,xy] \subset U$ is a unique concatenation of subarcs $[sx,sy] = [\xi_0,\xi_1] * \cdots * [\xi_{J-1},\xi_J]$, $J \ge 1$ such that each $[\xi_{j-1},\xi_j]$ is a maximal subarc contained in some subtree $s(r(\wt L_{k_j,c_j}))$, $j=1,\ldots,J$. Each such subarc pulls back uniquely to a subarc $[x_{j-1},x'_j] \subset r(\wt L_{k_j,c_j})$. Letting $x'_0 = x$ and $x_J = y$, for each $j=0,\ldots,J$ we have two points $x'_j,x_j$ in the same subtree $r(\wt S_{c'_j})$, connected by a unique arc $[x'_j,x_j] \subset r(\wt S_{c'_j})$. We therefore obtain an embedded arc in $T$ which is expressed as a concatenation without backtracking of the form
$$[x,y] = [x'_0,x_0] * [x_0,x'_1] * \cdots * [x_{J-1},x'_J] * [x'_J,x_J]
$$
whose first and last terms may be degenerate but whose other terms are all nondegenerate. This expression for $[x,y]$ makes sense even when $sx=sy$ but $x \ne y$, in which case $J=0$ and $x = x'_0, y=x_0 \in r(\wt S_{c'_0})$. From the form of the metric on $T$ we have 
$$d(x,y) \le d'(x'_0,x_0) + d'(x_0,x'_1) + \cdots + d'(x_{J-1},x'_J) + d'(x'_J,x_J)
$$
whose first and last terms may be zero but whose other terms are all nonzero. But in fact, this inequality is an equation, because \emph{any} connecting sequence from $x$ to $y$ must contain $x=x'_0,x_0,x'_1,\ldots,x_{J-1},x'_J,x_J$ as an increasing subsequence. The arc $[x,y]$ is therefore isometrically embedded in $T$. And it is the unique embedded arc in $T$ connecting $x$ to $y$, because any such arc must contain $x=x'_0,x_0,x'_1,\ldots,x_{J-1},x'_J,x_J$ as an increasing subsequence, and the arcs between successive points are unique in the corresponding subtrees. This shows that $T$ is an $\reals$-tree.

Finally, minimality of the action $F_n \act T$ is an immediate consequence of minimality of the actions $F_n \act U$ and $\pi_1(S)^c \act r(\wt S_c)$ for each $c \in \L(\pi_1 S)$.

This completes the proof of Proposition~\ref{PropGeomVertGrSys}.
\end{proof}
 
\bibliographystyle{amsalpha} 
\bibliography{mosher} 
 
 \printindex
 
 \end{document}